\documentclass[twoside]{article}
\usepackage[utf8]{inputenc}
\usepackage{amsmath}
\numberwithin{equation}{section}
\usepackage{amsmath,amsfonts,color,enumitem,geometry,latexsym,mathrsfs,upgreek}
\usepackage[colorlinks,linkcolor=blue,citecolor=red,urlcolor=blue]{hyperref}
\usepackage[amsmath,thmmarks]{ntheorem}
\usepackage[pagewise]{lineno}
\usepackage{ulem}  
\newtheorem{claim}{Claim}
\newcommand{\Title}{Symbolic dynamics for certain non-invertible $C^{1+\beta}$ maps}
\newcommand{\Titleabbr}{}
\newcommand{\Keyword}{$C^{1+\beta}$ map, zero Lyapunov exponents, Markov partition, symbolic dynamics}

\newcommand{\Authora}{Jing Xun${}^\text{ $\ast$}$}
\newcommand{\AuthorabbrA}{Xun}
\newcommand{\Authorb}{Yifan Zhang }
\newcommand{\AuthorabbrB}{ZHANG}
\newcommand{\Authorc}{Yujun Zhu}
\newcommand{\AuthorabbrC}{ZHU}

\usepackage{fancyhdr}
\title{\Title
\footnotetext{
\setlength{\parindent}{2em}
{2020 Mathematics Subject Classification:
37D05, 37B10, 37C50.}\\
\indent \emph{Keywords and phrases:} \Keyword.\\
\indent{} $^*$ The corresponding author.\\
\indent{}This research is supported by NSFC (No: 12571206, 12526102)}.}
\author{\Authora, \Authorb and \Authorc}
\date{}
{
\theorembodyfont{\rm}
\newtheorem{mdef}{Definition}[section]
\newtheorem{rmk}{Remark}

}
\newtheorem*{mainthm}{Main Theorem}
\newtheorem{thm}{Theorem}[section]

\newtheorem{prop}[thm]{Proposition}
\newtheorem{lem}[thm]{Lemma}
\newtheorem{cor}[thm]{Corollary}

\theoremstyle{nonumberchange}
\theorembodyfont{\rm}
\theoremsymbol{\mbox{$\Box$}}
\newtheorem{pf}{Proof}

\begin{document}
\vskip 5.1cm

\maketitle

\begin{center}
\small
 \textsl{School of Mathematical Sciences, Xiamen University, Xiamen, 361005, PR~China}

\end{center}
\begin{center}
\begin{minipage}{130mm}

{\bf Abstract}:
Let $f$ be a non-invertible $C^{1+\beta}(\beta>0)$ map with zero Lyapunov exponents and singularities on a closed Riemannian manifold $M$. We consider the symbolic dynamics of $f$. Combining the techniques in recent works of Sarig, Ovadia and Araujo-Lima-Poletti \cite{Sarig, Ovadia1, Ovadia2, Lima}, we construct a countable Markov partition for the invariant set consisting of summable points of the inverse limit space of $(M, f)$ and show that there exists a finite-to-one symbolic extension for $f$ on the corresponding subset of $M$.
\end{minipage}
\end{center}
\maketitle

\section{Introduction}
It is well known that symbolic systems are important dynamical systems which have rich topological and statistical properties. More relevantly, symbolic systems play important roles in the investigation of many classical systems, especially uniformly hyperbolic diffeomorphisms. Let $f$ be a diffeomorphism on a closed Riemannian manifold $M$. Once a Markov partition of $M$ (resp. an $f$-invariant subset $\Lambda$) is obtained, then we get a semi-conjugacy between a topological Markov shift and $f$ (resp. $f|_{\Lambda}$) which give a correspondence between $f$-invariant measures and shift-invariant measures. In 1960s, Adler and Weiss introduced finite Markov partitions for hyperbolic toral automorphisms \cite{Adler1, Adler2}. Later, Sina$\check{\text{\i}}$ and Bowen constructed finite Markov partitions for Anosov diffeomorphisms and basic sets of  Axiom A diffeomorphisms \cite{Sinai1,Sinai2,Bowen,Bowen2} respectively.

Recent years, researchers have made great efforts to apply symbolic dynamics to systems with weak hyperbolicity. One of the important progresses in this topic was made by Sarig \cite{Sarig}. He investigated symbolic dynamics for any $C^{1+\beta}$ surface diffeomorphism $f$ with positive topological entropy. In this case, $f$ is nonuniformly hyperbolic with respect to any ergodic measures of maximal entropy. Sarig constructed countable Markov partitions for the $\chi$--large subset (the subset is $\chi$--large if it is support on every ergodic invariant probability measure $\mu$ whose entropy is greater than $\chi>0$), and then proved two conjectures for surface diffeomorphisms, one due to Katok \cite{Katok3} and the other due to Buzzi \cite{Buzzi}, concerning the number of periodic points and the number of measures of maximal entropy. Later, Ovadia generalized Sarig's methods to higher dimension cases including $C^{1+\beta}$ diffeomorphisms which is non-uniformly hyperbolic or whose orbits may have zero Lyapunov exponents but still demonstrates some sensitivity to initial conditions \cite{Ovadia1, Ovadia2}. Araujo, Lima and Poletti considered this topic for the non-invertible and singular nonuniformly hyperbolic systems \cite{Lima}. There are many related works, which we will not list here.

Bowen's idea and Pesin theory are the fundamentals of Sarig's Method. It is well known that Bowen's construction of Markov partitions for uniformly hyperbolic diffeomorphisms uses Anosov's shadowing theory for pseudo-orbits. For the non-uniformly hyperbolic case, Pesin represented the system as a perturbation of a uniform hyperbolic system on a local chart by using the Lyapunov norm which is induced by the Lyapunov inner product $\langle \cdot, \cdot \rangle'$ (see Subsection \ref{pesin}). Nowadays, the local charts are called the Pesin charts, and the size of the Pesin charts depends on the hyperbolicity at the point. Sarig \cite{Sarig} developed a refined shadowing theory based on Pesin theory by introducing the concept of $\epsilon$-chains  which serve as the counterpart to Bowen’s pseudo-orbits. The $\epsilon$-chains are stronger than Bowen’s pseudo-orbits: they not only require the distance between points to be close, but also demand the hyperbolicity to be `close'.

Adapting Sarig's strategy, Ovadia investigated the symbolic dynamics in the higher dimensional settings \cite{Ovadia1, Ovadia2}. In \cite{Ovadia1}, Ovadia dealt with higher dimensional nonuniformly hyperbolic diffeomorphisms. Comparing with the 2-dimensional case,  the main difficulty in the higher dimensional is  how to compare the $\epsilon$-chains that are shadowing the same point. Ovadia introduced a linear transformation so that  the scaling functions can be compared (the definition of scaling function can be found in (\ref{scaling})).
In \cite{Ovadia2}, Ovadia considered the diffeomorphisms with deteriorating hyperbolicity, i.e., there exist orbits with zero Lyapunov exponents. The difference from the non-uniformly hyperbolic systems with non-zero Lyapunov exponents is that there exists no uniform estimate for the Pesin charts. To overcome this obstacle, Ovadia found a $I$--ladder function that can be seen as the bound of the Pesin charts (the bound changes with the points) and required a more restrictive “temperability” property of the orbits, where temperability refers to the asymptotic growth-rate of the norm of the inverse change of coordinates. Then there is a  shadowing theory for orbits which may have 0 Lyapunov exponents. Hence, Ovadia constructed a countable Markov partition for a class of orbits that may have 0 Lyapunov exponents. This induced a finite-to-one symbolic coding, which lifts the geometric potential with summable variations.

The investigation of the symbolic dynamics for non-invertible endomorphisms has also attracted a lot of attention. In 1990s, Qian and Zhang studied the ergodic theory for Axiom A endomorphisms via the Markov partitions \cite{Qian1} of the natural extension. Natural extension is a basic tool in the study of non-invertible systems, we will give the information of natural extension in Subsection \ref{extension}. For the systematical investigation of the ergodic theory for non-invertible $C^2$ endomorphisms we refer to the  monograph of Qian, Xie and Zhu \cite{Qian}. Via the natural extension, Araujo, Lima and Poletti systematically investigated the symbolic dynamics of $C^{1+\beta}$  non-uniformly hyperbolic maps with singularities on the smooth, possibly disconnected, and/or with
boundary and/or not compact, Riemannian manifold $M$ with finite diameter \cite{Lima}. In this setting, the Pesin charts are smaller in size. Hence, they made some assumptions about the manifold and the map so that the size of the Pesin charts can be controlled.  It is worth pointing out that the generality of the setup in \cite{Lima} covers many classical examples, such as geodesic flows in closed manifolds, multidimensional billiard maps, and Viana maps, and includes most of the recent results of the literature.

The main purpose of this paper is to consider the symbolic dynamics for more general systems --- non-invertible $C^{1+\beta}$ maps with zero Lyapunov exponents and singularities. The simultaneous occurrence of zero Lyapunov exponents and singularities require us to combine and finely adapt the techniques of Sarig, Ovadia and Araujo-Lima-Poletti \cite{Sarig, Ovadia1, Ovadia2, Lima} to our setting. The paper is organized as follows. In Section \ref{Preliminaries}, we introduce the basic properties of the natural extension. In Section \ref{set}, by Pesin theory, we prove the shadowing theorem for the natural extension, then we establish a countable Markov partition on the set 0-summ. It induces a finite-to-one symbolic coding by the method of Bowen and Sina$\check{\text{\i}}$ (the definition of 0-summ see Definition \ref{summ}). In Section \ref{example}, we introduce the canonical application and give a example satisfying our conditions.

\subsection{\texorpdfstring{Statement of the Main results}{Statement of the Main results}}
Let $M$ be a  closed (i.e., compact and  boundaryless) Riemannian manifold with dimension  greater than or equal to 2, and $f:M\rightarrow M$ be a $C^{1+\beta}$ endomorphism with singularities. Let $\Gamma_{\infty}:=\{x\in M: d_xf \text{ is degenerate}\}$.
For any $x\in M\backslash \Gamma_{\infty}$, denote $f_x^{-1}$ as the inverse branch of $f$ that sends $f(x)$ to $x$. Assume that the diameter of $M$ is smaller than 1, and $f$ satisfies the following properties.
For constants $a,K > 1$ such that for all $x \in M$ with $x, f(x) \notin \Gamma_{\infty}$, there are sets $D,D'\subset M$  and $\tau(x)$ with
$\min\{d(x, \Gamma_{\infty} )^a, d(f(x), \Gamma_{\infty} )^a\} < \tau(x) < 1$
s.t. for $D_x := B(x, 2\tau(x))$ and $\mathfrak{E}_x := B(f(x), 2\tau(x))$ the following holds:
\begin{enumerate}[label=(A\arabic*)]
    \item $f|_{D_x}$ is a diffeomorphism onto its image; $f_{x}^{-1}|_{\mathfrak{E}_x}$ is a well-defined diffeomorphism onto its image. \label{A1}
    \item $d(x, \Gamma_{\infty} )^a \leq \|d_yf\| \leq d(x, \Gamma_{\infty} )^{-a}\ \forall y \in D_x$ and $d(x, \Gamma_{\infty} )^a \leq \|d_zf^{-1}_{x}\| \leq d(x, \Gamma_{\infty} )^{-a}\ \forall z \in \mathfrak{E}_x$.\label{A2}
    \item $\| \Theta_D \circ d_yf\circ \nu_{y}- \Theta_D \circ d_zf\circ \nu_{z} \| \leq Kd(y, z)^\beta\ \forall y, z \in D_x$ and $\left\| \Theta_{D'} \circ d_{y'}f^{-1}_{x}\circ \nu_{y'} - \Theta_{D'} \circ d_{z'}f^{-1}_{x}\circ\nu_{z'}\right\| \leq Kd(y', z')^\beta\ \forall y, z \in \mathfrak{E}_x$ where $y,z\in D, y',z'\in D'$ (the definitions of $ \Theta_{D/D'}$ and $\nu_{y/y'}$ can be found in Section \ref{Preliminaries}).\label{A3}
\end{enumerate}
The assumption \ref{A2} guaranties that the orbits cannot approach the singularity set $\Gamma_\infty$ exponentially fast, since the size of the Pesin charts decreases as $x$ approaches $\Gamma_{\infty}$. For simplicity, we assume the multiplicative constants is 1.

Let $\mathcal{G}$ be a countable directed graph with vertices $\mathcal{V}$ and edges $\mathcal{E}$. For any vertex, there is at least one ingoing and one ongoing edge. The \textit{topological Markov shift} associated to $\mathcal{G}$ is the pair $(\Sigma,\sigma)$, where the set is
$$\Sigma=\Sigma(\mathcal{G}):=\{(v_i)_{i\in\mathbb{Z}}\in\mathcal{V}^{\mathbb{Z}}:v_i\rightarrow v_{i+1},\forall i\in\mathbb{Z}\},$$
and the map $\sigma$ is the left-shift. For simplicity, we denote $(v_i)_{i\in\mathbb{Z}}$ as $\uline{v}$, and endow the pair $(\Sigma,\sigma)$ with the natural metric $d(\uline{u},\uline{v}):=\exp(-\min\{|i|:u_i\neq v_i\})$ where $\uline{u},\uline{v}\in \Sigma$.  As in \cite{Sarig}, let
\begin{align*}
     \Sigma^{\#}:=&\{(v_i)_{i\in \mathbb{Z}}\in \Sigma: \exists v,w\in \mathcal{V}\text{ s.t. } v_i=v \text{ for infinitely many } i>0, \\
    &\text{ and } v_i=w \text{ for infinitely many } i<0\}.
\end{align*}
\begin{mainthm}[Theorem \ref{main}]\label{main theorem}
    Assume the system $(M,f)$ satisfies the above assumptions, and $(M^f,\hat{f})$ is the natural extension of $(M,f)$. For small enough $\epsilon>0$, there is a subset $RST\subset M^f$,  a locally compact topological Markov shift $(\tilde{\Sigma},\tilde{\sigma})$ and a uniformly continuous map $\tilde{\pi}:\tilde{\Sigma}\rightarrow M^f$ such that:

    (1) $\tilde{\pi}(\tilde{\Sigma}^{\#})=RST$.

    (2) $\tilde{\pi}$ is a finite-to-one map.

    (3) $\tilde{\pi}\circ \sigma=\hat{f}\circ \tilde{\pi}$.

    (4) $\tilde{\pi}$ is uniformly continuous on $\tilde{\Sigma}$.
\end{mainthm}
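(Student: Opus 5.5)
The plan follows the Sarig--Ovadia--Araujo--Lima--Poletti scheme, run on the natural extension $(M^f,\hat f)$ rather than on $M$.

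\textbf{Step 1: Pesin charts on the natural extension.} For $\hat x=(x_i)_{i\in\mathbb Z}\in M^f$ with no $x_i$ in $\Gamma_\infty$, the inverse branches $f^{-1}_{x_{i-1}}$ are available by \ref{A1}. I define the Lyapunov inner product along the orbit $\hat x$. The splitting and the norms depend on the whole past, not only on $x_0$. From this I get the scaling function $C(\hat x)$ and the Pesin chart $\Psi_{\hat x}=\exp_{x_0}\circ C(\hat x)$. Following Ovadia, I use an $I$-ladder function $Q_\epsilon(\hat x)$ to bound the chart size. It is capped by $\|C(\hat x)^{-1}\|^{-2/\beta}$ times a power of $d(x_0,\Gamma_\infty)^a$, in the spirit of Araujo--Lima--Poletti, so that \ref{A2} and \ref{A3} give a uniform $C^{1+\beta/2}$ estimate for $F_{\hat x}=\Psi_{\hat f\hat x}^{-1}\circ f\circ\Psi_{\hat x}$ on the chart. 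The invariant set is the set 0-summ of summable, temperable orbits. These are points whose $Q_\epsilon$ changes at most by a factor $e^{\pm\epsilon}$ along the orbit and does not decay exponentially; \ref{A2} prevents exponential approach to $\Gamma_\infty$. On this set the maps $F_{\hat x}$ are small perturbations of hyperbolic linear maps that may be only weakly hyperbolic.

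\textbf{Step 2: $\epsilon$-chains and shadowing.} I discretize the double charts $\Psi^{p^s,p^u}_{\hat x}$ into a countable family $\mathcal A$. The discretization uses a countable dense set of parameters and Ovadia's linear transformation, which makes nearby scaling functions comparable. It must be chosen so that every orbit in 0-summ is shadowed and each vertex has finitely many edges; I would prove this local finiteness via a compactness argument on levels of $Q_\epsilon$. Edges are the $\epsilon$-overlap relations. Every chain $\underline v\in\Sigma(\mathcal G)$ then determines unique admissible stable and unstable manifolds through graph transform in the charts. Their intersection defines $\pi(\underline v)\in M^f$: the whole backward sequence $(x_i)$ is read off the chain, so $\pi$ takes values in $M^f$, not in $M$. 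The map $\pi$ is Hölder, hence uniformly continuous, and satisfies $\pi\circ\sigma=\hat f\circ\pi$. On $\Sigma^\#$ its image covers the recurrent part of 0-summ.

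\textbf{Step 3: inverse problem, Markov partition, and Bowen--Sina\u{\i} refinement.} I show that if two chains in $\Sigma^\#$ shadow the same point, their coordinates $p^s_i,p^u_i$ and $C$-matrices agree up to factors $e^{\pm O(\sqrt\epsilon)}$. This is the inverse theorem. With it, the cover $\{\pi([v])\}$ is locally finite. Its Bowen--Sina\u{\i} refinement gives a countable Markov partition $\mathcal R$ of $RST:=\pi(\Sigma^\#)$. Taking $\tilde\Sigma$ as the Markov shift of $\mathcal R$ and $\tilde\pi$ as the associated coding gives (1), (3) and (4). Local compactness follows from local finiteness. For (2), the standard diamond argument bounds $|\tilde\pi^{-1}(\hat x)|$ by a function of the multiplicities of the cover at recurrent symbols.

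\textbf{Main obstacle.} I expect the inverse theorem in Step 3 to be the hardest step. There zero exponents (no uniform chart size) and singularities (chart size shrinking near $\Gamma_\infty$) act at once, and all estimates must be done along backward branches. I would first try to prove the comparison of $Q_\epsilon$ along two shadowing chains by combining Ovadia's temperability control with the $d(x,\Gamma_\infty)^a$ bounds from \ref{A2}.
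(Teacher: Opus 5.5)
Your overall architecture matches the paper's: Pesin charts on $M^f$, coarse-grained double charts, graph transform and shadowing, the inverse theorem, the Bowen--Sinai refinement, and the Bowen-property argument for (2). The gap is in the mechanism that handles zero exponents. With your choices, Step~2 already fails, before the inverse theorem you flag as the main obstacle.

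You name Ovadia's ladder but use it only to size charts. Elsewhere you use Sarig's scales:
\begin{itemize}
\item temperability as $e^{\pm\epsilon}$-changes of $Q_\epsilon$;
\item $\epsilon$-overlap edges;
\item $e^{\pm O(\sqrt{\epsilon})}$ precision in the inverse theorem.
\end{itemize}
These scales presuppose a hyperbolicity rate $\chi\gg\epsilon$ inside the charts. Here the only rate is that of Lemma~\ref{C}(2): $\|D_s(\tilde{x})\|\le e^{-1/s^2(\tilde{x})}$ and $\|D_u(\tilde{x})^{-1}\|\le e^{-1/u^2(\hat{f}(\tilde{x}))}$. Moreover, $1/s^2$ and $1/u^2$ are bounded below only by $\|C_0^{-1}\|^{-2}$, which is arbitrarily small along orbits with zero exponents.

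Suppose consecutive radii may grow by $e^{\epsilon}$. The pulled-back graph $F^{-1}_{\tilde{x},\tilde{y}}\mathrm{Graph}(G)$ is then only guaranteed to cover a $d_s$-disc of radius about $e^{1/s^2}q^s<e^{\epsilon}q^s$ (see the covering claim in the proof of Theorem~\ref{TF}). So it need not contain an admissible manifold of the required radius $p^s$. The Cauchy estimate (\ref{BO}) also breaks: growth $e^{n\epsilon}$ competes against contraction $e^{-\sum 1/s^2}$.

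The paper builds the ladder $I(t)=te^{\Gamma t^{1/\gamma}}$ into the edges: $p^s=\min\{I(q^s),Q(\tilde{x})\}$ and $q^u=\min\{I(p^u),Q(\tilde{y})\}$. It also ties the cap to the ladder exponent, $Q_\epsilon\le\epsilon^{20/\beta}\rho^{8a/\beta}\|C_0^{-1}\|^{-2\gamma}$. This forces $\Gamma Q^{1/\gamma}\ll\|C_0^{-1}\|^{-2}\le 1/s^2,1/u^2$ at the relevant points (see (\ref{>}), (\ref{s2}), (\ref{qs})).

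Your cap $\|C^{-1}\|^{-2/\beta}$ only gives $\|C^{-1}\|^{-2}\ge Q^{\beta}$. This dominates $\Gamma Q^{1/\gamma}$ for small $Q$ only if $\gamma\le 1/\beta$. But then $\alpha=\frac{\beta}{2}-\frac{1}{\gamma}<0$, and the summable-variation estimate of Proposition~\ref{shadow}(5) is lost.

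Two further consequences follow. The coded set has to be Ovadia's strong temperability: $q\circ\hat{f}=I^{\pm1}(q)$, $q\le Q_\epsilon$, $\limsup_{n\to\pm\infty}q>0$. The inverse theorem has to be run at ladder precision: $Q(\tilde{y}_i)=I^{\pm1}(Q(\tilde{x}_i))$ and $p^{s/u}_i=I^{\pm1}(q^{s/u}_i)$. The reason is that $I(e^{-\delta}t)<e^{-\delta}I(t)$, so a fixed factor $e^{\pm\delta}$ is not preserved by the recursion $p^s_{i-1}=\min\{I(p^s_i),Q(\tilde{x}_{i-1})\}$, whereas $I^{\pm1}$ commutes with it.

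There are also smaller corrections.

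\begin{itemize}
\item You cannot expect $\pi$ to be H\"older here. Contraction along admissible manifolds is only $I^{-n}$ (Proposition~\ref{shadow}(6)), which decays polynomially in $n$. So chains agreeing on $[-N,N]$ only give $d(\pi(\underline{v}),\pi(\underline{w}))\le 6C[I^{-N}(1)]^{\frac{1}{\gamma}+\alpha}$, via the leaf-core estimate (Theorem~\ref{main1}(2)). This bound is uniform over chains and tends to $0$. That is what yields uniform continuity, but you must argue it this way, not via H\"older continuity.
\item The cap should use $\rho(\tilde{x})=d(\{x_{-1},x_0,x_1\},\Gamma_\infty)$, not $d(x_0,\Gamma_\infty)$ alone. $F^{-1}_{\tilde{x}}$ must be defined and controlled on $B_{Q(\hat{f}(\tilde{x}))}(0)$ using (A2) at $x_0$. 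Lemma~\ref{C}(3) compares $\|C_0(\tilde{x})^{-1}\|$ and $\|C_0(\hat{f}(\tilde{x}))^{-1}\|$ only up to $d(x_0,\Gamma_\infty)^{\mp 2a}$. So $Q(\hat{f}(\tilde{x}))$ must already see $x_0$.
\item Defining $RST:=\pi(\Sigma^{\#})$ leaves (1) as $\tilde{\pi}(\tilde{\Sigma}^{\#})=\pi(\Sigma^{\#})$, which still needs both inclusions. The paper instead defines RST intrinsically and identifies it with $\pi(\Sigma^{\#})$ in Claim~\ref{=}.
\end{itemize}
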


By the above theorem and the properties of natural extension, the following corollary holds.
\begin{cor}
    Assume the system $(M,f)$ satisfies the above assumptions, there exists a finite-to-one  symbolic extension for $(M,f)$, which lifts the geometric potential with summable variations.
\end{cor}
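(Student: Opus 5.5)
The plan is to push the coding of the natural extension from the Main Theorem down to $M$ through the projection $p:M^f\to M$, $p((x_i)_{i\in\mathbb{Z}})=x_0$. This map satisfies $p\circ\hat f=f\circ p$ and is continuous and surjective onto $M$ (it is $1$-Lipschitz for the usual metric on $M^f$).

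\textbf{The coding map.} Take the locally compact topological Markov shift $(\tilde\Sigma,\tilde\sigma)$ and the map $\tilde\pi$ given by the Main Theorem, and set $\pi:=p\circ\tilde\pi:\tilde\Sigma\to M$. Then $\pi\circ\tilde\sigma=p\circ\hat f\circ\tilde\pi=f\circ\pi$. Uniform continuity of $\pi$ follows from item (4) together with the uniform continuity of $p$. The image $\pi(\tilde\Sigma^{\#})=p(RST)$ is the corresponding subset of $M$, and it carries every $f$-invariant measure that lifts to a measure on $RST$. Here one uses that invariant measures of $f$ and of $\hat f$ correspond bijectively via $p_*$, as recalled in Subsection \ref{extension}.

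\textbf{Finite-to-one.} I read ``finite-to-one symbolic extension'' as in \cite{Lima}: finite-to-one on $\tilde\Sigma^{\#}$ at the level of the natural extension, since $p$ itself is typically infinite-to-one. This suffices because $\pi$ factors through $\tilde\pi$, which is finite-to-one by item (2). As a consequence, the lift $\mu\mapsto\hat\mu\mapsto\nu$ with $\tilde\pi_*\nu=\hat\mu$ preserves entropy. Concretely, $h_\nu(\tilde\sigma)=h_{\hat\mu}(\hat f)=h_\mu(f)$, where the first equality holds because $\tilde\pi$ is finite-to-one (the Abramov--Rokhlin argument) and the second is the standard fact about natural extensions.

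\textbf{Lifting the geometric potential.} This is the real content. Define $\varphi(\underline v):=-\log\big|\det d_{x_0}f|_{E^u(\underline v)}\big|$, where $\tilde\pi(\underline v)=(x_i)$ and $E^u(\underline v)$ is the unstable direction at $x_0$. This direction depends on the past of $\underline v$, which is why the coding must be made on $M^f$. I would show that $\varphi$ has summable variations. From the shadowing theorem of Section \ref{set}, the unstable manifolds $V^u$ attached to the chains $\underline u,\underline v$ with $u_i=v_i$ for $|i|\le n$ are $C^{1+\beta/3}$ graphs in the Pesin chart whose derivatives differ by at most $C\theta^{n}$. Ovadia's argument \cite{Ovadia2} gives a uniform rate $\theta<1$ of this type in the zero-exponent setting, using the ladder bounds on chart sizes. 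Combining this with the Hölder continuity of $d f$ from \ref{A3}, and with the lower bound \ref{A2} on $\|d f\|$ in charts away from $\Gamma_\infty$, should give $\mathrm{var}_n\varphi\le C\theta'^{\,n}$, which is summable.

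I expect the main obstacle to be this variation estimate. The chart sizes vary from symbol to symbol and tend to zero near $\Gamma_\infty$, and the contraction rate is not uniform. So one must check that the graph-transform estimates, taken in chart coordinates, control $|\log\det|$ uniformly in the symbol $v_0$. The plan is to use the fact that $d$ and the chart maps are uniformly Hölder on each chart, by \ref{A3}, with constants absorbed by the $\epsilon$-chain overlap conditions. One then sums the resulting geometric series $\sum_n\theta^n$.
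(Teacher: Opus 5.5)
\textbf{Where the argument fails.} Your overall structure matches what the paper does: compose $\tilde\pi$ with $p$, take equivariance, uniform continuity and finite-to-one from the Main Theorem at the level of $M^f$, and get the potential from summable variations of the unstable bundle. But the step you call the real content rests on a false claim. There is no uniform rate $\theta<1$, and Ovadia's argument does not give one. Along a chain, the graph transform contracts only by factors $e^{-(1-\epsilon)/s^2}$ and $e^{-(1-\epsilon)/u^2}$ (Proposition \ref{Fs}). Inequalities (\ref{s2}) and (\ref{u2}) bound $1/s^2$ and $1/u^2$ below only by a multiple of $Q^{1/\gamma}$, which is arbitrarily small. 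So the per-step factor can be arbitrarily close to $1$, and products over $N$ steps are controlled only by $I^{-N}$, as in (\ref{BO}).

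\textbf{Why the rate matters.} $I^{-N}(t)$ decays only polynomially in $N$. The paper uses $(I^{-N}(1))^{1/\gamma}=O(1/N)$, and by Proposition \ref{I}(3) even $\sum_N (I^{-N}(t))^{1/\gamma}$ diverges. A bound $\mathrm{var}_n\varphi\le C\theta'^{\,n}$ would make the coding Hölder, which is exactly what fails along zero-exponent orbits. That is why the paper, following Ovadia, passes to leaf cores and claims only summable variations.

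\textbf{The paper's route.} The correct estimate is Proposition \ref{shadow}(5). For two chains agreeing on $[-N,N]$, the $C^1$-distance of the unstable leaf cores is at most $C(I^{-N}(p_0^s\wedge p_0^u))^{\frac1\gamma+\alpha}\le C(I^{-N}(1))^{\frac1\gamma+\alpha}$, with $\alpha=\frac\beta2-\frac1\gamma$. Summability in $N$ then comes from Proposition \ref{I}(4), since $(\frac1\gamma+\alpha)\gamma=1+\alpha\gamma>1$, not from a geometric series. Leaf cores, not the full admissible manifolds, are the right objects: by Proposition \ref{ad}(1) the coded point lies within $10^{-2}\eta^2$ of the chart centre, inside the core. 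The paper records only the resulting summable variations of $\underline R\mapsto E^{s/u}(\tilde\pi(\underline R))$, in the last proposition of Section \ref{set}, and combines this with the Main Theorem and the natural-extension correspondence.

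\textbf{A second, smaller gap.} Passing from $E^u$ to $\log|\det d f|_{E^u}|$ is not handled.
\begin{itemize}
\item Assumption \ref{A2} is not a bound ``in charts away from $\Gamma_\infty$'', since charts approach $\Gamma_\infty$. It gives only $\|(d_{x_0}f|_{E^u})^{-1}\|\le d(x_0,\Gamma_\infty)^{-a}$, which is unbounded over symbols.
\item Turning the chart-coordinate estimate on $dF_1-dF_2$ into an angle estimate in $T_{x_0}M$ costs a further factor $\|C_0(\tilde x_0)^{-1}\|$.
\end{itemize}
Neither factor is absorbed by the overlap conditions. You must absorb them using
$I^{-N}(p_0)\le p_0\le Q(\tilde x_0)\le\epsilon^{20/\beta}\rho(\tilde x_0)^{8a/\beta}\|C_0(\tilde x_0)^{-1}\|^{-2\gamma}$,
spending part of the exponent $\frac1\gamma+\alpha$. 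Then check that the remaining exponent $\kappa$ still satisfies $\kappa\gamma>1$, so that Proposition \ref{I}(4) applies. For instance, spending $\beta/8$ leaves $\kappa=3\beta/8$, and $\kappa\gamma>1$ because $\gamma>20/\beta$.

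With these two repairs the argument goes through. Your treatment of equivariance, uniform continuity and finite-to-one (on $\tilde\Sigma^{\#}\to M^f$) is fine.
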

\section{Preliminaries}\label{Preliminaries}
Let $\exp_x:T_xM\rightarrow M$ be the exponential map. Due to the compactness of the manifold $M$, there are numbers $r=r(M)$ such that
$$\exp_x: \{\xi\in T_xM:|\xi|<r\}\rightarrow B(x,r)\subset M$$
is a $C^\infty$ diffeomorphism for every $x\in M$, where $B(x,r)=\{y\in M:d(x,y)<r\}$. Let $B_r(0):=\{\xi\in T_xM:|\xi|<r\}$.  Take $r$ small enough so that the maps $\exp_x:B_{r}(0)\rightarrow M$ and $(x,y)\mapsto \exp_x^{-1}(y)$ on $B(z,r)\times B(z,r)$ for any $z\in M$ have a uniform Lipschitz constant of 2, $\|d_v\exp_x\|\leq 2,\ \forall v\in B_{r}(0)$ and $\|d_y\exp_x^{-1}\|\leq 2,\ \forall\ y\in B(x,r)$.

By the standard theory of differential geometry, we can find for any $x \in M$, an open neighborhood $D:=B(x,r')(r'<r)$ and a smooth map $\Theta_D : TD \to \mathbb{R}^d$ such that:
\begin{enumerate}[label=(\arabic*)]
    \item $\Theta_D : T_yM \to \mathbb{R}^d$ is a linear isometry  $\forall y \in D$.
    \item Define $\nu_y := (\Theta_D|_{T_yM})^{-1} : \mathbb{R}^d \to T_yM$, then the map $(y, u) \mapsto (\exp_y \circ \nu_y)(u)$ is smooth and Lipschitz on $D \times B_r(0)$ w.r.t the metric $d(y, y') + |u - u'|$. Set $L_1$ be the uniform Lipschitz constant for the map $(y,u)\mapsto(\exp_y \circ \nu_y)(u)$ on $D\times B_r(0)$.
    \item $y \mapsto \nu_y^{-1} \circ \exp_y^{-1}$ is Lipschitzian on $D$. Set $L_2$ be the uniform Lipschitz constant for the map $y \mapsto \nu_y^{-1} \circ \exp_y^{-1}$ on $D$.
\end{enumerate}
Let $\mathcal{D}$ be a countable cover of $M\backslash \Gamma_\infty$, consisting of elements which satisfy the above properties. Denote $\omega(D)< r$ to be the Lebesgue number of $\mathcal\mathcal{D}$. Since $M$ is compact, $\exists$ constants $L_3,L_4,L_5$ and $D\in\mathcal{D}$ such that $\forall y_1,y_2\in B(x,r)\subset D$,
\begin{gather}
    \|d_{v_1}(\Theta_D\circ \exp_{y_1}\circ \nu_{y_1})-d_{v_2}(\Theta_D\circ \exp_{y_2}\circ \nu_{y_2})\|\leq L_3(|v_1-v_2|+d(y_1,y_2))\ \forall |v_1|\leq r,|v_2|\leq r,\label{exp1}\\
    \|d_{z_1}(\Theta_D\circ \exp_{y_1}^{-1}\circ \nu(z_1))-d_{z_2}(\Theta_D\circ \exp_{y_2}^{-1}\circ \nu(z_2))\|\leq L_4(d(y_1,y_2)+d(z_1,z_2)), \ \forall z_1,z_2\in B(x,r),\label{exp2}
\end{gather}
and
\begin{equation}\label{exp5}
    \begin{aligned}
        &\quad \|[d_{z_1}(\Theta_D\circ \exp_{y_1}^{-1}\circ \nu(z_1))-d_{z_1}(\Theta_D\circ \exp_{y_2}^{-1}\circ \nu(z_1))]\\
        &-[d_{z_2}(\Theta_D\circ \exp_{y_1}^{-1}\circ \nu(z_2))-d_{z_2}(\Theta_D\circ \exp_{y_2}^{-1}\circ \nu(z_2))]\|\leq L_5d(y_1,y_2)d(z_1,z_2).
    \end{aligned}
\end{equation}
Without loss of generality, we assume $L_1,L_2,L_3,L_4,L_5$ are larger than one.

The norm $|\cdot|$ on $\mathbb{R}^d$ is the canonical Euclidean norm, and the norm $|\cdot|$ on the tangent space $T_xM\ \forall x\in M$ is the norm induced by the Riemannian inner product. For a linear transformation $A:\mathbb{R}^n\rightarrow\mathbb{R}^m$, $\|A\|:=\sup_{v\in\mathbb{R}^n, v\neq 0}\frac{|Av|}{|v|}$. For an open bounded set $U\subset\mathbb{R}^n$ and $H:U\rightarrow\mathbb{R}^m$, the $C_0$-norm is $\|H\|_{C^0}:=\sup_{v\in U}|H(v)|$ and the $C^{1+\frac{\beta}{2}}$-norm is
\begin{align*}
        \|H\|_{C^{1+\frac{\beta}{2}}}:=\|H\|_{C^0}+\|d_{\cdot}H\|_{C^0}+H\ddot{o}l_{\frac{\beta}{2}}(d_{\cdot}H)=\sup_{v \in U} | H(v) | + \sup_{v \in U} \| d_vH \| + \sup_{\substack{v_1,v_2 \in U, \\ v_1 \neq v_2}} \frac{\| d_{v_1}H - d_{v_2}H \|}{| v_1 - v_2 |^{\beta/2}}.
\end{align*}

\subsection{\texorpdfstring{Natural extension}{Natural extension}}\label{extension}
In this part, we will review some necessary concepts and results for natural extension, especially for smooth maps. The details can be found in the book \cite{Qian}. Let $M^f$ be the subset of $M^{\mathbb{Z}}$ consisting of all full orbits, i.e.
$$M^f=\{\tilde{x}=(x_i)_{i\in\mathbb{Z}}|x_i\in M,\  f(x_i)=x_{i+1},\  \forall i\in\mathbb{Z}\}.$$
Endow $M^f$ with the product topology and the distance $d(\tilde{x},\tilde{y}):=\sup\{2^{i}d(x_i,y_i):i\leq 0\}$, then $M^f$ is a closed subset of $M^{\mathbb{Z}}$. Let $p: M^f\rightarrow M$ denote the natural projection:
$$p(\tilde{x})=x_0,\ \forall\tilde{x}\in M^f,$$
and $\hat{f}:M^f\rightarrow M^f$ as the shift homeomorphism. $M^f$ is called \textit{the inverse limit space} (or \textit{orbit space}) of system $(M,f)$, and $\hat{f}$ is called the \textit{natural extension} of system $f$.

It is known that $p*$ induced by the projection $p$ is a bijection between $M_{\hat{f}}(M^f)$ and $M_f(M)$, where $M_{\hat{f}}(M^f)$   (resp. $M_f(M)$) denotes the set of all $\hat{f}$ (resp. $f$)-invariant probability measures on $M^f$ (resp. $M$) with the weak* topology. For any $f$-invariant Borel probability measure $\mu$ on $M$, there exists a unique $\hat{f}$-invariant Borel probability measure $\tilde{\mu}$ on $M^f$ such that $p\tilde{\mu}=\mu$.

Let $E=\bigcup_{\tilde{x}\in M^f}E_{\tilde{x}}$ be the pullback bundle of the tangent bundle $TM=\bigcup_{x\in M}T_xM$ by the projection $p$, and there is a natural isomorphism between fibers $E_{\tilde{x}}$ and $T_{p\tilde{x}}M$. Hence, for $\tilde{x}\in M^f$, define  $d_{\tilde{x}}\hat{f}:E_{\tilde{x}}\rightarrow E_{\hat{f}\tilde{x}}$ by $d_{\tilde{x}}\hat{f}:= d_{p\tilde{x}}f$, and for $\tilde{x}=(x_i)_{i\in\mathbb{Z}}\in M^f\backslash\bigcup_{n\in\mathbb{Z}}\hat{f}^n(p^{-1}(\Gamma_{\infty}))$, define
\[
d_{\tilde{x}}\hat{f}^n :=
\begin{cases}
d_xf^n & \text{if } n \geq 0, \\
(d_{x_{-n}}f)^{-1} \cdots (d_{x_{-2}}f)^{-1}(d_{x_{-1}}f)^{-1}, & \text{if } n < 0.
\end{cases}
\]
Then $d_{\tilde{x}}\hat{f}^{m+n}=d_{\hat{f}^n\tilde{x}}\hat{f}^m\circ d_{\tilde{x}}\hat{f}^n$, for any $n,m\in\mathbb{Z}.$ This implies $d\hat{f}$ is an invertible linear cocycle on a fiber bundle over $M^f \backslash \bigcup_{n\in\mathbb{Z}}\hat{f}^n(p^{-1}(\Gamma_{\infty}))$. Furthermore, the Lyapunov spectrum of $d\hat{f}$ is the same as $df$.
\section{The subset with Markovian symbolic dynamics}\label{set}
Let $M$ be a closed Riemannian manifold with dimension $d\, (d\geq 2)$, and $f:M\rightarrow M$ be a $C^{1+\beta}$ endomorphism with a singularity set $\Gamma'$ and it satisfies the assumptions \ref{A1}-\ref{A3}. In this section, we will consider the natural extension $(M^f,\hat{f})$ of the system $(M,f)$, and construct a locally finite Markov partition on certain subset by proving the shadowing theorem in our setting. Since $\hat{f}, d\hat{f}$ are invertible, for $x\in M$ and $x,f(x)\notin \Gamma_{\infty}$, we introduce the following definitions which are adapted from \cite{Ovadia2}.
\begin{mdef}\label{summ}
A point $\tilde{x}\in M^f\backslash\bigcup_{n\in\mathbb{Z}}\hat{f}^n(p^{-1}(\Gamma_{\infty}))$ is called \textit{summable} if there is a unique decomposition $E_{\tilde{x}}=E_{\tilde{x}}^s \oplus E_{\tilde{x}}^u$ such that
$$\sup_{\substack{\xi^s \in E^s_{\tilde{x}},\ |\xi^s|=1}} \sum_{m \geq 0} |d\hat{f}^m \xi_s|^2 < \infty, \quad
\sup_{\substack{\xi^u \in E^u_{\tilde{x}},\ |\xi^u|=1}} \sum_{m \geq 0} |d\hat{f}^{-m} \xi_u|^2<\infty.$$
\end{mdef}
The set of summable points is called \textit{0-summ}. Clearly, the set 0-summ is $\hat{f}$ -invariant, and it may contain the points with zero Lyapunov exponent.

For any $\tilde{x}\in M^f\backslash\bigcup_{n\in\mathbb{Z}}\hat{f}^n(p^{-1}(\Gamma_{\infty}))$, the scaling parameters defined by
$$s(\tilde{x})=\sup_{\substack{\xi^s \in E^s_{\tilde{x}},\ |\xi^s|=1}}S(\tilde{x},\xi^s),\ u(\tilde{x})=\sup_{\substack{\xi^u \in E^u_{\tilde{x}},\ |\xi^u|=1}}U(\tilde{x},\xi^u),$$
where
\begin{equation}\label{scaling}S^2(\tilde{x},\xi^s)=2\sum_{m=0}^\infty|d_{\tilde{x}}\hat{f}^m\xi^s|^2\text{ and } U^2(\tilde{x},\xi^u)=2\sum_{m=0}^\infty|d_{\tilde{x}}\hat{f}^{-m}\xi^u|^2.
\end{equation}
Note that the parameters $s(\tilde{x})$ and $u(\tilde{x})$ are finite when $\tilde{x}$ is summable.

For handling the points with zero Lyapunov exponent, Ovadia introduced the ladder function $I(t)$ \cite{Ovadia2}. The following proposition lists the properties of the function $I(t)$.
\begin{prop}\label{I}
Given $\Gamma>0$ and $\gamma>\frac{20}{\beta}$, the ladder function $I(t):=te^{\Gamma t^{\frac{1}{\gamma}}},\ I:(0,1)\rightarrow(0,\infty)$
satisfies the following properties:
\begin{enumerate}[label=(\arabic*)]
    \item $I$ and $I^{-1}$ are strictly increasing,  $I(t)>t$ on $(0,1)$ and $I^{-1}(t)<t$ on $(0,e^{\Gamma})$;
    \item $I^{-1}(t)\geq te^{-\Gamma t^{\frac{1}{\gamma}}}$;
    \item $ \sum_{n \geq 0} \left( I^{-n}(t) \right)^{\frac{1}{\gamma}} = \infty \  \text{for all } t > 0$;
    \item $\forall \tau > 0, \forall t > 0,\ \sum_{n \geq 0} \left( I^{-n}(t) \right)^{\frac{1+\tau}{\gamma}} < \infty$;
    \item $I^{\frac{1}{4}}:(0,1)\rightarrow(0,\infty)$ is well-defined, and it is a strictly increasing $C^1$ positive function such that $I^{\frac{1}{4}}(t)>t$ for all $t\in(0,1)$;
    \item $I^{-\frac{1}{4}}$ is strictly increasing on $(0,e^{\Gamma})$, $I^{-\frac{1}{4}}(t)<t$, and $I^{-\frac{n}{4}}(t)\xrightarrow{n \to \infty}0\ \forall t\in(0,1)$.
\end{enumerate}
\end{prop}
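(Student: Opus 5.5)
The plan is to verify the six properties of $I(t)=te^{\Gamma t^{1/\gamma}}$ directly, treating $I^{-n}$ as the $n$-fold iterate of $I^{-1}$ and $I^{1/4}$ as a fractional-iterate (functional square root of a square root). I will take them roughly in order.

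\textbf{Properties (1) and (2).} Differentiating gives
\[
I'(t)=e^{\Gamma t^{1/\gamma}}\bigl(1+\tfrac{\Gamma}{\gamma}t^{1/\gamma}\bigr)>0 .
\]
So $I$ is a strictly increasing $C^1$ bijection from $(0,1)$ onto $(0,e^{\Gamma})$, and hence $I^{-1}$ is strictly increasing on $(0,e^\Gamma)$. The inequality $I(t)>t$ is immediate since $e^{\Gamma t^{1/\gamma}}>1$. Applying it at $s=I^{-1}(t)$ gives $t=I(s)>s$.

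For (2), set $s=I^{-1}(t)\le t$. Then
\[
t=se^{\Gamma s^{1/\gamma}}\le se^{\Gamma t^{1/\gamma}},
\]
which is exactly $s\ge te^{-\Gamma t^{1/\gamma}}$.

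\textbf{Properties (3) and (4).} These are the core asymptotic statements. Put $t_n=I^{-n}(t)$ and $a_n=t_n^{1/\gamma}$. Then $t_{n}=t_{n+1}e^{\Gamma t_{n+1}^{1/\gamma}}$, so
\[
a_n=a_{n+1}e^{\Gamma a_{n+1}/\gamma}.
\]
The sequence $a_n$ decreases to a limit $L$ satisfying $L=Le^{\Gamma L/\gamma}$, which forces $L=0$. Comparing with the ODE $\dot a=-\frac{\Gamma}{\gamma}a^2$, I expect $a_n\sim\frac{\gamma}{\Gamma n}$. Rigorously, I will use
\[
\frac1{a_{n+1}}-\frac1{a_n}=\frac{1-e^{-\Gamma a_{n+1}/\gamma}}{a_{n+1}}\to\frac\Gamma\gamma .
\]
Summing (Cesàro) gives $1/a_n\sim\frac{\Gamma}{\gamma}n$. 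Then
\[
\sum_n t_n^{1/\gamma}=\sum_n a_n\asymp\sum_n \frac1n=\infty,
\qquad
\sum_n t_n^{(1+\tau)/\gamma}=\sum_n a_n^{1+\tau}\asymp\sum_n n^{-1-\tau}<\infty .
\]
For $t\ge1$ in the domain, one first applies $I^{-1}$ once to land in $(0,1)$.

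\textbf{Properties (5) and (6).} This is the main obstacle: constructing the fractional iterate $I^{1/4}$. I would build a continuous-time flow (Fatou/Abel-type) embedding of $I$ near its parabolic fixed point $0$, following Ovadia. First construct the square root $J$ with $J\circ J=I$, then $J^{1/2}$, each strictly increasing, $C^1$, and above the identity.

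The concrete route is a Fatou coordinate. Set
\[
\Phi(t)=\lim_{n\to\infty}\Bigl(\tfrac{\gamma}{\Gamma}t_n^{-1/\gamma}-n-c\log n\Bigr),
\]
which satisfies $\Phi\circ I^{-1}=\Phi+1$. One must check that this limit converges in $C^1$, using the refined expansion $1/a_{n+1}-1/a_n=\frac\Gamma\gamma-\frac{\Gamma^2}{2\gamma^2}a_{n+1}+O(a^2)$ to fix $c$. Then define $I^{s}:=\Phi^{-1}(\Phi-s)$. For $s=1/4$ this gives a strictly increasing $C^1$ map with $I^{1/4}(t)>t$, since $\Phi$ is decreasing.

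Property (6) then follows formally. $I^{-1/4}=\Phi^{-1}(\Phi+\tfrac14)$ is increasing and satisfies $I^{-1/4}(t)<t$. Moreover $I^{-4k/4}=I^{-k}\to0$ by the argument for (3), and monotonicity handles the intermediate indices $n$ not divisible by $4$.
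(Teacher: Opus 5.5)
The paper gives no proof of this proposition; it is quoted from Ovadia. Your argument therefore has to stand on its own, and it essentially does.

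Items (1)--(4) are correct as written, including the Stolz--Ces\`aro argument on $1/a_n$.

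For (5)--(6), reading $I^{1/4}$ as a genuine compositional fourth root built from an Abel function is the right interpretation. It is also exactly what the paper needs later. The set $\mathcal I=\{I^{-\ell/4}(1)\}_\ell$ must be mapped into itself by $I^{\pm1}$, so that $q\circ\hat f=I^{\pm1}(q)$ and $p^s=\min\{I(q^s),Q(\tilde x)\}$ stay in $\mathcal I$. That requires $(I^{-1/4})^{\circ4}=I^{-1}$ exactly.

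The step you leave unverified is the crux of (5): that $\Phi$ is $C^1$ with $\Phi'\neq0$. You need this both for $\Phi$ to be invertible and for $\Phi^{-1}$ to be $C^1$. Your second-order expansion of $1/a_{n+1}-1/a_n$ controls only the values of the approximants, not their derivatives. For the derivatives you must control the product of $(I^{-1})'$ along the backward orbit, which takes only a few lines.

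Set $\kappa=\Gamma/\gamma$, $g(a)=ae^{\kappa a}$, $a_k=(I^{-k}(t))^{1/\gamma}$ and $\Phi_n(t)=1/(\kappa a_n(t))$. The exact identity $\log a_{k-1}-\log a_k=\kappa a_k$, together with $g'(a)=e^{\kappa a}(1+\kappa a)$, gives
\[
\log|\Phi_n'(t)|=\log\Bigl(\frac{1}{\kappa a_0^2}\,\frac{da_0}{dt}\Bigr)+\sum_{k=1}^{n}\bigl(\kappa a_k-\log(1+\kappa a_k)\bigr).
\]
The summands lie in $[0,\kappa^2a_k^2/2]$ and are increasing in $t$. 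By (4) with $\tau=1$, the series converges uniformly on $(0,T]$ for each $T<e^{\Gamma}$. Hence $\Phi_n'$ converges locally uniformly to a continuous, strictly negative limit.

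This also makes the $c\log n$ normalization unnecessary. Fix $t_*$ and let $\Psi:=\lim_n(\Phi_n-\Phi_n(t_*))$ in $C^1_{\mathrm{loc}}(0,e^{\Gamma})$. Since $\Phi_n(I^{-1}t)-\Phi_n(t)=\kappa^{-1}(a_{n+1}^{-1}-a_n^{-1})\to1$, you get $\Psi\circ I^{-1}=\Psi+1$ directly.

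Define $\Psi$ on all of $(0,e^{\Gamma})$, the domain of $I^{-1}$. Then for $t\in(0,1)$ the value $\Psi(t)-\frac14$ lies between $\Psi(I(t))=\Psi(t)-1$ and $\Psi(t)$. So $I^{1/4}(t)=\Psi^{-1}(\Psi(t)-\frac14)\in(t,I(t))$ is defined, and the rest of (5)--(6) follows as you describe.
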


Define $\rho:M^f\rightarrow\mathbb{R}$ by $\rho(\tilde{x}):=d(\{x_{-1},x_{0},x_{1}\},\Gamma_{\infty})$, for any $\tilde{x}=(x_i)_{i\in\mathbb{Z}}\in M^f$. Given $\epsilon>0$, let $\mathcal{I}:= \{I^{-\frac{\ell}{4}}(1)\}_{\ell \in \mathbb{N}}$. For summable point $\tilde{x}$, define parameter $Q_{\epsilon}(\tilde{x}) := \max\{q \in \mathcal{I} \,|\, q \leq \tilde{Q}_{\epsilon}(\tilde{x})\}$, where
$$\tilde{Q}_{\epsilon}(\tilde{x}) := \epsilon^{\frac{20}{\beta}}\rho(\tilde{x})^{\frac{8a}{\beta}}\cdot\|C_0(\tilde{x})^{-1} \|^{-2\gamma}.$$
\begin{rmk}
    Notice that the greater the constant $\gamma$, the smaller the parameter $Q(\tilde{x})$ becomes. And the lower bound of $\gamma$ is stronger than the lower bound in \cite{Ovadia2} since $\|df\|,\|df^{-1}_{x}\|$ can not be bounded by a constant for all $x,f(x)\notin \Gamma_{\infty}$. Furthermore, we assume that the diameter of $M$ is smaller than one, then $\rho(\tilde{x})\leq 1$. This means the shadowing theorem in this context is more refined than the previous versions in \cite{Ovadia2, Sarig}.
\end{rmk}
\begin{mdef}
    A summable point $\tilde{x}$ is called \textit{strongly temperable} if there is a function $q:\{\hat{f}^n(\tilde{x})\}_{n\in\mathbb{Z}}\rightarrow\mathcal{I}$ such that
\begin{enumerate}[label=(\arabic*)]
    \item $q\circ f = I^{\pm1}(q)$,
    \item $\forall n\in\mathbb{Z},\ q(\hat{f}^{n}(\tilde{x}))\leq Q_\epsilon(\hat{f}^{n}(\tilde{x}))$.
\end{enumerate}
\end{mdef}
\begin{mdef}
   A strongly temperable point $\tilde{x}$ is called \textit{recurrently strongly temperable} if there is a function $q:\{\hat{f}^n(\tilde{x})\}_{n\in\mathbb{Z}}\rightarrow\mathcal{I}$ such that
\begin{enumerate}[label=(3)]
    \item $\limsup_{n \to \pm\infty} q(\hat{f}^n(\tilde{x})) > 0$.
\end{enumerate}
Denote \textit{ST} as the set of strongly temperable points, and \textit{RST} as the set of recurrently strongly temperable points.
\end{mdef}

\subsection{\texorpdfstring{Pesin Theory}{Pesin Theory}}\label{pesin}
For $\tilde{x}\in$ 0-summ, there is a unique decomposition $E_{\tilde{x}}=E_{\tilde{x}}^s \oplus E_{\tilde{x}}^u$. Let $d_s(\tilde{x}),\ d_u(\tilde{x})$ be the dimensions of $E_{\tilde{x}}^s$ and $E_{\tilde{x}}^u$, respectively. And the functions $d_s,d_u$ are $\hat{f}$-invariant since the decomposition $E^s\oplus E^u$ is $d\hat{f}$-invariant. Denote the Riemannian inner product on $E_{\tilde{x}}$ by $\langle \cdot, \cdot \rangle$, and define the Lyapunov inner product $\langle \cdot, \cdot \rangle'$ on 0-summ by the following identities:
$$\langle \xi^s, \eta^s \rangle' := 2 \sum_{m=0}^{\infty} \langle d_{\tilde{x}}\hat{f}^m \xi^s, d_{\tilde{x}}\hat{f}^m \eta^s \rangle,\ \text{for } \xi^s, \eta^s \in E^s(\tilde{x});$$
$$\langle \xi^u, \eta^u \rangle' := 2 \sum_{m=0}^{\infty} \langle d_{\tilde{x}}\hat{f}^{-m} \xi^u, d_{\tilde{x}}\hat{f}^{-m} \eta^u \rangle,\ \text{for } \xi^u, \eta^u \in E^u(\tilde{x});$$
and $\langle \xi,\eta \rangle'=0$, if $\xi\in E^s(\tilde{x}),\  \eta \in E^u(\tilde{x})$.

For $\tilde{x}\in$ 0-summ, we choose $C_0(\tilde{x}):\mathbb{R}^n\rightarrow E_{\tilde{x}}$ that satisfies the following conditions:
\begin{enumerate}[label=(\arabic*)]
    \item $C_0[\mathbb{R}^{d_s(\tilde{x})}\times\{0\}]=E^s(\tilde{x})$, and $C_0[\{0\}\times\mathbb{R}^{d_u(\tilde{x})}]=E^u(\tilde{x})$;
    \item $C_0(\tilde{x})$ is an isometry between $(\mathbb{R}^n,\langle \cdot, \cdot \rangle_{\mathbb{R}^n})$ and $(E(\tilde{x}),\langle \cdot, \cdot \rangle')$.
\end{enumerate}
\begin{lem}\label{C}
For $\tilde{x}\in$ 0-summ, the following holds:
\begin{enumerate}[label=(\arabic*)]
    \item $C_0(\tilde{x})$ is  contractive, and
$$\|C_0(\tilde{x})^{-1}\|^2= \sup_{\substack{\xi^s \in E^s(\tilde{x}),\  \xi^u \in E^u(\tilde{x}),\\  \xi^s + \xi^u\neq 0}} \frac{S^2(\tilde{x}, \xi^s) + U^2(\tilde{x}, \xi^u)}{|\xi^s + \xi^u|^2}.$$
    \item The map $D_0(\tilde{x})= C_0(\hat{f}(\tilde{x}))^{-1}\circ d_{\tilde{x}}\hat{f}\circ C_0(\tilde{x})$ is a block matrix
$$\begin{pmatrix}
  D_s(\tilde{x}) & 0 \\
  0 & D_u(\tilde{x})
\end{pmatrix},$$
where $D_{s\backslash u}(\tilde{x})$ is a $d_{s\backslash u}(\tilde{x})\times d_{s\backslash u}(\tilde{x})$ matrix. In addition, for all non-zero vectors $v_s \in \mathbb{R}^{d_s(\tilde{x})}\text{ and } v_u \in \mathbb{R}^{d_u(\hat{f}(\tilde{x}))}$,
\begin{align}\label{D1}
    \frac{\sqrt{2}}{2}d(x_0,\Gamma_{\infty})^{a}\leq \frac{|D_sv_s|}{|v_s|} \leq e^{-\frac{1}{s^2(\tilde{x})}},\quad \frac{\sqrt{2}}{2}d(x_0,\Gamma_{\infty})^{a} \leq \frac{|D^{-1}_u v_u|}{|v_u|} \leq e^{-\frac{1}{u^2(\hat{f}(\tilde{x}))}}.
\end{align}\label{f(x)}
    \item $\frac{\sqrt{2}}{2}d(p(\tilde{x}),\Gamma_{\infty})^{2a}\leq \frac{\|C_0(\hat{f}(\tilde{x}))^{-1}\|}{\|C_0(\tilde{x})^{-1}\|}\leq \sqrt{2}d(p(\tilde{x}),\Gamma_{\infty})^{-2a}$.\label{C(f)}
\end{enumerate}
\end{lem}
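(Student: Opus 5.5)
The plan follows Ovadia's argument for the analogous lemma in \cite{Ovadia2}. The only new ingredient is that $\|d\hat f^{\pm1}\|$ is no longer uniformly bounded. It is controlled pointwise by \ref{A2}, with constant $d(x_0,\Gamma_\infty)^{\pm a}$.

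For (1), write any $v\in\mathbb{R}^n$ as $v=(v_s,v_u)$. Since $C_0(\tilde x)$ is an isometry onto $(E_{\tilde x},\langle\cdot,\cdot\rangle')$ and $E^s\perp' E^u$, we have $|C_0v|'^2=S^2(\tilde x,\xi^s)+U^2(\tilde x,\xi^u)$ with $\xi^{s/u}=C_0v_{s/u}$. The $m=0$ terms give $S^2\ge2|\xi^s|^2$ and $U^2\ge 2|\xi^u|^2$. Hence
\[
|\xi^s+\xi^u|^2\le 2|\xi^s|^2+2|\xi^u|^2\le S^2+U^2=|v|^2,
\]
so $C_0$ is a contraction. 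Moreover $\|C_0^{-1}\|^2=\sup|C_0^{-1}\xi|^2/|\xi|^2$, and this is exactly the displayed supremum.

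For (2), the block form is immediate: $d\hat f$ maps $E^{s/u}(\tilde x)$ onto $E^{s/u}(\hat f\tilde x)$, and $C_0$ respects the splittings. This also shows $d_s,d_u$ are invariant.

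For the upper bound on $D_s$, set $\xi=C_0(\tilde x)v_s$. Then
\[
|D_sv_s|^2=S^2(\hat f\tilde x,d\hat f\xi)=S^2(\tilde x,\xi)-2|\xi|^2 .
\]
Dividing by $S^2(\tilde x,\xi)=|v_s|^2$ gives $1-2|\xi|^2/S^2(\tilde x,\xi)$. Now $S(\tilde x,\xi)\le s(\tilde x)|\xi|$, so this is at most $1-2/s^2(\tilde x)$. Since $1-2t\le e^{-2t}$, taking square roots yields $e^{-1/s^2}$.

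For the lower bound on $D_s$, we have $S^2(\hat f\tilde x,d\hat f\xi)\ge 2|d\hat f\xi|^2$. We also have
\[
S^2(\tilde x,\xi)=2|\xi|^2+S^2(\hat f\tilde x,d\hat f\xi)\le 2\|d_{x_0}f^{-1}\|^2|d\hat f\xi|^2+S^2(\hat f\tilde x,d\hat f\xi).
\]
By \ref{A2}, $\|(d_{x_0}f)^{-1}\|\le d(x_0,\Gamma_\infty)^{-a}$. Therefore
\[
S^2(\hat f\tilde x,d\hat f\xi)\ge \frac{S^2(\tilde x,\xi)}{d(x_0,\Gamma_\infty)^{-2a}+1}\ge \tfrac12 d(x_0,\Gamma_\infty)^{2a}S^2(\tilde x,\xi),
\]
using $d\le 1$. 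This gives the factor $\frac{\sqrt2}{2}d(x_0,\Gamma_\infty)^a$. The unstable block is handled symmetrically with $d\hat f^{-1}$ and $U$. The step to check there is which point's distance enters. The relevant derivative is $(d_{x_0}f)^{-1}$, equivalently $d_{x_1}f_{x_0}^{-1}$, which \ref{A2} controls by $d(x_0,\Gamma_\infty)^{\pm a}$. So the same constant appears.

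For (3), compare $\|C_0(\hat f\tilde x)^{-1}\|$ and $\|C_0(\tilde x)^{-1}\|$ using the formula from (1). For $\eta=\eta^s+\eta^u\in E_{\hat f\tilde x}$, write $\eta=d\hat f\xi$. We have $S^2(\hat f\tilde x,\eta^s)\le S^2(\tilde x,\xi^s)$. In the other direction,
\[
U^2(\hat f\tilde x,\eta^u)=2|\eta^u|^2+U^2(\tilde x,\xi^u)\le (2\|df\|^2+1)\,U^2(\tilde x,\xi^u),
\]
using $U^2(\tilde x,\xi^u)\ge 2|\xi^u|^2$. Also $|\xi|\le\|df^{-1}\||\eta|$. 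Combining these with \ref{A2} bounds the ratio of quotients by $3\,d(x_0,\Gamma_\infty)^{-4a}$. The constant then has to be tightened to the stated $\sqrt2\,d^{-2a}$ after taking square roots. To do this I would use the sharper bound $2|\eta^u|^2+U^2\le (\|df\|^2+1)U^2$, obtained by splitting more carefully, together with $d^{-2a}\ge1$.

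The reverse inequality follows by the same argument with $d\hat f^{-1}$. The main obstacle is this bookkeeping of exponents and constants in (3), that is, getting exactly $d^{\pm2a}$ with factor $\sqrt2$. If it fails I would accept a slightly worse constant, which is harmless downstream.
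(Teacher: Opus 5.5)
Your proposal is correct and follows the paper's proof essentially step for step. In (2), the paper also combines the recursion $S^2(\tilde x,\xi)=2|\xi|^2+S^2(\hat f\tilde x,d\hat f\xi)$ from \eqref{Ly1} with the pointwise bounds $\|d_{x_0}f\|,\|(d_{x_0}f)^{-1}\|\le d(x_0,\Gamma_\infty)^{-a}$ from assumption \ref{A2}, reaching the same factor $d^{2a}/(1+d^{2a})\ge\frac12 d^{2a}$. In (3), it multiplies a Lyapunov-norm growth factor $1+d^{-2a}\le 2d^{-2a}$ by the factor $d^{-2a}$ coming from the Euclidean denominator, exactly as you do.

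The extra care you anticipate in (3) is unnecessary. The justification you already give, $U^2(\tilde x,\xi^u)\ge 2|\xi^u|^2$, yields $2|\eta^u|^2\le 2\|df\|^2|\xi^u|^2\le \|df\|^2U^2(\tilde x,\xi^u)$, so the factor is $\|df\|^2+1$ and not $2\|df\|^2+1$. The constant $\sqrt2\,d(x_0,\Gamma_\infty)^{-2a}$ therefore comes out directly.

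For (1), the paper only cites Ovadia; your direct argument is the standard one.
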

\begin{pf}
  (1) See \cite{Ovadia1} for the proof of part (1).

  (2) By the construction of $C_0(\tilde{x})$, $D_0(\tilde{x})$ has the block form. Next, we estimate $|D_s(\cdot)|$ and $|D_u(\cdot)|$. For $\xi^s \in E^s(\tilde{x})$, we have
 \begin{equation}\label{Ly1}
\left\langle d_{\tilde{x}} \hat{f} \xi^s, d_{\tilde{x}} \hat{f} \xi^s \right\rangle' = 2 \sum_{m=0}^{\infty} |d_{\tilde{x}} \hat{f}^{m+1} \xi^s|^2 = \left\langle \xi^s, \xi^s \right\rangle' - 2|\xi^s|^2,
\end{equation}
this implies
\begin{align*}
    \frac{\left\langle d_{\tilde{x}} \hat{f} \xi^s, d_{\tilde{x}} \hat{f} \xi^s \right\rangle'}{\left\langle \xi^s, \xi^s \right\rangle'} = 1 - \frac{2|\xi^s|^2}{\left\langle \xi^s, \xi^s \right\rangle'}.
\end{align*}
By assumption $\ref{A2}$, $\left\langle \xi^s, \xi^s \right\rangle' > 2\left(|\xi^s|^2 + |d_{\tilde{x}} \hat{f} \xi^s|^2\right) \geq 2\left(1+d(x_0,\Gamma_{\infty})^{2a}\right)|\xi^s|^2$, where $x_0=p\tilde{x}$. Hence,
\begin{align*}
    \frac{d(x_0,\Gamma_{\infty})^{2a}}{1+d(x_0,\Gamma_{\infty})^{2a}} \leq \frac{\langle d_{\tilde{x}} \hat{f} \xi^s, d_{\tilde{x}} \hat{f} \xi^s \rangle'}{\langle \xi^s, \xi^s \rangle'} \leq e^{\frac{-2|\xi^s|^2}{\langle \xi^s, \xi^s \rangle'}}\leq e^{\frac{-2}{S^2(\tilde{x},\frac{\xi^s}{|\xi^s|})}}\leq e^{\frac{-2}{s^2(\tilde{x})}}.
\end{align*}
Let $v_s\in\mathbb{R}^{d_s(\tilde{x})}$ s.t. $\xi^s=C_0(\tilde{x})v_s$, then
\begin{align*}
    \frac{|D_s(\tilde{x})v^s|^2}{|v^s|^2}
    = \frac{|C_0(\hat{f}(\tilde{x}))^{-1} d_{\tilde{x}} \hat{f} \xi^s|^2}{|C_0(\tilde{x})^{-1} \xi^s|^2}
    = \frac{|d_{\tilde{x}} \hat{f} \xi^s|'^2}{|\xi^s|'^2}
    \in \bigg[\frac{d(x_0,\Gamma_{\infty})^{2a}}{1+d(x_0,\Gamma_{\infty})^{2a}}, e^{\frac{-2}{s^2(\tilde{x})}}\bigg],
\end{align*}
where the norm $|\cdot|'$ is induced by the inner product $\langle \cdot,\cdot\rangle'$.

Similarly, we can estimate $|D_u(\cdot)|$: for $\eta^u\in E^u(\hat{f}(\tilde{x}))$
 \begin{align*}
\left\langle d_{\hat{f}(\tilde{x})} \hat{f}^{-1} \eta^u, d_{\hat{f}(\tilde{x})} \hat{f}^{-1} \eta^u \right\rangle'
= 2 \sum_{m=0}^{\infty} |d_{\hat{f}(\tilde{x})} \hat{f}^{-m-1} \eta^u|^2
= 2 \sum_{m=0}^{\infty} |d_{\hat{f}(\tilde{x})} \hat{f}^{-m} \eta^u|^2 - 2|\eta^u|^2=\left\langle \eta^u, \eta^u \right\rangle' - 2|\eta^u|^2,
\end{align*}
and $\left\langle \eta^u, \eta^u \right\rangle'
> 2\left(|\eta^u|^2 + |d_{\hat{f}(\tilde{x})} \hat{f}^{-1} \eta^u|^2\right)
\geq 2\left(1+d(x_0,\Gamma_{\infty})^{2a}\right)|\eta^u|^2$. Let $v_u\in\mathbb{R}^{d_u(\hat{f}(\tilde{x}))}$, and $\eta^u:=C_0(\hat{f}(\tilde{x}))v_u$, then
\begin{align*}
    \frac{|D_u(\tilde{x})^{-1}v^u|^2}{|v^u|^2}
    = \frac{|C_0(\tilde{x})^{-1} d_{\hat{f}(\tilde{x}))} \hat{f}^{-1} \eta^u|^2}{|C_0(\hat{f}(\tilde{x}))^{-1} \eta^u|^2}
    = \frac{|d_{\hat{f}(\tilde{x})} \hat{f}^{-1} \eta^u|'^2}{|\eta^u|'^2}
    \in \bigg[\frac{d(x_0,\Gamma_{\infty})^{2a}}{1+d(x_0,\Gamma_{\infty})^{2a}}, e^{\frac{-2}{u^2(\hat{f}(\tilde{x}))}}\bigg].
 \end{align*}
Since the diameter of $M$  is less than  one and $a>1$, we have $\frac{d(x_0,\Gamma_{\infty})^{2a}}{1+d(x_0,\Gamma_{\infty})^{2a}}> \frac{1}{2} d(x_0,\Gamma_{\infty})^{2a}$, thus part (2) is proved.

(3) According to part (1) and $d\hat{f}$ is invertible,
\begin{equation}\label{Ly2}
    \begin{aligned}
\|C_0(\hat{f}(\tilde{x}))^{-1}\|^2
& = \sup_{\substack{\xi^s \in E^s(\tilde{x}),\  \xi^u \in E^u(\tilde{x}),\\  \xi^s + \xi^u\neq 0}} \frac{S^2(\tilde{x}, d_{\tilde{x}}\hat{f}(\xi^s)) + U^2(\tilde{x}, d_{\tilde{x}}\hat{f}(\xi^u))}{|d_{\tilde{x}}\hat{f}(\xi^s) + d_{\tilde{x}}\hat{f}(\xi^u)|^2}\\
& =\sup_{\substack{\xi^s \in E^s(\tilde{x}),\  \xi^u \in E^u(\tilde{x}),\\  \xi^s + \xi^u\neq 0}} \frac{|d_{\tilde{x}}\hat{f}(\xi^s)|'^2+|d_{\tilde{x}}\hat{f}(\xi^u)|'^2}{|d_{\tilde{x}}\hat{f}(\xi^s+\xi^u)|^2}.
    \end{aligned}
\end{equation}
By equation (\ref{Ly1}),
\begin{equation}\label{Ly3}
    |d_{\tilde{x}}\hat{f}(\xi^s)|'^2\leq |\xi^s|'^2\leq 2d(p(\tilde{x}),\Gamma_\infty)^{-2a}|\xi^s|'^2.
\end{equation}By definition of the inner product $\langle \cdot,\cdot\rangle'$ and assumption \ref{A2}, $|d_{\tilde{x}}\hat{f}(\xi^u)|'^2=|\xi^u|'^2+2|d_{\tilde{x}}\hat{f}(\xi^u)|^2$ and $|d_{\tilde{x}}\hat{f}(\xi^u)|'^2\geq 2|d_{\tilde{x}}\hat{f}(\xi^u)|^2+2|\xi^u|^2\geq 2|d_{\tilde{x}}\hat{f}(\xi^u)|^2(1+d(p(\tilde{x}),\Gamma_{\infty})^{2a})$,  then $\frac{|\xi^u|'^2}{|d_{\tilde{x}}\hat{f}(\xi^u)|'^2}=1-\frac{2|d_{\tilde{x}}\hat{f}(\xi^u)|^2}{|d_{\tilde{x}}\hat{f}(\xi^u)|'^2}\geq \frac{d(x_0,\Gamma_{\infty})^{2a}}{1+d(x_0,\Gamma_{\infty})^{2a}}$. This implies
\begin{equation}\label{Ly4}
    |d_{\tilde{x}}\hat{f}(\xi^u)|'^2\leq 2d(p(\tilde{x}),\Gamma_\infty)^{-2a}|\xi^u|'^2.
\end{equation}
Hence, applying inequalities (\ref{Ly3}), (\ref{Ly4}) and assumption \ref{A2} to equation (\ref{Ly2}), we get $\frac{\|C_0(\hat{f}(\tilde{x}))^{-1}\|}{\|C_0(\tilde{x})^{-1}\|}\leq \sqrt{2}d(p(\tilde{x}),\Gamma_{\infty})^{-2a}$. Similarly, we have $\frac{\|C_0(\hat{f}(\tilde{x}))^{-1}\|}{\|C_0(\tilde{x})^{-1}\|}\geq \frac{\sqrt{2}}{2}d(p(\tilde{x}),\Gamma_{\infty})^{2a}$.
\end{pf}
\begin{rmk}
In the situation of diffeomorphisms, there exists a constant $F_0$ s.t. $\frac{\|C_0(\hat{f}(\tilde{x}))^{-1}\|}{\|C_0(\tilde{x})^{-1}\|}\in[F_0^{-1},F_0]$ in \cite{Ovadia2}. In our setting, there exist singularities, the   corresponding result does not hold. By part \ref{C(f)} of the above lemma, there is not a uniform control on $\frac{\|C_0(\hat{f}(\tilde{x}))^{-1}\|}{\|C_0(\tilde{x})^{-1}\|}$. Hence, there is a finer control for the size of the Pesin chart.
\end{rmk}
\begin{mdef}
    (Pesin charts) The \textit{Pesin chart} at $\tilde{x}\in$ 0-summ is
    $$\psi_{\tilde{x}}:=\exp_{p(\tilde{x})}\circ C_0(\tilde{x}): B_r(0)\rightarrow M,$$
    where $B_r(0)$ is an open ball  of the origin with radius $r$ in $\mathbb{R}^d$ w.r.t. Euclidean norm.
\end{mdef}

For $\tilde{x}=(x_i)_{i\in\mathbb{R}}\in$ 0-summ, let
$$\iota(\tilde{x}):=\min\bigg\{\frac{1}{2}rd(x_0,\Gamma_{\infty})^a,\tau(\tilde{x})\bigg\}.$$
Without loss of generality, assume $r<1$, then $\iota(\tilde{x})=\frac{1}{2}rd(x_0,\Gamma_{\infty})^a$ since $\tau(\tilde{x})\geq d(x_0,\Gamma_{\infty})^a$. Write $f_{\tilde{x}}^{-1}:=f_{x_0}^{-1}$ and $\iota(\tilde{x}):=\iota(x_0)$. Define maps $F_{\tilde{x}}:=\psi_{\hat{f}(\tilde{x})}^{-1}\circ f\circ \psi_{\tilde{x}}$ on $B_{\iota(\tilde{x})}(0)$, and $F_{\tilde{x}}^{-1}:=\psi_{\tilde{x}}^{-1}\circ f_{\tilde{x}}^{-1}\circ \psi_{\hat{f}(\tilde{x})}$ on $B_{\iota(\tilde{x})}(0)$. The two maps are well-defined on $B_{\iota(\tilde{x})}(0)$. Indeed, $\psi_{\tilde{x}}(B_{\iota(\tilde{x})}(0))\subset B(x_0,2\iota(\tilde{x}))$, $f(B(x_0,2\iota(\tilde{x})))\subset B(f(x_0),2\iota(\tilde{x})d(x_0,\Gamma_{\infty})^{-a})$ and $2\iota(\tilde{x})d(x_0,\Gamma_{\infty})^{-a}<r$, then $F_{\tilde{x}}$ is well-defined. Similarly, we have
$\psi_{\hat{f}(\tilde{x})}(B_{\iota(\tilde{x})}(0) )\subset B(f(x_0),2\iota(\tilde{x}))$, and $f_{\tilde{x}}^{-1}(B(f(x_0),2\iota(\tilde{x})))\subset B(x_0,2\iota(\tilde{x})d(x_0,\Gamma_{\infty})^{-a})$
since $\iota(\tilde{x})\leq \tau(\tilde{x})$. And $2\iota(\tilde{x})d(x_0,\Gamma_{\infty})^{-a})<r$, then $F_{\tilde{x}}^{-1}$ is well-defined on $B_{\iota(\tilde{x})}(0)$. In addition, the maps $F_{\tilde{x}},F_{\tilde{x}}^{-1}:B_{\iota(\tilde{x})}(0)\rightarrow\mathbb{R}^d$ are diffeomorphisms onto their images, and they are the inverse of each other.

\begin{thm}\label{F_{x}}
    For all small enough $\epsilon>0$, $\tilde{x}=(x_i)_{i\in\mathbb{R}}\in$ 0-summ, the maps $F_{\tilde{x}}:B_{Q(\tilde{x})}(0)\rightarrow\mathbb{R}^d\text{ and } F_{\tilde{x}}^{-1}:B_{Q(\hat{f}(\tilde{x}))}(0)\rightarrow\mathbb{R}^d$ satisfies:
    \begin{enumerate}[label=(\arabic*)]
      \item $F_{\tilde{x}}\circ \ F_{\tilde{x}}^{-1}=Id$ and $F_{\tilde{x}}^{-1}\circ F_{\tilde{x}}=Id$, where $Id$ is the identity map.
      \item $F_{\tilde{x}}^{\pm1}(0)=0$, and $d_{0}F_{\tilde{x}}^{\pm1}=D_0(\tilde{x})^{\pm1}$.\label{D0}
     \item There exist two maps $H^+,H^-$ s.t. $F_{\tilde{x}}=D_0(\tilde{x})+H^+,F_{\tilde{x}}^{-1}=D_0(\tilde{x})^{-1}+H^-$; the maps $H^{\pm}$ satisfy  $H^{\pm1}(0)=0,\ d_0H^{\pm1}=0$, and $\|H^{\pm1}\|_{C^{1+\frac{\beta}{2}}}<\epsilon$.\label{F}
     \item $\|dF_{\tilde{x}}^{\pm1}\|_{C^0}\leq 2\sqrt{2} d(x_0,\Gamma_{\infty})^{-a}$.
\end{enumerate}
\end{thm}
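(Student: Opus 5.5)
We follow the standard Pesin-chart argument of \cite{Sarig, Ovadia1, Ovadia2}, adapted to singularities as in \cite{Lima}. The only new point is that every constant which is uniform in the diffeomorphism case must now be paid for with a power of $d(x_0,\Gamma_\infty)$. The factor $\rho(\tilde{x})^{8a/\beta}$ in $\tilde{Q}_\epsilon$ is there to absorb these powers.

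\textbf{Parts (1), (2) and (4).} First we check that the domains are admissible. Since $C_0$ is contractive (Lemma \ref{C}(1)), $Q(\tilde{x})\le\epsilon^{20/\beta}\rho(\tilde x)^{8a/\beta}$. Because $a>1$ and $\beta<1$, this gives $Q(\tilde{x})<\iota(\tilde{x})=\frac12 r d(x_0,\Gamma_\infty)^a$ for small $\epsilon$. In the same way, $Q(\hat f(\tilde x))\le \epsilon^{20/\beta}\rho(\hat f\tilde x)^{8a/\beta}$. Here $\rho(\hat f\tilde x)\le d(x_0,\Gamma_\infty)$, because $x_0$ is among the points $x_0,x_1,x_2$ used in $\rho(\hat f\tilde x)$. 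Hence $Q(\hat f(\tilde x))<\iota(\tilde x)$ as well. So both maps are restrictions of the well-defined mutually inverse diffeomorphisms constructed just before the theorem, and (1) follows.

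For (2), $\psi_{\tilde x}(0)=x_0$ and $f(x_0)=x_1=p(\hat f\tilde x)$, so $F^{\pm1}_{\tilde x}(0)=0$. By the chain rule and $d_0\exp_x=\mathrm{Id}$, $d_0F_{\tilde x}=C_0(\hat f\tilde x)^{-1}d_{x_0}f\,C_0(\tilde x)=D_0(\tilde x)$; the inverse follows likewise.

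For (4), write $d_vF_{\tilde x}=C_0(\hat f\tilde x)^{-1}\,d\exp^{-1}_{x_1}\,d f\,d\exp_{x_0}\,C_0(\tilde x)$ and bound it by $2\cdot 2\cdot d(x_0,\Gamma_\infty)^{-a}\|C_0(\hat f\tilde x)^{-1}\|$. This is too crude as it stands. Instead we use (3): $\|d F\|\le\|D_0\|+\epsilon$. By (\ref{D1}) and Lemma \ref{C}, $\|D_0^{\pm1}\|\le\sqrt2\, d(x_0,\Gamma_\infty)^{-a}$, which comes from the lower bounds $\frac{\sqrt2}{2}d^a$ on $|D_s|$ and $|D_u^{-1}|$ together with the contraction bounds. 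Adding $\epsilon\le\sqrt2 d^{-a}$ gives $2\sqrt2 d(x_0,\Gamma_\infty)^{-a}$. So (4) reduces to (3).

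\textbf{Part (3), the main step.} Set $H^+:=F_{\tilde x}-D_0(\tilde x)$. Then $H^+(0)=0$ and $d_0H^+=0$ by (2). It remains to show $\mathrm{H\ddot{o}l}_{\beta/2}(dH^+)\le\epsilon/3$ on $B_{Q(\tilde x)}(0)$; the $C^0$ bounds on $H^+$ and $dH^+$ then follow from Hölder continuity and the small radius.

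\begin{enumerate}
\item In a fixed $D\in\mathcal D$ containing $B(x_0,r)$, write $d_vF-d_wF$ as $C_0(\hat f\tilde x)^{-1}[\cdots]C_0(\tilde x)$.
\item Telescope the bracket into three differences, each controlled by a separate estimate:
\begin{itemize}
\item the differences of $d\exp$ and $d\exp^{-1}$ via (\ref{exp1}) and (\ref{exp2});
\item the difference of $df$ via \ref{A3}, with $K d(y,z)^\beta$ and $d(y,z)\le 2|v-w|$;
\item the norms of $df$ via \ref{A2}.
\end{itemize}
\item Combining these gives $\|d_vH^+-d_wH^+\|\le c\,\|C_0(\hat f\tilde x)^{-1}\|\,d(x_0,\Gamma_\infty)^{-a}\,|v-w|^{\beta}$. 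This uses $|C_0(\tilde x)(v-w)|\le|v-w|$ and an absolute constant $c$ depending on $K$ and $L_3,L_4$.
\item Write $|v-w|^\beta=|v-w|^{\beta/2}|v-w|^{\beta/2}\le|v-w|^{\beta/2}(2Q(\tilde x))^{\beta/2}$.
\item By Lemma \ref{C}(3), $\|C_0(\hat f\tilde x)^{-1}\|\le\sqrt2 d^{-2a}\|C_0(\tilde x)^{-1}\|$, so the prefactor is at most $c' d^{-3a}\|C_0(\tilde x)^{-1}\|\,Q^{\beta/2}$.
\item Substituting $Q\le\epsilon^{20/\beta}\rho^{8a/\beta}\|C_0^{-1}\|^{-2\gamma}$ gives $Q^{\beta/2}\le\epsilon^{10}\rho^{4a}\|C_0^{-1}\|^{-\gamma\beta}$. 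Since $\gamma\beta>20$ and $\|C_0^{-1}\|\ge1$, and $\rho\le d(x_0,\Gamma_\infty)$, the whole expression is at most $c'\epsilon^{10}d^{a}<\epsilon/3$ for small $\epsilon$.
\end{enumerate}

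For $H^-$ we run the same computation with $f^{-1}_{x_0}$ on $\mathfrak E_{x_0}$ and the second halves of \ref{A2} and \ref{A3}. There the roles of $C_0(\tilde x)$ and $C_0(\hat f\tilde x)$ are swapped, and we use $Q(\hat f\tilde x)$, bounded through $\rho(\hat f\tilde x)\le d(x_0,\Gamma_\infty)$ and Lemma \ref{C}(3) once more.

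The expected obstacle is this exponent bookkeeping: one must ensure that the powers $d^{-a}$ from \ref{A2} and $d^{-2a}$ from Lemma \ref{C}(3), together with the loss in passing from the $\beta$-Hölder to the $\beta/2$-Hölder constant, are dominated by $\rho^{8a/\beta}$. This requires relating $\rho(\tilde x)$, $\rho(\hat f\tilde x)$ and $d(x_0,\Gamma_\infty)$ in the right direction. This is exactly why $\rho$ is defined using $x_{-1},x_0,x_1$.
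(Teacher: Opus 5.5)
Your treatment of (2), (3) and (4) matches the paper's proof. You telescope $d_vF_{\tilde x}-d_wF_{\tilde x}$ into three terms via (\ref{exp1}), \ref{A3}, (\ref{exp2}) and \ref{A2}, and absorb the singular factors through Lemma \ref{C}(3) and the definition of $Q$. You then get (4) from (3) plus the bounds $\|D_0(\tilde x)^{\pm1}\|\le\sqrt2\,d(x_0,\Gamma_\infty)^{-a}$ from Lemma \ref{C}(2).

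The gap is in (1). Showing $Q(\tilde x),Q(\hat f(\tilde x))<\iota(\tilde x)$ only places the \emph{domains} inside $B_{\iota(\tilde x)}(0)$; it says nothing about the \emph{images}. Being mutually inverse on $B_{\iota(\tilde x)}(0)$ only gives $F^{-1}_{\tilde x}(F_{\tilde x}(v))=v$ for those $v$ with $F_{\tilde x}(v)\in B_{\iota(\tilde x)}(0)$, and symmetrically. So ``restricting'' does not yield $F_{\tilde x}\circ F_{\tilde x}^{-1}=Id$ on $B_{Q(\hat f(\tilde x))}(0)$ or $F^{-1}_{\tilde x}\circ F_{\tilde x}=Id$ on $B_{Q(\tilde x)}(0)$.

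What must be proved is $F_{\tilde x}(B_{Q(\tilde x)}(0))\subset B_{\iota(\tilde x)}(0)$ and $F^{-1}_{\tilde x}(B_{Q(\hat f(\tilde x))}(0))\subset B_{\iota(\tilde x)}(0)$. This is the entire content of the paper's proof of (1), and it is not a formality. The only a priori bound on the stretching is the crude one, e.g.\ $F^{-1}_{\tilde x}(B_{Q(\hat f(\tilde x))}(0))\subset B_{4d(x_0,\Gamma_\infty)^{-a}\|C_0(\tilde x)^{-1}\|Q(\hat f(\tilde x))}(0)$, and $\|C_0(\tilde x)^{-1}\|$ is unbounded. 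The paper controls it in three moves:
\begin{itemize}
\item it converts $\|C_0(\tilde x)^{-1}\|$ to $\|C_0(\hat f(\tilde x))^{-1}\|$ with Lemma \ref{C}(3);
\item it spends the factor $\|C_0(\hat f(\tilde x))^{-1}\|^{-2\gamma}$ built into $Q(\hat f(\tilde x))$;
\item this yields a radius at most $\epsilon^{20/\beta}d(x_0,\Gamma_\infty)^a\le\iota(\tilde x)$, and symmetrically for $F_{\tilde x}$.
\end{itemize}
Your argument for (1) never uses that $\|C_0^{-1}\|^{-2\gamma}$ factor.

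The repair fits your outline, because (3) and (4) do not depend on (1), so prove them first. Then $F^{\pm1}_{\tilde x}(0)=0$ and (4) give $|F^{\pm1}_{\tilde x}(v)|\le 2\sqrt2\,d(x_0,\Gamma_\infty)^{-a}|v|$. Your observation $\rho(\hat f(\tilde x))\le d(x_0,\Gamma_\infty)$ gives $Q(\tilde x),Q(\hat f(\tilde x))\le\epsilon^{20/\beta}d(x_0,\Gamma_\infty)^{8a/\beta}$. Hence both images lie in a ball of radius $2\sqrt2\,\epsilon^{20/\beta}d(x_0,\Gamma_\infty)^{8a/\beta-a}\le\iota(\tilde x)$ for small $\epsilon$.

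There is also a minor, harmless slip in step 3. The term containing the difference of $d\exp^{-1}_{x_1}$ at $f(z_1),f(z_2)$ carries $d(x_0,\Gamma_\infty)^{-2a}$, not $d(x_0,\Gamma_\infty)^{-a}$. One factor comes from $\|d_{z_2}f\|$ and one from $d(f(z_1),f(z_2))\le 2d(x_0,\Gamma_\infty)^{-a}|v-w|$, as in (\ref{H3}). After Lemma \ref{C}(3) this becomes $d^{-4a}$, which the factor $\rho^{4a}$ in $Q^{\beta/2}$ still absorbs.
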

\begin{pf}
Notice that $Q(\tilde{x}),Q(\hat{f}(\tilde{x}))\leq \frac{1}{2}rd(x_0,\Gamma_\infty)^a\leq \iota(\tilde{x})$ for small enough $\epsilon$. Thus, the maps $F_{\tilde{x}},F_{\tilde{x}}^{-1}$ are well-defined on $B_{Q(\tilde{x}}(0)),B_{Q(\hat{f}(\tilde{x})}$, respectively.

    (1) It is equivalent to prove that the maps $F_{\tilde{x}}\circ \ F_{\tilde{x}}^{-1}, F_{\tilde{x}}^{-1}\circ F_{\tilde{x}}$ are well-defined, i.e. $F_{\tilde{x}}(B_{Q(\tilde{x})}(0))\subset B_{\iota(\tilde{x})}(0)$ and $F_{\tilde{x}}^{-1}(B_{Q(\hat{f}(\tilde{x}))}(0))\subset B_{\iota(\tilde{x})}(0)$. According to the assumption \ref{A2}, we get
    $$ F_{\tilde{x}}^{-1}(B_{Q(\hat{f}(\tilde{x}))}(0))\subset B_{4d(x_0,\Gamma_{\infty})^{-a}\|C_0(\tilde{x})^{-1}\|Q(\hat{f}(\tilde{x}))}(0).$$
    And
\begin{equation*}
    \begin{aligned}
4d(x_0,\Gamma_{\infty})^{-a}\|C_0(\tilde{x})^{-1}\|Q(\hat{f}(\tilde{x}))
&\leq 4\sqrt{2}d(x_0,\Gamma_{\infty})^{-3a}\|C_0(\hat{f}(\tilde{x}))^{-1}\|Q(\hat{f}(\tilde{x}))\text{ (by Lemma \ref{C}(3))}\\
&\leq \epsilon^{\frac{20}{\beta}}d(x_0,\Gamma_{\infty})^{a}\|C_0(\hat{f}(\tilde{x}))^{-1}\|^{1-2\gamma}\\
&\leq \epsilon^{\frac{20}{\beta}}d(x_0,\Gamma_{\infty})^{a},
    \end{aligned}
\end{equation*}
    the last inequality holds since $\gamma>\frac{20}{\beta}, \text{ and } C_0(\tilde{x}) \text{ is  contractive for all }\tilde{x}\in$0-summ. Hence,
    $$4d(x_0,\Gamma_{\infty})^{-a}\|C_0(\tilde{x})^{-1}\|Q(\hat{f}(\tilde{x}))
    \leq \iota(\tilde{x})$$
   for small enough $\epsilon$. Similarly, we get
   $$ F_{\tilde{x}}(B_{Q(\tilde{x})}(0))\subset B_{4d(x_0,\Gamma_{\infty})^{-a}\|C_0(\hat{f}(\tilde{x}))^{-1}\|Q(\tilde{x})}(0),$$
   and
   $$4d(x_0,\Gamma_{\infty})^{-a}\|C_0(\hat{f}(\tilde{x}))^{-1}\|Q(\tilde{x})\leq 4\sqrt{2}d(x_0,\Gamma_{\infty})^{-3a}\|C_0(\tilde{x})^{-1}\|Q(\tilde{x})\leq \epsilon^{\frac{20}{\beta}}d(x_0,\Gamma_{\infty})^{a}\leq \iota(\tilde{x})$$
   for small enough $\epsilon$.

   (2) For any $x\in M$, $\exp_x(0)=x$ and $d_0\exp_x=Id$, then part (2) holds.

   (3) Define $H^{\pm1}=F_{\tilde{x}}^{\pm1}-D_0(\tilde{x})^{\pm1}$, then $dH^{\pm1}=dF_{\tilde{x}}^{\pm1}$.
For any $u,v \in  B_{Q(\tilde{x})}(0)$,
\begin{equation*}
    \begin{aligned}
    \| d_uH^{+} - d_vH^+ \|&=\|d_uF_{\tilde{x}}-d_vF_{\tilde{x}}\|\\
    &\leq\|C_0(\hat{f}(\tilde{x}))^{-1}\|\cdot\|d_u(\exp_{f(x_0)}^{-1}\circ f\circ \exp_{x_0})-d_v(\exp_{f(x_0)}^{-1}\circ f\circ \exp_{x_0})\|\cdot\|C_0(\tilde{x})\|.
    \end{aligned}
\end{equation*}
Assume $z_1:=\exp_{x_0}\circ C_0(\tilde{x})(u), z_2:=\exp_{x_0}\circ C_0(\tilde{x})(v)\in D$ and $f(z_1),f(z_2)\in D'$. where $D,D'\in \mathcal{D}$. Using assumption \ref{A2} and inequality (\ref{exp1}), we have
\begin{equation}\label{H1}
    \begin{aligned}
        &\quad \|d_{z_1}(\exp_{f(x_0)}^{-1}\circ f)\|\cdot\|\Theta_D d_{C_0(\tilde{x})(u)}\exp_{x_0}-\Theta_D d_{C_0(\tilde{x})(v)}\exp_{x_0}\|\\
        &\leq 2L_3d(x_0,\Gamma_{\infty})^{-a}|C_0(\tilde{x})(u)- C_0(\tilde{x})(v)|\leq 2L_3d(x_0,\Gamma_{\infty})^{-a}|u-v|.
    \end{aligned}
\end{equation}
 Applying assumption \ref{A3}, we have
\begin{equation}\label{H2}
    \begin{aligned}
        &\quad \|d_{f(z_1)}\exp_{f(x_0)}^{-1}\|\cdot\|\Theta_{D'}\circ d_{z_1}f\circ\nu_{z_1}-\Theta_{D'}\circ d_{z_2}f\circ\nu_{z_2}\|\cdot\|d_{C_0(\tilde{x})(v)}\exp_{x_0}\|\\
        &\leq 4Kd(z_1,z_2)^{\beta}\leq 8K|u-v|^\beta.
    \end{aligned}
\end{equation}
Using assumption \ref{A2} and inequality (\ref{exp2}), we get
\begin{equation}\label{H3}
    \begin{aligned}
        &\quad \|d_{f(z_1)}\exp_{f(x_0)}^{-1}\circ \nu_{f(z_1)}-d_{f(z_2)}\exp_{f(x_0)}^{-1}\circ \nu_{f(z_2)}\|\cdot\|d_{z_2}(f\circ\exp_{x_0})\|\\
        &\leq 2L_4d(x_0,\Gamma_{\infty})^{-a}d(f(z_1),f(z_2))\leq 4L_4d(x_0,\Gamma_{\infty})^{-2a}|u-v|.
    \end{aligned}
\end{equation}
 For any $u,v \in  B_{Q(\tilde{x})}(0)$, $|u-v|\leq |u-v|^\beta$ since $u,v\in Q(\tilde{x})$ and $Q(\tilde{x})\ll1$. According to the inequalities (\ref{H1}), (\ref{H2}) and (\ref{H3}),
\begin{equation}\label{F0}
\begin{aligned}
    \| d_uH^+ - d_vH^+  \|&\leq \|C_0(\hat{f}(\tilde{x})^{-1}\|\big(2L_3d(x_0,\Gamma_{\infty})^{-a}+8K+4L_4d(x_0,\Gamma_{\infty})^{-2a}\big)|u-v|^\beta\\
    &\leq 14K'd(x_0,\Gamma_{\infty})^{-2a}\|C_0(\hat{f}(\tilde{x})^{-1}\|\cdot|u-v|^\beta\\
    &\leq 28K'd(x_0,\Gamma_{\infty})^{-2a}\|C_0(\hat{f}(\tilde{x})^{-1}\|Q(\tilde{x})^{\frac{\beta}{2}}\cdot|u-v|^{\frac{\beta}{2}}\\
    &\leq 28K'\epsilon^{10}|u-v|^{\frac{\beta}{2}}\leq \frac{\epsilon^2}{3}|u-v|^{\frac{\beta}{2}},
\end{aligned}
\end{equation}
where $K':=\max\{K,L_3,L_4\}$. Let $v=0$, then $\|d_{u}H^+\|\leq \frac{\epsilon^2}{3}|u|^{\frac{\beta}{2}}$ for all $u\in B_{Q(\tilde{x})}(0)$, and so  $\|d_{u}H^+\|_{C^0}\leq \frac{\epsilon^2}{3}$ on $B_{Q(\tilde{x})}(0)$. Furthermore, $|H^+(u)|=|H^+(u)-H^+(0)|\leq \frac{\epsilon^2}{3}|u|$ by the mean value inequality, then $\|H^+\|_{C^0}\leq\frac{\epsilon^2}{3}$ on $B_{Q(\tilde{x})}(0)$. Therefore, $ \|H^+\|_{1+\frac{\beta}{2}}\leq \epsilon^2$ on $B_{Q(\tilde{x})}(0)$.

Similarly, we can estimate for $H^-$  on $B_{Q(\hat{f}(\tilde{x}))}(0)$.

(4) For any $u\in B_{Q(\tilde{x})}(0),v\in B_{Q(\hat{f}(\tilde{x}))}(0)$, by the proof of Lemma \ref{C}(2) and the foregoing conclusions, we get
\begin{equation*}
    \|d_uF_{\tilde{x}}\|\leq\|d_uF_{\tilde{x}}-d_0F_{\tilde{x}}\|+\|d_0F_{\tilde{x}}\|\leq \epsilon^2+\|D_u(\tilde{x})\|\leq \epsilon^2+\sqrt{2} d(x_0,\Gamma_{\infty})^{-a},
\end{equation*}
and
\begin{equation*}
\|d_vF_{\tilde{x}}^{-1}\|\leq\|d_vF_{\tilde{x}}^{-1}-d_0F_{\tilde{x}}^{-1}\|+\|d_0F_{\tilde{x}}^{-1}\|\leq \epsilon^2+\|D_s(\tilde{x})^{-1}\|\leq \epsilon^2+\sqrt{2} d(x_0,\Gamma_{\infty})^{-a}.
\end{equation*}
For small enough $\epsilon<1$, then $\|d_uF_{\tilde{x}}\|,\|d_vF_{\tilde{x}}^{-1}\|< 2\sqrt{2}d(x_0,\Gamma_{\infty})^{-a}$.
\end{pf}

Denote $\psi_{\tilde{x}}^{\eta}$ as the restriction of the Pesin chart $\psi_{\tilde{x}}$ to $B_\eta(0)$, where $0<\eta\leq Q(\tilde{x})$.
 \begin{mdef}
     ($I$-overlap) Two Pesin charts $\psi_{\tilde{x}_1}^{\eta_1},\ \psi_{\tilde{x}_2}^{\eta_2}$ are called \textit{$I$-overlap} if $\eta_1 = I^{\pm1} (\eta_2)$, and for some $D\in \mathcal{D}$ such that $p(\tilde{x}_1),p(\tilde{x}_2)\in D$, $d(p(\tilde{x}_1),p(\tilde{x}_2))+\|\Theta_D \circ C_0(\tilde{x}_1)- \Theta_D \circ C_0(\tilde{x}_2)\|< \eta_1^4\eta_2^4.$
 \end{mdef}
\begin{rmk}
Assume $\psi_{\tilde{x}_1}^{\eta_1},\ \psi_{\tilde{x}_2}^{\eta_2}$ are $I$-overlap, then
\begin{enumerate}[label=(\arabic*)]
    \item $\eta_2 = I^{\pm1} (\eta_1)$, i.e. the overlap condition is symmetric.
    \item $\psi_{\tilde{x}_1}^{\xi_1},\ \psi_{\tilde{x}_2}^{\xi_2}$ are $I$-overlap if $\eta_i\leq \xi_i \leq Q(\tilde{x_i})$ and $\xi_1=I^{\pm1}(\xi_2), i=1,2$.
\end{enumerate}
\end{rmk}

 Define the map $p_i: M^f\rightarrow M$ by $p_i(\tilde{x})=x_i,\ \forall \tilde{x}=(x_i)_{i\in \mathbb{Z}}$. For some $D\in\mathcal{D}$, let $C_i:=\Theta_D \circ C_0(\tilde{x}_i)$ for all $\tilde{x}_i\in $ 0-summ.
 \begin{prop}\label{overlap1}
For $\epsilon>0$ small enough, if Pesin charts $\psi_{\tilde{x}_1}^{\eta_1},\ \psi_{\tilde{x}_2}^{\eta_2}$ are $I$-overlap, then
\begin{enumerate}[label=(\arabic*)]
\item $\frac{d(p_i(\tilde{x}_1),\Gamma_\infty)}{d(p_i(\tilde{x}_2),\Gamma_\infty)}=\epsilon^{\pm\epsilon}$. In particular, $\frac{\rho(\tilde{x}_1)}{\rho(\tilde{x}_2)}=\epsilon^{\pm\epsilon}$.
\item $\|C_1^{-1}-C_2^{-1}\|< (\eta_1\eta_2)^3$, and $\frac{\|C_1^{-1}\|}{\|C_2^{-1}\|}=e^{\pm(\eta_1\eta_2)^3}$.
\item $\frac{\tilde{Q}(\tilde{x}_1)}{\tilde{Q}(\tilde{x}_2)}=e^{\pm(\frac{8a}{\beta}+2\gamma)\epsilon}$ and $\frac{Q(\tilde{x}_1)}{Q(\tilde{x}_2)}\in\big[I^{-\frac{1}{4}}(1)e^{-(\frac{8a}{\beta}+2\gamma)\epsilon},I^{\frac{1}{4}}(1)e^{(\frac{8a}{\beta}+2\gamma)\epsilon}\big].$
\item $\psi_{\tilde{x}_1}[B_{I^{-2}(\eta_1)}(0)] \subset \psi_{\tilde{x}_2}[B_{\eta_2}(0)]$ and $\psi_{\tilde{x}_2}[B_{I^{-2}(\eta_2)}(0)] \subset \psi_{\tilde{x}_1}[B_{\eta_1}(0)]$.
\item $\|\psi_{\tilde{x}_i}^{-1}\circ\psi_{\tilde{x}_j}-Id\|_{C^{1+\frac{\beta}{2}}}<\epsilon(\eta_1\eta_2)^3$ for $i,\ j =1,\ 2$ where the $C^{1+\frac{\beta}{2}}$ is calculated on $B_{I^{-1}(r)}(0)$.
\end{enumerate}
 \end{prop}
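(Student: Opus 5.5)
The proof goes item by item; the common input is the smallness of $d(p(\tilde{x}_1),p(\tilde{x}_2))+\|C_1-C_2\|<\eta_1^4\eta_2^4$ compared with the quantities it must control. Since $\eta_i\le Q(\tilde{x}_i)\le \tilde{Q}_\epsilon(\tilde{x}_i)\le \epsilon^{20/\beta}\rho(\tilde{x}_i)^{8a/\beta}$, the overlap distance is much smaller than any fixed power of $\rho(\tilde{x}_i)$ and of $\epsilon$.

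For (1) I first compare the $x_0$ coordinates. We have $|d(x_0^1,\Gamma_\infty)-d(x_0^2,\Gamma_\infty)|\le d(x_0^1,x_0^2)<\eta_1^4\eta_2^4$. This is at most $\epsilon^{c}\rho(\tilde{x}_j)^{c'}$ with $c'\gg 1$, so dividing by $d(x_0^j,\Gamma_\infty)\ge\rho(\tilde{x}_j)$ gives a ratio $1\pm\epsilon^{2}$, which lies in $[\epsilon^{\epsilon},\epsilon^{-\epsilon}]$ for small $\epsilon$. For $i=\pm1$ I would use (A1)--(A2). The point $x_0^2$ lies in $D_{x_0^1}$, because $\eta^4\ll\tau$. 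Hence $d(x_1^1,x_1^2)\le d(x_0,\Gamma_\infty)^{-a}d(x_0^1,x_0^2)$, and this is still $\ll\rho^{8a/\beta}\epsilon^{20/\beta}\cdot\rho^{-a}$, which is small relative to $\rho$ because $8a/\beta\cdot 4>a+1$.

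The case $i=-1$ is the delicate one. We need $x_{-1}^2=f^{-1}_{x_{-1}^1}(x_0^2)$, i.e. that the two inverse branches agree. This does not follow from closeness of $x_0$ alone. It must come from the $E$ part of the overlap: $C_0$ is built from the past orbit, but the overlap hypothesis only sees $x_0$ and $C_0$. I would use (A1) to say that $f^{-1}_{x_{-1}}$ is a diffeomorphism on $\mathfrak{E}_{x_{-1}}$ and bound $d(x_{-1}^1,\cdot)$ accordingly. If the branches differ, I would argue via the metric on $M^f$. This step is the main obstacle, and I would state (1) assuming both backward points lie in the same inverse-branch neighbourhood. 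The "in particular" claim follows by taking the minimum over $i\in\{-1,0,1\}$.

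For (2), write $C_1^{-1}-C_2^{-1}=C_1^{-1}(C_2-C_1)C_2^{-1}$. This gives $\|C_1^{-1}-C_2^{-1}\|\le\|C_1^{-1}\|\|C_2^{-1}\|\eta_1^4\eta_2^4$. Since $\eta_i\le\|C_0(\tilde{x}_i)^{-1}\|^{-2\gamma}$ with $2\gamma>1$, we have $\|C_i^{-1}\|\eta_i\le 1$, so the bound is at most $(\eta_1\eta_2)^3$. The ratio estimate then follows from $|\,\|C_1^{-1}\|-\|C_2^{-1}\|\,|<(\eta_1\eta_2)^3$, dividing by $\|C_2^{-1}\|\ge1$ (as $C_0$ is contractive) and using $1+t\le e^t$.

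For (3), multiply the three factors in $\tilde{Q}$. The $\epsilon$-power cancels. The $\rho$-ratio is $\epsilon^{\pm 8a\epsilon/\beta}$ by (1); I would restate this as $e^{\pm 8a\epsilon/\beta}$ after shrinking the constant in (1), since the argument there actually gives ratio $e^{\pm\epsilon}$. The $\|C^{-1}\|$-ratio is $e^{\pm2\gamma(\eta_1\eta_2)^3}\subset e^{\pm2\gamma\epsilon}$ by (2). Passing from $\tilde{Q}$ to $Q$ loses at most one step of the ladder $\mathcal{I}$, i.e. a factor $I^{\pm1/4}(1)$.

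For (4) and (5), consider $\psi_{\tilde{x}_2}^{-1}\circ\psi_{\tilde{x}_1}=C_2^{-1}\circ\Theta\circ\exp_{x_0^2}^{-1}\circ\exp_{x_0^1}\circ C_1$, and write it as $Id+R$. Using (\ref{exp1}), (\ref{exp2}) and (\ref{exp5}) with the Lipschitz constants $L_i$, together with $C_2^{-1}C_1=Id+C_2^{-1}(C_1-C_2)$, we get $\|R\|_{C^{1+\beta/2}}\le L\|C_2^{-1}\|(d(x_0^1,x_0^2)+\|C_1-C_2\|)\le L\eta_1^3\eta_2^3\cdot\eta_1\eta_2\|C_2^{-1}\|$. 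For small $\epsilon$ this is $<\epsilon(\eta_1\eta_2)^3$, which is (5). Then (4) follows: for $|v|<I^{-2}(\eta_1)$ we have $|v+R(v)|\le I^{-2}(\eta_1)+\epsilon(\eta_1\eta_2)^3<\eta_2$, because $\eta_2\ge I^{-1}(\eta_1)$ and $I^{-1}(\eta_1)-I^{-2}(\eta_1)$ dominates $\eta^6$ by Proposition \ref{I}(2).
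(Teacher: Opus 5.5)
The genuine gap is the case $i=-1$ of item (1). The rest is sound. For (1)--(2) with $i=0,1$ you reproduce the arguments the paper imports from \cite{Lima}. For (4) you read the inclusion off the $C^0$ part of (5), whereas the paper proves it directly from the Lipschitz property of $(y,u)\mapsto\exp_y\circ\nu_y(u)$; either works.

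For $i=-1$ you give no proof and instead assume $x^2_{-1}=f^{-1}_{x^1_{-1}}(x^2_0)$. Since $\rho(\tilde x)=d(\{x_{-1},x_0,x_1\},\Gamma_\infty)$ involves $x_{-1}$, this also leaves unproved the ``in particular'' claim about $\rho(\tilde x_1)/\rho(\tilde x_2)$, and hence the $\rho$-factor in (3).

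Your diagnosis of why this step is hard is correct. The $I$-overlap hypothesis only sees $x_0$ and $C_0$. The space $E^s$ and the Lyapunov norm on it are determined by the forward orbit of $x_0$, while the unstable data of two different pasts can be nearly identical. So $d(p(\tilde x_1),p(\tilde x_2))+\|C_1-C_2\|<\eta_1^4\eta_2^4$ does not force the two inverse branches to agree. No use of (A1)--(A2) can recover $d(x^1_{-1},x^2_{-1})$ from it.

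What is missing is an a priori bound on $d(x^1_{-1},x^2_{-1})$ of the size of the overlap quantity, and it must come from the hypothesis. The paper does not derive it from its own definition either. It defers (1)--(2) to Proposition 4.1 of \cite{Lima}, whose natural-extension argument needs exactly this control of the past coordinates.

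There are two ways to supply it. One is to compare base points in the metric of $M^f$, replacing $d(p(\tilde x_1),p(\tilde x_2))$ by $d(\tilde x_1,\tilde x_2)=\sup_{i\le 0}2^{i}d(x^1_i,x^2_i)$. The other is to add $d(p_{-1}(\tilde x_1),p_{-1}(\tilde x_2))$ to the overlap condition. Either way $d(x^1_{-1},x^2_{-1})<2\eta_1^4\eta_2^4$, and your $i=0$ computation applies verbatim to $i=-1$. That is the correct form of your remark ``argue via the metric on $M^f$''. It is a statement about how the hypothesis must be read, not something you can extract from the overlap condition as literally written.

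Two small repairs elsewhere:
\begin{itemize}
\item In (2), get the lower bound $\|C_1^{-1}\|/\|C_2^{-1}\|\ge e^{-(\eta_1\eta_2)^3}$ by dividing by $\|C_1^{-1}\|\ge 1$. The bound $1-t$ does not suffice, since $1-t\le e^{-t}$.
\item In (4), the inequality $I^{-1}(\eta_1)-I^{-2}(\eta_1)\ge\Gamma\,(I^{-2}(\eta_1))^{1+1/\gamma}$ follows from $I(s)-s=s(e^{\Gamma s^{1/\gamma}}-1)$. It does not follow from Proposition \ref{I}(2), which bounds $I^{-1}$ from below and so points the wrong way.
\end{itemize}
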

 \begin{pf}
Fix some $D\in\mathcal{D}$ such that $p(\tilde{x}_1),\ p(\tilde{x}_2)\in D$ and $d(p(\tilde{x}_1), p(\tilde{x}_2))+\|\ C_1-C_2\|<(\eta_1\eta_2)^4$. The proof of part (1) and (2) are the same as the proposition 4.1 in \cite{Lima}. And then part (3) holds by the definition of $Q(\cdot)$.

(4) Let $v\in B_{I^{-2}(\eta_1)}(0)$, by the $I$-overlap condition, we have
$$|\psi_{\tilde{x}_1}v-\psi_{\tilde{x}_2}v|=\big|\exp_{p(\tilde{x}_1)}\circ \nu_{\tilde{x}_1}(C_1v)-\exp_{p(\tilde{x}_2)}\circ \nu_{\tilde{x}_2}(C_2v)\big|<L_1(\eta_1\eta_2)^4,$$
where $L_1$ is the uniform Lipschitz constant for the map $(y,u)\mapsto(\exp_y \circ \nu_y)(u)$ on $D\times B_r(0)$. Then
\begin{equation*}
\begin{aligned}
\psi_{\tilde{x}_1}v&\in B_{L_1\eta_1^4\eta_2^4}(\psi_{\tilde{x}_2}v)=\psi_{\tilde{x}_2}\big(C(\tilde{x}_2)^{-1}\exp_{p(\tilde{x}_2)}^{-1}[B_{L_1\eta_1^4\eta_2^4}(\psi_{\tilde{x}_2}v)]\big)\\
&\subset \psi_{\tilde{x}_2}\big(C(\tilde{x}_2)^{-1}B_{2L_1\eta_1^4\eta_2^4}(\nu_{\tilde{x}_2}(C_1v))\big)\subset\psi_{\tilde{x}_{2}}\big[B_{2L_1\eta_1^4\eta_2^4\|C_2^{-1}\|}(C_2^{-1}(C_1v))\big].
\end{aligned}
\end{equation*}
Since $\eta_i\ll Q(\tilde{x}_i)< \epsilon^{\frac{20}{\beta}}$ for $i=1,\ 2$, $\exp^{-1}_{p(\tilde{x}_2)}$ is well-defined on $B_{L_1\eta_1^4\eta_2^4}(\psi_{\tilde{x}_2}v)$, so the '$=$' holds for small enough $\epsilon$. Thus, we only need to prove that $|w|<\eta_2$ if $w\in B_{2L_1\eta_1^4\eta_2^4\|C_2^{-1}\|}(C_2^{-1}(C_1v)).$
\begin{equation*}
\begin{aligned}
|w| &\leq|C_2^{-1}(C_1v)| + 2L_1\eta_1^4\eta_2^4\|C_2^{-1}\|\\
&\leq |(C_2^{-1}\circ C_1-Id)v| +|v| + 2L_1\eta_1^4\eta_2^4\|C_2^{-1}\|\\
&\leq |v| + \|C_2^{-1}\| \cdot \|C_1-C_2\|\cdot |v|  +2L_1\eta_1^4\eta_2^4\|C_2^{-1}\|\\
&\leq I^{-2}(\eta_1)+ \|C_0(\tilde{x}_2)^{-1}\|\eta_1^4\eta_2^4(I^{-2}(\eta_1)+ 2L_1)\\
&\leq I^{-2}(\eta_1)+\eta_1^4\text{ \Big(since } \eta_2^4\ll Q(\tilde{x}_2)^4 \leq (\epsilon^{\frac{20}{\beta}}\|C_0(\tilde{x}_2)\|^{-2\gamma})^4 \text{ and } \gamma>\frac{20}{\beta}.\Big)\\
&= (I^{-2}(\eta_1)-I^{-1}(\eta_1)) +I^{-1}(\eta_1)+ \eta_1^4\\
&\leq I^{-1}(\eta_1)-\frac{\Gamma}{2}\eta_1^{1+\frac{1}{\gamma}}+ \eta_1^4\\
&\leq I^{-1}(\eta_1)\leq \eta_1.
\end{aligned}
\end{equation*}

(5) Similar to the proof in part (1), for small enough $\epsilon<r-I^{-1}(r)$, we can show that $\psi_{\tilde{x}_j}[B_{I^{-1}(r)}(0)]\subset \psi_{\tilde{x}_i}[B_r(0)]$, $i,\ j=1,\ 2$. Hence, $\psi_{\tilde{x}_i}^{-1}\circ\psi_{\tilde{x}_j}$ is well defined on $B_{I^{-1}(r)}(0)$. Notice that
\begin{equation*}
\begin{aligned}
&\quad \psi_{\tilde{x}_i}^{-1}\circ \psi_{\tilde{x}_j}- Id\\
&= C_i^{-1}\circ \nu_{\tilde{x}_i}^{-1}\circ \exp_{p(\tilde{x}_i)}^{-1}\circ \exp_{p(\tilde{x}_j)} \circ \nu_{\tilde{x}_j}\circ C_j-Id\\
&= C_i^{-1}\circ \big(\nu_{\tilde{x}_i}^{-1}\circ \exp_{p(\tilde{x}_i)}^{-1}+\nu_{\tilde{x}_j}^{-1}\circ \exp_{p(\tilde{x}_j)}^{-1}-\nu_{\tilde{x}_j}^{-1}\circ \exp_{p(\tilde{x}_j)}^{-1}\big)\circ \exp_{p(\tilde{x}_j)} \circ \nu_{\tilde{x}_j}\circ C_j-C_i^{-1}\circ C_i\\
&= C_i^{-1}\circ \big(\nu_{\tilde{x}_i}^{-1}\circ \exp_{p(\tilde{x}_i)}^{-1}-\nu_{\tilde{x}_j}^{-1}\circ \exp_{p(\tilde{x}_j)}^{-1}\big)\circ \exp_{p(\tilde{x}_j)} \circ \nu_{\tilde{x}_j}\circ C_j+C_i^{-1}\circ \big(C_j-C_i\big).
\end{aligned}
\end{equation*}
Set $\Theta:= \nu_{\tilde{x}_i}^{-1}\circ \exp_{p(\tilde{x}_i)}^{-1}-\nu_{\tilde{x}_j}^{-1}\circ \exp_{p(\tilde{x}_j)}^{-1}$, then $\big\|\psi_{\tilde{x}_i}^{-1}\circ\psi_{\tilde{x}_j}-Id\big\|_{C^{1+\frac{\beta}{2}}}\leq \|C_i^{-1}\|\cdot \big\|\Theta\circ \psi_{\tilde{x}_j}\big\|_{C^{1+\frac{\beta}{2}}}+ \|C_{\tilde{x}_i^{-1}}\|\eta_1^4\eta_2^4$. Hence, we only need to calculate
\begin{equation*}
\begin{aligned}
\big\|\Theta \circ \psi_{\tilde{x}_j}\big\|_{C^{1+\frac{\beta}{2}}}=\big\|\Theta \circ \psi_{\tilde{x}_j}\big\|_{C^0}+ \big\|d(\Theta \circ \psi_{\tilde{x}_j})\big\|_{C^0}+ H\ddot{o}l_{\frac{\beta}{2}}\big(d(\Theta \circ \psi_{\tilde{x}_j})\big).
\end{aligned}
\end{equation*}  For $z_1\in B(p(\tilde{x}_i),r), z_2\in B(p(\tilde{x}_j),r)$, by inequality (\ref{exp5}),
\begin{equation}\label{exp3}
\big\|d_{z_1}\Theta-d_{z_2}\Theta\big\|\leq L_5d(p(\tilde{x}_i),p(\tilde{x}_j))d(z_1,z_2) \leq L_5\eta_1^4\eta_2^4d(z_1,z_2).
\end{equation}
(i) $\big\|\Theta \circ \psi_{\tilde{x}_j}\big\|_{C^0}=\big\|\big(\nu_{\tilde{x}_i}^{-1}\circ \exp_{p(\tilde{x}_i)}^{-1}-\nu_{\tilde{x}_j}^{-1}\circ \exp_{p(\tilde{x}_j)}^{-1}\big)\circ \psi_{\tilde{x}_j}\big\| \leq 2L_2 d(p(\tilde{x}_i),p(\tilde{x}_j)) \leq 2L_2\eta_1^4\eta_2^4,$ where $L_2$ is the uniform Lipschitz constant for the map $x \mapsto \nu_x^{-1} \circ \exp_x^{-1}$ on some $D\in\mathcal{D}$. \\
(ii) $\big\|d(\Theta \circ \psi_{\tilde{x}_j})\big\|_{C^0}\leq \big\|d\Theta\big\|_{C^0} \cdot \big\|d\psi_{\tilde{x}_j}\big\|_{C^0}\leq 2L_4d(p(\tilde{x}_i),p(\tilde{x}_j))\leq 2L_4 \eta_1^4\eta_2^4$ by inequality (\ref{exp2}).\\
(iii) For $v_1,v_2\in B_{I^{-1}(r)}(0)\subset \mathbb{R}^d$, let $z_1:=\psi_{\tilde{x}_j}(v_1),z_2:=\psi_{\tilde{x}_j}(v_2)$, we have
\begin{equation*}
    \begin{aligned}
\big\|d_{v_1}(\Theta\circ \psi_{\tilde{x}_j})-d_{v_2}(\Theta\circ \psi_{\tilde{x}_j})\big\|
&\leq \big\|d_{z_1}\Theta\cdot d_{v_1}\psi_{\tilde{x}_j}
       -d_{z_1}\Theta\cdot d_{v_2}\psi_{\tilde{x}_j}\big\|+\big\|d_{z_1}\Theta\cdot d_{v_2}\psi_{\tilde{x}_j}-d_{z_2}\Theta\cdot d_{v_2}\psi_{\tilde{x}_j}\big\|\\
&\leq \big\|d\Theta\big\|\cdot \big\|d_{v_1}\psi_{\tilde{x}_j}- d_{v_2}\psi_{\tilde{x}_j}\big\|
       + L_5\eta_1^4\eta_2^4d(\psi_{\tilde{x}_j}(v_1),\psi_{\tilde{x}_j}(v_2))\cdot \big\|d_{v_2}\psi_{\tilde{x}_j}\big\|\\
&\leq \big\|d\Theta\big\|\cdot H\ddot{o}l_{\frac{\beta}{2}}(d\psi_{\tilde{x}_j}) |v_1-v_2|^{\frac{\beta}{2}} +L_5\eta_1^4\eta_2^4\big\|d\psi_{\tilde{x}_j}\big\|^2|v_1-v_2|^{1-\frac{\beta}{2}}|v_1-v_2|^{\frac{\beta}{2}}\\
       &\leq \big(\big\|d\Theta\big\|\cdot H\ddot{o}l_{\frac{\beta}{2}}(d\psi_{\tilde{x}_j}) +L_5\eta_1^4\eta_2^4\big\|d\psi_{\tilde{x}_j}\big\|^2(2I^{-1}(r))^{1-\frac{\beta}{2}}\big)|v_1-v_2|^{\frac{\beta}{2}}.
    \end{aligned}
\end{equation*}
And $\big\|d_{v_1}(\nu_{z_1}^{-1}\circ \psi_{\tilde{x}_j})-d_{v_2}(\nu_{z_2}^{-1}\circ \psi_{\tilde{x}_j})\big\|\leq L_3|v_1-v_2|$ by inequality (\ref{exp2}),
then $H\ddot{o}l_{\frac{\beta}{2}}(d\psi_{\tilde{x}_j})\leq L_3(2I^{-1}(r))^{1-\frac{\beta}{2}}$. Thus,
\begin{equation*}
    \begin{aligned}
        H\ddot{o}l_{\frac{\beta}{2}}(d(\Theta \circ \psi_{\tilde{x}_j}))
        &\leq \big\|d\Theta\big\|\cdot H\ddot{o}l_{\frac{\beta}{2}}(d\psi_{\tilde{x}_j}) +L_5\eta_1^4\eta_2^4\big\|d\psi_{\tilde{x}_j}\big\|^2(2I^{-1}(r))^{1-\frac{\beta}{2}}\\
        &\leq L_4 \eta_1^4\eta_2^4\cdot  L_3(2I^{-1}(r))^{1-\frac{\beta}{2}}+4L_5\eta_1^4\eta_2^4 (2I^{-1}(r))^{1-\frac{\beta}{2}}.
    \end{aligned}
\end{equation*}
Hence, $\big\|\psi_{\tilde{x}_i}^{-1}\circ\psi_{\tilde{x}_j}-Id\big\|_{C^{1+\frac{\beta}{2}}}\leq \epsilon(\eta_1\eta_2)^3$ on $B_{I^{-1}(r)}(0)$.
\end{pf}

For $\tilde{x},\ \tilde{y}\in 0$-summ, if $\psi_{\hat{f}(\tilde{x})}^{\eta_1}$ and $\psi_{\tilde{y}}^{\eta_2}$ are $I$-overlap, define $F_{\tilde{x},\tilde{y}}:=\psi_{\tilde{y}}^{-1}\circ f \circ \psi_{\tilde{x}}$; and if $\psi_{\tilde{x}}^{\eta_1}$ and $\psi_{\hat{f}^{-1}(\tilde{y})}^{\eta_2}$ are $I$-overlap, define $F_{\tilde{x}, \tilde{y}}^{-1}:=\psi_{\tilde{x}}^{-1}\circ f^{-1}_{\tilde{x}} \circ \psi_{\tilde{y}}$. Notice that the inverse of the map $F_{\tilde{x},\tilde{y}}$ is $\psi_{\tilde{x}}^{-1}\circ f^{-1}_{\hat{f}^{-1}(\tilde{y})} \circ \psi_{\tilde{y}}$ if $F_{\tilde{x},\tilde{y}}$ is invertible. The following lemma shows that the maps $F_{\tilde{x},\tilde{y}},F_{\tilde{x},\tilde{y}}^{-1}$ are the inverse of each other at some small areas.
\begin{lem}
   For $\tilde{x},\ \tilde{y}\in 0$-summ, if $\psi_{\tilde{x}}^{\eta_1}$ and $\psi_{\hat{f}^{-1}(\tilde{y})}^{\eta_2}$ are $I$-overlap, then $f^{-1}_{\tilde{x}}$ and $ f^{-1}_{\hat{f}^{-1}(\tilde{y})}$ coincide on $\psi_{\tilde{y}}(B_{Q(\tilde{y})}(0))$.
\end{lem}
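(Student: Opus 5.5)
Write $x_0:=p(\tilde x)$ and $z_0:=p(\hat f^{-1}(\tilde y))=y_{-1}$. The two maps $f^{-1}_{\tilde x}=f^{-1}_{x_0}$ and $f^{-1}_{\hat f^{-1}(\tilde y)}=f^{-1}_{z_0}$ are both local inverse branches of $f$. By \ref{A1}, $f^{-1}_{x_0}$ is defined on $\mathfrak{E}_{x_0}=B(f(x_0),2\tau(x_0))$ and sends $f(x_0)$ to $x_0$. Likewise $f^{-1}_{z_0}$ is defined on $B(f(z_0),2\tau(z_0))$ and sends $f(z_0)=y_0$ to $z_0$.

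The plan has three steps.
\begin{enumerate}[label=(\roman*)]
\item Show that $U:=\psi_{\tilde y}(B_{Q(\tilde y)}(0))$ lies in both domains.
\item Show that both branches map $U$ into one small ball $B(x_0,2\tau(x_0))$.
\item Use injectivity of $f$ on that ball (\ref{A1}) to conclude. For $w\in U$, both $f^{-1}_{x_0}(w)$ and $f^{-1}_{z_0}(w)$ lie in $D_{x_0}$ and have the same image $w$ under $f$, so they coincide.
\end{enumerate}

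For step (i): since $\psi_{\tilde y}$ is $2$-Lipschitz and $C_0$ is contractive, $U\subset B(y_0,2Q(\tilde y))$. By the $I$-overlap, $d(x_0,z_0)<(\eta_1\eta_2)^4$. Then \ref{A2} gives $d(f(x_0),y_0)=d(f(x_0),f(z_0))\le d(x_0,\Gamma_\infty)^{-a}(\eta_1\eta_2)^4$. Here $\eta_1\le Q(\tilde x)\le\epsilon^{20/\beta}\rho(\tilde x)^{8a/\beta}$ and $\beta<1$, so $\rho^{8a/\beta}$ beats $d(x_0,\Gamma_\infty)^{-a}$. Hence $d(f(x_0),y_0)\ll \rho(\tilde x)^{a}$, and in fact it is much smaller than $\tau(x_0)$.

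To see that $2Q(\tilde y)$ is also small against $d(x_0,\Gamma_\infty)^a$, use Proposition \ref{overlap1}(1),(3) to compare $\rho(\hat f^{-1}\tilde y)$ with $\rho(\tilde x)$, together with $Q(\tilde y)\le \epsilon^{20/\beta}\rho(\tilde y)^{8a/\beta}$. Since $\rho(\hat f^{-1}\tilde y)$ already includes $d(y_0,\Gamma_\infty)$, this gives $\rho(\tilde y)\le d(y_0,\Gamma_\infty)$. Overlap part (1) then gives $d(y_0,\Gamma_\infty)$ comparable to $d(x_1,\Gamma_\infty)$ up to $\epsilon^{\pm\epsilon}$, and the chain of comparisons closes.

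So $U\subset B(f(x_0),\tau(x_0))\cap B(y_0,\tau(z_0))$, using $\tau(\cdot)>\min\{d(\cdot,\Gamma_\infty)^a,d(f(\cdot),\Gamma_\infty)^a\}$. Overlap part (1) again lets me compare $\tau(x_0)$ and $\tau(z_0)$ with powers of $\rho$.

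For step (ii): by \ref{A2}, $f^{-1}_{z_0}(U)\subset B(z_0,2d(z_0,\Gamma_\infty)^{-a}Q(\tilde y))$. This is within $d(x_0,z_0)+$ a tiny amount of $x_0$, hence inside $B(x_0,2\tau(x_0))=D_{x_0}$ for small $\epsilon$. Similarly $f^{-1}_{x_0}(U)\subset B(x_0,4d(x_0,\Gamma_\infty)^{-a}Q(\tilde y))\subset D_{x_0}$. Step (iii) then finishes the proof.

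The main obstacle is the bookkeeping in step (i). All radii must be shown small relative to $\tau(x_0)$, which is only bounded below by a minimum of $d^a$ at two points. This requires repeatedly transferring distances to $\Gamma_\infty$ between $x_0,z_0,x_1,y_0$ via Proposition \ref{overlap1}(1) and \ref{A2}. The built-in factor $\rho^{8a/\beta}$ in $\tilde Q_\epsilon$ should absorb every $d^{-a}$ loss, given that only finitely many such losses occur.
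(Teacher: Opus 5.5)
Your proposal is correct and follows essentially the paper's route: the paper also shows $\psi_{\tilde y}(B_{Q(\tilde y)}(0))\subset\mathfrak E_{\tilde x}=B(x_1,2\tau(x_0))$. It does this by bounding $d(x_1,y_0)$ through \ref{A2} and the overlap, and by transferring $\rho(\tilde y)^a\le\min\{d(y_{-1},\Gamma_\infty)^a,d(y_0,\Gamma_\infty)^a\}$ to $\min\{d(x_0,\Gamma_\infty)^a,d(x_1,\Gamma_\infty)^a\}<\tau(x_0)$ via Proposition \ref{overlap1}(1). Two points in your bookkeeping need correcting: it is the bound through $d(y_{-1},\Gamma_\infty)$, not a comparison of $\rho(\hat f^{-1}\tilde y)$ with $\rho(\tilde x)$, that controls $Q(\tilde y)$ against $d(x_0,\Gamma_\infty)^a$; and your radius for $f^{-1}_{x_0}(U)$ must also include the offset $d(x_1,y_0)$, which is harmless.

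The only real difference is the ending. The paper shows $f^{-1}_{\hat f^{-1}(\tilde y)}(U)\subset B(x_0,2d(x_0,\Gamma_\infty)^a\tau(x_0))\subset f^{-1}_{\tilde x}(\mathfrak E_{\tilde x})$ and concludes from $f\circ f^{-1}_{\tilde x}=\mathrm{id}$ on $\mathfrak E_{\tilde x}$. You instead place both branch images in $D_{x_0}$ and use injectivity of $f$ there, which is an equivalent and slightly more explicit finish.
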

\begin{pf}
    Let $\tilde{x}=(x_n)_{n\in\mathbb{Z}}$ and $\tilde{y}=(y_n)_{n\in\mathbb{Z}}$, and $\mathfrak{E}_{\tilde{x}}=\mathfrak{E}_{x_0}=B(x_1,2\tau(x_0))$. Since $$\psi_{\tilde{y}}(B_{Q(\tilde{y})}(0))\subset B(y_0, 2Q(\tilde{y}))\subset B(x_1, 2Q(\tilde{y})+d(x_1,y_0)),$$
and
\begin{equation}\label{tao}
   \begin{aligned}
       2Q(\tilde{y})+d(x_1,y_0))&\leq 2\epsilon^{\frac{20}{\beta}}\rho(\tilde{y})^a+ d(x_0,\Gamma_\infty)^{-a}d(x_0,y_{-1})\text{ (by assumption \ref{A2}})\\
       &\leq 2\epsilon^{\frac{20}{\beta}}\min\{d(y_{-1},\Gamma_\infty)^a, d(y_0,\Gamma_\infty)^a\}+\rho(\tilde{x})^{-a}Q(\tilde{x}) \text{ (since }d(x_0,y_{-1})\leq \eta_1^4\eta_2^4\leq Q(\tilde{x}))\\
       &\leq 2\epsilon^{\frac{20}{\beta}}e^{a\epsilon}\min\{d(x_0,\Gamma_\infty)^a, d(x_{1},\Gamma_\infty)^a\}+\epsilon\rho(\tilde{x})^a\text{ (by Proposition \ref{overlap1}(1)})\\
       &\leq \epsilon\tau(x_0)+ \epsilon\tau(x_0)=2\epsilon\tau(x_0)\leq 2\tau(x_0),
    \end{aligned}
\end{equation}
then $\psi_{\tilde{y}}(B_{Q(\tilde{y})}(0))\subset \mathfrak{E}_{\tilde{x}}$. Next, we prove $f^{-1}_{\hat{f}^{-1}(\tilde{y})}\psi_{\tilde{y}}(B_{Q(\tilde{y})}(0))\subset f^{-1}_{\tilde{x}}(\mathfrak{E}_{\tilde{x}})$. By assumption \ref{A2}, we have
$$f^{-1}_{\hat{f}^{-1}(\tilde{y})}\psi_{\tilde{y}}(B_{Q(\tilde{y})}(0))\subset f^{-1}_{\hat{f}^{-1}(\tilde{y})}B(y_0, 2Q(\tilde{y}))\subset B(y_{-1}, 2d(y_{-1},\Gamma_\infty)^{-a}Q(\tilde{y})),$$
and
$$f^{-1}_{\tilde{x}}(\mathfrak{E}_{\tilde{x}})=f^{-1}_{\tilde{x}}B(x_1,2\tau(x_0))\supset B(x_0, 2d(x_0,\Gamma_\infty)^a \tau(x_0)).$$
Since $B(y_{-1}, 2d(y_{-1},\Gamma_\infty)^{-a}Q(\tilde{y}))\subset B(x_0, 2d(y_{-1},\Gamma_\infty)^{-a}Q(\tilde{y})+d(x_0,y_{-1}))$ and
\begin{align*}
        2d(y_{-1},\Gamma_\infty)^{-a}Q(\tilde{y})+d(x_0,y_{-1})&\leq 2\rho(\tilde{y})^{-a}Q(\tilde{y}) + Q(\tilde{x})
        \leq 2\epsilon^{\frac{20}{\beta}}\rho(\tilde{y})^{2a} + \rho(\tilde{x})^{2a}\\
        &\leq 2\epsilon^{\frac{20}{\beta}}\big(\min\{d(y_{-1},\Gamma_\infty),d(y_0,\Gamma_\infty)\}\big)^{2a}+d(x_0,\Gamma_\infty)^a\rho(\tilde{x})^a\\
        &\leq 2\epsilon^{\frac{20}{\beta}}e^{2a\epsilon}\big(\min\{d(x_0,\Gamma_\infty),d(x_1,\Gamma_\infty)\}\big)^{2a}+d(x_0,\Gamma_\infty)^a\tau(x_0)\\
        &\leq 2d(x_0,\Gamma_\infty)^a\tau(x_0).
\end{align*}
\end{pf}

 The following theorem shows that $F_{\tilde{x}, \tilde{y}}$ is a small perturbation of the map $F_{\tilde{x}}$ on $B_{Q(\tilde{x})}(0)$, and $F_{\tilde{x}, \tilde{y}}^{-1}$ is a small perturbation of the map $F_{\tilde{x}}^{-1}$ on $B_{Q(\tilde{y})}(0)$.
\begin{thm}\label{F1}
    For small enough $\epsilon$, assume $\tilde{x},\ \tilde{y}\in $ 0-summ and $d_s(\tilde{x})=d_s(\tilde{y})$. If $\psi_{\hat{f}(\tilde{x})}^{\eta_1}$ $I$- overlaps $\psi_{\tilde{y}}^{\eta_2}$, then $F_{\tilde{x},\tilde{y}}$ is well-defined in $B_{Q(\tilde{x})}(0)$ and it can be written as the form $F_{\tilde{x},\tilde{y}}=D_0(\tilde{x})+\bar{H}^+$, where $\bar{H}^+$ satisfies: $\forall t\in[\eta_1,Q(\tilde{x})]$,
\begin{enumerate}[label=(\arabic*)]
\item $|\bar{H}^+(0)|\leq \epsilon\eta_1^3$,
\item $\|d\bar{H}^+\|_{C^0}\leq \sqrt{\epsilon}t^{\frac{\beta}{2}}$ on $B_{t}(0)$,
\item $H\ddot{o}l_{\frac{\beta}{2}}(d\bar{H}^+)\leq \sqrt{\epsilon}$.
\end{enumerate}
 Similarly, if $\psi_{\tilde{x}}^{\eta_1}$ $I$-overlaps $\psi_{\hat{f}^{-1}(\tilde{y})}^{\eta_2}$, then $F_{\tilde{x},\tilde{y}}^{-1}$ is well-defined in $B_{Q(\tilde{y})}(0)$ and it can be written as the form $F_{\tilde{x},\tilde{y}}^{-1}=D_0(\hat{f}^{-1}(\tilde{y}))^{-1}+\bar{H}^-$, where $\bar{H}^-$ satisfies: $\forall t\in[\eta_1,Q(\tilde{y})]$,
\begin{enumerate}[label=(\arabic*')]
\item $|\bar{H}^-(0)|\leq \epsilon\eta_1^3$,
\item $\|d\bar{H}^-\|_{C^0} \leq \sqrt{\epsilon}t^{\frac{\beta}{2}}$ on $B_{t}(0)$,\label{h-}
\item $H\ddot{o}l_{\frac{\beta}{2}}(d\bar{H}^-)\leq \sqrt{\epsilon}$.
\end{enumerate}
\end{thm}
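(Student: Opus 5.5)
The plan is the standard chart-transition decomposition (as in Sarig and Ovadia): write $F_{\tilde{x},\tilde{y}}$ as a change of charts composed with $F_{\tilde{x}}$, and bound the perturbation using Theorem \ref{F_{x}} together with Proposition \ref{overlap1}(5). Concretely,
$$F_{\tilde{x},\tilde{y}}=\psi_{\tilde{y}}^{-1}\circ f\circ\psi_{\tilde{x}}=(\psi_{\tilde{y}}^{-1}\circ\psi_{\hat{f}(\tilde{x})})\circ F_{\tilde{x}}.$$
Set $G:=\psi_{\tilde{y}}^{-1}\circ\psi_{\hat{f}(\tilde{x})}-Id$. Since $\psi_{\hat{f}(\tilde{x})}^{\eta_1}$ and $\psi_{\tilde{y}}^{\eta_2}$ $I$-overlap, Proposition \ref{overlap1}(5) gives $\|G\|_{C^{1+\frac{\beta}{2}}}<\epsilon(\eta_1\eta_2)^3$ on $B_{I^{-1}(r)}(0)$. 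Writing $F_{\tilde{x}}=D_0(\tilde{x})+H^+$ as in Theorem \ref{F_{x}}(3), we get
$$F_{\tilde{x},\tilde{y}}=D_0(\tilde{x})+\bar{H}^+,\qquad \bar{H}^+:=H^++G\circ F_{\tilde{x}}.$$

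\emph{Well-definedness.} First, $F_{\tilde{x}}$ is defined on $B_{Q(\tilde{x})}(0)$. By Theorem \ref{F_{x}}(1) (more precisely its proof), $F_{\tilde{x}}(B_{Q(\tilde{x})}(0))$ lies in a ball of radius $\epsilon^{\frac{20}{\beta}}d(x_0,\Gamma_\infty)^a\ll I^{-1}(r)$. Hence $G\circ F_{\tilde{x}}$ is defined, and $\psi_{\hat{f}(\tilde{x})}$ of this set lies in the domain of $\psi_{\tilde{y}}^{-1}$, exactly as in part (5) of Proposition \ref{overlap1}.

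\emph{Estimates.} They come out as follows.
\begin{enumerate}[label=(\roman*)]
\item $\bar{H}^+(0)=G(0)$, so $|\bar{H}^+(0)|\leq\|G\|_{C^0}<\epsilon\eta_1^3$.
\item $d\bar{H}^+=dH^++d_{F_{\tilde{x}}(\cdot)}G\cdot dF_{\tilde{x}}$. From the proof of Theorem \ref{F_{x}}(3) we have $\|d_uH^+\|\leq\frac{\epsilon^2}{3}|u|^{\frac{\beta}{2}}\leq\frac{\epsilon^2}{3}t^{\frac{\beta}{2}}$ on $B_t(0)$. The second term is bounded by $\epsilon(\eta_1\eta_2)^3\cdot 2\sqrt{2}d(x_0,\Gamma_\infty)^{-a}$, using Theorem \ref{F_{x}}(4).
\item For the H\"older constant, split as usual:
$$d_{F(u)}G\,d_uF-d_{F(v)}G\,d_vF=(d_{F(u)}G-d_{F(v)}G)\,d_uF+d_{F(v)}G\,(d_uF-d_vF).$$
The first piece is bounded using $\mathrm{H\ddot{o}l}_{\frac{\beta}{2}}(dG)$ and $\mathrm{Lip}(F_{\tilde{x}})^{1+\frac{\beta}{2}}$. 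The second is bounded using $\|dG\|$ and $\mathrm{H\ddot{o}l}_{\frac{\beta}{2}}(dH^+)\leq\epsilon^2$. Altogether this gives $\lesssim\epsilon^2+\epsilon(\eta_1\eta_2)^3 d(x_0,\Gamma_\infty)^{-2a}$.
\end{enumerate}

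The inverse statement is symmetric. There one writes
$$F^{-1}_{\tilde{x},\tilde{y}}=\psi_{\tilde{x}}^{-1}\circ f^{-1}_{\tilde{x}}\circ\psi_{\tilde{y}}=F^{-1}_{\hat{f}^{-1}(\tilde{y})}\circ(\psi_{\hat{f}^{-1}(\tilde{y})}^{-1}\circ\psi_{\tilde{x}})^{-1}\dots$$
It is more convenient to use the preceding lemma, which says $f^{-1}_{\tilde{x}}=f^{-1}_{\hat{f}^{-1}(\tilde{y})}$ on $\psi_{\tilde{y}}(B_{Q(\tilde{y})}(0))$. This gives
$$F^{-1}_{\tilde{x},\tilde{y}}=(\psi_{\tilde{x}}^{-1}\circ\psi_{\hat{f}^{-1}(\tilde{y})})\circ F^{-1}_{\hat{f}^{-1}(\tilde{y})}$$
on $B_{Q(\tilde{y})}(0)$, and the same three estimates follow using $H^-$, with $D_0(\hat{f}^{-1}(\tilde{y}))^{-1}$ as the linear part.

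\emph{Main obstacle.} The hard part is absorbing the singular factors $d(x_0,\Gamma_\infty)^{-a}$ and $d(x_0,\Gamma_\infty)^{-2a}$, which a uniform constant $F_0$ would handle in \cite{Ovadia2}. There are two issues.
\begin{enumerate}[label=(\alph*)]
\item In (ii) we need $\epsilon(\eta_1\eta_2)^3d(x_0,\Gamma_\infty)^{-a}\leq\frac{1}{2}\sqrt{\epsilon}\,t^{\frac{\beta}{2}}$ for $t\geq\eta_1$. I would use $\eta_2\leq Q(\tilde{y})\leq\epsilon^{\frac{20}{\beta}}\rho(\tilde{y})^{\frac{8a}{\beta}}$ together with Proposition \ref{overlap1}(1), which gives $\rho(\tilde{y})\approx\rho(\hat{f}(\tilde{x}))\leq d(x_1,\Gamma_\infty)$. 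One must also compare with $d(x_0,\Gamma_\infty)$; since $\rho$ involves the three points $x_{-1},x_0,x_1$, I would route this through $\rho(\hat{f}(\tilde{x}))\leq d(x_0,\Gamma_\infty)$. Then $\eta_2^3\rho^{-a}\leq\epsilon$ leaves $\eta_1^3\leq\eta_1^{\frac{\beta}{2}}\leq t^{\frac{\beta}{2}}$.
\item In (iii), the factor $\mathrm{Lip}(F)^{1+\frac{\beta}{2}}$ must be killed by the same power of $\eta_2$.
\end{enumerate}
I would carefully track that $\rho$ at neighbouring points of the chain bounds each $d(x_i,\Gamma_\infty)$ that appears.
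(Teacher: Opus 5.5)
Your proposal is correct and follows essentially the same route as the paper. Both use the factorization $F_{\tilde{x},\tilde{y}}=(\psi_{\tilde{y}}^{-1}\circ\psi_{\hat{f}(\tilde{x})})\circ F_{\tilde{x}}$, get well-definedness from Theorem~\ref{F_{x}} and Proposition~\ref{overlap1}(5), and apply the same chain-rule and H\"older splittings for (1)--(3); your inverse case uses the preceding coincidence lemma, whereas the paper only writes ``by similar calculations''. One small point in your favour: routing through $\rho(\hat{f}(\tilde{x}))\le d(x_0,\Gamma_\infty)$ (or directly through $\eta_1\le Q(\hat{f}(\tilde{x}))$) makes the absorption of the singular factor more transparent than the paper's step (2), which bounds that factor via $\rho(\tilde{x})^{-a}$.
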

\begin{pf}
    Notice that $F_{\tilde{x},\tilde{y}}=\psi_{\tilde{y}}^{-1}\circ \psi_{\hat{f}(\tilde{x})}\circ \psi_{\hat{f}(\tilde{x})}^{-1}\circ f\circ \psi_{\tilde{x}}=\psi_{\tilde{y}}^{-1}\circ \psi_{\hat{f}(\tilde{x})}\circ F_{\tilde{x}}$. Since $2\sqrt{2}d(x_0,\Gamma_{\infty})^{-a}Q(\tilde{x})\leq 2\sqrt{2}\epsilon^{\frac{20}{\beta}}\rho(\tilde{x})^a\leq I^{-1}(r)$ for small enough $\epsilon$, then
    $$F_{\tilde{x}}(B_{Q(\tilde{x})}(0))\subset B_{2\sqrt{2} d(x_0,\Gamma_{\infty})^{-a}Q(\tilde{x})}(0)\subset B_{I^{-1}(r)}(0) $$
    by Theorem \ref{F_{x}}(4). Thus, $F_{\tilde{x},\tilde{y}}$ is well-defined in $B_{Q(\tilde{x})}(0)$ by Proposition \ref{overlap1}(5). Now we estimate $\bar{H}^+$:

    (1) $|\bar{H}^+(0)|=|F_{\tilde{x},\tilde{y}}(0)|=\big|\psi_{\tilde{y}}^{-1}\circ \psi_{\hat{f}(\tilde{x})}(0)\big| \leq \epsilon\eta_1^3$ by Proposition \ref{overlap1}(5).

    (2) For any $v\in B_t(0)$, we have
    \begin{equation*}
    \begin{aligned}
        \|d_v\bar{H}^+\|&=\|d_vF_{\tilde{x},\tilde{y}}-D_0(\tilde{x})\|\\
        &\leq \|d_{F_{\tilde{x}}(v)}\big(\psi_{\tilde{y}}^{-1}\circ \psi_{\hat{f}(\tilde{x})}\big)-Id\|\cdot\|d_vF_{\tilde{x}}\|+\|d_vF_{\tilde{x}}-d_0F_{\tilde{x}}\|\text{ (since }d_0F_{\tilde{x}}=D_0(\tilde{x}))\\
        &\leq 2\sqrt{2}\epsilon\eta_1^3\eta_2^3d(x_0,\Gamma_{\infty})^{-a} + \frac{\epsilon^2}{3}|v|^{\frac{\beta}{2}}\\
        &\leq \epsilon\eta_1^2\eta_2^3(\epsilon^2+\rho(\tilde{x})^{-a}) + \frac{\epsilon^2}{3}t^{\frac{\beta}{2}}\leq \sqrt{\epsilon}t^{\frac{\beta}{2}},
    \end{aligned}
\end{equation*}
for small enough $\epsilon$. The second inequality holds by Proposition \ref{overlap1}(5), Theorem \ref{F_{x}}(4) and inequality (\ref{F0}).

(3) For any $v_1,\ v_2 \in B_{Q(\tilde{x})}(0)$, we have
    \begin{equation*}
    \begin{aligned}
        &\quad \|d_{v_1}\bar{H}^+-d_{v_2}\bar{H}^+\|=\|d_{v_1}F_{\tilde{x},\tilde{y}}-d_{v_2}F_{\tilde{x},\tilde{y}}\|\\
        &\leq \big\|d_{F_{\tilde{x}}(v_1)}\big(\psi_{\tilde{y}}^{-1}\circ \psi_{\hat{f}(\tilde{x})}\big)-d_{F_{\tilde{x}}(v_2)}\big(\psi_{\tilde{y}}^{-1}\circ \psi_{\hat{f}(\tilde{x})}\big)\big\|\cdot\|d_{v_1}F_{\tilde{x}}\|+\|d_{F_{\tilde{x}}(v_2)}\big(\psi_{\tilde{y}}^{-1}\circ \psi_{\hat{f}(\tilde{x})}\big)\|\cdot\|d_{v_1}F_{\tilde{x}}-d_{v_2}F_{\tilde{x}}\|\\
         &\leq 2\sqrt{2}\epsilon\eta_1^3\eta_2^3d(x_0,\Gamma_{\infty}^{-2a})\|F_{\tilde{x}}(v_1)-F_{\tilde{x}}(v_2)\|^{\frac{\beta}{2}} + (1+\epsilon\eta_1^3\eta_2^3)\cdot\Big(\frac{\epsilon^2}{3}|v_1-v_2|^{\frac{\beta}{2}}\Big)\text{ (by inequality (\ref{F0}))}\\
        &\leq \Big[\epsilon\eta_1^3\eta_2^3(2\sqrt{2}d(x_0,\Gamma_{\infty})^{-2a})^{1+\frac{\beta}{2}} + \frac{\epsilon^3}{3}(1+\epsilon\eta_1^3\eta_2^3)\Big]|v_1-v_2|^{\frac{\beta}{2}} \text{ (by Theorem \ref{F_{x}}(4)})\\
        &\leq \sqrt{\epsilon}|v_1-v_2|^{\frac{\beta}{2}},
    \end{aligned}
\end{equation*}
for small enough $\epsilon$. Hence,  $H\ddot{o}l_{\frac{\beta}{2}}(d\bar{H}^+)\leq \sqrt{\epsilon}$.

By similar calculations, the above conclusions hold for $F^{-1}_{\tilde{x},\tilde{y}}$.
\end{pf}

\subsection{\texorpdfstring{Coarse Graining}{Coarse Graining}}
\begin{prop}\label{graph A}
    For small enough $\epsilon>0$, there exists a countable collection $\mathcal{A}$ of Pesin charts such that
\begin{enumerate}[label=(\arabic*)]
\item Discreteness: $ \{\psi^{\eta}_{\tilde{x}}\in\mathcal{A}: \eta> t \}$ is finite for every $t>0$.
\item  Sufficiency: $\forall \tilde{x}\in RST$ and for every sequence $\{\eta_n\in \mathcal{I}\}_{n\in\mathbb{Z}}$ with $0<\eta_n<I^{\frac{-1}{4}}(Q(\hat{f}^n(\tilde{x})))$. If $\eta_n=I^{\pm1}(\eta_{n+1})\forall n\in\mathbb{N}$, then there exists a sequence $\{\psi_{\tilde{x}_n}^{\eta_n}\in \mathcal{A}\}_{n\in\mathbb{Z}}$ such that $\forall n$:
\begin{enumerate}[label=(\alph*)]
\item $\psi_{\tilde{x}_n}^{\eta_n}$ $I$-overlaps $\psi_{\hat{f}^n(\tilde{x})}^{\eta_n}$, $Q(\hat{f}^n(\tilde{x}))=I^{\frac{\pm1}{4}}(Q(\tilde{x}_n))$ and $d_s(\tilde{x}_n)=d_s(\hat{f}^n(\tilde{x}))$;\label{a1}
\item $\psi_{\hat{f}(\tilde{x}_n)}^{\eta_{n+1}}$ $I$-overlaps $\psi_{\tilde{x}_{n+1}}^{\eta_{n+1}}$;\label{b1}
\item $\psi_{\hat{f}^{-1}(\tilde{x}_n)}^{\eta_{n-1}}$ $I$-overlaps $\psi_{\tilde{x}_{n-1}}^{\eta_{n-1}}$;
\item If $\eta_n'\in\mathcal{I}$ satisfies $\eta_n\leq \eta_n'\leq \min\{Q(\tilde{x}_n), I(\eta_n)\}$, then $\psi_{\tilde{x}_n}^{\eta_n'}\in\mathcal{A}$.
\end{enumerate}
\end{enumerate}
\end{prop}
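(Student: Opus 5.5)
The plan is to follow Sarig's and Ovadia's coarse-graining argument. The one new ingredient is a precompactness device that absorbs the singular factor $\rho$. I parametrize the relevant data by a separable metric space and pick countable dense subsets at each "scale".

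\textbf{Parametrization.} For each $\tilde{x}\in$ 0-summ I record the vector
\[
\Xi(\tilde{x}):=\big(p_{-1}(\tilde{x}),p_0(\tilde{x}),p_1(\tilde{x}),\ \Theta_D\circ C_0(\hat{f}^{-1}\tilde{x}),\ \Theta_D\circ C_0(\tilde{x}),\ \Theta_D\circ C_0(\hat{f}\tilde{x}),\ Q(\hat{f}^{-1}\tilde{x}),Q(\tilde{x}),Q(\hat{f}\tilde{x})\big).
\]
Here $D\in\mathcal{D}$ is chosen from the countable cover, and the middle entries lie in a finite-dimensional matrix space.

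\textbf{Countable layers.} For $k,\ell\in\mathbb{N}$, $D\in\mathcal{D}$ and a stable dimension $d_s$, I let $Y_{k,\ell,D,d_s}$ be the set of $\tilde{x}\in RST$ with $Q(\tilde{x})=I^{-\ell/4}(1)$ and $\|C_0(\hat f^{j}\tilde{x})^{-1}\|\le k$, $\rho(\hat f^j\tilde x)\ge 1/k$ for $j=0,\pm1$. The bound $\rho\ge 1/k$ puts $p_j(\tilde{x})$ in a compact set avoiding $\Gamma_\infty$. The bound $\|C_0^{-1}\|\le k$ makes the $C_0$-entries lie in a compact set of matrices. By Lemma~\ref{C}(3) and assumption \ref{A2}, the neighbouring quantities are then controlled as well. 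So $\Xi(Y_{k,\ell,D,d_s})$ is precompact. For each layer I take a finite $\delta_\ell$-net of it, with $\delta_\ell\ll (I^{-\ell/4-8}(1))^{8}$, and with net points chosen inside $\Xi(Y)$. I then put into $\mathcal{A}$ every chart $\psi^{\eta}_{\tilde{y}}$ where $\tilde{y}$ is a net point and $\eta\in\mathcal{I}$ satisfies $\eta\le Q(\tilde{y})$ and $\eta\geq I^{-1}$ of a small power of $Q(\tilde y)$-related scale. In practice I allow all $\eta\in\mathcal I$ with $\eta\le Q(\tilde y)$, which is what makes (d) automatic.

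\textbf{Discreteness.} If $\eta>t$, then $Q(\tilde{y})\ge\eta>t$. By the definition of $\tilde Q_\epsilon$, this forces $\ell$ to be bounded, $\|C_0^{-1}\|\le t^{-1/(2\gamma)}$ and $\rho\ge t^{\beta/8a}$. So only finitely many $(k,\ell)$ contribute. The $D$'s are also finitely many, since the compact set $\{\rho\ge c\}$ is covered by finitely many elements of $\mathcal D$. Each layer contributes a finite net and finitely many $\eta$. The main care here is checking that $k$ is really bounded in terms of $t$, which follows from $\tilde Q$.

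\textbf{Sufficiency.} Given $\tilde{x}\in RST$ and the sequence $\eta_n$, for each $n$ I pick the layer containing $\hat f^n\tilde x$. I let $\tilde{x}_n$ be a net point with $|\Xi(\tilde{x}_n)-\Xi(\hat f^n\tilde x)|<\delta_\ell$.

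\emph{(a).} The distance and $C_0$ terms are below $\delta_\ell\le\eta_n^8$. This uses $\eta_n<I^{-1/4}(Q)$ to compare $\eta_n$ with $I^{-\ell/4}(1)$, and gives the $I$-overlap. The equality of $Q$ up to $I^{\pm1/4}$ follows from Proposition~\ref{overlap1}(3), since $Q$ is one of the recorded coordinates. Equality of $d_s$ holds because $d_s$ is part of the layer label.

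\emph{(b) and (c).} Since $\Xi$ also records $p_{\pm1}$ and $C_0(\hat f^{\pm1}\cdot)$, the points $\hat f(\tilde x_n)$ and $\hat f^{n+1}\tilde x$ are $\delta$-close. Combined with (a) at index $n+1$ and the triangle inequality, this makes $\psi^{\eta_{n+1}}_{\hat f(\tilde x_n)}$ and $\psi^{\eta_{n+1}}_{\tilde x_{n+1}}$ close with error $2\delta\ll\eta_{n+1}^8$, hence $I$-overlapping. Case (c) is symmetric.

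\emph{(d).} This holds by construction.

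I expect the main obstacle to be choosing $\delta_\ell$ uniformly over a layer, so that the triangle-inequality errors in (b) and (c) stay below $\eta_{n+1}^4\eta_{n+1}^4$. This requires $\eta_{n\pm1}$ to be comparable to $I^{-\ell/4}(1)$ up to a bounded number of $I$-steps. I would also handle $\eta$'s much smaller than $Q$ by letting the net fineness depend on $\eta$: I index the nets by $(k,\ell,m)$ with $\eta=I^{-m/4}(1)$, which keeps discreteness intact.
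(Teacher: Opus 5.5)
Your construction is the paper's: a precompact parametrization of $(p_{-1},p_0,p_1)$, the three $\Theta_D\circ C_0$'s and the three $Q$'s, finite nets in countably many layers, and (after your final correction) net fineness tied to $\eta$ through an index $m$. \textbf{The gap is in Discreteness, where you claim that $Q(\tilde y)>t$ bounds the layer index $k$; it does not.} Your layers impose $\rho(\hat f^{\pm1}\tilde y)\ge 1/k$. But $\rho(\hat f\tilde y)=d(\{y_0,y_1,y_2\},\Gamma_\infty)$ and $\rho(\hat f^{-1}\tilde y)=d(\{y_{-2},y_{-1},y_0\},\Gamma_\infty)$ involve $y_{\pm2}$, and $Q(\tilde y)>t$ says nothing about these points. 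It only bounds $\rho(\tilde y)$ from below and $\|C_0(\tilde y)^{-1}\|$ from above. Lemma~\ref{C}(3) then controls $\|C_0(\hat f^{\pm1}\tilde y)^{-1}\|$, so that part is fine. However, neither Lemma~\ref{C}(3) nor assumption \ref{A2} stops $y_2=f(y_1)$ or $y_{-2}$ from being arbitrarily close to $\Gamma_\infty$ while $Q(\tilde y)>t$. Your layers are nested, so every large layer contains points with $Q(\tilde y)>t$ that its net must cover. Nothing in the argument then prevents infinitely many net points, and hence infinitely many charts with $\eta>t$, from entering $\mathcal A$. You also cannot simply drop the $\rho(\hat f^{\pm1})$ bound. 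It is what keeps the recorded coordinates $Q(\hat f^{\pm1}\tilde y)$ in a compact part of $(0,1]$. You need those coordinates, because the chart $\psi^{\eta_{n+1}}_{\hat f(\tilde x_n)}$ in (b) exists only if $\eta_{n+1}\le Q(\hat f(\tilde x_n))$.

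\textbf{Fix.} Couple $\eta$ to the neighbouring $Q$'s in the definition of $\mathcal A$. For example, admit $\psi^\eta_{\tilde y}$ only when also $\eta\le I^{2}(Q(\hat f^{\pm1}\tilde y))$. Sufficiency and (d) are unaffected. Indeed, $\eta_n'\le I(\eta_n)\le I^2(\eta_{n\pm1})$, and $\eta_{n\pm1}<I^{-1/4}(Q(\hat f^{n\pm1}\tilde x))\le Q(\hat f^{\pm1}\tilde x_n)$ by the net property of the recorded $Q$-coordinates. Now $\eta>t$ forces $Q(\hat f^{\pm1}\tilde y)> I^{-2}(t)$. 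This bounds $\rho(\hat f^{\pm1}\tilde y)$ from below, and hence bounds $k$.

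\textbf{The paper has the same defect.} Its written bound on $k^{(\pm1)}$ derives a lower bound on $Q(\hat f^{\pm1}\tilde x)/Q(\tilde x)$ from Lemma~\ref{C}(3) alone. That ignores the factor $(\rho(\hat f^{\pm1}\tilde x)/\rho(\tilde x))^{8a/\beta}$; compare the paper's own remark after Lemma~\ref{S1}. The same restriction is needed there.

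\textbf{The $\eta$-bookkeeping also needs repair.} Once nets are indexed by $m$, admitting all $\eta\le Q(\tilde y)$ breaks Discreteness again: the fine nets for large $m$ contribute charts of every size. Admitting only $\eta=I^{-m/4}(1)$ breaks (d) instead. Use a window of a few $I$-steps around the net scale, with the fineness chosen from the bottom of the window. The paper does this with $e^{-(m+2)}\le\eta<e^{-(m-2)}$ and fineness $e^{-8(m+2)}$.
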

\begin{pf}
    Denote $X:= M^3\times GL(d,\mathbb{R})^3\times (0,1]^3$ with the product topology. For $\tilde{x}\in RST$, there exist three elements $D_{-1},D_0,D_1\in\mathcal{D}$ with $p_{-1}(\tilde{x}),p(\tilde{x}),p_1(\tilde{x})\in D_{-1}\times D_0\times D_1$, and let
\begin{align*}
    Y:=\Big\{ (\uline{\tilde{x}}, \uline{C_0},\uline{Q(\tilde{x})} ):&\
   \uline{\tilde{x}}= \big(p_{-1}(\tilde{x}),p(\tilde{x}),p_1(\tilde{x}) \big), \uline{Q(\tilde{x})}=\Big(Q\big(\hat{f}^{-1}(\tilde{x})\big),Q(\tilde{x}),Q\big(\hat{f}(\tilde{x})\big)\Big)\\
   &\text{ and }\uline{C_0}= \Big(\Theta_{D_{-1}}\circ C_0\big(\hat{f}^{-1}(\tilde{x})\big),\Theta_{D_0}\circ C_0(\tilde{x}),\Theta_{D_1}\circ C_0\big(\hat{f}(\tilde{x})\big) \Big)\Big\}.
\end{align*}
  For $\uline{k}=(k^{(-1)},k^{(0)},k^{(1)})\in\mathbb{N}^3$ and $l\in\{0,1,\dots, d\}$, we denote
  $$Y_{\uline{k},l}:= \big\{ (\uline{\tilde{x}}, \uline{C_0},\uline{Q(\tilde{x})} ):e^{-k^{(i)}-1}\leq \uline{Q(\tilde{x})}\leq e^{-k^{(i)}}, d_s(\tilde{x})=l,i=-1,0,1\big\}.$$
  Now we prove that $Y_{\uline{k},l}$ is precompact in $X$:

  (i) Since $\{ (p_{-1}(\tilde{x}),p(\tilde{x}),p_1(\tilde{x}) ):\tilde{x}\in RST\}$ is a subset of $M^3$ and $M$ is compact, then $\{ (p_{-1}(\tilde{x}),p(\tilde{x}),p_1(\tilde{x}) ):\tilde{x}\in RST\}$ is precompact;

  (ii) For $i=-1, 0,1$, since $\Theta_{D_i}$ is a linear isometry, $\|C_0(\hat{f}^i(\tilde{x}))\|<1$ and $$\|C_0(\hat{f}^i(\tilde{x}))^{-1}\|\leq (\epsilon^{\frac{20}{\beta}})^{\frac{1}{2\gamma}}Q(\hat{f}^i(\tilde{x}))^{-\frac{1}{2\gamma}}\leq (\epsilon^{\frac{20}{\beta}}e^{k^{(i)}+1})^{\frac{1}{2\gamma}}$$ by the definition of $Q(\hat{f}^i(\tilde{x}))$, thus $ \uline{C_0}$ belongs to a compact set.

  (iii) Since $e^{-k^{(i)}-1}\leq Q(\hat{f}^{i}(\tilde{x}))\leq 1$ and $[e^{-k^{(i)}-1},1]$ is compact, then $\{\uline{Q(\tilde{x})}:\tilde{x} \in RST\}$ is precompact.

 With the product topology, we have $Y_{\uline{k},l}\subset X$ is a precompact set. Hence, we can find a finite set $Y_{\uline{k},l,m}$ that satisfies $\forall (\uline{\tilde{x}}, \uline{C_0},\uline{Q(\tilde{x})})\in Y_{\uline{k},l}$ there exists an element $(\uline{\tilde{y}}, \uline{C_0'},\uline{Q(\tilde{y})})\in Y_{\uline{k},l,m}$ s.t. $\forall i=-1,0,1$

  $\bullet \quad d(p_i(\tilde{x}),p_i(\tilde{y}))< \frac{1}{2}\omega(\mathcal{D})$;

   $\bullet \quad \exists D \in\mathcal{D}$ s.t. $p_i(\tilde{x})$ and $p_i(\tilde{y})$ belong to the same $D$, and $d(p_i(\tilde{x}),p_i(\tilde{y}))+\|\Theta_{D}\circ C_0(\hat{f}^i(\tilde{x}))-\Theta_{D}\circ C_0(\hat{f}^i(\tilde{y}))\|\leq e^{-8(m+2)}$;

  $\bullet \quad Q(\hat{f}^i(\tilde{x}))=I^{\pm\frac{1}{4}}Q(\hat{f}^i(\tilde{y}))$.

  Let
  \begin{equation}\label{A}
      \begin{aligned}
          \mathcal{A}:=\big\{\psi_{\tilde{x}}^\eta: &(\uline{\tilde{x}}, \uline{C_0},\uline{Q(\tilde{x})} )\in Y_{\uline{k},l,m} \text{ for some }\uline{k},m\in\mathbb{N},l\in\{1,2,\cdots,d\},\\
          &\text{ and }\eta\in\mathcal{I}, 0<\eta\leq Q(\tilde{x}),e^{-(m+2)}\leq \eta <e^{-(m-2)}\big\}.
      \end{aligned}
  \end{equation}
  Hence, $\mathcal{A}$ is a countable collection of Pesin charts.

(1) (Discreteness) For any $\psi_{\tilde{x}}^\eta\in\mathcal{A}$, we have $\eta <e^{-(m-2)}$ and $Q(\hat{f}^{i}(\tilde{x}))\leq e^{-k^{(i)}}$ (since $(\uline{\tilde{x}}, \uline{C_0},\uline{Q(\tilde{x})})\in Y_{\uline{k},l,m}\subset Y_{\uline{k},l})$. Then $m\leq |\log\eta|+2$ and $k^{(i)}\leq |\log Q(\hat{f}^i(\tilde{x}))|$. Note that $\eta\leq Q(\tilde{x})\leq \rho(\tilde{x})^{\frac{8a}{\beta}}\leq 1$, then $k^{(0)}\leq |\log\eta|$ and $\frac{\eta}{2^\gamma} d(x_0,\Gamma_\infty)^{4a\gamma}\leq \frac{Q(\hat{f}(\tilde{x}))}{Q(\tilde{x})}\leq \frac{2^\gamma}{\eta} d(x_0,\Gamma_\infty)^{-4a\gamma}$ by Proposition \ref{C}(3). Furthermore,
$$k^{(\pm1)}\leq |\log Q(\hat{f}^{\pm}(\tilde{x}))|\leq \Big|\log \Big(\frac{1}{2^\gamma}\eta^{1+\frac{\beta\gamma}{2}}Q(\tilde{x})\Big)\Big|\leq \Big|\log \Big(\frac{1}{2^\gamma}\eta^{2+\frac{\beta\gamma}{2}}\Big)\Big|\leq \Big|\log \Big(\frac{1}{2^\gamma}t^{2+\frac{\beta\gamma}{2}}\Big)\Big|,$$
by $\frac{\eta}{2^\gamma} d(x_0,\Gamma_\infty)^{4a\gamma}\geq \frac{\eta}{2^\gamma} \rho(\tilde{x})^{4a\gamma}\geq \frac{1}{2^\gamma}\eta^{1+\frac{\beta\gamma}{2}}$.
Hence,
$$|\{\psi^{\eta}_{\tilde{x}}\in\mathcal{A}: \eta> t \}|\leq \sum_{\substack{k^{(0)},m<|\log t|+2,k^{(\pm1)}\leq \big|\log (\frac{1}{2^\gamma}t^{2+\frac{\beta\gamma}{2}})\big|\\
l\in\{0,1,\dots, d\}}}|Y_{\uline{k},l,m}|\times |\{\eta\in\mathcal{I}: \eta>t\}|<\infty.$$

(2) (Sufficiency) Notice that $I^{\frac{-1}{4}}(Q(\hat{f}^n(\tilde{x})))\leq Q(\hat{f}^n(\tilde{x}))$ by Proposition \ref{I}(6). For any $\tilde{x}\in RST$ and $\eta_n\in\mathcal{I}$ satisfy $0<\eta_n<I^{\frac{-1}{4}}(Q(\hat{f}^n(\tilde{x})))$ and $\eta_n=I^{\pm1}(\eta_{n+1})\ \forall n\in\mathbb{N}$. Suppose $\uline{k_n},m_n$ s.t. $e^{-(m_n+1)}\leq \eta_n\leq e^{-m_n+1}$ and $e^{-k_n^{(i)}-1}\leq Q(\hat{f}^{(i)}(\hat{f}^n(\tilde{x})))\leq e^{k_n^{(i)}},i=-1,0,1$, and $l_n=d_s(\hat{f}^n(\tilde{x}))\ \forall n\in\mathbb{N}$. Then $(\uline{\hat{f}^n(\tilde{x})}, \uline{C_0},\uline{Q(\hat{f}^n(\tilde{x}))}) \in Y_{\uline{k}_n,l_n}$, and so there exist element $(\uline{\tilde{x}_n}, \uline{C_0'},\uline{Q(\tilde{x}_n)}) \in Y_{\uline{k}_n,l_n,m_n}$ such that $\forall n\in\mathbb{N}$:
\begin{enumerate}[label=(B\arabic*)]
\item $d(p_i(\hat{f}^n(\tilde{x})),p_i(\tilde{x}_n))< \frac{1}{2}\omega(\mathcal{D})$;\label{B1}
\item For $i=-1, 0,1$, $p_i(\hat{f}^n(\tilde{x}))$ and $p_i(\tilde{x}_n)$ belong to the same $D\in\mathcal{D}$, and $d(p_i(\hat{f}^n(\tilde{x})),p_i(\tilde{x}_n))+\|\Theta_{D}\circ C_0(\hat{f}^i(\hat{f}^n(\tilde{x})))-\Theta_{D}\circ C_0(\hat{f}^i(\tilde{x}_n))\|\leq e^{-8(m_n+2)}$;\label{B2}
\item $Q(\hat{f}^i(\hat{f}^n(\tilde{x})))=I^{\pm\frac{1}{4}}Q(\hat{f}^i(\tilde{x}_n))$;\label{B3}
\item $d_s(\tilde{x}_n)=d_s(\hat{f}^n(\tilde{x}))=l_n$.
\end{enumerate}
And the Pesin chart $\psi_{\tilde{x}_n}^{\eta_n}$ belongs to $\mathcal{A},\ \forall n\in\mathbb{N}$.

  (a) We just need to prove $\psi_{\tilde{x}_n}^{\eta_n}$ $I$-overlaps $\psi_{\hat{f}^n(\tilde{x})}^{\eta_n}$: According to \ref{B1} with $i=0$, there exists $D_0\in\mathcal{D}$ s.t. $p(\hat{f}^n(\tilde{x}))$ and $p(\tilde{x}_n)$ belong to $D_0$. And
  $$d\big(p(\hat{f}^n(\tilde{x})),p(\tilde{x}_n)\big)+\|\Theta_{D_0}\circ C_0(\hat{f}^n(\tilde{x}))-\Theta_{D_i}\circ C_0(\tilde{x}_n)\|\leq e^{-8(m_n+2)}\leq \eta_n^4\eta_n^4,$$
the last inequality holds since $\eta_n\geq e^{-m_n-1 }$.

(b) We need to prove $\psi_{\hat{f}(\tilde{x}_n)}^{\eta_{n+1}}$ $I$-overlaps $\psi_{\tilde{x}_{n+1}}^{\eta_{n+1}}$: In the case of $n$, by \ref{B1} with $i=1$, $d(p_1(\hat{f}^n(\tilde{x})),p_1(\tilde{x}_n))< \frac{1}{2}\omega(\mathcal{D})$; and in the case of $n+1$, by \ref{B1} with $i=0$, $ d(p(\hat{f}^{n+1}(\tilde{x})),p(\tilde{x}_{n+1})< \frac{1}{2}\omega(\mathcal{D})$. Thus we can find a set $D\in\mathcal{D}$ such that $\hat{f}(\tilde{x}_n),\ \hat{f}^{n+1}(\tilde{x})$ and $\tilde{x}_{n+1}$ belong to the same set $D$. Hence,
\begin{equation*}
    \begin{aligned}
        &\quad d(p_1(\tilde{x}_n),p(\tilde{x}_{n+1}))+\|\Theta_{D}\circ C_0(\hat{f}(\tilde{x}_n))-\Theta_{D}\circ C_0(\tilde{x}_{n+1})\|\\
        &\leq d(p_1(\tilde{x}_{n}),p_1(\hat{f}^n(\tilde{x})))+\|\Theta_{D}\circ C_0(\hat{f}(\tilde{x}_n))-\Theta_{D}\circ C_0(\hat{f}(\hat{f}^n(\tilde{x})))\|\\
        &+ d(p_1(\hat{f}^n(\tilde{x})),p(\tilde{x}_{n+1}))+\|\Theta_{D}\circ C_0(\hat{f}(\hat{f}^n(\tilde{x})))-\Theta_{D}\circ C_0(\tilde{x}_{n+1})\|\\
        &\leq e^{-8(m_n+2)}+e^{-8(m_{n+1}+2)}\text{ (by \ref{B2}})\\
        &\leq e^{-8}(\eta_n^8+\eta_{n+1}^8)\\
        &\leq e^{-8}\Big(e^{\Gamma\eta_{n+1}^{\frac{1}{\gamma}}}+1\Big)\eta_{n+1}^8 \text{ (since } \eta_n\leq I(\eta_{n+1}))\\
        &\leq \eta_{n+1}^4\eta_{n+1}^4 \text{ (since } \eta_{n+1}\ll 1).
    \end{aligned}
\end{equation*}
Similarly, we can prove (c).

(d) By $\psi_{\tilde{x}_n}^{\eta_n}\in \mathcal{A}$ and $e^{-m_n-2}\leq \eta_n\leq \eta_n'\leq I(\eta_n)= e^{\Gamma\eta_n^{\frac{1}{\gamma}}}\eta_n\leq e^{-m_n+2}$, part (d) holds.
\end{pf}
\begin{mdef}
    An \textit{$I$-double chart} is a pair of Pesin charts $\psi_{\tilde{x}}^{p^s,p^u}:=(\psi_{\tilde{x}}^{p^s},\psi_{\tilde{x}}^{p^u})$ where $p^s,p^u\in\mathcal{I}$ and $0<p^s,p^u\leq Q(\tilde{x})$. For simplicity, we just say $\psi_{\tilde{x}}^{p^s,p^u}$ \textit{double chart}.
\end{mdef}

Write $\psi_{\tilde{x}}^{p^s,p^u}\rightarrow\psi_{\tilde{y}}^{q^s,q^u}$ if
\begin{enumerate}[label=(\arabic*)]
\item $\psi_{\tilde{x}}^{p^s\land p^u}$ $I$-overlaps $\psi_{\hat{f}^{-1}(\tilde{y})}^{p^s\land p^u}$ and $\psi_{\hat{f}(\tilde{x})}^{q^s\land q^u}$ $I$-overlaps $\psi_{\tilde{y}}^{q^s\land q^u}$;
\item $p^s=\min\{I(q^s),Q(\tilde{x})\}$ and $q^u=\min\{I(p^u),Q(\tilde{y})\}$.
\end{enumerate}
\begin{mdef}
    An \textit{$I$-chain} is a sequence $\{\psi_{\tilde{x}_i}^{p_i^s,p_i^u}\}_{i\in\mathbb{Z}}$ of double charts such that $\psi_{\tilde{x}_i}^{p_i^s,p_i^u}\rightarrow\psi_{\tilde{x}_{i+1}}^{p_{i+1}^s,p_{i+1}^u}\ \forall i\in\mathbb{Z}$. And the sequence $\{\psi_{\tilde{x}_i}^{p_i^s,p_i^u}\}_{i\geq 0}$ (resp. $\{\psi_{\tilde{x}_i}^{p_i^s,p_i^u}\}_{i\leq 0}$) is called \textit{positive (resp. negative) $I$-chain} if $\psi_{\tilde{x}_i}^{p_i^s,p_i^u}\rightarrow\psi_{\tilde{x}_{i+1}}^{p_{i+1}^s,p_{i+1}^u}\ \forall i\geq 0$ (resp. $i\leq0$).
\end{mdef}

Let $\mathcal{G}$ be a countable directed graph with vertices $\mathcal{V}$ and edges $\mathcal{E}$ where

$$\mathcal{V}:=\big\{\psi_{\tilde{x}}^{p^s,p^u}:\psi_{\tilde{x}}^{p^s\land p^u}\in\mathcal{A}, p^s,p^u\in \mathcal{I}, 0<p^s,p^u\leq Q(\tilde{x})\big\}\text{ ($\mathcal{A}$ is constructed by (\ref{A}))}$$
and
$$\mathcal{E}:=\big\{(\psi_{\tilde{x}}^{p^s,p^u},\psi_{\tilde{y}}^{q^s,q^u})\in\mathcal{V}\times\mathcal{V}: \psi_{\tilde{x}}^{p^s,p^u}\rightarrow\psi_{\tilde{y}}^{q^s,q^u}\big\}.$$
By the above definitions, it is easy to prove that  $q^u\land q^s=I^{\pm1}(p^u\land p^s)$ if $\psi_{\tilde{x}}^{p^s,p^u}\rightarrow\psi_{\tilde{y}}^{q^s,q^u}$. Then for every $\psi_{\tilde{x}}^{p^s,p^u}\in\mathcal{V}$, there are only finitely many double charts $\psi_{\tilde{y}}^{q^s,q^u}$ in $\mathcal{V}$ such that $\psi_{\tilde{x}}^{p^s,p^u}\rightarrow\psi_{\tilde{y}}^{q^s,q^u}$ or $\psi_{\tilde{y}}^{q^s,q^u}\rightarrow \psi_{\tilde{x}}^{p^s,p^u}$ by Proposition \ref{graph A}(1). Hence, every vertex of $\mathcal{G}$ has finite degree. Let
$$\Sigma(\mathcal{G}):=\big\{\{\psi_{\tilde{x}_i}^{p_i^s,p_i^u}\}_{i\in\mathbb{Z}}:(\psi_{\tilde{x}}^{p^s,p^u},\psi_{\tilde{y}}^{q^s,q^u})\in\mathcal{E},\forall i\in \mathbb{Z\big\}}$$
 equipped with left shift map $\sigma$ which is \textit{the topological Markov shift associated to $\mathcal{G}$}.
\begin{mdef}
    Let $(Q_k)_{k\in\mathbb{Z}}$ be a sequence in $\mathcal{I}:= \{I^{-\frac{\ell}{4}}(1)\}_{\ell \in \mathbb{N}}$. A sequence of pairs $\{(p_k^s,p_k^u)\}_{k\in\mathbb{Z}}$ is called \textit{$I$-strongly subordinated to $(Q_k)_{k\in\mathbb{Z}}$} if $\forall k\in\mathbb{Z}$:
\begin{enumerate}[label=(\arabic*)]
\item $p_k^s,p_k^u\in\mathcal{I}$ and $0<p_k^s,p^u_k\leq Q_k$;
\item $p_{k+1}^u=\min\{I(p_k^u),Q_{k+1}\}$ and $p_{k-1}^s=\min\{I(p_k^s),Q_{k-1}\}$.
\end{enumerate}
\end{mdef}

The following two lemmas  are the counterparts of lemma 2.26 and 2.27 in \cite{Ovadia2}, we omit the proofs.
\begin{lem}\label{CG1}
  Let $(Q_k)_{k\in\mathbb{Z}}$ be a sequence in $\mathcal{I}:= \{I^{-\frac{\ell}{4}}(1)\}_{\ell \in \mathbb{N}}$, and a sequence $(q_k)_{k\in\mathbb{Z}}$ in $\mathcal{I}$ such that $0<q_k\leq Q_k$ and $q_k=I^{\pm1}(q_{k+1})\ \forall k\in\mathbb{Z}$. Then there exists a sequence of pairs $\{(p_k^s,p_k^u)\}_{k\in\mathbb{Z}}$ which is $I$-strongly subordinated to $(Q_k)_{k\in\mathbb{Z}}$ and $p_k^s\land p_k^u\geq q_k,\ \forall k\in\mathbb{Z}$.
\end{lem}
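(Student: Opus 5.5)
The plan is to construct $p^u_k$ and $p^s_k$ separately by one-sided recursions, each started far away, and then take limits. This follows Ovadia's argument for Lemma 2.26 in \cite{Ovadia2}.

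For the unstable parameters, fix $N\in\mathbb{N}$ and define a finite sequence starting at $-N$. Set $p^{u,N}_{-N}:=q_{-N}$ and recursively $p^{u,N}_{k+1}:=\min\{I(p^{u,N}_k),Q_{k+1}\}$ for $k\ge -N$. By induction, $p^{u,N}_k\ge q_k$ for all $k\ge -N$:
\begin{itemize}[label=$\cdot$]
\item if the minimum equals $Q_{k+1}$, this is at least $q_{k+1}$ by hypothesis;
\item otherwise it equals $I(p^{u,N}_k)\ge I(q_k)\ge q_{k+1}$, since $I$ is increasing (Proposition \ref{I}(1)) and $q_{k+1}=I^{\pm1}(q_k)\le I(q_k)$.
\end{itemize}
We also need $p^{u,N}_k\in\mathcal{I}$. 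Since $I$ maps $I^{-\ell/4}(1)$ to $I^{-(\ell-4)/4}(1)$, the set $\mathcal{I}$ is closed under $I$ as long as we stay below $1$. The cap by $Q_{k+1}\le 1$ handles the top, and the minimum of two elements of $\mathcal{I}$ lies in $\mathcal{I}$.

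Next, pass to $N\to\infty$. I will show that $N\mapsto p^{u,N}_k$ is monotone non-increasing for each fixed $k$. Indeed $p^{u,N+1}_{-N}=\min\{I(p^{u,N+1}_{-N-1}),Q_{-N}\}\ge q_{-N}=p^{u,N}_{-N}$ gives the opposite direction, so the naive choice is not monotone downward. Instead, compare via the recursion: the map $t\mapsto\min\{I(t),Q_{k+1}\}$ is monotone, so the ordering at time $-N$ propagates. This makes $p^{u,N}_k$ non-decreasing in $N$, and it is bounded above by $Q_k$.

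Because $\mathcal{I}\cap[q_k,Q_k]$ is finite (the set $\mathcal{I}$ accumulates only at $0$), the sequence stabilizes. Define $p^u_k:=\lim_N p^{u,N}_k$. The recursion $p^u_{k+1}=\min\{I(p^u_k),Q_{k+1}\}$ and the bounds $q_k\le p^u_k\le Q_k$ pass to the limit, since the sequences are eventually constant.

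Symmetrically, define $p^{s,N}_N:=q_N$ and $p^{s,N}_{k-1}:=\min\{I(p^{s,N}_k),Q_{k-1}\}$ for $k\le N$. The same induction, now using $q_{k-1}\le I(q_k)$, gives $p^{s,N}_k\ge q_k$. The same monotonicity and finiteness argument yields a limit $p^s_k$ satisfying $p^s_{k-1}=\min\{I(p^s_k),Q_{k-1}\}$.

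Both conditions of $I$-strong subordination then hold, and $p^s_k\wedge p^u_k\ge q_k$. The only delicate points are that $\mathcal{I}$ is closed under $I$ below $1$, and that $\mathcal{I}$ has finitely many elements above any positive level. Both follow from the definition $\mathcal{I}=\{I^{-\ell/4}(1)\}$ and Proposition \ref{I}(6).
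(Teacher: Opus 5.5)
Your proof is correct. The paper gives no proof of this lemma; it only defers to Ovadia's Lemma 2.26, so there is no in-text argument to compare against. What you wrote is a valid self-contained proof. The points you single out are the ones that matter:
\begin{itemize}
\item closure of $\mathcal{I}$ under $t\mapsto\min\{I(t),Q\}$: if $t=I^{-\ell/4}(1)$ with $\ell\ge 4$ then $I(t)\in\mathcal{I}$, and if $\ell<4$ then $I(t)>1\ge Q$;
\item the inequalities $q_{k\pm1}\le I(q_k)$;
\item finiteness of $\mathcal{I}\cap[q_k,1]$, via Proposition \ref{I}(6).
\end{itemize}
One clean-up is needed. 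Your monotonicity paragraph first announces that $N\mapsto p^{u,N}_k$ is non-increasing and then reverses itself. State directly that it is non-decreasing: $p^{u,N+1}_{-N}\ge q_{-N}=p^{u,N}_{-N}$, and $t\mapsto\min\{I(t),Q_{k+1}\}$ is monotone, so the inequality propagates forward.

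A more direct route avoids the limiting procedure by writing down the maximal solution, extending $I(t)=te^{\Gamma t^{1/\gamma}}$ to all $t>0$:
\[
p^u_k:=\min_{m\ge 0}I^m(Q_{k-m}),\qquad p^s_k:=\min_{m\ge 0}I^m(Q_{k+m}).
\]
\begin{itemize}
\item Each term is at least $I^m(q_{k\mp m})\ge q_k$.
\item A term that is $\le 1$ lies in $\mathcal{I}$, and a term $>1$ exceeds $Q_k$. So the minimum is effectively taken over a subset of the finite set $\mathcal{I}\cap[q_k,1]$ that contains $Q_k$; it is attained and lies in $\mathcal{I}$.
\item The recursions follow from $I(\min S)=\min I(S)$.
\end{itemize}
This gives the largest $I$-strongly subordinated sequence in one line.

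Your finite-horizon construction, seeded at $q_{\mp N}$, generally produces a smaller solution. For example, with $Q_k\equiv 1$ and $q_{k+1}=I(q_k)$ for $k<0$, it gives $p^u_k=q_k$ for $k\le 0$, while the closed form gives $1$. This is harmless, since the lemma only asks for $p^s_k\wedge p^u_k\ge q_k$. Your approach has the advantage that this lower bound is visible by a one-step induction. Its cost is the extra monotone-stabilization argument.
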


\begin{lem}\label{limsup}
 Suppose $\{(p_k^s,p_k^u)\}_{k\in\mathbb{Z}}$ $I$-strongly subordinated to $(Q_k)_{k\in\mathbb{Z}}$ and $p_k:=p_k^s\land p_k^u$. If $\limsup_{k\rightarrow\infty}p_k>0$ and $\limsup_{k\rightarrow -\infty}p_k>0$, then there exist infinitely many $k>0$ and $k<0$ s.t. $p_k^s= Q_k$ and $p_k^u=Q_k$.
\end{lem}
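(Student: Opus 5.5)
The plan is a combinatorial argument on the two index sets
$$\mathbb{A}:=\{k\in\mathbb{Z}: p_k^u=Q_k\},\qquad \mathbb{B}:=\{k\in\mathbb{Z}: p_k^s=Q_k\}.$$
I will show that $\mathbb{A}\cap\mathbb{B}$ contains infinitely many positive and infinitely many negative indices, which is exactly the statement. It has two steps: first, $\mathbb{A}$ and $\mathbb{B}$ are each unbounded in both directions; second, an interlacing argument extracts common elements.

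\emph{Step 1 (each set is unbounded in both directions).} Since $p_k\in\mathcal{I}$, and $\mathcal{I}$ accumulates only at $0$ by Proposition \ref{I}(6), the hypothesis $\limsup_{k\to\infty}p_k>0$ gives some $q\in\mathcal{I}$ with $p_k\geq q$ for infinitely many $k>0$. The same holds for $k<0$.

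Now fix $N=N(q)$ with $I^N(q)>1\geq Q_k$; such $N$ exists because $I(t)>t$ and $I$ is increasing, so $I^n(q)\uparrow$ beyond $1$. Suppose $p^u_j\geq q$. If $j+1,\dots,j+n\notin\mathbb{A}$, the recursion $p^u_{k+1}=\min\{I(p^u_k),Q_{k+1}\}$ forces $p^u_{j+n}=I^n(p^u_j)\geq I^n(q)$. Since $p^u_{j+n}\leq Q_{j+n}\leq 1$, this is impossible for $n=N$, so $\mathbb{A}\cap[j+1,j+N]\neq\emptyset$. Symmetrically, using $p^s_{k-1}=\min\{I(p^s_k),Q_{k-1}\}$, we get $\mathbb{B}\cap[j-N,j-1]\neq\emptyset$. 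Applying both facts along the infinitely many such $j\to\pm\infty$ shows that $\mathbb{A}$ and $\mathbb{B}$ are unbounded above and below.

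\emph{Step 2 (interlacing).} Claim: if $a\in\mathbb{A}$, $b\in\mathbb{B}$ and $a\leq b$, then $\mathbb{A}\cap\mathbb{B}\cap[a,b]\neq\emptyset$.

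Let $l:=\min(\mathbb{B}\cap[a,\infty))\leq b$, and let $k':=\max(\mathbb{A}\cap[a,l])$; the latter is well defined since $a\in\mathbb{A}$. Suppose $k'<l$ and put $n:=l-k'\geq 1$.

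On the indices $k',\dots,l-1$ none lies in $\mathbb{B}$, because $l$ is the first element of $\mathbb{B}$ that is $\geq a$. Hence the $s$-recursion is uncapped there, and $p^s_{k'}=I^n(p^s_l)=I^n(Q_l)$. Together with $p^s_{k'}\leq Q_{k'}$ this gives $Q_l\leq I^{-n}(Q_{k'})<Q_{k'}$.

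On the indices $k'+1,\dots,l$ none lies in $\mathbb{A}$, by maximality of $k'$. Hence $p^u_l=I^n(p^u_{k'})=I^n(Q_{k'})>Q_{k'}>Q_l$. This contradicts $p^u_l\leq Q_l$. Therefore $k'=l$, and $l\in\mathbb{A}\cap\mathbb{B}\cap[a,b]$.

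\emph{Conclusion.} For positive indices, given any $M>0$, choose $a\in\mathbb{A}$ with $a>M$, then $b\in\mathbb{B}$ with $b\geq a$; both exist by Step 1. Step 2 gives a common element in $[a,b]\subset(M,\infty)$. For negative indices, given $M<0$, choose $b\in\mathbb{B}$ with $b<M$, then $a\in\mathbb{A}$ with $a\leq b$, again by Step 1; Step 2 gives a common element in $[a,b]\subset(-\infty,M)$.

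Hence there are infinitely many $k>0$ and infinitely many $k<0$ at which $p_k^s=Q_k$ and $p_k^u=Q_k$ hold simultaneously. The main point to get right is Step 2. The key is to choose $l$ as the first $\mathbb{B}$-index after $a$ and $k'$ as the last $\mathbb{A}$-index before $l$. With that choice both recursions are uncapped on the same block $[k',l]$, and the opposite monotonicity of $p^s$ and $p^u$ on that block produces the contradiction.
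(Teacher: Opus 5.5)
Your proof is correct, and it proves a stronger form of the lemma than the paper actually uses. The paper omits the proof and defers to Lemma 2.27 of \cite{Ovadia2}. The argument there (following Sarig) handles the four one-sided assertions separately:
\begin{itemize}
\item If $p^u_k<Q_k$ for all large $k$, then $p^u_k$ is eventually an $I$-orbit and exceeds $1\geq Q_k$.
\item If $p^s_k<Q_k$ for all large $k$, then $p^s_k=I^{-(k-k_0)}(p^s_{k_0})\to 0$, contradicting $\limsup_{k\to\infty}p_k>0$.
\item The same two arguments apply symmetrically as $k\to-\infty$.
\end{itemize}
This is essentially your Step~1; your uniform window $N(q)$ even gives bounded gaps near indices where $p_k\geq q$. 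It yields $p^s_k=Q_k$ and $p^u_k=Q_k$ infinitely often in each direction, but a priori at different indices.

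That weaker form is all the paper uses later. In item 4 of the inverse theorem one only needs some $l_1>0$ with $p^s_{l_1}=Q(\tilde{x}_{l_1})$ and some $l_2<0$ with $p^u_{l_2}=Q(\tilde{x}_{l_2})$. From these one propagates toward $0$ using the $s$- and $u$-recursions.

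Your interlacing Step~2 is the genuinely new ingredient. On a block $[k',l]$ where neither cap is active, $p^u$ would have to climb from $Q_{k'}$ to at most $Q_l$. At the same time $p^s$ would have to climb backward from $Q_l$ to at most $Q_{k'}$, which $I(t)>t$ forbids. This upgrades the conclusion to $p^s_k=p^u_k=Q_k$ at the same indices, infinitely often for both signs of $k$. That is the literal, strongest reading of the statement, obtained at the cost of only a few extra lines.
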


\begin{prop}\label{exist}
    $\forall \tilde{x}\in RST$, there exists a chain $\Big\{\psi_{\tilde{x}_k}^{p_k^s,p_k^u}\Big\}_{k\in\mathbb{Z}}\in\Sigma(\mathcal{G})$ such that $\psi_{\tilde{x}_k}^{p_k^s,p_k^u}$ $I$-overlaps $\psi_{\hat{f}^k(\tilde{x})}^{p_k^s,p_k^u}\ \forall k\in\mathbb{Z}$.
    \end{prop}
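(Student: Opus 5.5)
The plan is to follow the scheme of Ovadia's Proposition 2.30 in \cite{Ovadia2}: first produce suitable radii along the orbit, then replace the orbit charts by charts from $\mathcal{A}$ using the sufficiency part of Proposition \ref{graph A}, and finally check the edge conditions.

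\textbf{Step 1: a temperable ladder one quarter-step below $Q$.} Fix $\tilde{x}\in RST$ and let $q:\{\hat{f}^n(\tilde{x})\}\to\mathcal{I}$ be the function from the definition of recurrent strong temperability. It satisfies $q(\hat{f}^{n+1}\tilde{x})=I^{\pm1}(q(\hat{f}^n\tilde{x}))$, $q\le Q_\epsilon$, and has positive $\limsup$ in both directions. Set $q_n:=I^{-1/4}(q(\hat{f}^n\tilde{x}))$, or $I^{-1/2}$ if strict inequality is needed; since $\mathcal{I}$ is the quarter-ladder, this stays in $\mathcal{I}$. Because $I$ and $I^{-1/4}$ commute and are increasing (Proposition \ref{I}), we still have $q_n=I^{\pm1}(q_{n+1})$ and $q_n<I^{-1/4}(Q(\hat{f}^n\tilde{x}))$. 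Also $\limsup_{n\to\pm\infty}q_n>0$.

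\textbf{Step 2: subordinated radii and charts from $\mathcal{A}$.} Apply Lemma \ref{CG1} with $Q_k:=I^{-1/4}(Q(\hat{f}^k\tilde{x}))$ to get $(p_k^s,p_k^u)$ that is $I$-strongly subordinated to $(Q_k)$, with $p_k:=p_k^s\wedge p_k^u\ge q_k$. The sequence $p_k$ satisfies $p_k=I^{\pm1}(p_{k+1})$: this follows from the subordination recursions, exactly as in the remark after the definition of $\mathcal{E}$. Moreover $0<p_k<I^{-1/4}(Q(\hat f^k\tilde x))$, up to the strictness adjustment in Step 1.

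Now apply Proposition \ref{graph A}(2) to the sequence $\eta_n:=p_n$. This gives $\psi_{\tilde{x}_n}^{p_n}\in\mathcal{A}$ satisfying (a)--(c). I then need $\psi_{\tilde{x}_n}^{p_n^s,p_n^u}\in\mathcal{V}$, which requires $p_n^s,p_n^u\le Q(\tilde{x}_n)$. This holds because $p_n^{s/u}\le I^{-1/4}(Q(\hat{f}^n\tilde{x}))\le Q(\tilde{x}_n)$, using $Q(\hat{f}^n\tilde{x})=I^{\pm1/4}(Q(\tilde{x}_n))$ from (a). Finally, the overlap $\psi_{\tilde{x}_n}^{p_n^s,p_n^u}$ with $\psi_{\hat{f}^n\tilde{x}}^{p_n^s,p_n^u}$ follows from (a) together with the remark that enlarging radii (up to $Q$) preserves $I$-overlap.

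\textbf{Step 3: the edges.} To show $\psi_{\tilde{x}_n}^{p_n^s,p_n^u}\to\psi_{\tilde{x}_{n+1}}^{p_{n+1}^s,p_{n+1}^u}$, condition (1) is exactly (b) and (c) of Proposition \ref{graph A}. Condition (2) asks for $p_n^s=\min\{I(p_{n+1}^s),Q(\tilde{x}_n)\}$, whereas subordination gives $p_n^s=\min\{I(p_{n+1}^s),Q_n\}$ with $Q_n=I^{-1/4}Q(\hat f^n\tilde x)$.

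I expect this mismatch between $Q_n$ and $Q(\tilde{x}_n)$ to be the main obstacle. I would resolve it as Ovadia does. Having obtained the charts $\tilde{x}_n$ from Step 2, I redo the subordination with $Q_k:=Q(\tilde{x}_k)$ itself, applying Lemma \ref{CG1} to the ladder $q_k$, which satisfies $q_k\le Q(\tilde{x}_k)$ by (a). This yields new $(p_k^s,p_k^u)$ with minima $p_k'\ge q_k$ and $p_k'\le Q(\tilde x_k)$, so the pairs are vertices by construction and condition (2) holds exactly.

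Two checks remain for this new choice. First, the minima $p_k'$ differ from the $p_k$ used for the overlaps. The overlaps persist, however, because $I$-overlap at radius $q_k$ is stable under enlarging to any $I^{\pm1}$-related radii $\le Q$. This is the remark after the definition of $I$-overlap, combined with part (d) of Proposition \ref{graph A} to ensure the enlarged charts remain in $\mathcal{A}$; that part needs the enlargement to be at most one $I$-step, so I would apply Proposition \ref{graph A} to $\eta_n=q_n$ from the start. Second, the overlap of $\psi_{\tilde{x}_k}^{p_k^s,p_k^u}$ with $\psi_{\hat{f}^k\tilde{x}}^{p_k^s,p_k^u}$ requires $p_k^{s/u}\le Q(\hat{f}^k\tilde{x})$. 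This may fail by one quarter-step, so I would instead state the overlap at the common radius $p_k'$. Lemma \ref{limsup} is not needed here; it only becomes relevant for showing the chain lies in $\Sigma^\#$.
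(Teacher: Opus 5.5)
Your Steps 1--2 match the paper's first two steps. The repair you settle on in Step 3, however, breaks the one thing that must still be checked there: that the final double charts are vertices of $\mathcal{G}$.

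\textbf{The gap.} Membership in $\mathcal{V}$ requires $\psi_{\tilde{x}_k}^{p_k^s\wedge p_k^u}\in\mathcal{A}$, not merely $p_k^s,p_k^u\le Q(\tilde{x}_k)$, so ``the pairs are vertices by construction'' is false. In Proposition~\ref{graph A}, the chart $\tilde{x}_k$ is selected with precision $e^{-8(m_k+2)}$ tied to the radius $\eta_k$ you feed in. Moreover, $\mathcal{A}$ contains $\psi_{\tilde{x}_k}^{\eta}$ only for $\eta$ in the window attached to $m_k$. The only way to certify $\psi_{\tilde{x}_k}^{p_k^s\wedge p_k^u}\in\mathcal{A}$ is part (2)(d), which needs $\eta_k\le p_k^s\wedge p_k^u\le\min\{Q(\tilde{x}_k),I(\eta_k)\}$. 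Lemma~\ref{CG1} supplies only the lower bound.

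If you feed in the temperability ladder $\eta_k=q_k$ and then subordinate to $Q(\tilde{x}_k)$, the resulting minima can sit arbitrarily many $I$-steps above $q_k$. For example, if $q(\hat{f}^k\tilde{x})$ is constant and far below a constant $Q(\hat{f}^k\tilde{x})$, the minima are of size roughly $Q$. So ``apply Proposition~\ref{graph A} to $\eta_n=q_n$ from the start'' is exactly the wrong move. Your first version of Step 3 has the same defect: it keeps the charts obtained from $\eta_n=p_n$ but re-subordinates with lower bound $q_k$, so again only a lower bound is available.

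\textbf{The missing idea.} You need a two-sided comparison between the two subordinated sequences; this is what the paper's Steps 3--4 rely on, following Ovadia and Sarig.
\begin{enumerate}
\item Use the explicit construction $p_k^s=\min_{n\ge0}I^n(Q_{k+n})$, $p_k^u=\min_{n\ge0}I^n(Q_{k-n})$. It satisfies the subordination recursions because $I$ is increasing. The minimum is attained because the values lie on the ladder $\{I^{j/4}(1)\}$ and are bounded below by $q_k$, since $I^n(Q_{k+n})\ge I^n(q_{k+n})\ge q_k$.
\item Let $\eta_k$ be the minima of this sequence for $Q_k:=I^{-1/4}(Q(\hat f^k\tilde x))$, and apply Proposition~\ref{graph A} to $\eta_k$. (Using $I^{-1/2}$ instead gives the strict inequality required there and changes nothing below except $I^{1/2}$ becoming $I^{3/4}$.)
\item Let $(p_k^s,p_k^u)$ be the same construction for $Q_k':=Q(\tilde x_k)$. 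By (a), $Q_k\le Q_k'\le I^{1/2}(Q_k)$. The construction is monotone in the input sequence and commutes with $I^{1/2}$, so $\eta_k\le p_k^s\wedge p_k^u\le I^{1/2}(\eta_k)\le I(\eta_k)$.
\end{enumerate}
Now (d) puts the double charts in $\mathcal{V}$, and edge condition (2) holds by construction. Edge condition (1) and the overlap with $\psi_{\hat f^k\tilde x}$ then follow from (a)--(c) by enlarging radii, as you planned.
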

    \begin{pf}
The proof is the same as the proof of Proposition 2.28 in \cite{Ovadia2}. For the completeness, we give a brief sketch.

    Step 1: Choose $q_k\in\mathcal{I}\bigcap \big[I^{-\frac{1}{4}}(q(\hat{f}^k(\tilde{x}))),I^{\frac{1}{4}}(q(\hat{f}^k(\tilde{x})))\big]$, then there exists a sequence $\{(q_k^s,q_k^u)\}_{k\in\mathbb{Z}}$ which is  $I$-strongly subordinated to $I^{-\frac{1}{4}}(Q(\hat{f}^k(\tilde{x})))\}_{k\in\mathbb{Z}}$ and $q_k^s\land q_k^u\geq q_k$ by Lemma \ref{CG1} and $q_k\leq I^{\frac{1}{4}}(q(\hat{f}^k(\tilde{x})))\leq I^{-\frac{1}{4}}(Q(\hat{f}^k(\tilde{x})))$ since $I^{-\frac{1}{4}}$ is increasing and $I^{-\frac{1}{4}}(t)<t\ \forall t\in(0,e^\Gamma)$.

    Step 2: Let $\eta_k:=q_k^s\land q_k^u$, then there exists a sequence $\{\psi_{\tilde{x}_k}^{\eta_k}\}_{k\in\mathbb{Z}}$ satisfying the properties in Proposition \ref{graph A}(2).

    Step 3: Using Lemma \ref{CG1} for $(\eta_k)_{k\in\mathbb{Z}}$, then there exists a sequence $\{(p_k^s,p_k^u)\}_{k\in\mathbb{Z}}$ which is  $I$-strongly subordinated to $\{Q(\tilde{x}_k)\}_{k\in\mathbb{Z}}$.

    Step 4: Verify that $\{\psi_{\tilde{x}_k}^{p_k^s,p_k^u}\}_{k\in\mathbb{Z}}$ satisfies the conditions.
    \end{pf}

\subsection{\texorpdfstring{Admissible Manifold}{Admissible Manifold}}
  \begin{mdef}
      For every $\tilde{x}\in RST$, an \textit{$s$-admissible manifold} $V^s$ at double chart $\psi_{\tilde{x}}^{p^s,p^u}$ has the form of
      $$V^s=\psi_{\tilde{x}}\big\{(v_1,G(v_1)):v_1\in B^{d_s(\tilde{x})}_{p^s}(0)\big\},$$
      where $B^{d_s(\tilde{x})}_{p^s}(0)\subset\mathbb{R}^{d_s(\tilde{x})}$ is a ball at the origin with radius $p^s$ and $G: B^{d_s(\tilde{x})}_{p^s}(0)\rightarrow\mathbb{R}^{d_u(\tilde{x})}$ is a $C^{1+\frac{\beta}{2}}$ function such that
\begin{enumerate}[label=(AM\arabic*), labelwidth=3em, leftmargin=!]
\item $|G(0)|\leq 10^{-3}(p^s\land p^u)^2$;\label{AM1}
\item $\|d_0G\|\leq \frac{1}{2}(p^s\land p^u)^{\frac{\beta}{2}}$;\label{AM2}
\item $\|d_\cdot G\|_{\frac{\beta}{2}}= \|d_\cdot G\|_{C^0}+H\ddot{o}l_{\frac{\beta}{2}}(d_\cdot G)<\frac{1}{2}$.\label{AM3}
\end{enumerate}
A \textit{$u$-admissible manifold} $V^u$ at double chart $\psi_{\tilde{x}}^{p^s,p^u}$ has the form of
$$V^u=\psi_{\tilde{x}}\big\{(G(v_2),v_2):v_2\in B^{d_u(\tilde{x})}_{p^u }(0)\big\},$$
where $G: B^{d_u(\tilde{x})}_{p^u}(0)\rightarrow\mathbb{R}^{d_s(\tilde{x})}$ is a $C^{1+\frac{\beta}{2}}$ function such that \ref{AM1}-\ref{AM3}. The map $G$ is called  the \textit{representing function}.
  \end{mdef}

By assumptions \ref{A1}-\ref{A3}, the representing function $G$ satisfies: $\forall t\in B_{p^{s/u}}(0)\subset\mathbb{R}^{d_{s/u}(\tilde{x})}$
\begin{equation}\label{G1}
    \|d_tG\|\leq \|d_0G\|+H\ddot{o}l_{\frac{\beta}{2}}(d_\cdot G)|t|^{\frac{\beta}{2}}\leq \frac{1}{2}(p^s\land p^u)^{\frac{\beta}{2}}+\frac{1}{2}(p^{s/u})^{\frac{\beta}{2}}\leq (p^{s/u})^{\frac{\beta}{2}}\leq \epsilon\implies Lip(G)\leq \epsilon,
\end{equation}
and
\begin{equation}\label{G2}
    |G(t)|\leq |G(0)|+Lip(G)\cdot p^{s/u}\leq 10^{-3}(p^s\land p^u)^2+\epsilon p^{s/u}\leq 10^{-2}p^{s/u}.
\end{equation}

Let $v:=\psi_{\tilde{x}}^{p^s,p^u}$, denote $\mathcal{M}^s(v):=\{V^s: V^s \text{ is the $s$-admissible manifold  at $v$\}}$, endowing the metric $d_{C^0}(V_1,V_2):=\|G_1-G_2\|_{C^0}$ and $d_{C^1}(V_1,V_2):=\|d_{\cdot}G_1-d_{\cdot}G_2\|_{C^0}$ on $B_{p^{s}}(0)$, where $G_1,G_2$ are the representing functions of $V_1, V_2$, respectively. Similarly, we can define $\mathcal{M}^u(v)$.

\begin{prop}\label{ad}
    For $\epsilon>0$ small enough, let $v:=\psi_{\tilde{x}}^{p^s,p^u}$, then for every $V^s\in \mathcal{M}^s(v)$ and $V^u\in \mathcal{M}^u(v)$, we have
\begin{enumerate}[label=(\arabic*)]
\item $\exists !\ p=\psi_{\tilde{x}}(v_1)$ s.t. $V^s\bigcap V^u=\{p\}$, where $v_1\in\mathbb{R}^d,|v|\leq 10^{-2}(p^s\land p^u)^2$;
\item Define $P:\mathcal{M}^s(v)\times \mathcal{M}^u(v)\rightarrow M$ by $(V^s,V^u)\mapsto p$, where $p$ is the intersection of $V^s$ and $V^u$. Then $P$ is a Lipschitz map with Lipschitz constant 3 i.e. if $V^s_i\bigcap V^u_i=\{p_i\}$ for $i=1,2$, then $d(p_1,p_2)\leq 3[d_{C^0}(V^s_1,V^s_2)+d_{C^0}(V^u_1,V^u_2)]$, where $V_i^s\in\mathcal{M}^s(v),V_i^u\in\mathcal{M}^u(v)$.
\end{enumerate}
\end{prop}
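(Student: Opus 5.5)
The plan is to work entirely inside the chart $\psi_{\tilde{x}}$ and reduce both parts to a contraction mapping argument in $\mathbb{R}^d=\mathbb{R}^{d_s(\tilde{x})}\times\mathbb{R}^{d_u(\tilde{x})}$. Write $V^s=\psi_{\tilde{x}}\{(t,G^s(t))\}$ and $V^u=\psi_{\tilde{x}}\{(G^u(w),w)\}$, and set $\eta:=p^s\land p^u$. Since $\psi_{\tilde{x}}$ is injective on $B_r(0)$ (it is $\exp_{p(\tilde{x})}\circ C_0(\tilde{x})$ with $C_0$ a linear injection into the injectivity domain), a point of $V^s\cap V^u$ corresponds exactly to a pair $(t,w)$ with $t=G^u(w)$ and $w=G^s(t)$. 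Equivalently, $t$ is a fixed point of the map $T:=G^u\circ G^s$.

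First I will check that $T$ is well defined and is a self-map of a small ball. By (\ref{G2}), $|G^s(t)|\le 10^{-2}p^s$. I still need $G^s(t)\in B_{p^u}(0)$, so I will restrict $T$ to the ball $B_{\delta}(0)$ with $\delta=10^{-2}\eta^2$. For $|t|\le\delta$, (\ref{G1}) and (AM1) give
\[|G^s(t)|\le 10^{-3}\eta^2+\epsilon\delta<10^{-2}\eta^2<p^u,\]
and the same bound then holds for $|G^u(G^s(t))|$. So $T(\bar B_\delta(0))\subset\bar B_\delta(0)$. By (\ref{G1}) both representing functions have Lipschitz constant at most $\epsilon$, so $\mathrm{Lip}(T)\le\epsilon^2<1$. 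The Banach fixed point theorem then gives a unique fixed point $t^*$ in $\bar B_\delta(0)$. The corresponding intersection point is $p=\psi_{\tilde{x}}(t^*,G^s(t^*))$, and its chart coordinate has norm at most $2\delta$, which is comparable to the claimed $10^{-2}\eta^2$ (I will absorb constants using $\epsilon$ small).

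For uniqueness on the whole of $V^s\cap V^u$, not just near the origin, I argue as follows. Any intersection point has coordinates $(t,w)$ with $|t|<p^s$ and $|w|<p^u$, where $t=G^u(w)$ and $w=G^s(t)$. The bound $|t|\le|G^u(0)|+\epsilon|w|$ together with $|w|\le|G^s(0)|+\epsilon|t|$ forces $|t|,|w|\le 2\cdot 10^{-3}\eta^2$. Such a point therefore lies in the contraction region, which gives uniqueness.

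For part (2), let $(t_i,w_i)$ be the chart coordinates of $p_i$. Then
\[|t_1-t_2|\le \|G^u_1-G^u_2\|_{C^0}+\epsilon|w_1-w_2|,\qquad |w_1-w_2|\le\|G^s_1-G^s_2\|_{C^0}+\epsilon|t_1-t_2|.\]
Solving these gives
\[|t_1-t_2|+|w_1-w_2|\le (1-\epsilon)^{-1}\big(d_{C^0}(V^s_1,V^s_2)+d_{C^0}(V^u_1,V^u_2)\big).\]
Pushing forward by $\psi_{\tilde{x}}$ costs at most a factor of $2$, since $\|d\exp\|\le2$ and $C_0$ is a contraction. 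Taking Euclidean norm versus sum of components costs at most a factor $\sqrt2$. Altogether the Lipschitz constant is $2(1-\epsilon)^{-1}\le 3$ for $\epsilon$ small.

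The main obstacle I expect is the domain bookkeeping: making sure $G^s(t)$ lands in the domain $B_{p^u}(0)$ of $G^u$ even when $p^s$ and $p^u$ are very different. This is why I restrict to the $\eta^2$-ball and rely on (AM1) rather than on (\ref{G2}) alone. The constants there are tight but harmless given $\epsilon\ll1$.
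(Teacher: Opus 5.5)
Your proposal is correct and is essentially the argument the paper defers to (Ovadia's, itself adapted from Sarig): intersection points are the fixed points of $G^u\circ G^s$, which is an $\epsilon^2$-contraction of a small ball, and the Lipschitz bound for $P$ comes from the same pair of coupled inequalities. One small fix: your first bound $2\delta=2\cdot 10^{-2}\eta^2$ on the chart coordinate cannot be repaired by shrinking $\epsilon$, since the factor $2$ does not depend on $\epsilon$; however, your uniqueness estimate $|t|,|w|\le 2\cdot 10^{-3}\eta^2$ already gives $|(t,w)|\le 2\sqrt{2}\cdot 10^{-3}\eta^2<10^{-2}\eta^2$, which is exactly the stated bound.
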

\begin{pf}
Since the process of proof does not involve whether the mapping $f$ is invertible, this proof is the same as Ovadia's \cite{Ovadia2}.
\end{pf}

 The following theorem will show that the Graph Transform preserves admissibility.

 \begin{thm}\label{TF}
     For $\epsilon>0$ small enough, given two double charts $w_1=\psi_{\tilde{x}}^{p^s,p^u}, w_2=\psi_{\tilde{y}}^{q^s,q^u}$, and let $V^s$ be a $s$-admissible manifold in $w_2$. If $w_1\rightarrow w_2$, then
\begin{enumerate}[label=(\arabic*)]
\item $f_{\tilde{x}}^{-1}V^s$ contains a unique $s$-admissible manifold in $\psi_{\tilde{x}}^{p^s,p^u}$, denote the $s$-admissible manifold as $\mathcal{F}^s_{w_1,w_2}(V^s)$;\label{1}
\item For any $u$-admissible $V^u$ in $\psi_{\tilde{x}}^{p^s,p^u}$, $f_{\tilde{x}}^{-1}V^s$ intersects $V^u$ at a unique point;
\item If $p:=\psi_{\tilde{y}}(0,G(0))\in V^s$, then $f^{-1}_{\tilde{x}}(p)\in \mathcal{F}^s_{w_1,w_2}(V^s)$.
\end{enumerate}
 For the $u$-case, there are similar statements.
 \end{thm}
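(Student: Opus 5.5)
We work in the coordinates of $w_1$ and reduce everything to the map $F^{-1}_{\tilde{x},\tilde{y}}=\psi_{\tilde{x}}^{-1}\circ f^{-1}_{\tilde{x}}\circ\psi_{\tilde{y}}$. Since $w_1\rightarrow w_2$, the chart $\psi_{\tilde{x}}^{p^s\land p^u}$ $I$-overlaps $\psi_{\hat{f}^{-1}(\tilde{y})}^{p^s\land p^u}$. The preceding lemma shows that $f^{-1}_{\tilde{x}}$ and $f^{-1}_{\hat{f}^{-1}(\tilde{y})}$ coincide on $\psi_{\tilde{y}}(B_{Q(\tilde{y})}(0))$, so the choice of inverse branch is harmless. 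Theorem \ref{F1} then gives
$$F^{-1}_{\tilde{x},\tilde{y}}=D_0(\hat{f}^{-1}(\tilde{y}))^{-1}+\bar{H}^-,$$
where $|\bar H^-(0)|\le\epsilon\eta^3$, $\|d\bar H^-\|_{C^0(B_t)}\le\sqrt{\epsilon}t^{\beta/2}$ and $H\ddot{o}l_{\beta/2}(d\bar H^-)\le\sqrt\epsilon$. Since $d_s(\tilde{x})=d_s(\tilde{y})$ by the overlap and the construction of $\mathcal{A}$, the linear part is block diagonal, $\mathrm{diag}(D_s^{-1},D_u^{-1})$.

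For the estimates on this linear part we use Lemma \ref{C}(2). The factor $D_u^{-1}$ contracts, with $\|D_u^{-1}\|\le e^{-1/u^2}<1$. The factor $D_s^{-1}$ expands, with $|D_s^{-1}v|\ge e^{1/s^2}|v|$ and $\|D_s^{-1}\|\le\sqrt2\, d(x_0,\Gamma_\infty)^{-a}$. The novelty relative to \cite{Ovadia2} is that these bounds are not uniform. Our plan is to absorb the factor $d(x_0,\Gamma_\infty)^{-a}$ into the smallness of $Q$, which carries the factor $\rho^{8a/\beta}\epsilon^{20/\beta}$, exactly as in Theorem \ref{F_{x}}.

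\textbf{Step 1 (graph transform).} Write $V^s=\psi_{\tilde{y}}\{(t,G(t))\}$, so that $F^{-1}_{\tilde{x},\tilde{y}}(t,G(t))=(D_s^{-1}t+h_1(t),\,D_u^{-1}G(t)+h_2(t))$, where $h_i$ are the components of $\bar H^-$ evaluated along the graph. The first coordinate $\tau(t)=D_s^{-1}t+h_1(t)$ is a small $C^1$ perturbation of an expanding linear map, because $\mathrm{Lip}(h_1)\le\sqrt\epsilon\,(q^s)^{\beta/2}(1+\epsilon)$, which is small relative to the expansion. Hence $\tau$ is injective and, by the standard degree argument (Brouwer or a contraction mapping for $\tau^{-1}$), its image contains $B_{p^s}(0)$. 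The key inclusion is $e^{1/s^2}q^s-|\tau(0)|\ge p^s$. This uses $p^s=\min\{I(q^s),Q(\tilde{x})\}$, and we must check that the ratio $I(q^s)/q^s=e^{\Gamma (q^s)^{1/\gamma}}$ is dominated by $e^{1/s^2}$ up to the error. Here we use $Q\le\|C_0^{-1}\|^{-2\gamma}\epsilon^{20/\beta}$ together with $s\le\|C_0^{-1}\|$, which gives $(q^s)^{1/\gamma}\ll s^{-2}$, in the manner of Ovadia. Define $\tilde G=(D_u^{-1}G+h_2)\circ\tau^{-1}$ on $B_{p^s}(0)$. Then $\psi_{\tilde{x}}\{(v,\tilde G(v))\}\subset f^{-1}_{\tilde{x}}V^s$, and its uniqueness follows from the injectivity of $\tau$.

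\textbf{Step 2 (admissibility).} We verify \ref{AM1}--\ref{AM3} for $\tilde G$. For \ref{AM1}, we have $|\tilde G(0)|\le\|D_u^{-1}\|\,|G(\tau^{-1}0)|+|h_2(\tau^{-1}0)|$, and $|\tau^{-1}(0)|\lesssim\epsilon\eta^3$. Using \eqref{G1} and \eqref{G2}, this gives a bound of order $10^{-3}e^{-1/u^2}(q^s\land q^u)^2+O(\epsilon\eta^3)$. The loss from $q\land$ to $p\land$ is controlled because $p^s\land p^u=I^{\pm1}(q^s\land q^u)$. For \ref{AM2} and \ref{AM3}, we differentiate $d\tilde G=(D_u^{-1}dG+dh_2)\circ(d\tau)^{-1}$. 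The contraction by $\|D_u^{-1}\|\,\|d\tau^{-1}\|\le e^{-1/u^2-1/s^2}(1+O(\sqrt\epsilon))$ absorbs the factor $I^{\beta/2}$ coming from the change of radius, as in the $C^{1+\beta/2}$ graph-transform estimates of \cite{Ovadia2}. The H\"older constant is handled by the same chain-rule splitting used in the proof of Theorem \ref{F1}(3). \textbf{We expect this step to be the main obstacle.} The inequality $e^{-1/s^2}I(q)^{\beta/2}\cdots\le q^{\beta/2}(\ldots)$ must hold with $\rho^{-a}$ factors present, and the plan is to use the exponent $8a/\beta$ in $\tilde Q_\epsilon$ to kill all $d(x_0,\Gamma_\infty)^{-a}$ terms, each being multiplied by at least $\eta^{\beta/2}$.

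\textbf{Step 3 (remaining parts).} Part (2) follows from Proposition \ref{ad}(1) applied to $\mathcal{F}^s_{w_1,w_2}(V^s)$ and $V^u$. Any further intersection of $V^u$ with $f^{-1}_{\tilde{x}}V^s$ would have to lie in the graph part, because $V^u$ lies over $B_{p^u}$ with $|G|\le10^{-2}p^u$, which is inside the window where $\tau$ covers. Part (3) holds because $F^{-1}_{\tilde{x},\tilde{y}}(0,G(0))$ has first coordinate $\tau(0)$, of size $\le\epsilon\eta^3\ll p^s$, so it lies in the domain of $\tilde G$. The $u$-case is symmetric, using $F_{\tilde{x},\tilde{y}}$ and part (1) of Theorem \ref{F1}.
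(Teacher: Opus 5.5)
Your architecture matches the paper's. You write $F^{-1}_{\tilde x,\tilde y}\mathrm{Graph}(G)$ via Theorem \ref{F1}, invert the first coordinate $\tau$ (the paper's $\phi$), set $\tilde G=(D_u^{-1}G+h_2)\circ\tau^{-1}$, check \ref{AM1}--\ref{AM3}, and get (2)--(3) from Proposition \ref{ad}(1) and the smallness of $\tau(0)$. Deducing uniqueness in (1) from injectivity of $\tau$ is a clean substitute for the paper's appeal to \cite{Ovadia2}. The quantitative core, however, does not go through as you state it. This is exactly where the singular zero-exponent setting departs from \cite{Ovadia2}, and it fails in two places.

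\textbf{First gap: the covering step.} The expansion of $\tau$ is governed by $s(\hat f^{-1}(\tilde y))$, whereas the natural bound $q^s\le Q(\tilde y)$ controls $\|C_0(\tilde y)^{-1}\|$. The only lower bound on $s^{-2}(\hat f^{-1}(\tilde y))$ in terms of $\tilde y$ is $\frac12\rho(\tilde y)^{4a}\|C_0(\tilde y)^{-1}\|^{-2}$ (Lemma \ref{C}(1),(3)). By contrast, $\Gamma(q^s)^{1/\gamma}\le\Gamma Q(\tilde y)^{1/\gamma}$ carries only $\rho(\tilde y)^{8a/(\beta\gamma)}$, and $8a/(\beta\gamma)<2a/5$. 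So absorbing $d(\cdot,\Gamma_\infty)^{-a}$ into $\rho^{8a/\beta}$ works for terms with $Q^{\beta/2}$ (this is how (\ref{>}) shows $\tau$ expands), but not for the $I$-loss. Consequently $Q\le\epsilon^{20/\beta}\|C_0^{-1}\|^{-2\gamma}$ and $s\le\|C_0^{-1}\|$ do not give $(q^s)^{1/\gamma}\ll s^{-2}(\hat f^{-1}(\tilde y))$.

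The fix splits into two cases. If $p^s\le q^s$, only the net expansion (\ref{>}) is needed. If $p^s>q^s$, then $q^s<p^s\le Q(\tilde x)$. Since $\psi^{p^s\land p^u}_{\tilde x}$ $I$-overlaps $\psi^{p^s\land p^u}_{\hat f^{-1}(\tilde y)}$, Proposition \ref{overlap1}(2) gives $\|C_0(\tilde x)^{-1}\|=e^{\pm(p^s\land p^u)^6}\|C_0(\hat f^{-1}(\tilde y))^{-1}\|$. Hence $\Gamma(q^s)^{1/\gamma}\le\Gamma Q(\tilde x)^{1/\gamma}\le 2\Gamma\epsilon^{20/(\beta\gamma)}s^{-2}(\hat f^{-1}(\tilde y))$. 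The paper's (\ref{qs}) is no guide here, since it is written with $u^2(\tilde y)$, which is not the rate of $\tau$.

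\textbf{Second gap: \ref{AM2}--\ref{AM3}.} The per-step gains $1/u^2(\tilde y)$ and $1/s^2(\hat f^{-1}(\tilde y))$ can be arbitrarily small. So neither a factor $1+O(\sqrt\epsilon)$ nor an additive $O(\sqrt\epsilon)$ is ever absorbed. Every error must carry a positive power of $\eta$ or $Q$ (note $Q(\tilde y)^{1/\gamma}\le\epsilon^{20/(\beta\gamma)}u^{-2}(\tilde y)$), and the \emph{stated} bounds of Theorem \ref{F1} do not provide this.

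In \ref{AM2}, Theorem \ref{F1}(2$'$) bounds $\|dh_2\|$ at $\tau^{-1}(0)$ only by $\sqrt\epsilon(p^s\land p^u)^{\beta/2}$. That is $2\sqrt\epsilon$ relative to the target $\frac12(p^s\land p^u)^{\beta/2}$. You need the pointwise bound from its proof, of order $\epsilon(p^s\land p^u)^6d^{-a}+\frac{\epsilon^2}{3}|v|^{\beta/2}$. Evaluated at a point within $O(\eta^2)$ of $0$, this gives $O(\sqrt\epsilon\,\eta^{\beta})$, as the paper does.

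In \ref{AM3}, the H\"older constant $\sqrt\epsilon$ of Theorem \ref{F1}(3$'$) enters additively. It cannot be beaten by $e^{-1/u^2-1/s^2}$ when $\|d_\cdot G\|_{\frac{\beta}{2}}$ is near $\frac12$. You must go back to (\ref{F0}) and its analogue for $F^{-1}$, whose constant is really of order $d^{-2a}\|C_0^{-1}\|Q^{\beta/2}$. With Lemma \ref{C}(3) this is $O(\epsilon^{10}\|C_0(\tilde y)^{-1}\|^{1-\beta\gamma})\ll u^{-2}(\tilde y)$. The paper's own \ref{AM3} step has the same defect: its term $8e^{1/2}\sqrt\epsilon$ is not dominated by $1/s^2+1/u^2$.

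\textbf{A smaller point in (2).} Your ``inside the window'' claim fails when $10^{-2}p^u>p^s$. Argue instead that any intersection point lies on the graph of $(D_u^{-1}G+h_2)\circ\tau^{-1}$ over all of $\tau(B_{q^s})$. This graph is $O(\epsilon)$-Lipschitz because $\tau$ is bi-Lipschitz on the convex ball $B_{q^s}$. Then run the contraction argument of Proposition \ref{ad}(1), which replaces McShane.
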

 \begin{pf}
      Suppose $G$ is the representing function of $V^s$, then $V^s=\psi_{\tilde{y}}\{(v,G(v)):v\in B_{q^s}^{d_s(\tilde{y})}(0)\}$. Notice that  $\psi_{\tilde{x}}^{-1}(f_{\tilde{x}}^{-1}(V^s))= \psi_{\tilde{x}}^{-1}(f_{\tilde{x}}^{-1}(\psi_{\tilde{y}}\text{Graph}(G)))=F^{-1}_{\tilde{x},\tilde{y}}\text{Graph}(G)$. Denote $\eta:=q^s\land q^u$, by Theorem \ref{F1}, $F^{-1}_{\tilde{x},\tilde{y}}\text{Graph}(G)$ can be represented as the form of
      \begin{align}\label{Graph2}
          \big\{(D_s^{-1}(v)+h_s^-(v,G(v)),D_u^{-1}(G(v))+h_u^{-}(v,G(v))):v\in  B_{q^s}^{d_s(\tilde{y})}(0)\big\},
      \end{align}
    where $\|D_s\|\leq e^{-\frac{1}{s^2(\hat{f}^{-1}(\tilde{y}))}}$, $\|D_u^{-1}\|\leq e^{-\frac{1}{u^2(\tilde{y})}}$, $|h_{s/u}^-(0,0)|\leq \epsilon \eta^3$, $\|dh_{s/u}^-\|\leq \sqrt{\epsilon}\eta^{\frac{\beta}{2}}$, and $H\ddot{o}l_{\frac{\beta}{2}}\bar{H}^-=H\ddot{o}l_{\frac{\beta}{2}}(h_s^-,h_u^-)\leq \sqrt{\epsilon}$.

    (1) Using graph transform, we will prove that (\ref{Graph2}) can be represented as the form of $\{(w,F(w)):w\in B_{p^s}^{d_s(\tilde{x})}(0)\}$ and $F$ satisfies \ref{AM1}-\ref{AM3}. As the ideas in \cite{Katok1, Sarig}, denote $\phi(v):=D_s^{-1}(v)+h_s^-(v,G(v))$ on $B_{q^s}^{d_s(\tilde{y})}(0)$, then $F=[D_u^{-1}(G(\cdot))+h_u^-(\cdot,G(\cdot)))]\circ \phi^{-1}$. The map $\phi^{-1}$ is well-defined on $\phi (B_{q^s}^{d_s(\tilde{y})}(0))$ since $\phi$ is expanding on $B_{q^s}^{d_s(\tilde{y})}(0)$. Indeed, $\forall v\in B_{q^s}^{d_s(\tilde{y})}(0)$
\begin{align*}
        \|d_v\phi\|&= \|D_s^{-1}+d_{(v,G(v))}h_s^-\begin{pmatrix}
            I\\
            d_vG
        \end{pmatrix}\|\geq \|D_s^{-1}\|-\sqrt{\epsilon}\eta^{\frac{\beta}{2}}(\|d_vG\|+1)\\
        &\geq \|D_s^{-1}\|-\sqrt{\epsilon}\eta^{\frac{\beta}{2}}(\epsilon+1) \text{ (by inequality (\ref{G1}))}\\
        &\geq \|D_s^{-1}\|\big(1-\sqrt{\epsilon}\eta^{\frac{\beta}{2}}(\epsilon+1)\big)\text{ (since }\|D_s^{-1}\|\geq \|D_s\|^{-1}\geq e^{\frac{1}{s^2(\hat{f}^{-1}(\tilde{y}))}}\geq1)\\
        &\geq e^{-\eta^{\frac{\beta}{2}}}\|D_s\|^{-1} \ (\epsilon \text{ is small enough s.t. }\sqrt{\epsilon}(\epsilon+1 )\ll1),
\end{align*}
where $I$ is a $d_s(\tilde{y})\times d_s(\tilde{y})$ identity matrix. By Lemma \ref{C}(1)(3), $\|C_0(\tilde{y})^{-1}\|^2\geq s^2(\tilde{y})$ and
$\|C_0(\tilde{y})^{-1}\|\geq \frac{\sqrt{2}}{2}\|C_0(\hat{f}^{-1}(\tilde{y}))^{-1}\|d(p(\hat{f}^{-1}(\tilde{y}),\Gamma_\infty)^{2a}\geq \frac{\sqrt{2}}{2}\|C_0(\hat{f}^{-1}(\tilde{y}))^{-1}\|\rho(\tilde{y})^{2a}.$
And so
\begin{equation}\label{>}
e^{-\eta^{\frac{\beta}{2}}}\|D_s\|^{-1}\geq e^{-Q(\tilde{y})^{\frac{\beta}{2}}}\cdot e^{\frac{1}{s^2(\hat{f}^{-1}(\tilde{y}))}}\geq e^{-Q(\tilde{y})^{\frac{\beta}{2}}+\|C_0(\hat{f}^{-1}(\tilde{y}))^{-1}\|^{-2}}\geq e^{-Q(\tilde{y})^{\frac{\beta}{2}}+\frac{1}{2}\rho(\tilde{y})^{4a}\|C_0(\tilde{y})^{-1}\|^{-2}}>1,\end{equation}
since $Q(\tilde{y})^{\frac{\beta}{2}}\leq (\epsilon^{\frac{20}{\beta}})^{\frac{\beta}{2}}\|C_0(\tilde{y})^{-1}\|^{-2}\rho(\tilde{y})^{4a}\leq \frac{1}{2}\rho(\tilde{y})^{4a}\|C_0(\tilde{y})^{-1}\|^{-2}$ (recall that $\gamma\geq \frac{20}{\beta}$ and $a>1$).
 \begin{claim}\label{phi}
     For small enough $\epsilon$, the image $\phi(B_{q^s}^{d_s(\tilde{y})}(0))$ contains $B_{e^{\frac{1}{s^2(\hat{f}^{-1}(\tilde{y}))}-\frac{1}{2}\eta^{\frac{\beta}{2}}}q^s}^{d_s(\tilde{y})}(0)$, then $\phi^{-1}$ is well defined on $B_{e^{\frac{1}{s^2(\hat{f}^{-1}(\tilde{y}))}-\frac{1}{2}\eta^{\frac{\beta}{2}}}q^s}^{d_s(\tilde{y})}(0)$ and it satisfies
\begin{enumerate}[label=(\alph*)]
\item $\|d_{\cdot}\phi^{-1}\|<e^{\eta^{\frac{\beta}{2}}-\frac{1}{s^2(\hat{f}^{-1}(\tilde{y}))}}$;\label{a}
\item $|\phi^{-1}(0)|< 2\sqrt{\epsilon}\eta^{2+\frac{\beta}{2}}$;\label{b}
\item $\|d_{\cdot}\phi^{-1}\|_{\frac{\beta}{2}}\leq e^{6\sqrt{\epsilon}\eta^{\frac{\beta}{2}}-\frac{1}{s^2(\hat{f}^{-1}(\tilde{y}))}}$.\label{c}
\end{enumerate}
 \end{claim}
\begin{pf}
 For $v\in B_{q^s}^{d_s(\tilde{y})}(0)$, $d_v\phi=D_s^{-1}+d_{(v,G(v))}h_s^-\begin{pmatrix}
            I\\
            d_vG
        \end{pmatrix}=:D_s^{-1}+A(v)$. Estimate for $\|A(v)\|$: $\forall v_1,v_2\in B_{q^s}^{d_s(\tilde{y})}(0)$
\begin{equation}\label{Av}
    \begin{aligned}
        &\quad \|A(v_1)-A(v_2)\|=\Big\|d_{(v_1,G(v_1))}h_s^-\begin{pmatrix}
            I\\
            d_{v_1}G
        \end{pmatrix}-d_{(v_2,G(v_2))}h_s^-\begin{pmatrix}
            I\\
            d_{v_2}G
        \end{pmatrix}\Big\|\\
        &\leq \Big\|d_{(v_1,G(v_1))}h_s^-\begin{pmatrix}
            I\\
            d_{v_1}G
        \end{pmatrix}-d_{(v_1,G(v_1))}h_s^-\begin{pmatrix}
            I\\
            d_{v_2}G
        \end{pmatrix}\Big\|+\Big\|d_{(v_1,G(v_1))}h_s^-\begin{pmatrix}
            I\\
            d_{v_2}G
        \end{pmatrix}-d_{(v_2,G(v_2))}h_s^-\begin{pmatrix}
            I\\
            d_{v_2}G
        \end{pmatrix}\Big\|\\
        &\leq \|d_{(v_1,G(v_1))}h_s^-\|\cdot\|d_{v_1}G-d_{v_2}G\|+\|d_{(v_1,G(v_1))}h_s^--d_{(v_2,G(v_2))}h_s^-\|\cdot\Big\|\begin{pmatrix}
            I\\
            d_{v_2}G
        \end{pmatrix}\Big\|\\
        &\leq \sqrt{\epsilon} \eta^{\frac{\beta}{2}}\cdot \frac{1}{2}|v_1-v_2|^{\frac{\beta}{2}}+\sqrt{\epsilon}|v_1-v_2|^{\frac{\beta}{2}}\cdot (1+ \epsilon)^{1+\frac{\beta}{2}}\leq 3\sqrt{\epsilon}|v_1-v_2|^{\frac{\beta}{2}}.
    \end{aligned}
\end{equation}
Hence,  $H\ddot{o}l_{\frac{\beta}{2}}A\leq 3\sqrt{\epsilon}$. For $v\in B_{q^s}^{d_s(\tilde{y})}(0)$,
\begin{equation}\label{Av1}
    \|A(v)\|\leq\|A(0)\|+H\ddot{o}l_\frac{\beta}{2}A\cdot|v|^{\frac{\beta}{2}}\leq \sqrt{\epsilon}\eta^{\frac{\beta}{2}}(1+\epsilon)+3\sqrt{\epsilon}(q^s)^{\frac{\beta}{2}} \leq 5\sqrt{\epsilon}(q^s)^{\frac{\beta}{2}}.
\end{equation}
Estimate for the error term $h_s^-$: $\forall v\in B_{q^s}^{d_s(\tilde{y})}(0)$,
$$|h_s^-(v,G(v))|\leq |h_s^-(0,0)|+\|d_{\cdot}h_s^-\|\cdot(|v|+|G(v)|) \leq \epsilon\eta^3+2\sqrt{\epsilon}(q^s)^{1+\frac{\beta}{2}}\leq 3\sqrt{\epsilon}(q^s)^{1+\frac{\beta}{2}},$$
by inequality (\ref{G2}).
Given $u\in B_{e^{\frac{1}{s^2(\hat{f}^{-1}(\tilde{y}))}-\frac{1}{2}\eta^{\frac{\beta}{2}}}q^s}^{d_s(\tilde{y})}(0)$, assume $\phi(v)=u$. Define the map $T$ on $B_{q^s}^{d_s(\tilde{x})}(0)$ by
$$T(v)=D_s[u-h_s^-(v,G(v))].$$
Then $\phi(v)=u$ iff $v$ is the fixed point of $T$. Next, we will prove that $T$ has a unique fixed point $v_0$ and $v_0\in B_{q^s}^{d_s(\tilde{y})}(0)$. Indeed,
\begin{align*}
    |T(v)|&\leq \|D_s\|(|\phi(v)|+|h_s^-(v,G(v))|)\\
    &\leq e^{-\frac{1}{s^2(\hat{f}^{-1}(\tilde{y}))}}(e^{\frac{1}{s^2(\hat{f}^{-1}(\tilde{y}))}-\frac{1}{2}\eta^{\frac{\beta}{2}}}q^s+3\sqrt{\epsilon}(q^s)^{1+\frac{\beta}{2}})\\
    &\leq (e^{-\frac{1}{2}\eta^{\frac{\beta}{2}}}+3\sqrt{\epsilon}(q^s)^{\frac{\beta}{2}})q^s\leq q^s.
\end{align*}
This implies that $T(B_{q^s}^{d_s(\tilde{y})}(0))\subset B_{q^s}^{d_s(\tilde{y})}(0)$. And $\|d_vT\|\leq \|D_s\|\cdot \|A(v)\|\leq e^{\frac{1}{s^2(\hat{f}^{-1}(\tilde{y}))}}\cdot 4\sqrt{\epsilon}\eta^{\frac{\beta}{2}}\leq 1$. By the fixed point theorem, $T$ has the unique fixed point $v_0\in B_{q^s}(0)$.

(a) $\|d_{\cdot}\phi^{-1}\|=\|(d_{\cdot}\phi)^{-1}\|\leq \|(d_{\cdot}\phi)\|^{-1}\leq e^{\eta^{\frac{\beta}{2}}}\|D_s\|<e^{\eta^{\frac{\beta}{2}}-\frac{1}{s^2(\hat{f}^{-1}(\tilde{y}))}}$.

(b) Since $|\phi(0)|=|h_s^-(0,G(0))|\leq |h_s^-(0,0)|+\|d_{\cdot}h_s^-\|\cdot|G(0)|\leq \epsilon\eta^3+\sqrt{\epsilon}\eta^{\frac{\beta}{2}}\cdot 10^{-3}\eta^2\leq 2\sqrt{\epsilon}\eta^{2+\frac{\beta}{2}}$, then $\phi(0)\in B_{e^{\frac{1}{s^2(\hat{f}^{-1}(\tilde{y}))}-\frac{1}{2}\eta^{\frac{\beta}{2}}}q^s}^{d_s(\tilde{x})}(0)$ for small enough $\epsilon$. Hence,
$$|\phi^{-1}(0)|=|\phi^{-1}(0)-\phi^{-1}(\phi(0))|\leq \|d(\phi^{-1})\|\cdot|\phi(0)|<e^{\eta^{\frac{\beta}{2}}-\frac{1}{s^2(\hat{f}^{-1}(\tilde{y}))}}\cdot 2\sqrt{\epsilon}\eta^{2+\frac{\beta}{2}}< 2\sqrt{\epsilon}\eta^{2+\frac{\beta}{2}}.$$

(c) Let $w:=\phi(v)$, then $d_w\phi^{-1}= (D_s^{-1}+A(v))^{-1}=(Id+D_sA(v))^{-1}D_s$. Hence
\begin{align}\label{norm}
\|d_{\cdot}\phi^{-1}\|_{\frac{\beta}{2}}\leq \|(Id+D_sA(\phi^{-1}(\cdot)))^{-1}\|_{\frac{\beta}{2}}\cdot\|D_s\|_{\frac{\beta}{2}}\leq \frac{1}{1-\|D_s A(\phi^{-1}(\cdot))\|_{\frac{\beta}{2}}}\cdot\|D_s\|_{\frac{\beta}{2}}.
\end{align}
Notice that $D_s$ and $\phi^{-1}$ are contractive since $\|d(\phi^{-1})\|<e^{\eta^{\frac{\beta}{2}}-\frac{1}{s^2(\hat{f}^{-1}(\tilde{y}))}}<1$. By inequalities (\ref{Av}) and (\ref{Av1}), $\|A(v)\|_{\frac{\beta}{2}}\leq 5\sqrt{\epsilon}(q^s)^{\frac{\beta}{2}}+ 3\sqrt{\epsilon}\leq 8\sqrt{\epsilon}$, and $\|D_s A(\phi^{-1}(\cdot))\|_{\frac{\beta}{2}}\leq e^{-\frac{1}{s^2(\hat{f}^{-1}(\tilde{y}))}}\cdot 8\sqrt{\epsilon}<1$. Hence,
$$\|d_{\cdot}\phi^{-1}\|_{\frac{\beta}{2}}\leq \frac{1}{1-8\sqrt{\epsilon}}\cdot e^{-\frac{1}{s^2(\hat{f}^{-1}(\tilde{y}))}}\leq e^{10\sqrt{\epsilon}-\frac{1}{s^2(\hat{f}^{-1}(\tilde{y}))}},$$
for small enough $\epsilon>0$.
\end{pf}
\begin{claim}\label{claim3}
    The map $F:\mathbb{R}^{d_s(\tilde{x})}\rightarrow\mathbb{R}^{d_u(\tilde{x})}$ satisfies \ref{AM1}-\ref{AM3}.
\end{claim}
\begin{pf}
    \ref{AM1}: Since $Lip(G)<\epsilon$ and  $|G(0)|\leq 10^{-3}(p^s\land p^u)^2$, we have
    \begin{equation}\label{phi1}
 \begin{aligned}
       |(\phi^{-1}(0),G(\phi^{-1}(0)))|&\leq |\phi^{-1}(0)|+|G(0)|+Lip(G)\cdot|\phi^{-1}(0)|\\
       &\leq (1+\epsilon)2\sqrt{\epsilon}\eta^{2+\frac{\beta}{2}}+10^{-3}\eta^2\leq 2\epsilon^2\eta^2.
    \end{aligned}
    \end{equation}
Then
   \begin{equation*}
    \begin{aligned}
        |F(0)|&=|D_u^{-1}(G(\phi^{-1}(0)))+h_u^-(\phi^{-1}(0),G(\phi^{-1}(0)))|\\
        &\leq \|D_u^{-1}\|(|G(0)|+Lip(G)|(\phi^{-1}(0)|)+(|h_u^-(0)|+\|d_{\cdot}h_u^-\|\cdot|(\phi^{-1}(0),G(\phi^{-1}(0)))|)\\
         &\leq e^{-\frac{1}{u^2(\tilde{y})}}(10^{-3}\eta^2+ \epsilon\cdot 2\sqrt{\epsilon}\eta^{2+\frac{\beta}{2}}) +\epsilon\eta^3+\sqrt{\epsilon}\eta^{\frac{\beta}{2}}\cdot 2\epsilon^2\eta^2\text{ (by inequality (\ref{phi1})})\\
         &\leq  e^{-\frac{1}{u^2(\tilde{y})}}[10^{-3}\eta^2+ \epsilon\cdot 2\sqrt{\epsilon}\eta^{2+\frac{\beta}{2}}+\sqrt{2}d(p(\tilde{y}),\Gamma_{\infty})^{-a}\cdot2\sqrt{\epsilon}\eta^{2+\frac{\beta}{2}}]\text{ (by Lemma \ref{C}}(2)) \\
         &\leq e^{-\frac{1}{u^2(\tilde{y})}}\cdot 10^{-3}\eta^2[1+2\cdot 10^3(\epsilon +\sqrt{2}d(p(\tilde{y}),\Gamma_{\infty})^{-a})\cdot \sqrt{\epsilon}\eta^{\frac{\beta}{2}}]\\
         &\leq e^{-\frac{1}{u^2(\tilde{y})}}\cdot 10^{-3}(p^s\land p^u)^2\cdot e^{\epsilon\eta^{\frac{\beta}{4}}}(\frac{q^s\land q^u}{p^s\land p^u})^2\text{ (since }\eta^{\frac{\beta}{4}}\cdot d(p(\tilde{y}),\Gamma_{\infty})^{-a})\ll\epsilon)\\
         &\leq 10^{-3}(p^s\land p^u)^2\cdot e^{-\frac{1}{u^2(\tilde{y})}}\cdot e^{\epsilon\eta^{\frac{\beta}{4}}}\cdot e^{2\Gamma\eta^{\frac{1}{\gamma}}}\text{ (since } p^s\land p^u\geq I^{-1}(q^s\land q^u))\\
         &\leq 10^{-3}(p^s\land p^u)^2\cdot e^{-\frac{1}{u^2(\tilde{y})}+3\Gamma \eta^{\frac{1}{\gamma}}} \text{ (since } \frac{1}{\gamma}<\frac{\beta}{20}< \frac{\beta}{4}).
         \end{aligned}
\end{equation*}
And $-\frac{1}{u^2(\tilde{y})}+3\Gamma \eta^{\frac{1}{\gamma}}\leq -\frac{1}{u^2(\tilde{y})}+3\Gamma Q(\tilde{y})^{\frac{1}{\gamma}}\leq -\frac{1}{u^2(\tilde{y})}+\|C(\tilde{y})^{-1}\|^{-2}<0$, then $|F(0)|\leq 10^{-3}(p^s\land p^u)^{2}$.

\ref{AM2} : According to $\|d_\cdot G\|_{\frac{\beta}{2}}= \|d_\cdot G\|_{C^0}+H\ddot{o}l_{\frac{\beta}{2}}(d_\cdot G)<\frac{1}{2}$ and claim \ref{phi}\ref{a},
\begin{equation}\label{dF1}
    \|d_{\phi^{-1}(0)}G\|\leq \|d_0G\|+H\ddot{o}l_{\frac{\beta}{2}}d_{\cdot}G\cdot|\phi^{-1}(0)|^{\frac{\beta}{2}}\leq \|d_0G\|+\frac{1}{2}\cdot (2\sqrt{\epsilon}\eta^{2+\frac{\beta}{2}})^{\frac{\beta}{2}}\leq \|d_0G\|+\epsilon^2\eta^\beta(<\epsilon).
\end{equation}
By the proof of Theorem \ref{F1} and inequality (\ref{phi1}),
 \begin{equation}\label{dF2}
     \|d_{(\phi^{-1}(0),G(\phi^{-1}(0))}h_u^-\|\leq \sqrt{\epsilon}|(\phi^{-1}(0),G(\phi^{-1}(0))|^{\frac{\beta}{2}}\leq \sqrt{\epsilon}\eta^\beta.
 \end{equation}
Next, we estimate $\|d_0F\|$:
\begin{equation*}
    \begin{aligned}
\|d_0F\|&= \Big\|D_u^{-1}\cdot d_{\phi^{-1}(0)}G\cdot d_0\phi^{-1}+d_{(\phi^{-1}(0),G(\phi^{-1}(0))}h_u^-\begin{pmatrix}
            I\\
            d_{\phi^{-1}(0)}G
        \end{pmatrix}d_0\phi^{-1}\Big\|\\
        &\leq \big[\|D_u^{-1}\|\cdot \|d_{\phi^{-1}(0)}G\|+\|d_{(\phi^{-1}(0),G(\phi^{-1}(0))}h_u^-\|\cdot (1+\|d_{\phi^{-1}(0)}G\|)\big]\cdot\|d_0\phi^{-1}\|\\
        & \leq \big[\|D_u^{-1}\|\cdot (\|d_0G\|+\epsilon^2\eta^\beta)+\sqrt{\epsilon}\eta^\beta \cdot 2\big]\cdot e^{(q^s)^{\frac{\beta}{2}}-\frac{1}{s^2(\hat{f}^{-1}(\tilde{y}))}}\text{ (by inequalities (\ref{dF1}), (\ref{dF2}), claim \ref{phi}\ref{a}})\\
        &\leq e^{(q^s)^{\frac{\beta}{2}}-\frac{1}{s^2(\hat{f}^{-1}(\tilde{y}))}-\frac{1}{u^2(\tilde{y})}}\cdot (\|d_0G\|+\epsilon^2\eta^\beta+2\sqrt{2}d(p(\tilde{y}),\Gamma_\infty)^{-a}\sqrt{\epsilon}\eta^\beta)\text{ (by Lemma \ref{C}(2)})\\
         &\leq e^{(q^s)^{\frac{\beta}{2}}-\frac{1}{s^2(\hat{f}^{-1}(\tilde{y}))}-\frac{1}{u^2(\tilde{y})}}\cdot \frac{1}{2}(p^s\land p^u)^{\frac{\beta}{2}} \Big[\frac{\|d_0G\|}{\frac{1}{2}(p^s\land p^u)^{\frac{\beta}{2}}}+(\frac{q^s\land q^u}{p^s\land p^u})^{\frac{\beta}{2}}\epsilon\eta^{\frac{\beta}{4}}\Big]\\
         &\leq e^{(q^s)^{\frac{\beta}{2}}-\frac{1}{s^2(\hat{f}^{-1}(\tilde{y}))}-\frac{1}{u^2(\tilde{y})}}\cdot \frac{1}{2}(p^s\land p^u)^{\frac{\beta}{2}}\cdot e^{\frac{\beta}{2}\Gamma(p^s\land p^u)^{\frac{1}{\gamma}}}(1+\epsilon\eta^{\frac{\beta}{4}}) \text{ (by  } q^s\land q^u\leq I(p^s\land p^u))\\
         &\leq \frac{1}{2}(p^s\land p^u)^{\frac{\beta}{2}} \cdot e^{(q^s)^{\frac{\beta}{2}}-\frac{1}{s^2(\hat{f}^{-1}(\tilde{y}))}-\frac{1}{u^2(\tilde{y})}+\frac{\beta}{2}\Gamma(p^s\land p^u)^{\frac{1}{\gamma}}+\epsilon\eta^{\frac{\beta}{4}}}.
    \end{aligned}
\end{equation*}
Similar to the calculation of inequality (\ref{>}), there exists a constant $k_0=k_0(\Gamma,\beta,\epsilon)$ s.t.
\begin{equation}\label{s2}
\frac{1}{s^2(\hat{f}^{-1}(\tilde{y}))}\geq k_0Q(\tilde{y})^{\frac{1}{\gamma}}\geq k_0Q(\tilde{y})^{\frac{\beta}{2}},
\end{equation}
and
\begin{equation}\label{u2}
\frac{1}{u^2(\tilde{y})}\geq k_0Q(\hat{f}^{-1}(\tilde{y}))^{\frac{1}{\gamma}}\geq k_0Q(\hat{f}^{-1}(\tilde{y}))^{\frac{\beta}{2}}.
\end{equation}
Then $(q^s)^{\frac{\beta}{2}}-\frac{1}{s^2(\hat{f}^{-1}(\tilde{y}))}-\frac{1}{u^2(\tilde{y})}+\frac{\beta}{2}\Gamma(p^s\land p^u)^{\frac{1}{\gamma}}+\epsilon\eta^{\frac{\beta}{4}}\leq Q(\tilde{y})^{\frac{\beta}{2}}-\frac{1}{s^2(\hat{f}^{-1}(\tilde{y}))}-\frac{1}{u^2(\tilde{y})}+ \frac{\beta}{2}\Gamma e^{\epsilon}\eta^{\frac{1}{\gamma}}+Q(\tilde{y})^{\frac{\beta}{4}}<0.$
Hence, $\|d_0F\|\leq \frac{1}{2}(p^s\land p^u)^{\frac{\beta}{2}}$.

\ref{AM3}: By Theorem \ref{F1}, $(1+\epsilon)q^s+10^{-3}\eta^2\leq 2q^s$. And  $\|d_\cdot h_u^-\|_{\frac{\beta}{2}}\leq \sqrt{\epsilon}\eta^{\frac{\beta}{2}}+\sqrt{\epsilon}\leq 3\sqrt{\epsilon}(q^s)^{\frac{\beta}{2}}$. Then
\begin{equation*}
    \begin{aligned}
        \|d_\cdot F\|_{\frac{\beta}{2}}&=\|D_u^{-1}\cdot d_{\phi^{-1}(\cdot)}G\cdot d_\cdot\phi^{-1}+d_{(\phi^{-1}(\cdot),G(\phi^{-1}(\cdot))}h_u^-\begin{pmatrix}
            I\\
            d_{\phi^{-1}(\cdot)}G
        \end{pmatrix}d_\cdot\phi^{-1}\|_{\frac{\beta}{2}}\\
        &\leq \big[\|D_u^{-1}\|\cdot \|d_{\cdot}G\|_{\frac{\beta}{2}}+\|d_{\cdot}h_u^-\|_{\frac{\beta}{2}}(1+\|d_{\cdot}G\|_{\frac{\beta}{2}})\big]\|d_\cdot\phi^{-1}\|_{\frac{\beta}{2}}\\
        &\leq e^{6\sqrt{\epsilon}(q^s)^{\frac{\beta}{2}}-\frac{1}{s^2(\hat{f}^{-1}(\tilde{y}))}}(\|d_{\cdot}G\|_{\frac{\beta}{2}}\|D_u^{-1}\|+4\sqrt{\epsilon})\\
        &\leq e^{6\sqrt{\epsilon}(q^s)^{\frac{\beta}{2}}-\frac{1}{s^2(\hat{f}^{-1}(\tilde{y}))}-\frac{1}{u^2(\tilde{y})}}(\|d_{\cdot}G\|_{\frac{\beta}{2}}+4e^{\frac{1}{2}}\sqrt{\epsilon})\\
        &\leq \frac{1}{2}e^{6\sqrt{\epsilon}(q^s)^{\frac{\beta}{2}}-\frac{1}{s^2(\hat{f}^{-1}(\tilde{y}))}-\frac{1}{u^2(\tilde{y})}}\cdot e^{8e^{\frac{1}{2}}\sqrt{\epsilon}}.
    \end{aligned}
\end{equation*}
And by inequality (\ref{s2}), $6\sqrt{\epsilon}(q^s)^{\frac{\beta}{2}}-\frac{1}{s^2(\hat{f}^{-1}(\tilde{y}))}-\frac{1}{u^2(\tilde{y})}+8e^{\frac{1}{2}}\sqrt{\epsilon}<0$, then $\|d_{\cdot}F\|_{\frac{\beta}{2}}<\frac{1}{2}$.
\end{pf}

 Since $\frac{1}{u^2(\tilde{y})}>\|C(\tilde{y})^{-1}\|^{-2}\geq (\frac{1}{2}+\Gamma)(q^s)^{\frac{1}{\gamma}}\geq \frac{1}{2}(q^s)^{\frac{\beta}{2}}+\Gamma(q^s)^{\frac{1}{\gamma}}$ and $p^s=\min\{I(q^s),Q(\tilde{x})\}$, for $v\in B_{q^s}^{d_s(\tilde{y})}(0)$,
 \begin{equation}\label{qs}
     e^{\frac{1}{u^2(\tilde{y})}-\frac{1}{2}(q^s)^{\frac{\beta}{2}}}q^s\geq e^{\Gamma(q^s)^{\frac{1}{\gamma}}}q^s=I(q^s)\geq p^s,
 \end{equation}
 for small enough $\epsilon$. Hence, $f^{-1}_{\tilde{x}}V^s$ contains   an $s$-admissible manifold $\mathcal{F}^s_{w_1,w_2}(V^s)=\psi_{\tilde{x}}\{(w,F(w)):w\in B_{p^s}^{d_s(\tilde{x})}(0)\}$. The proof for the uniqueness of the $s$-admissible manifold contained in $f_{\tilde{x}}^{-1}V^s$ is   the same as the proof in \cite{Ovadia2}.

 (2) For any $u$-admissible $V^u$ in $\psi_{\tilde{x}}^{p^s,p^u}$, $f_{\tilde{x}}^{-1}V^s$ intersects $V^u$ at least at one point by Proposition \ref{ad}(1). And by McShane's extension formula, we can prove $f_{\tilde{x}}^{-1}V^s$ intersects $V^u$ at most at one point. The detailed proof is shown in \cite{Ovadia2}.

 (3) By the Theorem \ref{F1} and $|G(0)|\leq 10^{-3}(p^s\land p^u)^2$, we have
$$|h_s^-(0,G(0))|\leq |h_s^-(0,0)|+\|d_{\cdot}h\|\cdot|G(0)|\leq \epsilon\eta^3+\sqrt{\epsilon}\eta^{\frac{\beta}{2}}\cdot 10^{-3}\eta^2\leq \sqrt{\epsilon}\eta^{2+\frac{\beta}{2}}\leq \sqrt{\epsilon}(I(p^s\land p^u))^{2+\frac{\beta}{2}}\ll p^s.$$
 \end{pf}
\begin{rmk} Since the map $f$ has singularities, there is no uniform lower bound on $\|D_s\|$ and $\|D_u^{-1}\|$. By Lemma \ref{C}(2), we have $\|D_s\|, \|D_u^{-1}\|\geq \frac{\sqrt{2}}{2}d(x,\Gamma_\infty)^a$. Hence, stronger control over the origin offset $|G(0)|$ is required when proving $F$ satisfies the admissibility parameters \ref{AM1}-\ref{AM3}.
\end{rmk}

\begin{mdef}
    Suppose $v_1\rightarrow v_2$, where $v_1=\psi_{\tilde{x}}^{p^s,p^u}, v_2=\psi_{\tilde{y}}^{q^s,q^u}$.
 \begin{enumerate}[label=(\arabic*)]
\item The \textit{Graph Transform} $\mathcal{F}^s_{v_1,v_2}$ maps  an $s$-admissible manifold $V^s$ in $\psi_{\tilde{y}}^{q^s,q^u}$ with representing function $F:B_{q^s}^{d_s(\tilde{y})}(0)\rightarrow \mathbb{R}^{d_u(\tilde{y})}$ to the unique $s$-admissible manifold in $\psi_{\tilde{x}}^{p^s,p^u}$ with representing function $\tilde{F}:B_{p^s}^{d_s(\tilde{x})}(0)\rightarrow \mathbb{R}^{d_u(\tilde{x})}$ such that it is contained in $f^{-1}_{\tilde{x}}(V^s)$ and
    $$\{(v,\tilde{F}(v)):v\in B_{p^s}^{d_s(\tilde{x})}(0)\}\subset F^{-1}_{\tilde{x},\tilde{y}}\{(v,F(v)):v\in B^{d_s(\tilde{y})}_{q^s}(0)\}.$$
\item  The \textit{Graph Transform} $\mathcal{F}^u_{v_1,v_2}$ maps a $u$-admissible manifold $V^u$ in $\psi_{\tilde{x}}^{p^s,p^u}$ with representing function $G:B_{p^u}^{d_u(\tilde{x})}(0)\rightarrow \mathbb{R}^{d_s(\tilde{x})}$ to the unique $u$-admissible manifold in $\psi_{\tilde{y}}^{q^s,q^u}$ with representing function $\tilde{G}:B_{q^u}^{d_u(\tilde{y})}(0)\rightarrow \mathbb{R}^{d_s(\tilde{y})}$ such that it is contained in $f(V^u)$ and
    $$\{(\tilde{G}(v),v):v\in B_{q^u}^{d_u(\tilde{y})}(0)\}\subset F_{\tilde{x},\tilde{y}}\{(G(v),v):v\in B^{d_u(\tilde{x})}_{p^u}(0)\}.$$
\end{enumerate}
\end{mdef}
 \begin{prop}\label{Fs}
       Suppose $w_1\rightarrow w_2$, where $w_1=\psi_{\tilde{x}}^{p^s,p^u}, w_2=\psi_{\tilde{y}}^{q^s,q^u}$. For small enough $\epsilon$, and $s$-admissible manifolds $V_1^{s}, V_2^{s}$ in $\psi_{\tilde{y}}^{q^s,q^u}$ with the representing functions $F_1,F_2$, respectively:
\begin{enumerate}[label=(\arabic*), series=myenum]
    \item $d_{C^0}(\mathcal{F}^{s}_{w_1,w_2}(V_1^{s}),\mathcal{F}^{s}_{w_1,w_2}(V_1^{s}))\leq e^{-\frac{1-\epsilon}{u^2(\tilde{y}))}}d_{C^0}(V_1^{s},V_2^{s})$.
    \item $d_{C^1}(\mathcal{F}^s_{w_1,w_2}(V_1^{s}),\mathcal{F}^s_{w_1,w_2}(V_1^{s}))\leq e^{-\frac{1-\epsilon}{s^2(\hat{f}^{-1}(\tilde{y}))}-\frac{1-\epsilon}{u^2(\tilde{y})}}\big((1+\epsilon(q^s\land q^u)^{\frac{\beta}{2}})\|d_{\cdot}F_1-d_{\cdot}F_2\|+10\epsilon\|F_1-F_2\|^{\frac{\beta}{2}}\big)$.
\end{enumerate}
For $u$-admissible manifolds $V_1^{u}, V_2^{u}$ in $\psi_{\tilde{x}}^{p^s,p^u}$ with the representing functions $G_1,G_2$, respectively:
\begin{enumerate}[label=(\arabic*), resume=myenum]
    \item $d_{C^0}(\mathcal{F}^{u}_{w_1,w_2}(V_1^{u}),\mathcal{F}^{u}_{w_1,w_2}(V_1^{u}))\leq e^{-\frac{1-\epsilon}{s^2(\tilde{x}))}}d_{C^0}(V_1^{u},V_2^{u})$.
    \item $d_{C^1}(\mathcal{F}^u_{w_1,w_2}(V_1^{u}),\mathcal{F}^u_{w_1,w_2}(V_1^{u}))\leq e^{-\frac{1-\epsilon}{s^2(\tilde{x}))}-\frac{1-\epsilon}{u^2(\hat{f}(\tilde{x}))}}\big((1+\epsilon(p^s\land p^u)^{\frac{\beta}{2}})\|d_{\cdot}G_1-d_{\cdot}G_2\|+10\epsilon\|G_1-G_2\|^{\frac{\beta}{2}}\big)$.
\end{enumerate}
 \end{prop}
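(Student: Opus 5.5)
We prove only the $s$-statements (1)--(2); the $u$-statements (3)--(4) follow by the symmetric argument with $F_{\tilde{x},\tilde{y}}$ in place of $F^{-1}_{\tilde{x},\tilde{y}}$ and the roles of $D_s,D_u^{-1}$ exchanged. As in the proof of Theorem \ref{TF}, we write $F^{-1}_{\tilde{x},\tilde{y}}=D_0(\hat{f}^{-1}(\tilde{y}))^{-1}+(h_s^-,h_u^-)$ using Theorem \ref{F1}. For $i=1,2$ we set $\phi_i(v):=D_s^{-1}v+h_s^-(v,F_i(v))$, so that the image representing function is
\[
\tilde{F}_i=\big[D_u^{-1}F_i+h_u^-(\cdot,F_i(\cdot))\big]\circ\phi_i^{-1},
\]
restricted to $B^{d_s(\tilde{x})}_{p^s}(0)$. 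Claim \ref{phi} supplies the bounds $\|d\phi_i^{-1}\|\le e^{\eta^{\beta/2}-1/s^2(\hat{f}^{-1}(\tilde{y}))}$ and the $\frac{\beta}{2}$-H\"older bounds on $d\phi_i^{-1}$, where $\eta=q^s\land q^u$.

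\textbf{$C^0$ estimate.} Fix $w\in B_{p^s}(0)$ and put $v_i=\phi_i^{-1}(w)$. The identity $\phi_1(v_1)=\phi_2(v_2)$ reads
\[
D_s^{-1}(v_1-v_2)=h_s^-(v_2,F_2(v_2))-h_s^-(v_1,F_1(v_1)).
\]
Since $\|dh_s^-\|\le\sqrt{\epsilon}\eta^{\beta/2}$ and $\mathrm{Lip}(F_i)\le\epsilon$ by (\ref{G1}), this gives
\[
|v_1-v_2|\le \|D_s\|\,\sqrt{\epsilon}\eta^{\beta/2}\big(2|v_1-v_2|+\|F_1-F_2\|\big),
\]
hence $|v_1-v_2|\le 2\sqrt{\epsilon}\eta^{\beta/2}\|F_1-F_2\|$. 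Next,
\[
|\tilde{F}_1(w)-\tilde{F}_2(w)|\le \|D_u^{-1}\|\big(|F_1(v_1)-F_2(v_1)|+\epsilon|v_1-v_2|\big)+\sqrt{\epsilon}\eta^{\beta/2}\big(|v_1-v_2|+|F_1(v_1)-F_2(v_2)|\big).
\]
To absorb the additive $h_u^-$ terms into a multiplicative factor, we use $\|D_u^{-1}\|\ge\frac{\sqrt{2}}{2}d(p(\tilde{y}),\Gamma_\infty)^a$ from Lemma \ref{C}(2) together with $\eta^{\beta/4}\rho(\tilde{y})^{-a}\ll\epsilon$, which holds by the definition of $Q$. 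This yields the factor $\|D_u^{-1}\|(1+\epsilon\eta^{\beta/4})$. The tilt must then be hidden in the exponent, $1+\epsilon\eta^{\beta/4}\le e^{\epsilon/u^2(\tilde{y})}$, using the lower bound (\ref{u2}) for $1/u^2(\tilde{y})$ in terms of $Q^{1/\gamma}$ and the fact that $\frac{1}{\gamma}<\frac{\beta}{4}$. This proves (1).

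\textbf{$C^1$ estimate.} Differentiate:
\[
d_w\tilde{F}_i=\big[D_u^{-1}d_{v_i}F_i+d_{(v_i,F_i(v_i))}h_u^-\big(I,\,d_{v_i}F_i\big)^{T}\big]\,d_w\phi_i^{-1}.
\]
We expand $d\tilde{F}_1-d\tilde{F}_2$ telescopically into three types of terms. The first type is $\|D_u^{-1}\|\,\|d\phi^{-1}\|\,\|dF_1-dF_2\|$, the main term, which gives the factor $e^{-1/s^2-1/u^2}(1+\epsilon\eta^{\beta/2})$. The second type consists of H\"older differences $\|dF_2(v_1)-dF_2(v_2)\|$ and $\|dh(\cdot)_1-dh(\cdot)_2\|$, bounded by $\sqrt{\epsilon}|v_1-v_2|^{\beta/2}$ and $\sqrt{\epsilon}\,(|v_1-v_2|+\|F_1-F_2\|)^{\beta/2}$; by the $C^0$ step these are all $\lesssim\sqrt{\epsilon}\|F_1-F_2\|^{\beta/2}$. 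The third type is $\|d\phi_1^{-1}-d\phi_2^{-1}\|$ at $w$. Writing $d\phi_i^{-1}=(D_s^{-1}+A_i(v_i))^{-1}$ and using the resolvent identity, this is bounded by $\|D_s\|^2\|A_1(v_1)-A_2(v_2)\|$. In turn $\|A_1(v_1)-A_2(v_2)\|$ is controlled by $\sqrt{\epsilon}\eta^{\beta/2}\|dF_1-dF_2\|$ plus H\"older terms in $\|F_1-F_2\|^{\beta/2}$, exactly as in (\ref{Av}).

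Collecting these, and again converting the factors of $\|D_u^{-1}\|^{-1}\le\sqrt{2}\,d(\cdot,\Gamma_\infty)^{-a}$ that arise when factoring out $e^{-1/u^2}$ into $\epsilon$-small quantities via the smallness of $Q$, gives (2) with constant $10\epsilon$.

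\textbf{Main obstacle.} The hard part is this bookkeeping. Because of the singularities there is no uniform lower bound on $\|D_u^{-1}\|$, so the additive errors coming from $h_u^-$ cannot simply be compared with the contraction factor $e^{-1/u^2}$. Each such error must carry a power of $\eta$ that beats $d(p(\tilde{y}),\Gamma_\infty)^{-a}$, and the leftover $(1+\cdot)$ factors must be absorbed as $e^{\epsilon/s^2}$ and $e^{\epsilon/u^2}$ through (\ref{s2}) and (\ref{u2}). I would first verify the inequality $\rho(\tilde{y})^{-a}\eta^{\beta/4}\le\epsilon\,\|C_0(\tilde{y})^{-1}\|^{-2}$ directly from the definition of $\tilde{Q}_\epsilon$, since every absorption step reduces to it.
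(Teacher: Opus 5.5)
There is a gap at the end of your $C^1$ step. The bounds you list do not give the coefficient $10\epsilon$ in front of $\|F_1-F_2\|^{\beta/2}$, and your absorption device cannot produce it.

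Up to that point you follow the paper's decomposition, and you are more careful in two places:
\begin{itemize}
\item you apply the resolvent identity at the two distinct points $v_i=\phi_i^{-1}(w)$, whereas the derivation of (\ref{phi3}) treats $d_{\tilde v}\phi_2^{-1}\circ d_v\phi_2$ as the identity although $\phi_2(v)\neq\tilde v$;
\item you keep the term $D_u^{-1}(d_{v_1}F_2-d_{v_2}F_2)\,d_w\phi_1^{-1}$, which the paper drops.
\end{itemize}
Part (1) is fine.

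Two terms cause the trouble.
\begin{itemize}
\item[(a)] The H\"older part of $dh_u^-$. With $H\ddot{o}l_{\beta/2}(d\bar H^-)\le\sqrt\epsilon$ from Theorem \ref{F1}, your bound for it is of order $\sqrt\epsilon\,e^{-1/s^2(\hat f^{-1}(\tilde y))}\|F_1-F_2\|^{\beta/2}$. But $\sqrt\epsilon$ is not $\le 10\epsilon\,e^{-1/u^2(\tilde y)}$. The paper reaches ``$6\epsilon$'' here only by multiplying in $H\ddot{o}l_{\beta/2}(h_u^-)$ a second time.
\item[(b)] The term the paper drops. By \ref{AM3}, $dF_2$ is only $\tfrac12$-H\"older, not $\sqrt\epsilon$-H\"older as your sentence suggests. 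With your bound $|v_1-v_2|\le 2\sqrt\epsilon\eta^{\beta/2}\|F_1-F_2\|$, it contributes $\tfrac12(2\sqrt\epsilon\eta^{\beta/2})^{\beta/2}\|F_1-F_2\|^{\beta/2}$. Knowing only $\eta\le\epsilon^{20/\beta}$, this coefficient is bounded only by a constant times $\epsilon^{21\beta/4}$. For small $\beta$ that is neither $O(\sqrt\epsilon)$ nor $O(\epsilon)$.
\end{itemize}
Dividing by $\|D_u^{-1}\|\ge\frac{\sqrt2}{2}\rho(\tilde y)^a$ costs a factor $\sqrt2\rho(\tilde y)^{-a}$. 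That is affordable only for terms carrying a real power of $\eta$. Term (a) carries none, so your device turns it into $\sqrt{2\epsilon}\,\rho(\tilde y)^{-a}\|F_1-F_2\|^{\beta/2}$, which is not small. Term (b) already has $\|D_u^{-1}\|$ in front; its problem is the size of its coefficient.

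The obstacle you single out is not actually there. Since $U^2(\tilde y,\xi)\ge 2|\xi|^2$, you have $u^2(\tilde y)\ge 2$. So after replacing $\|D_u^{-1}\|$ by its upper bound $e^{-1/u^2(\tilde y)}$, you may factor $e^{-1/u^2(\tilde y)}$ out of every additive error at the price of $e^{1/2}$. No lower bound on $\|D_u^{-1}\|$ is needed; this is also the correct justification of the paper's step that pulls $\|D_u^{-1}\|$ out of the $h_u^-$ terms.

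The leftover factors $1+O(\sqrt\epsilon\eta^{\beta/2})$ and $e^{\eta^{\beta/2}}$ then go into $e^{\epsilon/s^2+\epsilon/u^2}$. This works because $1/u^2(\tilde y)\ge\|C_0(\tilde y)^{-1}\|^{-2}\ge Q(\tilde y)^{1/\gamma}\ge\eta^{1/\gamma}$, by Lemma \ref{C}(1) and the definition of $Q$, while $\eta^{\beta/2-1/\gamma}\le\epsilon^{9}$. Note that (\ref{u2}), which you cite, is stated with $Q(\hat f^{-1}(\tilde y))$, not the quantity you need. This argument proves (1) and the $\|dF_1-dF_2\|$ part of (2), but leaves (a) and (b) untouched.

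To close the gap you have two options:
\begin{itemize}
\item Lower the exponent of the $C^0$ term, e.g.\ to $\beta/3$. Since $|v_1-v_2|$ and $\|F_1-F_2\|$ are at most $2q^s\le 2\epsilon^{20/\beta}$, the spare factor $(\cdot)^{\beta/6}$ is at most $2\epsilon^{10/3}$. This makes both (a) and (b) much smaller than $\epsilon\|F_1-F_2\|^{\beta/3}$.
\item Keep $\beta/2$ and replace $10\epsilon$ by some $\delta(\epsilon)\to0$, of order $\sqrt\epsilon+\epsilon^{21\beta/4}$.
\end{itemize}
Either version is enough for the way (2) is used in Proposition \ref{shadow}(5). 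The sharper $O(\epsilon^2)$ H\"older constant that the proofs of Theorems \ref{F_{x}}(3) and \ref{F1}(3) actually produce removes (a), but not (b).
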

 \begin{pf}
     By Theorem \ref{TF}, $\mathcal{F}^s_{w_1,w_2}(V_1^s)=\psi_{\tilde{x}}[\{(v,\tilde{F_i}(v):v\in B_{p^s}^{d_s(\tilde{x})}(0)\}]$, where $\tilde{F}_i=[D_u^{-1}(F_i(\cdot))+h_u^-(\cdot,F_i(\cdot)))]\circ \phi_i^{-1}$, $\phi_i(v):=D_s^{-1}(v)+h_s^-(v,F_i(v))$ on $B_{q^s}^{d_s(\tilde{y})}(0)$.

     (1) \begin{equation}\label{phi2}
         \begin{aligned}
             \|\phi_1^{-1}-\phi_2^{-1}\|&\leq \|\phi_1^{-1}\circ\phi_1-\phi_2^{-1}\circ\phi_1\|=\|Id-\phi_2^{-1}\circ\phi_1\|\\
             &=\|\phi_2^{-1}\circ\phi_2-\phi_2^{-1}\circ\phi_1\|
             \leq \|d_{\cdot}\phi_2^{-1}\|\cdot\|\phi_1-\phi_2\|\\
             &\leq \|d_{\cdot}\phi_2^{-1}\|\cdot\|h_s^-(\cdot,F_1(\cdot))-h_s^-(\cdot,F_2(\cdot))\|\\
             &\leq \|d_{\cdot}\phi_2^{-1}\|\cdot\|d_{\cdot}h_s^-\|\cdot\|F_1-F_2\|,
         \end{aligned}
     \end{equation}
     the first inequality holds by $\phi_i(B_{q^s}^{d_s(\tilde{y})}(0))\supset B_{p^s}^{d_s(\tilde{x})}(0)$. Then $\frac{\|\phi_1^{-1}-\phi_2^{-1}\|}{\|F_1-F_2\|}\leq \sqrt{\epsilon}\eta^\frac{\beta}{2}$ since $\|d_{\cdot}\phi_2^{-1}\|\leq 1$ (by claim \ref{phi}\ref{a}, $\eta=q^s\land q^u$). And
     \begin{equation*}
         \begin{aligned}
         \|F_1\circ\phi_1^{-1}-F_2\circ \phi_2^{-1}\|&\leq \|F_1\circ\phi_1^{-1}-F_1\circ \phi_2^{-1}\|+\|F_1\circ\phi_2^{-1}-F_2\circ \phi_2^{-1}\|\\
         &\leq Lip(F_1)\|\phi_1^{-1}-\phi_2^{-1}\|+\|F_1-F_2\|,
     \end{aligned}
     \end{equation*}
  since $\phi_i$ is expanding on $B_{q^s}^{d_s(\tilde{y})}(0)$, then $\frac{\|F_1\circ\phi_1^{-1}-F_2\circ \phi_2^{-1}\|}{\|F_1-F_2\|}\leq 1+\epsilon^{\frac{3}{2}}\eta^{\frac{\beta}{2}}$ by inequality  (\ref{G1}). Hence, we can estimate
     \begin{equation*}
         \begin{aligned}
             d_{C^0}(\mathcal{F}^s_{w_1,w_2}(V_1^s),^s(V^s_2))
             &=\|[D_u^{-1}(F_1(\cdot))+h_u^-(\cdot,F_1(\cdot)))]\circ \phi_1^{-1}-[D_u^{-1}(F_2(\cdot))+h_u^-(\cdot,F_2(\cdot)))]\circ \phi_2^{-1}\|\\
             &\leq \|D_u^{-1}\|\cdot\|F_1\circ\phi_1^{-1}-F_2\circ \phi_2^{-1}\|+\|d_{\cdot}h_u^-\|(\|\phi_1^{-1}-\phi_2^{-1}\|+\|F_1\circ\phi_1^{-1}-F_2\circ \phi_2^{-1}\|)\\
             &\leq \|D_u^{-1}\|[(1+\epsilon^{\frac{3}{2}}\eta^{\frac{\beta}{2}})+\sqrt{\epsilon}\eta^{\frac{\beta}{2}}(\sqrt{\epsilon}\eta^{\frac{\beta}{2}}+1+\epsilon^{\frac{3}{2}}\eta^{\frac{\beta}{2}})]\|F_1-F_2\|\\
             &\leq e^{-\frac{1}{u^2(\tilde{y})}+2\sqrt{\epsilon}\eta^{\frac{\beta}{2}}}\|F_1-F_2\|.
         \end{aligned}
     \end{equation*}
     For small enough $\epsilon$, $\frac{\epsilon}{u^2(\tilde{y})}\geq 2\sqrt{\epsilon}\eta^{\frac{\beta}{2}}$, thus part(1) is proved.

     (2) For $\tilde{v}\in B_{e^{\frac{1}{s^2(\hat{f}^{-1}(\tilde{y}))}-\frac{1}{2}\eta^{\frac{\beta}{2}}}q^s}^{d_s(\tilde{y})}(0)$, and let $\tilde{v}=\phi_1(v)$:
\begin{equation*}
    \begin{aligned}
        \|d_{\tilde{v}}\phi_1^{-1}-d_{\tilde{v}}\phi_2^{-1}\|&\leq \|Id-d_{\tilde{v}}\phi_2^{-1}\cdot d_v\phi_1\|= \|d_{\tilde{v}}\phi_2^{-1}\cdot d_v\phi_2-d_{\tilde{v}}\phi_2^{-1}\cdot d_v\phi_1\|\\
        &\leq \|d_v\phi_1-d_v\phi_2\|\cdot \|d_{\tilde{v}}\phi_2^{-1}\|\\
        &\leq \Big\|d_{(v,F_1(v))}h_s^-\begin{pmatrix}
            I\\
            d_vF_1
        \end{pmatrix}-d_{(v,F_2(v))}h_s^-\begin{pmatrix}
            I\\
            d_vF_2
        \end{pmatrix}\Big\|\cdot \|d_{\tilde{v}}\phi_2^{-1}\|\\
        &\leq \Big[\Big\|d_{(v,F_1(v))}h_s^-\begin{pmatrix}
            I\\
            d_vF_1
        \end{pmatrix} -d_{(v,F_1(v))}h_s^-\begin{pmatrix}
            I\\
            d_vF_2
        \end{pmatrix}\Big\|\\
        &+ \Big\|d_{(v,F_1(v))}h_s^-\begin{pmatrix}
            I\\
            d_vF_2
        \end{pmatrix}-d_{(v,F_2(v))}h_s^-\begin{pmatrix}
            I\\
            d_vF_2
        \end{pmatrix}\Big\|\Big]\cdot \|d_{\tilde{v}}\phi_2^{-1}\|\\
        &\leq [\|d_{\cdot}h_s^-\|\cdot\|d_{\cdot}F_1-d_{\cdot}F_2\|+Hol_{\frac{\beta}{2}}(d_{\cdot}h_s^-)\|F_1-F_2\|^{\frac{\beta}{2}}(1+\|d_{\cdot}F_2\|)]\cdot \|d_{\tilde{v}}\phi_2^{-1}\|,
    \end{aligned}
\end{equation*}
 then
 \begin{equation}\label{phi3}
     \|d_{\tilde{v}}\phi_1^{-1}-d_{\tilde{v}}\phi_2^{-1}\|\leq e^{(q^s)^{\frac{\beta}{2}}-\frac{1}{s^2(\hat{f}^{-1}(\tilde{y}))}}(\sqrt{\epsilon}\eta^{\frac{\beta}{2}}\|d_{\cdot}F_1-d_{\cdot}F_2\|+2\sqrt{\epsilon}\|F_1-F_2\|^{\frac{\beta}{2}}).
 \end{equation}
 Denote $v_1:=\phi_1^{-1}(\tilde{v})$ and $v_2:=\phi_2^{-1}(\tilde{v})$, we estimate
 \begin{align*}
    &\quad \|d_{(v_1,F_1(v_1))}h_u^--d_{(v_2,F_2(v_2))}h_u^-\|\\
    &\leq \|d_{(v_1,F_1(v_1))}h_u^--d_{(v_1,F_2(v_1))}h_u^-\|+\|d_{(v_1,F_2(v_1))}h_u^--d_{(v_1,F_2(v_2))}h_u^-\|+\|d_{(v_1,F_2(v_2))}h_u^--d_{(v_2,F_2(v_2))}h_u^-\|\\
    &\leq \sqrt{\epsilon}\|F_1-F_2\|^{\frac{\beta}{2}}+\sqrt{\epsilon}|v_1-v_2|^{\frac{\beta}{2}}+\sqrt{\epsilon}|v_1-v_2|^{\frac{\beta}{2}}\leq3\sqrt{\epsilon}\|F_1-F_2\|^{\frac{\beta}{2}},
    \end{align*}
since $|v_1-v_2|=|\phi_1^{-1}(\tilde{v})-\phi_2^{-1}(\tilde{v})|\leq \|F_1-F_2\|$ by part (1). Then
 \begin{equation*}
     \begin{aligned}
         &\quad \Big\|d_{(v_1,F_1(v_1))}h_u^-\begin{pmatrix}
            I\\
            d_{v_1}F_1
        \end{pmatrix}-d_{(v_2,F_2(v_2))}h_u^-\begin{pmatrix}
            I\\
            d_{v_2}F_2
        \end{pmatrix}\Big\|\\
 &\leq \|d_{(v_1,F_1(v_1))}h_u^--d_{(v_2,F_2(v_2))}h_u^-\|\cdot\Big\|\begin{pmatrix}
            I\\
            d_{v_1}F_1
        \end{pmatrix}\Big\|+ \|d_{\cdot}h_u^-\|\cdot\Big\|\begin{pmatrix}
            I\\
            d_{v_1}F_1
        \end{pmatrix}-\begin{pmatrix}
            I\\
            d_{v_2}F_2
        \end{pmatrix}\Big\|\\
        &\leq Hol_{\frac{\beta}{2}}(h_u^-)3\sqrt{\epsilon}\|F_1-F_2\|^{\frac{\beta}{2}}(1+\|d_{\cdot}F_1\|)+\|d_{\cdot}h_u^-\|\cdot\|d_{v_1}F_1-d_{v_2}F_2\|\\
        &\leq 6\epsilon\|F_1-F_2\|^{\frac{\beta}{2}}+\sqrt{\epsilon}\eta^{\frac{\beta}{2}}(\|d_{v_1}F_1-d_{v_2}F_1\|+\|d_{v_2}F_1-d_{v_2}F_2\|)\\
        &\leq  6\epsilon\|F_1-F_2\|^{\frac{\beta}{2}}+\sqrt{\epsilon}\eta^{\frac{\beta}{2}}\Big(\frac{1}{2}|v_1-v_2|^{\frac{\beta}{2}}+\|d_{\cdot}F_1-d_{\cdot}F_2\|\Big)\\
        &\leq 7\epsilon\|F_1-F_2\|^{\frac{\beta}{2}}+\sqrt{\epsilon}\eta^{\frac{\beta}{2}}\|d_{\cdot}F_1-d_{\cdot}F_2\|.
     \end{aligned}
 \end{equation*}
It follows that
       \begin{equation*}
           \begin{aligned}
               &\quad\Big\|d_{(v_1,F_1(v_1))}h_u^-\begin{pmatrix}
            I\\
            d_{v_1}F_1
        \end{pmatrix}d_{\tilde{v}}\phi_1^{-1}-d_{(v_2,F_2(v_2))}h_u^-\begin{pmatrix}
            I\\
            d_{v_2}F_2
        \end{pmatrix}d_{\tilde{v}}\phi_2^{-1}\Big\|\\
        &\leq \Big\|d_{(v_1,F_1(v_1))}h_u^-\begin{pmatrix}
            I\\
            d_{v_1}F_1
        \end{pmatrix}-d_{(v_2,F_2(v_2))}h_u^-\begin{pmatrix}
            I\\
            d_{v_2}F_2
        \end{pmatrix}\Big\|\cdot\|d_{\tilde{v}}\phi_1^{-1}\|\\
        &+\Big\|d_{(v_2,F_2(v_2))}h_u^-\begin{pmatrix}
            I\\
            d_{v_2}F_2
        \end{pmatrix}\Big\|\cdot\|d_{\tilde{v}}\phi_1^{-1}-d_{\tilde{v}}\phi_2^{-1}\|\\
        &\leq e^{(q^s)^{\frac{\beta}{2}}-\frac{1}{s^2(\hat{f}^{-1}(\tilde{y}))}}( 7\epsilon\|F_1-F_2\|^{\frac{\beta}{2}}+\sqrt{\epsilon}\eta^{\frac{\beta}{2}}\|d_{\cdot}F_1-d_{\cdot}F_2\|)\\
        &+ 2\sqrt{\epsilon}\eta^{\frac{\beta}{2}}e^{(q^s)^{\frac{\beta}{2}}-\frac{1}{s^2(\hat{f}^{-1}(\tilde{y}))}}(\sqrt{\epsilon}\eta^{\frac{\beta}{2}}\|d_{\cdot}F_1-d_{\cdot}F_2\|+2\sqrt{\epsilon}\|F_1-F_2\|^{\frac{\beta}{2}})\\
        &\leq e^{(q^s)^{\frac{\beta}{2}}-\frac{1}{s^2(\hat{f}^{-1}(\tilde{y}))}}\big[(\sqrt{\epsilon}\eta^{\frac{\beta}{2}}+2\epsilon\eta^\beta)\|d_{\cdot}F_1-d_{\cdot}F_2\|+(7\epsilon+4\epsilon\eta^{\frac{\beta}{2}})\|F_1-F_2\|^{\frac{\beta}{2}}\big].
           \end{aligned}
       \end{equation*}
And by claim \ref{phi}\ref{a} and inequality (\ref{phi3}) , we have
\begin{equation*}
    \begin{aligned}
        &\quad \|d_{v_1}F_1\cdot d_{\tilde{v}}\phi_1^{-1}-d_{v_2}F_2\cdot d_{\tilde{v}}\phi_2^{-1}\|\leq \|d_{\cdot}F_1-d_{\cdot}F_2\|\cdot\|d_{\cdot}\phi_1^{-1}\|+\|d_{\cdot}F_2\|\cdot\|d_{\cdot}\phi_1^{-1}-d_{\cdot}\phi_2^{-1}\|\\
        &\leq e^{(q^s)^{\frac{\beta}{2}}-\frac{1}{s^2(\hat{f}^{-1}(\tilde{y}))}}\|d_{\cdot}F_1-d_{\cdot}F_2\|+\epsilon e^{(q^s)^{\frac{\beta}{2}}-\frac{1}{s^2(\hat{f}^{-1}(\tilde{y}))}}(\sqrt{\epsilon}\eta^{\frac{\beta}{2}}\|d_{\cdot}F_1-d_{\cdot}F_2\|+2\sqrt{\epsilon}\|F_1-F_2\|^{\frac{\beta}{2}})\\
        &\leq e^{(q^s)^{\frac{\beta}{2}}-\frac{1}{s^2(\hat{f}^{-1}(\tilde{y}))}}[(1+\epsilon\eta^{\frac{\beta}{2}})\|d_{\cdot}F_1-d_{\cdot}F_2\|+2\epsilon\|F_1-F_2\|^{\frac{\beta}{2}}].
    \end{aligned}
\end{equation*}
Hence,
\begin{equation*}
         \begin{aligned}
             \|d_{\tilde{v}}\tilde{F}_1-d_{\tilde{v}}\tilde{F}_2\|&=\Big\|D_u^{-1}\cdot d_{v_1}F_1\cdot d_{\tilde{v}}\phi_1^{-1}+d_{(v_1,F_1(v_1))}h_u^-\begin{pmatrix}
            I\\
            d_{v_1}F_1
        \end{pmatrix}d_{\tilde{v}}\phi_1^{-1}\\
        &- D_u^{-1}\cdot d_{v_2}F_2\cdot d_{\tilde{v}}\phi_2^{-1}-d_{(v_2,F_2(v_2))}h_u^-\begin{pmatrix}
            I\\
            d_{v_2}F_2
        \end{pmatrix}d_{\tilde{v}}\phi_2^{-1}\Big\|\\
         &\leq \|D_u^{-1}\|e^{(q^s)^{\frac{\beta}{2}}-\frac{1}{s^2(\hat{f}^{-1}(\tilde{y}))}}\big[(1+\epsilon\eta^{\frac{\beta}{2}})\|d_{\cdot}F_1-d_{\cdot}F_2\|+2\epsilon\|F_1-F_2\|^{\frac{\beta}{2}}\big]\\
         &+e^{(q^s)^{\frac{\beta}{2}}-\frac{1}{s^2(\hat{f}^{-1}(\tilde{y}))}}\big[(\sqrt{\epsilon}\eta^{\frac{\beta}{2}}+2\epsilon\eta^\beta)\|d_{\cdot}F_1-d_{\cdot}F_2\|+  (7\epsilon+4\epsilon\eta^{\frac{\beta}{2}})\|F_1-F_2\|^{\frac{\beta}{2}}\big]\\
         &\leq e^{(q^s)^{\frac{\beta}{2}}-\frac{1}{s^2(\hat{f}^{-1}(\tilde{y}))}-\frac{1}{u^2(\tilde{y})}}\big((1+\epsilon\eta^{\frac{\beta}{2}})\|d_{\cdot}F_1-d_{\cdot}F_2\|+10\epsilon\|F_1-F_2\|^{\frac{\beta}{2}}\big).
         \end{aligned}
     \end{equation*}
    For small enough $\epsilon$, $\frac{\epsilon}{s^2(\hat{f}^{-1}(\tilde{y}))}+\frac{\epsilon}{u^2(\tilde{y})}\geq 2(q^s)^{\frac{\beta}{2}}$, thus $\|d_{\cdot}\tilde{F}_1-d_{\cdot}\tilde{F}_2\|\leq e^{-\frac{1-\epsilon}{s^2(\hat{f}^{-1}(\tilde{y}))}-\frac{1-\epsilon}{u^2(\tilde{y})}}\big((1+\epsilon\eta^{\frac{\beta}{2}})\|d_{\cdot}F_1-d_{\cdot}F_2\|+10\epsilon\|F_1-F_2\|^{\frac{\beta}{2}}\big)$.

    Similarly, we can get the $u$-case.
 \end{pf}

In this context, we study the orbits which may have $0$ Lyapunov exponents, so the coding map may not be $H\ddot{o}lder$ continuous when coding points on these orbits.  Ovadia introduced \textit{leaf cores} by shrinking the size of the admissible manifold, then the coding map has summable variations about the leaf cores.
\begin{mdef}
    Given a double chart $\psi_{\tilde{x}}^{p^s,p^u}$. The \textit{leaf cores} of $s/u$-admissible manifold $V^{s/u}$ is
    $$\hat{V}^{s/u}:=\psi_{\tilde{x}}\circ Graph(F^{s/u})\text{ on }B_{\eta^2}^{d_{s/u}(\tilde{x})}(0),$$
    where $\eta:=p^s\land p^u$, and $F^{s/u}$ is the representing function of the $s/u$-admissible manifold $V^{s/u}$.
\end{mdef}

Notice that $\hat{V}^{s/u}\subset V^{s/u}$ if $\hat{V}^{s/u}$ is the leaf cores of $V^{s/u}$.
 \begin{cor}
      Suppose $w_1\rightarrow w_2$, where $w_1=\psi_{\tilde{x}}^{p^s,p^u}, w_2=\psi_{\tilde{y}}^{q^s,q^u}$. For small enough $\epsilon$, and $s$-admissible manifolds $V_1^{s}, V_2^{s}$ in $\psi_{\tilde{y}}^{q^s,q^u}$ with the representing functions $F_1,F_2$, respectively:
 \begin{enumerate}[label=(\arabic*), series=myenum]
\item $d_{C^0}(\mathcal{F}^{s}_{w_1,w_2}(\hat{V}_1^{s}),\mathcal{F}^{s}_{w_1,w_2}(\hat{V}_1^{s}))\leq e^{-\frac{1-\epsilon}{u^2(\tilde{y}))}}d_{C^0}(\hat{V}_1^{s},\hat{V}_2^{s})$.
\item $d_{C^1}(\mathcal{F}^{s}_{w_1,w_2}(\hat{V}_1^{s}),\mathcal{F}^{s}_{w_1,w_2}(\hat{V}_1^{s}))\leq e^{-\frac{1-\epsilon}{s^2(\hat{f}^{-1}(\tilde{y}))}-\frac{1-\epsilon}{u^2(\tilde{y})}}\big((1+\epsilon(q^s\land q^u)^{\beta})\|d_{\cdot}F_1-d_{\cdot}F_2\|+10\epsilon\|F_1-F_2\|^{\frac{\beta}{2}}\big)$.
\end{enumerate}
For $u$-admissible manifolds $V_1^{u}, V_2^{u}$ in $\psi_{\tilde{x}}^{p^s,p^u}$ with the representing functions $G_1,G_2$, respectively:
\begin{enumerate}[label=(\arabic*), resume=myenum]
    \item $d_{C^0}(\mathcal{F}^{u}_{w_1,w_2}(\hat{V}_1^{u}),\mathcal{F}^{u}_{w_1,w_2}(\hat{V}_1^{u}))\leq e^{-\frac{1-\epsilon}{s^2(\tilde{x}))}}d_{C^0}(\hat{V}_1^{u},\hat{V}_2^{u})$.
    \item $d_{C^1}(\mathcal{F}^u_{w_1,w_2}(\hat{V}_1^{u}),\mathcal{F}^u_{w_1,w_2}(\hat{V}_1^{u}))\leq e^{-\frac{1-\epsilon}{s^2(\tilde{x}))}-\frac{1-\epsilon}{u^2(\hat{f}(\tilde{x}))}}\big((1+\epsilon(p^s\land p^u)^{\beta})\|d_{\cdot}G_1-d_{\cdot}G_2\|+10\epsilon\|G_1-G_2\|^{\frac{\beta}{2}}\big)$.
\end{enumerate}
 \end{cor}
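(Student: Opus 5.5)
The plan is to rerun the proof of Proposition \ref{Fs} with the leaf cores in place of the full admissible manifolds. Only the domain of the representing functions changes: $B_{\eta^2}$ instead of $B_{q^s}$ or $B_{p^u}$. The graph transform is still given by the formula from Theorem \ref{TF}. Thus $\tilde{F}_i=[D_u^{-1}(F_i(\cdot))+h_u^-(\cdot,F_i(\cdot))]\circ\phi_i^{-1}$ with $\phi_i(v)=D_s^{-1}v+h_s^-(v,F_i(v))$. We interpret $\mathcal{F}^s_{w_1,w_2}(\hat V^s_i)$ as the restriction of $\tilde F_i$ to the leaf-core ball $B_{(p^s\land p^u)^2}(0)$ of $w_1$. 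The $C^0$ and $C^1$ distances are then suprema over this smaller ball.

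First we check that the restricted graph transform only sees points of $\hat V^s_i$. Concretely, $\phi_i^{-1}(B_{(p^s\land p^u)^2}(0))\subset B_{(q^s\land q^u)^2}(0)$. By Claim \ref{phi}(b), $|\phi_i^{-1}(0)|<2\sqrt{\epsilon}\eta^{2+\beta/2}$, where $\eta=q^s\land q^u$. By Claim \ref{phi}(a), $\phi_i^{-1}$ is a contraction with factor $e^{\eta^{\beta/2}-1/s^2(\hat f^{-1}(\tilde y))}<1$, using (\ref{>}). Since $p^s\land p^u\le I(\eta)$, it suffices to show
\[
e^{\eta^{\beta/2}-1/s^2}\,I(\eta)^2+2\sqrt{\epsilon}\eta^{2+\beta/2}\le \eta^2 .
\]
This follows from $I(\eta)^2=\eta^2e^{2\Gamma\eta^{1/\gamma}}$ together with (\ref{s2}), which gives $1/s^2\ge k_0Q(\tilde y)^{1/\gamma}$ and so absorbs both $2\Gamma\eta^{1/\gamma}$ and $\eta^{\beta/2}$, as in the proof of (\ref{qs}). \textbf{This inclusion is the main obstacle.} The margin between $I(\eta)^2$ and $\eta^2$ is only of order $\eta^{2+1/\gamma}$, so the constant $k_0$ must be strong enough. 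The smallness $Q(\tilde y)\le\epsilon^{20/\beta}\|C_0(\tilde y)^{-1}\|^{-2\gamma}$ with $\gamma>20/\beta$ is exactly what should provide it. If a direct inclusion fails, the fallback is to shrink the target ball slightly. Alternatively, one can note that the statement only compares distances, so it suffices that the restricted sup-norms dominate.

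With the inclusion in hand, part (1) repeats the chain (\ref{phi2}) and the subsequent $C^0$ estimate with all sup norms taken over the leaf-core balls. This gives the factor $e^{-1/u^2(\tilde y)+2\sqrt{\epsilon}\eta^{\beta/2}}\le e^{-(1-\epsilon)/u^2(\tilde y)}$, times $\|F_1-F_2\|_{C^0(B_{\eta^2})}=d_{C^0}(\hat V^s_1,\hat V^s_2)$.

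For part (2) we also repeat the $C^1$ computation (\ref{phi3}) and what follows. The improvement from $\eta^{\beta/2}$ to $\eta^{\beta}$ in the coefficient of $\|dF_1-dF_2\|$ comes from Theorem \ref{F1}(2) applied with $t$ of order $\eta^2$. On the leaf core, that result gives $\|d\bar H^-\|\le\sqrt{\epsilon}(\eta^2)^{\beta/2}=\sqrt{\epsilon}\eta^\beta$ rather than $\sqrt{\epsilon}\eta^{\beta/2}$. We may take $t=\eta$ and then note that the points lie in $B_{\eta^2}$, using the Hölder bound $\|d_v h\|\le\|d_0h\|+\sqrt{\epsilon}|v|^{\beta/2}$ and a refined $\|d_0h\|$ from (\ref{F0}). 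Consequently every occurrence of $\sqrt{\epsilon}\eta^{\beta/2}$ multiplying $\|dF_1-dF_2\|$ becomes $\sqrt{\epsilon}\eta^\beta$, yielding $(1+\epsilon\eta^\beta)$. The Hölder terms are unchanged and give $10\epsilon\|F_1-F_2\|^{\beta/2}$. The $u$-case (3)–(4) is symmetric, using $F_{\tilde x,\tilde y}$ and the $s$-bounds at $\tilde x$.
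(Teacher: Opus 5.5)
Your proposal is correct and is essentially the paper's argument: restrict the formula from Theorem~\ref{TF} and the estimates of Proposition~\ref{Fs} to the leaf-core balls, once one knows that $\phi_i^{-1}$ maps $B_{(p^s\land p^u)^2}(0)$ into $B_{\eta^2}(0)$. The paper gets this inclusion as the covering $\phi_i(B_{\eta^2}(0))\supset B_{(p^s\land p^u)^2}(0)$ by redoing (\ref{qs}) at scale $\eta^2$, while you get it equivalently from the Lipschitz bound and offset of $\phi_i^{-1}$ in Claim~\ref{phi} together with (\ref{s2}) (with $k_0$ large, as the paper effectively uses in (\ref{BO})); your account of the improved factor $\epsilon\eta^{\beta}$ fills in a step the paper leaves implicit, and your fallbacks are unnecessary — the second would not work, since without the inclusion the estimate would involve $F_1-F_2$ outside the core.
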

 \begin{pf}
    We restrict the proof of Theorem \ref{TF} on $B_{\eta^2}^{d_s(\tilde{x})}(0)$ where $\eta=q^s\land q^u$, as the calculations of  inequality (\ref{qs}), we have
    $e^{\frac{1}{u^2(\tilde{y})}-\frac{1}{2}(\eta^2)^{\frac{\beta}{2}}}\eta^2\geq e^{2\Gamma(\eta)^{\frac{1}{\gamma}}}\eta^2=I(\eta)^{2}\geq (p^s\land p^u)^2$. Hence,
    $$\mathcal{F}^{s}_{w_1,w_2}(\hat{V}_1^s)=\psi_{\tilde{x}}[\{(v,\tilde{F_i}(v)):v\in B_{(p^s\land p^u)^2}^{d_s(\tilde{x})}(0)\}],$$
    where $\tilde{F}_i=[D_u^{-1}(F_i(\cdot))+h_u^-(\cdot,F_i(\cdot))]\circ \phi_i^{-1}$, $\phi_i(v):=D_s^{-1}(v)+h_s^-(v,F_i(v))$ on $B_{\eta^2}^{d_s(\tilde{y})}(0)$. Then we get the corollary by restricting the proof of Proposition \ref{Fs} on $ B_{\eta^2}^{d_s(\tilde{x})}(0)$.
 \end{pf}
 \begin{mdef}
    Given a double chart $\psi_{\tilde{x}}^{p^s,p^u}$,  and let $\{V_n\}_{n\in\mathbb{N}}$ be a sequence of $s/u$-admissible manifolds with the representing function $F_n$ in $\psi_{\tilde{x}}^{p^s,p^u}$. We say that the sequence $\{V_n\}_{n\in\mathbb{N}}$ uniformly converges to $s/u$-admissible manifold $V$ if the representing functions $F_n$ uniformly converge to the representing function of $V$.
\end{mdef}

If we restrict the domain of the representing functions $F_n$ to $B_{(p^s\land p^u)^2}^{d_{s/u}(\tilde{x})}$, we get the definition of the leaf core $\hat{V}_n$ uniformly converging to the leaf core $\hat{V}$. And notice that the admissible manifolds $\{V_n\}$ uniformly converges to the admissible manifold $V$ implies the corresponding leaf cores $\{\hat{V}_n\}$ uniformly converges to the leaf core $\hat{V}$.
\begin{prop}\label{shadow}
    For small enough $\epsilon>0$  and a chain $(v_i)_{i\in\mathbb{Z}}$, where $v_i=\psi_{\tilde{x}_i}^{p_i^s,p^u_i}$. Choose an  arbitrary sequence $u$-admissible manifolds $V_{-n}^u$ in $v_{-n}$ and a sequence $s$-admissible manifolds $V_{n}^s$ in $v_{n}$, $n\geq 0$. The following statements hold:
 \begin{enumerate}[label=(\arabic*)]
\item  Denote $$V^u((v_i)_{i\leq0}):= \lim_{n\rightarrow\infty}(\mathcal{F}^u)^n(V^u_{-n}),\ V^s((v_i)_{i\geq0}):= \lim_{n\rightarrow\infty}(\mathcal{F}^s)^n(V^s_{n}),$$
    where $(\mathcal{F}^u)^n= \mathcal{F}_{v_{-1},v_0}^u\circ \cdots\circ\mathcal{F}_{v_{-n+1},v_{-n+2}}^u\circ \mathcal{F}_{v_{-n},v_{-n+1}}^u$ and $(\mathcal{F}^s)^n= \mathcal{F}_{v_{0},v_1}^u\circ \cdots\circ\mathcal{F}_{v_{n-2},v_{n-1}}^u\circ \mathcal{F}_{v_{n-1},v_{n}}^u$. $V^u((v_i)_{i\leq0})$, $V^s((v_i)_{i\geq0})$ are well-defined, and are independent of the choice of $V_{-n}^u\in\mathcal{M}^u(v_{-n})$, $V_n^s\in\mathcal{M}^s(v_n),n\geq 0$.
\item $V^u((v_i)_{i\leq0})/V^s((v_i)_{i\geq0})$ is a $u/s$-admissible manifold in $v_0$.
\item (Invariance) $f[V^s((v_i)_{i\geq0})]\subset V^s((v_{i+1})_{i\geq0})$ and $f^{-1}_{\tilde{x}_{-1}}[V^u((v_i)_{i\leq0})]\subset V^u((v_{i-1})_{i\leq0})$.
\item (Shadowing)
    $$V^s((v_i)_{i\geq0})=\{y\in \psi_{\tilde{x}_0}[B_{p^s_0}(0)]: f^n(y)\in\psi_{\tilde{x}_n}[B_{10Q(\tilde{x}_n)}(0)],\forall n\geq 0\};$$
    and
    $$V^u((v_i)_{i\leq0})=\{y\in \psi_{\tilde{x}_0}[B_{p^u_0}(0)]: f^{-1}_{\tilde{x}_{-n}}\circ \cdots \circ f^{-1}_{\tilde{x}_{-1}}(y)\in\psi_{\tilde{x}_{-n}}[B_{10Q(\tilde{x}_{-n})}(0)],\forall n\geq 0\}.$$
\item (Summable variations) For all $N\geq 0$, suppose the chain $(w_i)_{i\in\mathbb{Z}}$ of double charts satisfy $v_i=w_i$ for $|i|\leq N$, then there exist constants $C, \alpha(\Gamma,\gamma,\beta)>0$ such that
    $$d_{C^1}(\hat{V}^u((v_i)_{i\leq0}),\hat{V}^u((w_i)_{i\leq0})\leq C(I^{-N}(p_{0}^s\land p_{0}^u))^{\frac{1}{\gamma}+\alpha}.$$
    A similar statement for the map $(v_i)_{i\geq0}\mapsto \hat{V}^s((v_i)_{i\geq 0})$.
\item (Hyperbolicity) If $y,z\in V^s((v_i)_{i\geq 0})$, then $d(f^n(y),f^n (z))\leq 8I^{-n}(p_0^s)$ for all $n\geq 0$; and if $y,z\in V^u((v_i)_{i\leq 0})$, then $d( f^{-1}_{\tilde{x}_n}\circ \cdots \circ f^{-1}_{\tilde{x}_{-1}}(y),f^{-1}_{\tilde{x}_n}\circ \cdots \circ f^{-1}_{\tilde{x}_{-1}}(z))\leq 8I^{n}(p_0^u)$ for all $n\leq 0$.
\end{enumerate}
\end{prop}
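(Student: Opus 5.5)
The plan follows the scheme of Ovadia (and ultimately Sarig): obtain the stable and unstable manifolds as limits of iterated graph transforms, and read off the remaining properties from Theorem \ref{TF}, Proposition \ref{Fs} and its corollary. We treat the $u$-case; the $s$-case is symmetric, with $\mathcal{F}^s$ built from $F^{-1}_{\tilde{x},\tilde{y}}$.

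\emph{Parts (1) and (2).} By Theorem \ref{TF}, each $(\mathcal{F}^u)^n(V^u_{-n})$ is a $u$-admissible manifold in $v_0$, so it suffices to show that the representing functions form a Cauchy sequence in $C^1$, uniformly in the choice of initial manifolds. Take two choices $V^u_{-n}$ and $W^u_{-m}$ with $m\ge n$, and compare $(\mathcal{F}^u)^n(V^u_{-n})$ with $(\mathcal{F}^u)^n\big(\mathcal{F}^u_{v_{-m},\dots}(W^u_{-m})\big)$; both are $n$-fold transforms of admissible manifolds in $v_{-n}$. By Proposition \ref{Fs}(3), each step contracts $d_{C^0}$ by the factor $e^{-(1-\epsilon)/s^2(\tilde{x}_{-k})}$. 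Since admissible manifolds in a common chart are at $C^0$-distance at most $2\cdot 10^{-2}p^u$, this bounds $d_{C^0}$ by $2\cdot10^{-2}\,p^u_{-n}\prod_{k=1}^{n}e^{-(1-\epsilon)/s^2(\tilde x_{-k})}$. The chain only gives $p^u_{-n}\le I^{n}(p_0^u)$, so the work lies in showing the product beats this.

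Here I would use $1/s^2(\tilde{x}_{-k})\ge \|C_0^{-1}\|^{-2}\ge k_0 Q(\tilde{x}_{-k})^{1/\gamma}$ (as in (\ref{s2})), together with $p^u_{-k}\le Q(\tilde{x}_{-k})$. This gives a decay of the form $\prod_k e^{-k_0(p_{-k})^{1/\gamma}}$. The product tends to $0$ because $\sum (I^{-k}t)^{1/\gamma}=\infty$ (Proposition \ref{I}(3)), once we go backwards along the ladder, i.e. along $p_{-k}\gtrsim I^{-k}(p_0)$; note that $p_{k+1}=I^{\pm1}(p_k)$ forces $p_{-k}\ge I^{-k}(p_0)$. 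Hence the $C^0$ limit exists and is independent of the choices. For $C^1$, iterate Proposition \ref{Fs}(4): the $C^1$ error recurses with coefficient $(1+\epsilon p^{\beta/2})$ times the contraction, plus $10\epsilon\,(C^0\text{-error})^{\beta/2}$. Unrolling this gives a convolution-type sum that also tends to $0$. Admissibility of the limit follows because (AM1)--(AM3) are closed conditions under $C^1$ convergence, with uniform Hölder bounds passing to the limit via Arzelà--Ascoli.

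\emph{Part (3), invariance.} The transform $\mathcal{F}^u_{v_{-1},v_0}$ applied to $(\mathcal{F}^u)^{n}$ for the shifted chain is exactly $(\mathcal{F}^u)^{n+1}$. Each transform's image is contained in $f$ of the previous manifold, and correspondingly $f^{-1}_{\tilde{x}_{-1}}$ maps back into the previous one. Passing to the limit, using continuity of $f^{-1}_{\tilde{x}_{-1}}$ on the chart domain (which is guaranteed by the lemma on coincidence of inverse branches), gives the inclusion.

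\emph{Parts (4) and (6), shadowing and hyperbolicity.} For $\subseteq$ in (4), invariance gives $f^{-k}$-iterates inside $V^u$ of the shifted chains, which lie in $\psi_{\tilde{x}_{-k}}[B_{p^u_{-k}}]$; the factor $10$ absorbs the size of $G$. For $\supseteq$, take $y$ whose backward orbit stays in the charts. Write $\psi^{-1}_{\tilde{x}_{-n}}$ of its iterate in coordinates. By Theorem \ref{F1}, the $s$-component of the image is contracted along the backward path under $F^{-1}$, with contraction relative to any $u$-admissible graph by the cone argument. Its $C^0$ distance to $(\mathcal{F}^u)^n(V_{-n})$ is therefore bounded by the same vanishing product, so $y\in V^u$. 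Part (6) follows by the same cone/expansion argument: for two points on $V^u$ the backward iterates contract by $e^{-1/u^2}$ at each step, and the sizes are bounded by $2p^u_{-k}\cdot 2\le 8 I^{n}(p^u_0)$ via the $I$-ladder.

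\emph{Part (5), summable variations.} This is the main obstacle. When the chains $(v_i)$ and $(w_i)$ agree for $|i|\le N$, the leaf cores of $V^u((v_i))$ and $V^u((w_i))$ are both $N$-fold transforms, through the same charts, of two leaf cores in $v_{-N}$. Their distance at the start is at most $p_{-N}^2\le Q(\tilde x_{-N})^2$. Applying the corollary after Proposition \ref{Fs} $N$ times, the $C^0$ error is $\lesssim p_{-N}^2\prod_{k<N}e^{-k_0 p_{-k}^{1/\gamma}}$. Using $p_{-k}\ge I^{-k}(p_0)$ and the specific form $I(t)=te^{\Gamma t^{1/\gamma}}$, the product telescopes: roughly $\prod e^{-k_0 I^{-k}(p_0)^{1/\gamma}}\approx I^{-N}(p_0)/p_0$ raised to a power $k_0/\Gamma$. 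This yields the bound $C(I^{-N}(p_0))^{1/\gamma+\alpha}$. The $C^1$ error comes with the $\beta/2$-power loss, which is why we work on the leaf cores (radius $\eta^2$) with $\gamma>20/\beta$: this leaves room to keep an exponent $1/\gamma+\alpha$ with $\alpha>0$. I expect the delicate bookkeeping to be in this telescoping estimate and in choosing $k_0$ large relative to $\Gamma$, which is done by taking $\epsilon$ small in $Q_\epsilon$.
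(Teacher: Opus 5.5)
Your overall route is the paper's: iterated graph transforms, with contraction from Proposition \ref{Fs} using $1/s^2,1/u^2\ge\|C_0^{-1}\|^{-2}\ge k_0Q^{1/\gamma}$, $k_0=\epsilon^{-20/(\beta\gamma)}\gg\Gamma$, and leaf cores for (5). Parts (1)--(4) are fine at the paper's level of detail. In (1) you do not even need an upper bound on $p^u_{-n}$: $p^u_{-n}\le Q(\tilde x_{-n})\le 1$, and the product tends to $0$ by Proposition \ref{I}(3). (In (4), note that it is the forward maps that contract the $s$-offset from the $u$-graphs; under $F^{-1}$ that offset is expanded.)

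\textbf{The error in (6).} You bound the iterates by ``sizes $2p^u_{-k}\cdot 2\le 8I^{-k}(p^u_0)$'', i.e.\ you use $p^u_{-k}\le 2I^{-k}(p^u_0)$. The rule $p^u_{j+1}=\min\{I(p^u_j),Q(\tilde x_{j+1})\}$ only gives the \emph{opposite} inequality $p^u_{-k}\ge I^{-k}(p^u_0)$ going backwards. The same slip is behind your remark in (1) that the chain gives $p^u_{-n}\le I^n(p^u_0)$. For a constant chain at a hyperbolic fixed point, $p^u_{-k}=Q$ for all $k$ while $I^{-k}(Q)\to 0$, so the inequality fails. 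Invariance only places the backward images of $V^u$ inside $V^u((v_{i-k})_{i\le 0})$, which has size $p^u_{-k}$. Their smallness must come from the contraction, starting from the initial size $2p^u_0$. Write the iterates as $(G_{-k}(\eta_{-k}),\eta_{-k})$ and $(G_{-k}(\zeta_{-k}),\zeta_{-k})$. Theorem \ref{F1} gives $|\eta_{-k-1}-\zeta_{-k-1}|\le e^{-(1-\epsilon)/u^2(\tilde x_{-k})}|\eta_{-k}-\zeta_{-k}|$. Since $1/u^2(\tilde x_{-k})\ge k_0Q(\tilde x_{-k})^{1/\gamma}\ge k_0(I^{-k}(p^u_0))^{1/\gamma}$ and $(1-\epsilon)k_0\ge\Gamma$, you get $|\eta_{-m}-\zeta_{-m}|\le 2p^u_0\prod_{k=0}^{m-1}e^{-\Gamma(I^{-k}(p^u_0))^{1/\gamma}}\le 2I^{-m}(p^u_0)$. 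The last step uses $I^{-m}(t)=t\prod_{k=1}^{m}e^{-\Gamma(I^{-k}(t))^{1/\gamma}}$ and that $I^{-k}(t)$ decreases in $k$. The factors $(1+\epsilon)$ from $\mathrm{Lip}(G_{-m})$ and $2$ from $\mathrm{Lip}(\psi_{\tilde x_{-m}})$ then give the constant $8$. This is the paper's argument, and the $s$-case is identical.

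\textbf{The gap in (5).} Your bookkeeping does not close. You keep the initial leaf-core distance $p_{-N}^2$ as a separate factor and replace $p_{-k}$ by its lower bound $I^{-k}(p_0)$ inside the contraction product. But $p_{-N}$ can be much larger than $p_0$, up to $Q(\tilde x_{-N})$. So $p_{-N}^2\,(I^{-N}(p_0)/p_0)^{k_0/\Gamma}$ is not bounded by $C(I^{-N}(p_0))^{\frac1\gamma+\alpha}$ with $C$ independent of $p_0$.

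The missing step is the one in (\ref{FV}). For $p=p^s\wedge p^u$ one has $p_{-i-1}\le I(p_{-i})$, so $p_{-N}^2\le p_{-k}^2\exp\big(2\Gamma\sum_{i=k}^{N-1}p_{-i}^{1/\gamma}\big)$. This growth must be absorbed by the contraction $\exp\big(-(1-\epsilon)k_0\sum_i p_{-i}^{1/\gamma}\big)$ with the \emph{same} $p_{-i}$, using $(1-\epsilon)k_0-2\Gamma\ge 2\Gamma$. Only then do you invoke $p_{-i}\ge I^{-i}(p_0)$. This yields $d_{C^0}(\hat V^u_k,\hat W^u_k)\le 2\big(I^{-(N-k)}(p_{-k})\big)^2$ at every intermediate level $k$.

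The $C^1$ bound needs these intermediate estimates. Unrolling Proposition \ref{Fs}(4) on leaf cores produces a sum of $N$ terms $10\epsilon\,(\text{contraction down to level }k)\cdot\|F_k-G_k\|^{\beta/2}$, each at most a constant times $(I^{-N}(p_0))^{\beta}$. The resulting factor $N$ is absorbed via the polynomial decay $(I^{-N}(1))^{1/\gamma}=O(1/N)$ together with $\gamma\beta/2>1$. This is exactly where $\alpha=\frac\beta2-\frac1\gamma$ comes from. Your sketch contains neither the intermediate $C^0$ bounds nor this summation, so (5) is not yet established.
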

\begin{pf}
We prove the results for the $u$-case; a similar prove applies to the $s$-case.

      (1) $\{(\mathcal{F}^u)^n(V^u_{-n})\}_{n\in\mathbb{N}}$ is a Cauchy sequence: let $W^u_{-n}$ be an arbitrary $u$-admissible manifold in $v_{-n}$, for all $n> 0$. By Proposition \ref{Fs}, we get
      \begin{equation}\label{BO}
          \begin{aligned}
              d_{C^0}((\mathcal{F}^u)^n(V^u_{-n}),(\mathcal{F}^u)^n(W^u_{-n})&\leq e^{-\sum_{k=1}^{n}\frac{1-\epsilon}{s^2(\tilde{x}_{-k})}}d(V^u_{-n},W^u_{-n})\\
              &\leq e^{-(1-\epsilon)(\epsilon^{\frac{20}{\beta}})^{-\frac{1}{\gamma}}\sum_{k=1}^{n}(p_{-k+1}^u)^{\frac{1}{\gamma}}}\cdot (2p_{-n}^u)\text{ (by inequality (\ref{s2}))}\\
              &\leq 2 e^{-(1-\epsilon)(\epsilon^{\frac{20}{\beta}})^{-\frac{1}{\gamma}}\sum_{k=0}^{n-1}(p_{-k}^u)^{\frac{1}{\gamma}}}p^u_{0} e^{\Gamma\sum_{i=0}^{n-1}(p^u_{-i})^{\frac{1}{\gamma}}}\\
              &= 2 e^{-[((1-\epsilon)(\epsilon^{\frac{20}{\beta}})^{-\frac{1}{\gamma}}-\Gamma)\cdot\frac{1}{\Gamma}]\Gamma\sum_{k=0}^{n-1}(p_{-k}^u)^{\frac{1}{\gamma}}}p^u_{0}\\
              &\leq 2 e^{-\Gamma\sum_{k=0}^{n-1}(I^{-k}(p^u_{0}))^{\frac{1}{\gamma}}}p^u_{0}\\
              &\leq 2I^{-n}(p^u_{0}).
          \end{aligned}
      \end{equation}
      The second-to-last inequality holds since $\big((1-\epsilon)(\epsilon^{\frac{20}{\beta}})^{-\frac{1}{\gamma}}-\Gamma\big)\cdot\frac{1}{\Gamma}>1$ for small enough $\epsilon$ and $p_{k+1}^u=\min\{I(p_k^u),Q(\tilde{x}_{k+1})\}$. Thus $\forall n,m\in\mathbb{N}(n\geq m)$, we have
      $$d_{C^0}((\mathcal{F}^u)^n(V^u_{-n}),(\mathcal{F}^u)^m(W^u_{-m}))=d_{C^0}((\mathcal{F}^u)^m[(\mathcal{F}^u)^{n-m}(V^u_{-n})],(\mathcal{F}^u)^m(W^u_{-m}))\leq 2I^{-m}(p^u_{0}).$$
      Therefore, $\{(\mathcal{F}^u)^n(V^u_{-n})\}_{n\in\mathbb{N}}$ is a Cauchy sequence by Proposition \ref{I}(6).

      (2) It is equivalent to prove that the representing functions of $V^u((v_i)_{i\leq0})/V^s((v_i)_{i\geq0})$ satisfy \ref{AM1}-\ref{AM3}. And the proof is   the same as the proof of proposition 3.14(2) in \cite{Ovadia2}.

      (3) By definition and Theorem \ref{TF}, we have
      \begin{equation*}
          \begin{aligned}
          &\quad V^u((v_i)_{i\leq0})= \lim_{n\rightarrow\infty}(\mathcal{F}^u)^n(V^u_{-n})=\lim_{n\rightarrow\infty}\mathcal{F}_{v_{-1},v_0}^u\circ \mathcal{F}_{v_{-2,-1}}^u\cdots\circ \mathcal{F}_{v_{-n},v_{-n+1}}^u(V^u_{-n})\\
          &= \mathcal{F}_{v_{-1},v_0}^u(\lim_{n\rightarrow\infty}\mathcal{F}_{v_{-2,-1}}^u\cdots\circ \mathcal{F}_{v_{-n},v_{-n+1}}^u(V^u_{-n}))= \mathcal{F}_{v_{-1},v_0}^u(V^u((v_{i-1})_{i\leq0})\subset f(V^u((v_{i-1})_{i\leq0}).
          \end{aligned}
      \end{equation*}
      Hence, $f^{-1}_{\tilde{x}_{-1}}[V^u((v_i)_{i\leq0})]\subset V^u((v_{i-1})_{i\leq0})$.

      (4) The proof mainly uses the properties of $F_{\tilde{x}_n,\tilde{x}_{n+1}}^{\pm1}$ (Proposition \ref{F1}); for details, see \cite{Ovadia2}  .

      (5) Denote $V_k^u:=V^u((v_{i-k})_{i\leq 0})$ and $W_k^u:=V^u((w_{i-k})_{i\leq 0})$, the representing functions are $F_k$ and $G_k \forall 0\leq k\leq N$, respectively. According to the estimate regarding the inequality (\ref{BO}), we have a similar calculation for the leaf cores $\hat{V}^u_k$ and $\hat{W}^u_k$:
\begin{equation}\label{FV}
\begin{aligned}
d_{C^0}(\hat{V}^u_k,\hat{W}^u_k)
&=d_{C^0}((\mathcal{F}^u)^{N-k}(\hat{V}^u_{-N}),(\mathcal{F}^u)^{N-k}(\hat{W}^u_{-N})
\leq e^{-\sum_{i=k+1}^{N}\frac{1-\epsilon}{s^2(\tilde{x}_{-i})}}d(\hat{V}^u_{-N},\hat{W}^u_{-N})\\
&\leq e^{-(1-\epsilon)(\epsilon^{\frac{20}{\beta}})^{-\frac{1}{\gamma}}\sum_{i=k+1}^{N}(p_{-i+1}^u\land p_{-i+1}^s)^{\frac{1}{\gamma}}}\cdot 2(p_{-N}^u\land p_{-N}^s)^2\text{ (by inequality (\ref{s2}))}\\    &\leq 2 e^{-(1-\epsilon)(\epsilon^{\frac{20}{\beta}})^{-\frac{1}{\gamma}}\sum_{i=k}^{N-1}(p_{-i}^u\land p_{-i}^s)^{\frac{1}{\gamma}}}\cdot(p^u_{-k}\land p^s_{-k})^2 e^{2\Gamma\sum_{i=k}^{N-1}(p^u_{-i}\land p^s_{-i})^{\frac{1}{\gamma}}}\\            &= 2 e^{-[((1-\epsilon)(\epsilon^{\frac{20}{\beta}})^{-\frac{1}{\gamma}}-2\Gamma)\frac{1}{2\Gamma}]2\Gamma\sum_{i=k}^{N-1}(p_{-i}^u\land p_{-i}^s)^{\frac{1}{\gamma}}}\cdot(p^u_{-k}\land p^s_{-k})^2 \\
&\leq 2 e^{-2\Gamma\sum_{i=k}^{N-1}(p_{-i}^u\land p_{-i}^s)^{\frac{1}{\gamma}}}(p^u_{-k}\land p^s_{-k})^2 \text{ (for small enough }\epsilon) \\
&\leq 2 \big(e^{-\Gamma\sum_{i=0}^{N-k-1}[I^{-i}(p_{-k}^u\land p_{-k}^s)]^{\frac{1}{\gamma}}}(p^u_{-k}\land p^s_{-k})\big)^2.
\end{aligned}
\end{equation}
For any $0\leq k\leq N$ and $t\in B_{(p^s_{-k}\land p^u_{-k})^2}^{d_u(\tilde{x}_{-k})}(0)$, $\|d_{t}F_k\|\leq (p^s_{-k}\land p^u_{-k})^{\beta}$ and $\|d_{t}G_k\| \leq (p^s_{-k}\land p^u_{-k})^{\beta}$ by inequality (\ref{G1}). According to the Proposition \ref{Fs}(4), we have
\begin{equation*}
    \begin{aligned}
      \|d_{\cdot}F_0-d_{\cdot}G_0\|&\leq 2e^{-\sum_{k=1}^{N}(\frac{1-\epsilon}{s^2(\tilde{x}_{-k})}+\frac{1-\epsilon}{u^2(\tilde{x}_{-k+1})})}\|d_{\cdot}F_N-d_{\cdot}G_N\|+10\epsilon\sum_{k=1}^Ne^{-\sum_{i=1}^{k}(\frac{1-\epsilon}{s^2(\tilde{x}_{-i})}+\frac{1-\epsilon}{u^2(\tilde{x}_{-i+1})})}\|F_k-G_k\|^{\frac{\beta}{2}}\\
      &:=\uppercase\expandafter{\romannumeral1}' + \uppercase\expandafter{\romannumeral2}',
    \end{aligned}
\end{equation*}
where
\begin{equation*}
\begin{aligned}
\uppercase\expandafter{\romannumeral1}'&\leq 4(p^s_{-N}\land p^u_{-N})^{\beta} e^{-\sum_{k=1}^{N}(\frac{1-\epsilon}{s^2(\tilde{x}_{-k})}+\frac{1-\epsilon}{u^2(\tilde{x}_{-k+1})})}\\
&\leq 4(p_{-k}^s\land p_{-k}^u)^{\beta}\Big[e^{\beta\sum_{k=0}^{N-1}\Gamma(p_{0}^s\land p^u_{0})^{\frac{1}{\gamma}}}e^{-\sum_{k=1}^{N}\frac{1-\epsilon}{u^2(\tilde{x}_{-k+1})}}\Big]e^{-\sum_{k=1}^{N}\frac{1-\epsilon}{s^2(\tilde{x}_{-k})}}\\
&\leq 4(p_{0}^s\land p_{0}^u)^{\beta}e^{-\beta\sum_{k=0}^{N-1}\Gamma(p_{-k}^s\land p^u_{-k})^{\frac{1}{\gamma}}}\text{ (by inequality (\ref{s2}))}\\
&\leq 4(p_{0}^s\land p_{0}^u)^{\beta}e^{-\beta\sum_{k=0}^{N-1}\Gamma[I^{-k}(p_{0}^s\land p^u_{0})]^{\frac{1}{\gamma}}}\\
&\leq 4(I^{-N}(p_{0}^s\land p_{0}^u))^{\beta}.
\end{aligned}
\end{equation*}
and
\begin{equation*}
    \begin{aligned}
\uppercase\expandafter{\romannumeral2}'
&\overset{(a)}{\leq} 2^{\frac{\beta}{2}}\cdot 10\epsilon\sum_{k=1}^Ne^{-\sum_{i=1}^{k}(\frac{1-\epsilon}{s^2(\tilde{x}_{-i})}+\frac{1-\epsilon}{u^2(\tilde{x}_{-i+1})})}\cdot \Big[e^{-\Gamma\sum_{i=0}^{N-k-1}[I^{-i}(p_{-k}^u\land p_{-k}^s)]^{\frac{1}{\gamma}}}(p^u_{-k}\land p^s_{-k})\Big]^\beta\\
&\leq 2^{\frac{\beta}{2}}\cdot 10\epsilon\sum_{k=1}^N(p_{0}^s\land p_{0}^u)^\beta\Big[e^{\beta\sum_{i=0}^{k-1}\Gamma(p_{-i}^s\land p^u_{-i})^{\frac{1}{\gamma}}}e^{-\sum_{i=1}^{k}\frac{1-\epsilon}{u^2(\tilde{x}_{-i+1})}}\Big]\\
&\quad \cdot e^{-\sum_{i=1}^{k}\frac{1-\epsilon}{s^2(\tilde{x}_{-i})}}\cdot e^{-\beta\Gamma\sum_{i=0}^{N-k-1}[I^{-i}(p_{-k}^u\land p_{-k)}^s)]^{\frac{1}{\gamma}}}\\
&\leq 2^{\frac{\beta}{2}}\cdot 10\epsilon\sum_{k=1}^N(p_{0}^s\land p_{0}^u)^{\beta}e^{-\beta\sum_{i=0}^{k-1}\Gamma(p_{-i}^s\land p^u_{-i})^{\frac{1}{\gamma}}}\cdot e^{-\beta\Gamma\sum_{i=0}^{N-k-1}[I^{-i}(p_{-k}^u\land p_{-k}^s)]^{\frac{1}{\gamma}}}\\
&\leq 2^{\frac{\beta}{2}}\cdot 10\epsilon\sum_{k=1}^N(p_{0}^s\land p_{0}^u)^{\beta}e^{-\beta\sum_{i=0}^{N-1 }\Gamma[I^{-i}(p_{0}^s\land p^u_{0})]^{\frac{1}{\gamma}}}\\
&\leq 2^{\frac{\beta}{2}}\cdot 10\epsilon\sum_{k=1}^N (I^{-N}(p_{0}^s\land p_{0}^u))^\beta\leq 2N(I^{-N}(p_{0}^s\land p_{0}^u))^\beta,
\end{aligned}
\end{equation*}
The inequality (a) holds by the inequality (\ref{FV}). Let $\alpha=\frac{\beta}{2}-\frac{1}{\gamma}>0\ (\gamma>\frac{20}{\beta})$ and $C=\sup_{N\geq 1}2(N+1)(I^{-N}(1))^{\frac{1}{\gamma}+\alpha}$ ($C$ is bounded since $(I^{-N}(1)^{\frac{1}{\gamma}}=O(\frac{1}{N+\delta})$ for some $\delta>0$), then
$$\|d_{\cdot}F_0-d_{\cdot}G_0\|\leq 2(N+1)(I^{-N}(1))^{\frac{1}{\gamma}+\alpha}(I^{-N}(p_{0}^s\land p_{0}^u))^{\frac{1}{\gamma}+\alpha}\leq C(I^{-N}(p_{0}^s\land p_{0}^u))^{\frac{1}{\gamma}+\alpha}.$$

(6) By Theorem \ref{shadow}(3), there are $u$-admissible manifolds $V^u((v_{i+n})_{i\leq0})$ at $v_n$ with representing function $G_n$ s.t. $f^{-1}_{\tilde{x}_n}\circ \cdots \circ f^{-1}_{\tilde{x}_{-1}}(V^u((v_i)_{i\leq 0}))\subset V^u((v_{i+n})_{i\leq0})$ ($n\leq 0$). Thus $f^{-1}_{\tilde{x}_n}\circ \cdots \circ f^{-1}_{\tilde{x}_{-1}}(y), f^{-1}_{\tilde{x}_n}\circ \cdots \circ f^{-1}_{\tilde{x}_{-1}}(z)$ can be written as $\psi_{\tilde{x}_n}(y_n'),\psi_{\tilde{x}_n}(z_n')$ where $y_n'=(G_n(y_n),y_n)$ and $z_n'=(G_n(z_n),z_n)$. For all $n\leq0$, $y_{n-1}'=F_{\tilde{x}_{n-1},\tilde{x}_{n}}^{-1}(y_n')$ and $z_{n-1}'=F_{\tilde{x}_{n-1},\tilde{x}_{n}}^{-1}(z_n')$, then
\begin{equation}\label{s}
    \begin{aligned}
        |y_{n-1}-z_{n-1}|&\leq \|D_{u,n-1}^{-1}\|\cdot|y_n-z_n|+|h_{u,n}(G_n(y_n),y_n)-h_{u,n}(G_n(z_n),z_n)|\\
        &\leq e^{-\frac{1}{u^2(\tilde{x}_{n})}}|y_n-z_n|+\sqrt{\epsilon}\eta_n^{\frac{\beta}{2}}(1+\epsilon)|y_n-z_n|\\
        &\leq e^{-\frac{1-\epsilon}{u^2(\tilde{x}_{n})}}|y_n-z_n|\leq \cdots \leq e^{-\sum_{i=0}^{n}\frac{1-\epsilon}{u^2(\tilde{x}_{-k})}}|y_0-z_0|\\
        &\leq e^{-[(1-\epsilon)(\epsilon^{\frac{20}{\beta}})^{\frac{1}{\gamma}}\frac{1}{\Gamma}]\Gamma\sum_{k=0}^{n}(p^u_{-k})^{-\frac{1}{\gamma}}}|y_0-z_0|\\
        &\leq 2e^{-\Gamma\sum_{k=0}^n(I^{k}(p^u_{0}))^{\frac{1}{\gamma}}}p^u_{0}\text{ ( for small enough } \epsilon) \\
        &\leq 2I^{(n-1)}(p_{0}^u).
    \end{aligned}
\end{equation}
Thus
$$|y_{n-1}'-z_{n-1}'|\leq (1+\epsilon)|y_{n-1}-z_{n-1}|\leq 4I^{(n-1)}(p_0^u) \text{ for all } n\leq 0,$$
and so $|f^n(y)-f^n(z)|\leq \|\psi_{\tilde{x}_n}\|\cdot |y_n'-z_n'|\leq 8I^{-n}(p_0^u)$.
\end{pf}

 Our purpose is to construct Markov partition for    the subset $RST\subset M^f$. According to the idea of Bowen to construct Markov partition for Axiom A \cite{Bowen}, in the following content, we will give the corresponding definitions and properties for $(\hat{f},M^{f})$. Given an $I$-chain $(v_i)_{i\in\mathbb{Z}}$, where $v_i=\psi_{\tilde{x}_i}^{p_i^s,p_i^u}$.
\begin{mdef}
    The \textit{stable set} of $(v_i)_{i\geq 0}$ is
$$\tilde{V}^s((v_i)_{i\geq 0}):=\{\tilde{y}=(y_n)_{n\in\mathbb{Z}}\in M^f:y_0\in V^s((v_i)_{i\geq 0})\}.$$
The \textit{unstable set} of $(v_i)_{i\leq 0}$ is
$$\tilde{V}^u((v_i)_{i\leq 0}):=\{\tilde{y}=(y_n)_{n\in\mathbb{Z}}\in M^f:y_0\in V^u((v_i)_{i\leq 0}) \text{ and } y_n=f^{-1}_{\tilde{x}_n}(y_{n+1})\text{ for all }n<0\}.$$
\end{mdef}

By Proposition \ref{shadow}(4), there is an equivalent definition of stable/unstable set:
$$\tilde{V}^s((v_i)_{i\geq 0})=\{\tilde{y}=(y_n)_{n\in\mathbb{Z}}\in M^f:y_0\in \psi_{\tilde{x}_0}[B_{p_0^s}(0)]\text{ and }y_n\in \psi_{\tilde{x}_n}[B_{10Q(\tilde{x}_n)}(0)], \forall n\geq 0\},$$
and
$$\tilde{V}^u((v_i)_{i\leq 0})=\{\tilde{y}=(y_n)_{n\in\mathbb{Z}}\in M^f:y_0\in \psi_{\tilde{x}_0}[B_{p_0^u}(0)] \text{ and } y_n=f^{-1}_{\tilde{x}_n}(y_{n+1}) \in \psi_{\tilde{x}_n}[B_{10Q(\tilde{x}_n)}(0)], \forall n<0\}.$$

Given a double chart $v$, if a stable/unstable set $\tilde{V}^s((v_i)_{i\geq0})$/$\tilde{V}^u((v_i)_{i\leq0})$ with $v_0=v$, they are called a \textit{stable/unstable set at $v$}, respectively. Denote $\tilde{M}^{s/u}(v)$ as the set of all stable/unstable sets at $v$. Then the transform graph for stable/unstable sets can be defined by
$$\tilde{\mathcal{F}}_{v_i,v_{i+1}}^s:\tilde{M}^s(v_{i+1})\rightarrow\tilde{M}^s(v_i), \tilde{V}^s(v_{i+1+k})_{k\geq 0})\mapsto \tilde{V}^s(v_{i+k})_{k\geq 0});$$
and
$$\tilde{\mathcal{F}}_{v_{i-1},v_{i}}^s:\tilde{M}^s(v_{i-1})\rightarrow\tilde{M}^s(v_i), \tilde{V}^u(v_{i-1+k})_{k\leq 0})\mapsto \tilde{V}^u(v_{i+k})_{k\leq 0}).$$

Similar to the properties for $s/u$-admissible manifolds, there are some similar properties for stable/unstable sets:
\begin{prop}\label{tilde}
    For small enough $\epsilon>0$,
\begin{enumerate}[label=(\arabic*)]
    \item For any stable set $\tilde{V}^s\in \tilde{M}^s(v)$ and unstable set $\tilde{V}^u\in\tilde{M}^u(v)$ where $v$ is a double chart, there is a unique point $\tilde{p}\in M^f$ s.t. $\tilde{V}^s\bigcap\tilde{V}^u=\{\tilde{p}\}$.
    \item Given double charts $v$, $w$ s.t. $v\rightarrow w$. If $\tilde{V}^s\in\tilde{M}^s(w)$ and $\tilde{V}^u\in\tilde{M}^u(v)$, then $\tilde{\mathcal{F}}^s_{v,w}[\tilde{V}^s]\subset \hat{f}^{-1}(\tilde{V}^s)$, $\tilde{\mathcal{F}}^u_{v,w}[\tilde{V}^u]\subset \hat{f}(\tilde{V}^u)$.
    \item $\hat{f}[\tilde{V}^s((v_i)_{i\geq 0})]\subset \tilde{V}^s((v_i)_{i\geq1})$ and $\hat{f}^{-1}[\tilde{V}^u((v_i)_{i\leq0})]\subset \tilde{V}^u((v_i)_{i\leq-1})$.
    \item If $\tilde{y}=(y_k)_{k\in\mathbb{Z}}$, $\tilde{z}=(z_k)_{k\in\mathbb{Z}}\in \tilde{V}^s((v_i)_{i\geq0})$ where $v_i=\psi_{\tilde{x}_i}^{p^s_i,p^u_i},\ \forall i\geq0$, then $d(\hat{f}^n(\tilde{y}),\hat{f}^n(\tilde{z}))\leq 8I^{-n}(p_0^s)$ for all $n\geq 0$; if $\tilde{y}=(y_k)_{k\in\mathbb{Z}}$, $\tilde{z}=(z_k)_{k\in\mathbb{Z}}\in \tilde{V}^u((v_i)_{i\leq0})$ where $v_i=\psi_{\tilde{x}_i}^{p^s_i,p^u_i},\ \forall i\leq0$, then $d(\hat{f}^n(\tilde{y}),\hat{f}^n(\tilde{z}))\leq 8I^{n}(p_0^u)$ for all $n\leq 0$.
    \item For small enough $\epsilon>0$, given stable/unstable sets $\tilde{V}^{s/u}$, $\tilde{W}^{s/u}$ at $\psi_{\tilde{x}}^{p^s,p^u}$, $\psi_{\tilde{y}}^{q^s,q^u}$, respectively. If $\tilde{x}=\tilde{y}$, then $\tilde{V}^{s/u}$ and $\tilde{W}^{s/u}$ are disjoint or one contains the other.
    \item Given double charts $v$, $w$ s.t. $v\rightarrow w$. For unstable set $\tilde{V}^u\in\tilde{M}^u(v)$, $\hat{f}(\tilde{V}^u)$ intersects any stable set in $\tilde{M}^s(w)$ at a unique point; for any stable set $\tilde{W}^s\in\tilde{M}^s(w)$, $\hat{f}^{-1}(\tilde{W}^s)$ intersects any unstable set in $\tilde{M}^u(v)$ at a unique point.
\end{enumerate}
\end{prop}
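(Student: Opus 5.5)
The plan is to transfer each item from the corresponding manifold-level statement (Propositions \ref{ad}, \ref{shadow} and Theorem \ref{TF}) to the natural extension. The key observation is that an unstable set $\tilde{V}^u((v_i)_{i\le 0})$ is parametrized by its $0$-th coordinate: the backward coordinates are fixed by the branches $f^{-1}_{\tilde{x}_n}$, and the forward coordinates by $f$. Hence $p$ restricted to $\tilde{V}^u$ is a bijection onto $V^u((v_i)_{i\le0})$. A stable set, by contrast, contains every full orbit whose $0$-th coordinate lies in $V^s$, with arbitrary past.

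\emph{Part (1).} If $\tilde{p}\in\tilde{V}^s\cap\tilde{V}^u$, then $p_0\in V^s\cap V^u$. By Proposition \ref{ad}(1) this intersection is a single point $p$. The past of $\tilde{p}$ is then forced: $p_n=f^{-1}_{\tilde{x}_n}\circ\cdots\circ f^{-1}_{\tilde{x}_{-1}}(p)$. For existence, I build this orbit. The lemma on coinciding inverse branches together with Proposition \ref{shadow}(4) shows the backward iterates stay in $\psi_{\tilde{x}_n}[B_{10Q(\tilde{x}_n)}(0)]\subset\mathfrak{E}$-domains, so they are well defined. The resulting point lies in $\tilde{V}^u$ by definition and in $\tilde{V}^s$ because $p_0\in V^s$.

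\emph{Parts (2) and (3).} These follow from Proposition \ref{shadow}(3) and Theorem \ref{TF}. If $\tilde{y}\in\tilde{V}^s((v_i)_{i\ge0})$, then $f(y_0)\in V^s((v_{i+1})_{i\ge0})$, so $\hat f\tilde{y}$ lies in the shifted stable set. For $\tilde{y}\in\tilde{V}^u$, the point $y_{-1}=f^{-1}_{\tilde{x}_{-1}}(y_0)$ lies in $V^u((v_{i-1}))$, and the earlier coordinates are still given by the prescribed branches. Part (2) is the same statement rephrased through $\tilde{\mathcal F}$.

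\emph{Part (4).} For $n\ge0$, the metric on $M^f$ gives
$$d(\hat f^n\tilde{y},\hat f^n\tilde{z})=\sup_{i\le 0}2^{i}d(y_{n+i},z_{n+i}).$$
For the indices $0\le n+i$, Proposition \ref{shadow}(6) gives $d(y_{n+i},z_{n+i})\le 8I^{-(n+i)}(p_0^s)$, and I need $2^{i}I^{-(n+i)}(p^s_0)\le I^{-n}(p^s_0)$. This holds because $I^{-1}(t)\ge te^{-\Gamma t^{1/\gamma}}\ge t/2$ for small $t$ (Proposition \ref{I}(2)), so each backward step of $I^{-1}$ costs at most a factor of $2$.

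The terms with $n+i<0$ are the main obstacle. For general stable sets the pasts of $\tilde{y}$ and $\tilde{z}$ are unrelated, and the weight $2^{i}\le 2^{-n}$ only bounds them by $2^{-n}\mathrm{diam}M$. I would try to show $2^{-n}\le 8I^{-n}(p_0^s)$, using that $I^{-n}(t)$ decays subexponentially (Proposition \ref{I}(3) forces $\sum (I^{-n}t)^{1/\gamma}=\infty$). Failing that, I would restrict the claim to the coordinates $k\ge0$, which is all that is used later.

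For the unstable case with $n\le0$, every coordinate up to index $0$ lies on the backward-branch orbits. Proposition \ref{shadow}(6) applied at each index then bounds the supremum directly, the extra factors $2^i$ only helping.

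\emph{Parts (5) and (6).} For (5), if $\tilde{V}^s=\tilde{V}^s((v_i))$ and $\tilde{W}^s=\tilde{V}^s((w_i))$ have the same center $\tilde{x}$, the shadowing characterization from Proposition \ref{shadow}(4) exhibits both as sets of orbits staying in nested chart neighbourhoods. Following Ovadia's argument, if they meet then the larger one contains the smaller: the projected manifolds are both graphs over the same chart, and they coincide on the smaller domain by the uniqueness part of Proposition \ref{shadow}(1). Part (6) follows from part (1) combined with part (3) and Theorem \ref{TF}(2).
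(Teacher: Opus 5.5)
Your treatment of (1)--(3), (6) and of the unstable half of (4) is essentially the paper's. You transfer Proposition~\ref{ad}(1), Proposition~\ref{shadow}(3),(6) and Theorem~\ref{TF}(2) to $M^f$, using that an unstable set is parametrized by its $0$-th coordinate while a stable set carries arbitrary pasts. The problems are in the stable half of (4) and in (5).

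\textbf{The stable half of (4).} Your first remedy cannot work. The inequality $2^{-n}\le 8I^{-n}(p_0^s)$ fails for every $n$ below a threshold that grows as $p_0^s\to 0$: at $n=0$ it reads $1\le 8p_0^s$, whereas $p_0^s\le Q(\tilde{x}_0)<\epsilon^{20/\beta}$. Subexponential decay of $I^{-n}$ only helps for large $n$.

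In fact the stable half of (4) is false as stated. Take $y_0=z_0\in V^s((v_i)_{i\ge0})$ with two different preimages $y_{-1}\neq z_{-1}$, choosing $y_{-1}$ near $p_{-1}(\tilde x_0)$. Both orbits lie in $\tilde V^s((v_i)_{i\ge0})$, since its definition puts no condition on the past. Yet $d(\tilde y,\tilde z)\ge\frac12 d(y_{-1},z_{-1})\ge\tau(y_{-1})$ by \ref{A1}, and $\tau(y_{-1})\gtrsim\rho(\tilde x_0)^a\gg 8p_0^s$.

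The paper's proof takes exactly the step you were unsure about. It asserts $\max\{2^{-n},8I^{-n}(p_0^s)\}\le 8I^{-n}(p_0^s)$ on the grounds that $8I^{-n}(p_0^s)\ge e^{-\Gamma\sum_{i<n}(I^{-i}(p_0^s))^{1/\gamma}}$. Iterating Proposition~\ref{I}(2) only gives $I^{-n}(p_0^s)\ge p_0^s\,e^{-\Gamma\sum_{i<n}(I^{-i}(p_0^s))^{1/\gamma}}$, so the factor $p_0^s$ has been dropped.

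What your index split (the same as the paper's) actually proves is $d(\hat f^n\tilde y,\hat f^n\tilde z)\le\max\{2^{-n-1},8I^{-n}(p_0^s)\}$. State it in that form rather than restricting to forward coordinates. The later uses are in the metric of $M^f$: Proposition~\ref{Z}(6), and the bound on $\mathrm{diam}\bigcap_{i=-n}^{n}\hat f^{-i}(R_i)$ behind Theorem~\ref{main}(4). The extra $2^{-n}$ term is harmless there.

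\textbf{Part (5).} The key step does not hold up. The uniqueness in Proposition~\ref{shadow}(1) is independence of the seed manifolds for \emph{one} chain. Here the two sets come from different chains that share only the centre $\tilde x$ at index $0$. The radii $p^{s/u},q^{s/u}$ may differ, and so may every chart at $i\neq 0$, so the windows $\psi_{\tilde x_i}[B_{10Q(\tilde x_i)}(0)]$ and $\psi_{\tilde y_i}[B_{10Q(\tilde y_i)}(0)]$ are not nested.

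For unstable sets there is a further issue, specific to the natural extension, that you do not address. Membership in $\tilde V^u$ prescribes the past through the branches $f^{-1}_{\tilde x_n}$, and membership in $\tilde W^u$ through $f^{-1}_{\tilde y_n}$. So even $V^u\subset W^u$ in $M$ would not give $\tilde V^u\subset\tilde W^u$.

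The missing ingredient is the paper's Claim. Start from a common point and use Proposition~\ref{shadow}(4),(6) together with $Q(\tilde y_n)\le\epsilon\tau(\tilde y_{n-1})$ (the computation (\ref{tao})). This shows that the backward images $f^{-1}_{\tilde x_n}\circ\cdots\circ f^{-1}_{\tilde x_{-1}}(V^u)$ lie in $\mathfrak E_{\tilde y_{n-1}}$ and in $f^{-1}_{\tilde y_{n-1}}(\mathfrak E_{\tilde y_{n-1}})$, so the two families of branches agree there.

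Only then are the sets compared. The paper goes far enough into the past that both backward images lie in a single window. It then uses the shadowing characterization to place them in a common graph. Finally it concludes with a connectedness/boundary argument using $p_0^u\le q_0^u$, with an approximation when $p_0^u=q_0^u$. For stable sets the branch issue disappears, but you still need to compare the two different forward chains, which Proposition~\ref{shadow}(1) does not do.
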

\begin{pf}
    According to Proposition \ref{ad}(1), Theorem \ref{TF}(1), Proposition \ref{Fs}(2) and the relationship between $s/u$-admissible manifolds and stable/unstable sets, the parts (1)-(3) can be proved easily.

    (4) Recall that the distance $d(\tilde{x},\tilde{y})=\sup\{2^{i}d(x_i,y_i):i\leq 0\}$ for $\tilde{x}=(x_i)_{i\in\mathbb{Z}},\tilde{y}=(y_i)_{i\in\mathbb{Z}}\in M^f$. If $\tilde{y}=(y_k)_{k\in\mathbb{Z}}$, $\tilde{z}=(z_k)_{k\in\mathbb{Z}}\in \tilde{V}^s((v_i)_{i\geq0})$, for every $k\leq 0 $ and $n>0$:
\begin{enumerate}[label=(\roman*)]
    \item $0\leq n\leq -k\ (k+n\leq 0)$: $2^kd(y_{k+n},z_{k+n})\leq 2^k\leq 2^{-n}$;
    \item $n> -k\ (k+n> 0)$: by Proposition \ref{I}, we have
\end{enumerate}
\begin{equation}\label{symbol}
   \begin{aligned}
        I^{-n}(p_0^s)&=I^{-1}(I^{-n+1}(p_0^s))\geq I^{-n+1}(p_0^s)e^{-\Gamma(I^{-n+1}(p_0^s))^{\frac{1}{\gamma}}}\\
        &\geq \cdots \geq I^{-n-k}(p_0^s)e^{-\sum_{i=n+k}^{n-1}\Gamma(I^{-i}(p_0^s))^{\frac{1}{\gamma}}}\geq 2^kI^{-n-k}(p_0^s),
   \end{aligned}
\end{equation}
since $e^{-\Gamma(I^{-i}(p_0^s))^{\frac{1}{\gamma}}}\geq e^{-\Gamma (p_0^s)^{\frac{1}{\gamma}}}>\frac{1}{2}\forall i\geq 0 $ holds for small enough $\epsilon$. Then
$2^kd(y_{k+n},z_{k+n})\leq 2^k\cdot 8I^{-(n+k)}(p_0^s)\leq 8I^{-n}(p_0^s)$ by Proposition \ref{shadow}(6).
Hence,
   \begin{equation*}
       \begin{aligned} d(\hat{f}^n(\tilde{y}),\hat{f}^n(\tilde{z}))&=d((y_{k+n})_{k\in\mathbb{Z}},(z_{k+n})_{k\in\mathbb{Z}})=\sup\{2^{k}d(y_{k+n},z_{k+n}):k\leq0\}\\
    &\leq \max\{2^{-n},8I^{-n}(p_0^s)\}\leq 8I^{-n}(p_0^s),
       \end{aligned}
   \end{equation*}
since $8I^{-n}(p_0^s)\geq e^{-\Gamma\sum_{i=0}^{n-1}(I^{-i}(p_0^s))^{\frac{1}{\gamma}}}$ and $e^{-\Gamma(I^{-i}(p_0^s))^{\frac{1}{\gamma}}}\geq e^{-\Gamma (p_0^s)^{\frac{1}{\gamma}}}>\frac{1}{2}\forall i\geq 0$ hold for small enough $\epsilon$.

  If $\tilde{y}=(y_k)_{k\in\mathbb{Z}}$, $\tilde{z}=(z_k)_{k\in\mathbb{Z}}\in \tilde{V}^u((v_i)_{i\leq0})$, by Proposition \ref{shadow}(6), for every $k\leq 0$ and $n\leq 0$:
  \begin{equation*}
      \begin{aligned}
          d(\hat{f}^{n}(\tilde{y}),\hat{f}^{n}(\tilde{z}))&=d((y_{k+n})_{k\in\mathbb{Z}},(z_{k+n})_{k\in\mathbb{Z}})=\sup\{2^{k}d(y_{k+n},z_{k+n}):k\leq0\}\\
          &\leq \sup\{2^{k}\cdot 8I^{k+n}(p_0^u):k\leq0\} \leq 8I^{n}(p_0^u).
      \end{aligned}
  \end{equation*}

  (5) We prove the result for the $u$-case; a similar   proof applies to the $s$-case.

Let $\tilde{V}^u=\tilde{V}^u((v_i)_{i\leq 0})=\{\tilde{z}=(z_n)_{n\in\mathbb{Z}}\in M^f:z_0\in V^u((v_i)_{i\leq 0}) \text{ and } z_n=f^{-1}_{\tilde{x}_n}(z_{n+1})\text{ for all }n<0\}$  and $\tilde{W}^u=\tilde{W}^u((w_i)_{i\leq 0})=\{\tilde{z}=(z_n)_{n\in\mathbb{Z}}\in M^f:z_0\in W^u((w_i)_{i\leq 0}) \text{ and } z_n=f^{-1}_{\tilde{y}_n}(z_{n+1})\text{ for all }n<0\}$ where $v_i=\psi_{\tilde{x}_i}^{p_i^s,p_i^u}$, $w_i=\psi_{\tilde{y}_i}^{q_i^s,q_i^u}$ and $\tilde{x}_0=\tilde{x}$, $\tilde{y}_0=\tilde{y}$. Assume $\tilde{z}\in\tilde{V}^u\bigcap\tilde{W}^u$ and $p^u_0\leq q^u_0$, then we want to prove $\tilde{V}^u\subset \tilde{W}^u$. We will prove this by showing $f^{-1}_{\tilde{x}_n}$, $f^{-1}_{\tilde{y}_n}$ coincide in the domains of interest for all $n<0$ and $f^{-1}_{\tilde{x}_n}\circ \cdots \circ f^{-1}_{\tilde{x}_{-1}}(V^u)\subset f^{-1}_{\tilde{y}_n}\circ \cdots \circ f^{-1}_{\tilde{y}_{-1}}(W^u)$ for all $|n|$ large enough ($n<0$).

    \begin{claim}
        $f^{-1}_{\tilde{x}_n}\circ \cdots \circ f^{-1}_{\tilde{x}_{-1}}(V^u)\subset \mathfrak{E}_{\tilde{y}_{n-1}}$ and $f^{-1}_{\tilde{x}_{n-1}}\circ \cdots \circ f^{-1}_{\tilde{x}_{-1}}(V^u)\subset f^{-1}_{\tilde{y}_{n-1}}(\mathfrak{E}_{\tilde{y}_{n-1}})$ for all $n\leq 0$
    \end{claim}
    \begin{pf}
Let $z_n\in f^{-1}_{\tilde{x}_n}\circ \cdots \circ f^{-1}_{\tilde{x}_{-1}}(V^u)\bigcap f^{-1}_{\tilde{y}_n}\circ \cdots \circ f^{-1}_{\tilde{y}_{-1}}(W^u)$, for any $t_n\in f^{-1}_{\tilde{x}_n}\circ \cdots \circ f^{-1}_{\tilde{x}_{-1}}(V^u)$, we have $d(z_n,t_n)\leq 8I^{n}(p_0^u)\leq 8I^{n}(q_0^u)\leq 8q^u_{n}$ by Proposition \ref{shadow}(6) and the map $I^{-1}$ is strictly increasing on $(0,e^\Gamma)$. And $z_n\in B(p(\tilde{y}_n),20 Q(\tilde{y}_n)) $ by Proposition \ref{shadow}(4), so
\begin{equation}\label{t}
    t_n\in B(p(\tilde{y}_n),20Q(\tilde{y}_n)+8q^u_{n})\subset B(p_1(\tilde{y}_{n-1}),21Q(\tilde{y}_n)+8q^u_{n}),
\end{equation}
since $d(p_0(\tilde{y}_n),p_1(\tilde{y}_{n-1}))\ll q_n^s\land q_n^u<Q(\tilde{y}_n)$. According to the calculations of inequality (\ref{tao}), we get $Q(\tilde{y}_{n})\leq \epsilon\tau(\tilde{y}_{n-1})$ for all $n\leq 0$. Hence, $f^{-1}_{\tilde{x}_n}\circ \cdots \circ f^{-1}_{\tilde{x}_{-1}}(V^u)\subset B(p_1(\tilde{y}_{n-1}),30 Q(\tilde{y}_n))\subset B(p_1(\tilde{y}_{n-1}),30 \epsilon\tau(\tilde{y}_{n-1}))\subset \mathfrak{E}_{\tilde{y}_{n-1}}$.

For $t_{n-1}\in f^{-1}_{\tilde{x}_{n-1}}\circ f^{-1}_{\tilde{x}_n}\circ \cdots \circ f^{-1}_{\tilde{x}_{-1}}(V^u)$, we get $t_{n-1}\in B(p(\tilde{y}_{n-1}),30Q(\tilde{y}_{n-1}))$ by inequality (\ref{t}). By assumption \ref{A2}, $f^{-1}_{\tilde{y}_{n-1}}(\mathfrak{E}_{\tilde{y}_{n-1}})\supset B(p(\tilde{y}_{n-1}),2d(p(\tilde{y}_{n-1}),\Gamma)^a\tau(\tilde{y}_{n-1}))\supset B(p(\tilde{y}_{n-1}),2\rho(\tilde{y}_{n-1})^{2a})\supset B(p(\tilde{y}_{n-1}),30Q(\tilde{y}_{n-1}))$.
    \end{pf}

 Denote $V^u_n:=V^u((v_{i+n})_{i\leq 0})$ with the representing function $F_n$ and $W^u_n:=W^u((w_{i+n})_{i\leq 0})$ with the representing function $G_n$ ($n\leq 0$). Given $t\in(0,1)$, $I^{n}(t)\rightarrow 0$ as $n\rightarrow \infty$ (by Lemma \ref{I}(6)), then there exists $n_0\leq 0$ s.t.  $f^{-1}_{\tilde{x}_n}\circ \cdots \circ f^{-1}_{\tilde{x}_{-1}}(V^u)\subset \psi_{\tilde{x}_{n}}[B_{\frac{1}{2}Q(\tilde{x}_{n})}(0)]$ for all $n\leq n_0$.
Assume $z_n\in f^{-1}_{\tilde{x}_n}\circ \cdots \circ f^{-1}_{\tilde{x}_{-1}}(V^u)\bigcap f^{-1}_{\tilde{y}_n}\circ \cdots \circ f^{-1}_{\tilde{y}_{-1}}(W^u)$ and $w_n\in f^{-1}_{\tilde{y}_n}\circ \cdots \circ f^{-1}_{\tilde{y}_{-1}}(W^u)$. Notice that $f^{-1}_{\tilde{x}_n}$ and $f^{-1}_{\tilde{y}_n}$ coincide in the domains of interest for all $n<0$. By the proof of Proposition \ref{shadow}(6), $ d(\psi_{\tilde{x}_n}^{-1}(z_n),\psi_{\tilde{x}_n}^{-1}(w_n))\leq 4I^{n}(q_0^u)\leq \frac{1}{2}Q(\tilde{x}_n)$ for large enough $|n|$. Hence, $$|\psi_{\tilde{x}_n}^{-1}(w_n)|\leq |\psi_{\tilde{x}_n}^{-1}(z_n)|+\frac{1}{2}Q(\tilde{x}_n)\leq Q(\tilde{x}_n). $$
i.e. there exists $n_0'<0$ s.t.  $f^{-1}_{\tilde{x}_n}\circ \cdots \circ f^{-1}_{\tilde{x}_{-1}}(W^u)\subset \psi_{\tilde{x}_{n}}[B_{Q(\tilde{x}_{n})}(0)]$ for all $n\leq n_0'$ and small enough $\epsilon$.

Next, we will show that $\tilde{V}^u\subset \tilde{W}^u$ if $p_0^u\leq q_0^u$ and $\tilde{x}_0=\tilde{y}_0$. Since $f^{-1}_{\tilde{x}_n}\circ \cdots \circ f^{-1}_{\tilde{x}_{-1}}(V^u),f^{-1}_{\tilde{x}_n}\circ \cdots \circ f^{-1}_{\tilde{x}_{-1}}(W^u)\subset \psi_{\tilde{x}_{n}}[B_{Q(\tilde{x}_{n})}(0)]$ for all $|n|\geq \max\{|n_0|,|n_0'|\}$, then $f^{-1}_{\tilde{x}_n}\circ \cdots \circ f^{-1}_{\tilde{x}_{-1}}(V^u),f^{-1}_{\tilde{x}_n}\circ \cdots \circ f^{-1}_{\tilde{x}_{-1}}(W^u)\subset V^u_n$ by Proposition \ref{shadow}(4). And they are two connected subsets of $graph(F_n)$ and so there are two closed subsets $S_1,S_2\subset \mathbb{R}^{d_u(\tilde{x}_n)}$ s.t.
     $$f^{-1}_{\tilde{x}_n}\circ \cdots \circ f^{-1}_{\tilde{x}_{-1}}(V^u)=\psi_{\tilde{x}_n}\{(F_n(v),v):v\in S_1\},$$
     and
     $$f^{-1}_{\tilde{x}_n}\circ \cdots \circ f^{-1}_{\tilde{x}_{-1}}(W^u)=\psi_{\tilde{x}_n}\{(G_n(v),v):v\in S_2\}.$$
 According to the assumptions, $S_1\bigcap S_2\neq \emptyset$ and $S_1, S_2$ are homeomorphic to the closed unit ball in $\mathbb{R}^{d_u(\tilde{x}_0)}$. Hence, if $S_1$ is not contained in $S_2$, there exists an element $z\in \partial S_2\bigcap S_1$.
\begin{enumerate}[label=(\alph*)]
    \item If $p_0^u<q_0^u$: $f^n\circ \psi_{\tilde{x}_n}(F_n(z),z)\in\partial(W^u)\bigcap V^u$ since $f^n\circ \psi_{\tilde{x}_n}$ is continuous and $f(\partial A)\subset \partial(f(A))$ ($A$ is any subset of $M$). Then $q_0^u\leq p_0^u<q_0^u$, this is a contraction. Hence, $S_1\subset S_2$.
    \item If $p_0^u=q_0^u$: given $\delta_0=\frac{p_0^u}{2}$ and $\delta\in(0,\delta_0)$, consider $V^u_\delta:=\psi_{\tilde{x}_0}\{(F_0(v),v):v\in B_{p_0^u-\delta}(0)\}$. Then $V_{\delta}^u\subset W^u$ by the previous arguments. Hence, $\overline{\bigcup_{\delta\in(0,\delta_0)} V^u_\delta}\subset\overline{W^u}$ and so $V^u\subset W^u$ .
\end{enumerate}

Similarly, the same argument holds for the $u$-case.

  (6) For any stable set $\tilde{W}^s\in\tilde{M}^s(w)$, by part(1), assume $\tilde{z}\in\hat{f}(\tilde{V}^u)\bigcap \tilde{W}^s$. By Theorem \ref{TF}, the element $p(\tilde{z})$ is unique, and there is an unstable set $\tilde{W}^u\in\tilde{M}^u(w)\subset \hat{f}(\tilde{V}^u)$. Furthermore, $\tilde{z}\in \tilde{W}^s\bigcap \tilde{W}^u$, then the negative positions of $\tilde{z}$ are uniquely determined by $\tilde{W}^u$.
\end{pf}

 Given an $I$-chain $(v_i)_{i\in\mathbb{Z}}$ where $v_i=\psi_{\tilde{x}_i}^{p_i^s,p_i^u}$. Let $\tilde{V}^s:=\tilde{V}^s((v_i)_{i\geq 0})$, $\tilde{V}^u:=\tilde{V}^u((v_i)_{i\leq 0})$ be a stable/unstable set, respectively. Denote $T\tilde{V}^{s/u}$ as the tangent bundle $TV^{s/u}\subset TM^f$, i.e. $\forall \tilde{y}\in \tilde{V}^{s/u}$, $T_{\tilde{y}}\tilde{V}^{s/u}=T_{p(\tilde{y})}V^{s/u}\subset T_{\tilde{y}}M^f$ where $V^s:=V^s((v_i)_{i\geq 0}),V^u:=V^u((v_i)_{i\leq 0})$.
\begin{prop}\label{tilde1}
   For small enough $\epsilon$: if $\tilde{y}=(y_i)_{i\in\mathbb{Z}}\in \tilde{V}^s((v_i)_{i\geq 0})$ and $\xi$ belong to the unit ball $T_{\tilde{y}}\tilde{V}^s(1)=T_{p(\tilde{y})}V^s(1)$, then $|d_{\tilde{y}}\hat{f}^n(\xi)|\leq 4I^{-n}(p_0^s)\cdot \frac{\|C_0(\tilde{x}_0)^{-1}\|}{p_0^s}$ for all $n\geq 0$; and if $\tilde{y}\in \tilde{V}^u((v_i)_{i\leq 0})$ and $\xi$ belong to the unit ball $T_{\tilde{y}}\tilde{V}^u(1)=T_{p(\tilde{y})}V^u(1)$, then $|d_{\tilde{y}}\hat{f}^{n}(\xi)|\leq 4I^{n}(p_0^u)\cdot \frac{\|C_0(\tilde{x}_0)^{-1}\|}{p_0^u}$ for all $n\leq 0$.
\end{prop}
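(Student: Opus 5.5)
The plan is to work entirely in the Pesin charts along the chain and transfer the contraction estimate already obtained for points in Proposition \ref{shadow}(6) to tangent vectors. I treat the $s$-case; the $u$-case is symmetric, using $F^{-1}_{\tilde{x}_{n-1},\tilde{x}_n}$ and $\|D_u^{-1}\|\le e^{-1/u^2}$.

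\textbf{Step 1 (reduction to the charts).} Write $V^s_n:=V^s((v_{i+n})_{i\geq0})$, with representing function $G_n$. By Proposition \ref{shadow}(3), $f^n(V^s)\subset V^s_n$, so $y_n=\psi_{\tilde{x}_n}(w_n,G_n(w_n))$. The chart points satisfy $(w_{n+1},G_{n+1}(w_{n+1}))=F_{\tilde{x}_n,\tilde{x}_{n+1}}(w_n,G_n(w_n))$. Given $\xi\in T_{p(\tilde y)}V^s$ with $|\xi|\le1$, set $\zeta_0:=d\psi_{\tilde{x}_0}^{-1}\xi$. Since $\psi_{\tilde{x}_0}^{-1}=C_0(\tilde{x}_0)^{-1}\circ\exp^{-1}_{p(\tilde{x}_0)}$ and $\|d\exp^{-1}\|\le2$, we get
\[
|\zeta_0|\le 2\|C_0(\tilde{x}_0)^{-1}\|,
\]
and $\zeta_0$ is tangent to $\mathrm{Graph}(G_0)$, i.e. $\zeta_0=(\eta_0,d G_0\,\eta_0)$. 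Let $\zeta_n:=d F_{\tilde{x}_{n-1},\tilde{x}_n}\cdots dF_{\tilde{x}_0,\tilde{x}_1}\zeta_0=(\eta_n,dG_n\eta_n)$. Then $d\hat f^n\xi=d\psi_{\tilde{x}_n}\zeta_n$, and since $\|C_0\|\le1$, $|d\hat f^n\xi|\le 2|\zeta_n|\le2(1+\epsilon)|\eta_n|$ by (\ref{G1}).

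\textbf{Step 2 (one-step contraction).} By Theorem \ref{F1}, $F_{\tilde{x}_n,\tilde{x}_{n+1}}=D_0(\tilde{x}_n)+\bar H^+$. The stable component of the derivative gives
\[
|\eta_{n+1}|\le\bigl(\|D_s(\tilde{x}_n)\|+\sqrt{\epsilon}\,\eta^{\beta/2}(1+\epsilon)\bigr)|\eta_n|\le e^{-\frac{1-\epsilon}{s^2(\tilde{x}_n)}}|\eta_n|,
\]
using Lemma \ref{C}(2) and absorbing the error exactly as in the proof of Proposition \ref{shadow}(6).

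\textbf{Step 3 (summing the exponents).} Iterating and using the lower bound (\ref{s2}) for $1/s^2$ in terms of $(p^s_k)^{1/\gamma}$, together with $p^s_k=\min\{I(p^s_{k+1}),Q(\tilde{x}_k)\}$, I expect to get, as in (\ref{BO}),
\[
e^{-\sum_{k<n}\frac{1-\epsilon}{s^2(\tilde{x}_k)}}\le e^{-\Gamma\sum_{k=0}^{n-1}(I^{-k}(p_0^s))^{1/\gamma}}\le \frac{I^{-n}(p_0^s)}{p_0^s},
\]
where the last inequality is Proposition \ref{I}(2) iterated. Combining with Step 1 gives $|d\hat f^n\xi|\le 4(1+\epsilon)\|C_0(\tilde{x}_0)^{-1}\|\,I^{-n}(p^s_0)/p^s_0$. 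The $(1+\epsilon)$ factor is removed by retaining a little of the slack in the exponent, or by using $\|d\exp\|\cdot\|C_0\|$ more carefully.

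\textbf{Main obstacle.} The delicate point is Step 3. The lower bound on $1/s^2(\tilde{x}_k)$ has to dominate $\Gamma(I^{-k}(p_0^s))^{1/\gamma}$ along the entire forward chain, even though $p^s_k$ may be truncated by $Q(\tilde{x}_k)$. I would handle this by noting $p_k^s\le Q(\tilde{x}_k)$, so $(1-\epsilon)\epsilon^{-20/(\beta\gamma)}(p^s_k)^{1/\gamma}\ge\Gamma(p^s_k)^{1/\gamma}$. I would then use monotonicity of $I^{-1}$ to compare $p^s_k$ with $I^{-k}(p^s_0)$ in the right direction: since $p^s_k\ge I^{-k}(p^s_0)$, the sum of the powers only gets larger.
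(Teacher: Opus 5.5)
Your proposal is correct and follows essentially the paper's own argument. The paper likewise pushes the tangent vector through the chart maps along the chain (it writes out the $u$-case via $F^{-1}_{\tilde{x}_{n-1},\tilde{x}_n}$, you the $s$-case), obtains the one-step factor $e^{-(1-\epsilon)/s^2(\tilde{x}_k)}$ (resp.\ $e^{-(1-\epsilon)/u^2(\tilde{x}_k)}$), and converts the summed exponent into $I^{-n}(p_0^s)/p_0^s$ via Proposition~\ref{I}(2) together with $p^s_k\geq I^{-k}(p^s_0)$.

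One minor point concerns the constant. Of your two fixes for the factor $4(1+\epsilon)$, only the second works uniformly, namely using $\|d\exp\|,\|d\exp^{-1}\|\leq 1+\epsilon$ near the base point. The first fails because at $n=1$ the available exponent slack is at most $(1-\epsilon)/s^2(\tilde{x}_0)$, which can be far smaller than $\epsilon$, so it cannot absorb a fixed factor $1+\epsilon$.
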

\begin{pf}
 Assume $p(\tilde{y})=y$, $\tilde{V}^u:=
\tilde{V}^u((v_i)_{i\leq 0})$, and $V^u:=V^u((v_i)_{i\leq 0})$. Let $V^u_n:=V^u((v_{i+n})_{i\leq 0})$ with the representing function $G_n,\forall n\leq 0$. According to the assumption $\tilde{y}\in \tilde{V}^u$, thus $y_{n}=f^{-1}_{y_{n}}\circ\cdots f^{-1}_{y_{-1}}(y)=f^{-1}_{\tilde{x}_{n}}\circ \cdots \circ f^{-1}_{\tilde{x}_{-1}} (y)=\psi_{\tilde{x}_n}(G_n(v_n), v_n)$, $v_n\in B_{Q(\tilde{x}_n)}(0)$. And $(G_n(v_{n-1}), v_{n-1})=F_{\tilde{x}_{n-1},\tilde{x}_{n}}^{-1}(G_n(v_{n}), v_{n})=(D_0(\hat{f}^{-1}(\tilde{x}_n)^{-1})+\bar{H}^-_n)(G_n(v_{n}), v_{n})$ by Theorem \ref{F1}.  Write $\xi$ as
$$d_{(G_0(v_{0}), v_{0})}\psi_{\tilde{x}_0}\begin{pmatrix}
  d_{v_0}G_0(w_0) \\
  w_0
\end{pmatrix},\text{ for some }w_0\in\mathbb{R}^{d_u(\tilde{x}_0)}.$$
For any $n\leq 0$, define
$$\begin{pmatrix}
  d_{v_{n-1}}G_{n-1}(w_{n-1}) \\
  w_{n-1}
\end{pmatrix}=\Bigg[\begin{pmatrix}
  D_{s,n}^{-1} & 0 \\
  0 & D_{u,n}^{-1}
\end{pmatrix}+\begin{pmatrix}
  d_{(G_n(v_{n}), v_{n})}H_{s,n}^- \\
 d_{(G_n(v_{n}), v_{n})}H_{u,n}^-
\end{pmatrix}^{T}\Bigg]\begin{pmatrix}
  d_{v_n}G_n(w_n) \\
  w_n
\end{pmatrix},$$
where $D_0(\hat{f}^{-1}(\tilde{x}_n))=\begin{pmatrix}
  D_{s,n} & 0 \\
  0 & D_{u,n}
\end{pmatrix}$ and $H_n^-=H_{s,n}^-+H_{u,n}^-$. The following calculations are similar to the estimate for inequality (\ref{s}):
\begin{equation*}
    \begin{aligned}
        |w_{n-1}|&\leq \|D_{u,n}^{-1}\|\cdot|w_n|+\|d_{(G_n(v_{n}), v_{n})}H_{s,n}^-\|\cdot|(d_{v_n}G_n(w_n),w_n)|\\
        &\leq (e^{-\frac{1}{u^2(\tilde{x}_n)}}+2\sqrt{\epsilon}\eta_n^{\frac{\beta}{2}})|w_n|\\
        &\leq e^{-\frac{1-\epsilon}{u^2(\tilde{x}_n)}}|w_n|
        \leq \cdots \leq e^{-\sum_{i=0}^n\frac{1-\epsilon}{u^2(\tilde{x}_i)}}|w_0|,
    \end{aligned}
\end{equation*}
where $\eta_n=p^s_n\land p^u_n$. And $|w_0|\leq \frac{1}{2}\|d_{(G_0(v_{0}), v_{0})}\psi_{\tilde{x}_0}\|^{-1}\cdot|\xi|\leq \|C_0(\tilde{x}_0)^{-1}\|$. Hence,
\begin{equation*}
    \begin{aligned}
    |d_{\tilde{y}}\hat{f}^{n}(\xi)|&=\|d_{(G_n(v_{n}), v_{n})}\psi_{\tilde{x}_i}\|\cdot\bigg|\begin{pmatrix}
  d_{v_n}G_n(w_n) \\
  w_n
\end{pmatrix}\bigg|\\
&\leq 4e^{-\sum_{i=0}^{n+1}\frac{1-\epsilon}{u^2(\tilde{x}_n)}}|w_0|\\
&\leq 4I^{n}(p_0^u)\cdot \frac{\|C_0(\tilde{x}_0)\|^{-1}}{p_0^u}.
    \end{aligned}
\end{equation*}
Apply the properties of $F_{\tilde{x}_{n},\tilde{x}_{n+1}}$ to the $s$-case ($n\geq 0$); the similar arguments hold for the $s$-case.
\end{pf}

Recall that $\Sigma:=\Sigma(\mathcal{G})=\big\{\{\psi_{\tilde{x}_i}^{p_i^s,p_i^u}\}_{i\in\mathbb{Z}}:(\psi_{\tilde{x}_i}^{p_i^s,p_i^u},\psi_{\tilde{x}_{i+1}}^{p_{i+1}^s,p_{i+1}^u})\in\mathcal{E},\forall i\in \mathbb{Z\big\}}$. Define a map $\pi:\Sigma(\mathcal{G})\rightarrow M^f$ by $\pi((v_i)_{i\in\mathbb{Z}}):= \tilde{V}^s((v_i)_{i\geq 0})\bigcap\tilde{V}^u((v_i)_{i\leq 0})$. By Proposition \ref{tilde}(1), the map $\pi$ is well-defined.
\begin{thm}\label{main1}
For small enough $\epsilon$, given $I$-chain $(v_i)_{i\in\mathbb{Z}}$ where $v_i=\psi_{\tilde{x}_{i+1}}^{p_i^s,p_i^u}$. Then
\begin{enumerate}[label=(\arabic*)]
    \item $\pi\circ \sigma=\hat{f}\circ \pi$.
    \item $\pi:\Sigma\rightarrow M^f$ is uniformly continuous.
    \item $\pi(\Sigma)\supset \pi(\Sigma^{\#})\supset RST$.
\end{enumerate}
\end{thm}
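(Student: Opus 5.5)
For part (1) I will use the invariance of stable and unstable sets. Let $\underline{v}=(v_i)_{i\in\mathbb{Z}}\in\Sigma$ and $\tilde{p}=\pi(\underline{v})$, so that $\tilde{p}\in\tilde{V}^s((v_i)_{i\geq0})\cap\tilde{V}^u((v_i)_{i\leq0})$. Proposition \ref{tilde}(3) gives $\hat{f}(\tilde{p})\in\tilde{V}^s((v_{i+1})_{i\geq 0})$. For the unstable side, Proposition \ref{tilde}(2), applied to the edge $v_0\rightarrow v_1$, gives $\tilde{\mathcal{F}}^u_{v_0,v_1}[\tilde{V}^u((v_i)_{i\le0})]\subset\hat{f}(\tilde{V}^u((v_i)_{i\le0}))$. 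Proposition \ref{tilde}(6) then says that $\hat{f}(\tilde{V}^u((v_i)_{i\le0}))$ meets the stable set $\tilde{V}^s((v_{i+1})_{i\ge0})$ in exactly one point. Both $\hat{f}(\tilde p)$ and $\pi(\sigma\underline{v})$ lie in that intersection, so they coincide. The $y_n=f^{-1}_{\tilde{x}_n}(y_{n+1})$ condition in the definition of $\tilde V^u$ is compatible with the shift, because both points are full orbits with the same negative branches. This gives $\pi\circ\sigma=\hat f\circ\pi$.

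For part (2), fix $N$ and let $\underline{v},\underline{w}\in\Sigma$ with $v_i=w_i$ for $|i|\le N$. Set $\tilde p=\pi(\underline v)$ and $\tilde q=\pi(\underline w)$. The plan is to show $d(\tilde p,\tilde q)\to0$ as $N\to\infty$, uniformly in the chains. I apply Proposition \ref{shadow}(5) to the shifted chains $\sigma^{\pm k}\underline v$ for $|k|\le N/2$. Since $v_k=w_k$, the admissible manifolds $V^{s/u}$ at $v_k$ of the two chains are $C^1$-close. The bound is $C(I^{-N/2}(1))^{1/\gamma+\alpha}$, which is uniform because $p\le 1$. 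Proposition \ref{ad}(2), the Lipschitz property of the intersection map, then gives $d(p_k(\tilde p),p_k(\tilde q))\le 6C(I^{-N/2}(1))^{1/\gamma+\alpha}$ for $|k|\le N/2$.

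This uses the $C^0$ distance rather than the leaf-core distance. The $C^0$ estimate for full admissible manifolds follows from the contraction (\ref{BO}), which gives $2I^{-m}(p_0)$. It is simpler to use that estimate directly: the $C^0$ distance is at most $4I^{-N}(1)$, and the derivatives are not needed. The metric on $M^f$ is $\sup_{i\le0}2^id(x_i,y_i)$, so
\[
d(\hat f^{k}\tilde p,\hat f^{k}\tilde q)\le\max\{2^{-N/2},\ 12 I^{-N/2}(1)\}.
\]
This bound is uniform, and $I^{-n}(1)\to0$ by Proposition \ref{I}(6).

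The point needing care is that the negative coordinates of $\tilde p$ and $\tilde q$ are produced by the branches $f^{-1}_{\tilde x_n}$. Because the charts agree for $|i|\le N$, these branches are the same maps, and the point is contained in the same $\mathfrak E$ by the Claim in the proof of Proposition \ref{tilde}(5). That is what makes the coordinatewise comparison legitimate.

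For part (3), take $\tilde x\in RST$. Proposition \ref{exist} gives a chain $\underline v$ with $\psi^{p_k^s,p_k^u}_{\tilde x_k}$ $I$-overlapping $\psi_{\hat f^k\tilde x}^{p_k^s,p_k^u}$ for every $k$. Lemma \ref{limsup} shows $p_k^s=p_k^u=Q(\tilde x_k)$ for infinitely many $k>0$ and $k<0$. By the recurrence condition in RST these $Q$ values stay bounded below along a subsequence. Discreteness (Proposition \ref{graph A}(1)) then forces some vertex to repeat infinitely often in each direction, so $\underline v\in\Sigma^\#$.

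It remains to show $\pi(\underline v)=\tilde x$. By Proposition \ref{overlap1}(4), $x_n=p(\hat f^n\tilde x)$ lies in $\psi_{\tilde x_n}[B_{10Q(\tilde x_n)}(0)]$ for all $n$, and its backward branches are those of the chain, by the lemma on coinciding inverse branches. The shadowing characterization, Proposition \ref{shadow}(4) together with the equivalent definitions of $\tilde V^{s/u}$, then places $\tilde x$ in both sets. Uniqueness from Proposition \ref{tilde}(1) gives $\pi(\underline v)=\tilde x$.

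I expect the main obstacle to be this last step: checking $p(\tilde x)\in\psi_{\tilde x_0}[B_{p_0^{s/u}}(0)]$ and matching the inverse branches on the unstable side. I will handle it through the overlap estimates of Proposition \ref{overlap1}, comparing $\tilde x_n$ with $\hat f^n\tilde x$.
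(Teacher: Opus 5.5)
\textbf{Parts (1) and (2) and the identification $\pi(\underline v)=\tilde x$ are fine.} In (1), using the uniqueness in Proposition~\ref{tilde}(6) to get $\hat f(\tilde p)=\pi(\sigma\underline v)$ is a cleaner argument than the paper's coordinatewise check. In (2), replacing the leaf-core estimate of Proposition~\ref{shadow}(5) by the $C^0$ contraction (\ref{BO}) is legitimate and enough for uniform continuity. It applies because the two chains share the graph transforms on the window of length $N/2$ on each side of $k$. Your verification that $\pi(\underline v)=\tilde x$ is correct and more explicit than the paper, which only asserts it. It combines Proposition~\ref{overlap1}(4), which gives $x_n\in\psi_{\tilde x_n}[B_{p_n^s\wedge p_n^u}(0)]$, with the inverse-branch lemma and Proposition~\ref{shadow}(4).

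\textbf{The gap is in showing $\underline v\in\Sigma^{\#}$.}
\begin{itemize}
\item You invoke Lemma~\ref{limsup}, but its hypothesis is $\limsup_{k\to\pm\infty}(p_k^s\wedge p_k^u)>0$. That is exactly what has to be proved, so the step is circular.
\item Even granting its conclusion, the indices where $p_k^s=p_k^u=Q(\tilde x_k)$ need not coincide with the indices where recurrence makes $q(\hat f^k\tilde x)$, and hence $Q(\tilde x_k)$, bounded below. So you do not obtain a subsequence on which the vertices lie in a finite set.
\item A lower bound on $Q(\tilde x_k)$ alone does not help. Proposition~\ref{graph A}(1) controls vertices only through $p^s\wedge p^u$.
\end{itemize}

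\textbf{How to fix it.} The missing ingredient is the inequality $p_k^s\wedge p_k^u\ge q_k\ge I^{-\frac14}(q(\hat f^k\tilde x))$. It is not in the statement of Proposition~\ref{exist}, but it comes out of its construction (Lemma~\ref{CG1} in Steps~1 and~3), and the paper uses precisely this.
\begin{itemize}
\item With it, the recurrence condition gives some $t>0$ and infinitely many $k>0$ and $k<0$ with $p_k^s\wedge p_k^u\ge t$.
\item There are only finitely many vertices with $p^s\wedge p^u\ge t$. This follows from discreteness of $\mathcal A$ together with finiteness of $\mathcal I\cap[t,1]$.
\item Pigeonhole then yields $\underline v\in\Sigma^{\#}$ directly, and Lemma~\ref{limsup} is not needed.
\end{itemize}
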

\begin{pf}
    (1) Assume $\pi((v_i)_{i\in\mathbb{Z}})=\tilde{y}\in \tilde{V}^s((v_i)_{i\geq 0})\bigcap\tilde{V}^u((v_i)_{i\leq 0})$ and $\tilde{y}=(y_k)_{k\in\mathbb{Z}}$. By definition, $y_k=f^k(y)$ for all $k\geq 0$; $y_k=f^{-1}_{\tilde{x}_k}(y_{k+1})$ for all $k< 0$. Then $y_{k+1}=f(y_k)\in V^s((v_{k+1+i})_{i\geq0})$ for all $k\geq 0$; and $y_{k+1}=f^{-1}_{\tilde{x}_{k+1}}(y_{k+2})=\cdots =f^{-1}_{\tilde{x}_{k+1}}\circ\cdots \circ f^{-1}_{\tilde{x}_{-1}}(y_{0})\in V^u((v_{k+1+i})_{i\leq0})$ for all $k< 0$ by Proposition \ref{tilde}(3). Hence,
    $\hat{f}\circ \pi((v_i)_{i\in\mathbb{Z}}) =(f(y_k))_{k\in\mathbb{Z}}=(y_{k+1})_{k\in\mathbb{Z}}=\tilde{V}^s((v_{i+1})_{i\geq 0})\bigcap\tilde{V}^u((v_{i+1})_{i\leq 0}) =\pi\circ \sigma((v_i)_{i\in\mathbb{Z}})$.

    (2) For all $N\geq 0$, suppose the chain $(w_i)_{i\in\mathbb{Z}}$ of double charts satisfies $v_i=w_i$ for $|i|\leq N$, let $\tilde{p}^1=(p_k^1)_{k\in\mathbb{Z}}=\tilde{V}^s_1((v_i)_{i\geq 0})\bigcap\tilde{V}^u_1((v_i)_{i\leq 0})$ and let $\tilde{p}^2=(p_k^2)_{k\in\mathbb{Z}}=\tilde{V}^s_2((w_i)_{i\geq 0})\bigcap\tilde{V}^u_2((w_i)_{i\leq 0})$. By Proposition \ref{ad}(2) and part (1), for $-N\leq n\leq0$,
    \begin{equation*}
        \begin{aligned}
            d(p_n^1,p_n^2)&\leq 3[d_{C^0}(\hat{V}^s_1((v_{i+n})_{i\geq0}),\hat{V}^s_2((v_{i+n})_{i\geq0}))+d_{C^0}(\hat{V}^u_1((v_{i+n})_{i\leq0}),\hat{V}^u_2((v_{i+n})_{i\leq0}))]\\
            &\leq 6C[I^{-(N+n)}(p_{0}^u\land p_{0}^u)]^{\frac{1}{\gamma}+\alpha} \text{( by Proposition \ref{shadow}(5))}.
        \end{aligned}
    \end{equation*}
    Note that  Proposition \ref{ad}(2) is not a conclusion for leaf cores, but it is easy to obtain the same arguments hold for leaf cores. Similar to the calculations of inequality (\ref{symbol}),
    \begin{equation*}
        \begin{aligned}
            d(\tilde{p}^1,\tilde{p}^2)&\leq \max\{2^{-N-1},6\cdot2^nC[I^{-(N+n)}(p_{0}^u\land p_{0}^u)]^{\frac{1}{\gamma}+\alpha}\}\\
            &\leq \max\{2^{-N-1},6C[I^{-N}(p_{0}^s\land p_{0}^u)]^{\frac{1}{\gamma}+\alpha}\}\leq 6C[I^{-N}(p_{0}^s\land p_{0}^u)]^{\frac{1}{\gamma}+\alpha}.
        \end{aligned}
    \end{equation*}

    (3) For any $\tilde{x}\in RST$, there exists a chain $\{\psi_{\tilde{x}_k}^{p_k^s,p_k^u}\}_{k\in\mathbb{Z}}\in\Sigma(\mathcal{G})$ such that $\psi_{\tilde{x}_k}^{p_k^s,p_k^u}$ $I$-overlaps $\psi_{\hat{f}^k(\tilde{x})}^{p_k^s,p_k^u}$ and $p_k^s\land p_k^u\geq q_k$ $\forall k\in\mathbb{Z}$ by Proposition \ref{exist}, where $q_k\in \mathcal{I}\bigcap [I^{-\frac{1}{4}}(q(\hat{f}^n(\tilde{x}))),I^{\frac{1}{4}}(q(\hat{f}^n(\tilde{x})))]$ and the function $q$ is   the same as the definition of RST. Denote $v_k=\psi_{\tilde{x}_k}^{p_k^s,p_k^u}$ for all $k\in\mathbb{Z}$, then $\tilde{x}=\pi((v_k)_{k\in\mathbb{Z}})$. By the definition of RST, $\limsup_{n \to \pm\infty} q(\hat{f}^n(\tilde{x})) > 0$, thus there exist infinitely many $k_i\leq 0, k_j\geq 0$ s.t. $p_{k_i}^s\land p_{k_i}^s\geq q_{k_i}>0$ and $p_{k_j}^s\land p_{k_j}^s\geq q_{k_j}>0$. By the discreteness of Proposition \ref{graph A}(1), there exists $(w_i)_{i\in\mathbb{Z}}\in \Sigma^{\#}$ s.t. $\pi((w_i))_{i\in\mathbb{Z}}=\tilde{x}$.
\end{pf}
\subsection{\texorpdfstring{Inverse theorem}{Inverse theorem}}
In this subsection, we always assume that $(v_i)_{i\in\mathbb{Z}}=(\psi_{\tilde{x}_i}^{p_i^s,p_i^u})_{i\in\mathbb{Z}}$, $(w_i)_{i\in\mathbb{Z}}=(\psi_{\tilde{y}_i}^{q_i^s,q_i^u})_{i\in\mathbb{Z}}\in\Sigma^{\#}$ s.t. $\pi((v_i)_{i\in\mathbb{Z}})=\pi((w_i)_{i\in\mathbb{Z}})=\tilde{z}$, $p(\tilde{z})=\psi_{\tilde{x}_0}(v)$ and $p(\tilde{z})=\psi_{\tilde{y}_0}(w)$, $v,w\in\mathbb{R}^d$. We will show that the two $I$-chains are almost same i.e. the respective parameters are very "close".
\begin{enumerate}[series=myenum]
    \item Estimate the distance $\tilde{x}_i$ and $\tilde{y}_i$ and compare $\rho(\tilde{x}_i)$ with $\rho(\tilde{y}_i)$:\label{1.}
\end{enumerate}
By Proposition \ref{ad}(2), $|v|\leq 10^{-2}(p_0^s\land p_0^u)^2$. Then \begin{equation}\label{x}
    d(p(\tilde{x}_0),p(\tilde{z}))=d(\psi_{\tilde{x}_0}(0),\psi_{\tilde{x}_0}(v))\leq 50^{-1}(p_0^s\land p_0^u)^2.
\end{equation}
Similarly, $d(p(\tilde{y}_0),p(\tilde{z}))\leq 50^{-1}(q_0^s\land q_0^u)^2$. Hence, $d(p(\tilde{x}_0),p(\tilde{y}_0))\leq 25^{-1}\max\{(p_0^s\land p_0^u)^2,(q_0^s\land q_0^u)^2\}$.

By inequality (\ref{x}), $d(p(\tilde{x}_0),p(\tilde{z}))\leq \epsilon^2\rho(\tilde{x}_0)\leq \epsilon^2d(p(\tilde{x}_0),\Gamma_\infty)$, so $p(\tilde{z})\in D_{\tilde{x}_0}$ and $p(\tilde{z})\in \mathcal{E}_{\hat{f}^{-1}(\tilde{x}_0)}$ for small enough $\epsilon$. Then
$$d(p(\tilde{z}),\Gamma_\infty)=d(p(\tilde{x}_0),\Gamma_\infty)\pm d(p(\tilde{x}_0),p(\tilde{z}))=e^{\pm\epsilon}d(p(\tilde{x}_0),\Gamma_\infty).$$
According to assumption \ref{A2} and inequality (\ref{x}), $d(p_{\pm1}(\tilde{x}_0),p_{\pm1}(\tilde{z}))\leq d(p(\tilde{x}_0),\Gamma_\infty)^{-a}d(p(\tilde{x}_0),p(\tilde{z}))\leq \epsilon^2\rho(\tilde{x}_0)\leq\epsilon^2d(p_{\pm1}(\tilde{x}_0),\Gamma_{\infty})$. Thus $d(p_{\pm1}(\tilde{z}),\Gamma_\infty)=e^{\pm\epsilon}d(p_{\pm1}(\tilde{x}_0),\Gamma_\infty).$ Then $\frac{\rho(\tilde{x}_0)}{\rho(\tilde{z})}=e^{\pm\epsilon}$. Similarly, $\frac{\rho(\tilde{y}_0)}{\rho(\tilde{z})}=e^{\pm\epsilon}$. Therefore, $\frac{\rho(\tilde{x}_0)}{\rho(\tilde{y}_0})=e^{\pm2\epsilon}$.

Applying this to $(\psi_{\tilde{x}_{i+k}}^{p_{i+k}^s,p_{i+k}^u})_{k\in\mathbb{Z}}, (\psi_{\tilde{y}_{i+k}}^{q_{i+k}^s,p_{i+k}^u})_{k\in\mathbb{Z}}$, the same arguments hold for other $i\neq 0$.
\begin{enumerate}[resume=myenum]
    \item Compare $\|C_0(\tilde{x}_i)^{-1}\|$ with $\|C_0(\tilde{y}_i)^{-1}\|$:
\end{enumerate}
Assume $\tilde{z}$ belongs to the stable set $\tilde{V}^s((v_i)_{i\geq0})$, then $\tilde{z}\in 0$-summ:
\begin{equation}\label{S}
    \begin{aligned}
        s^2(\tilde{z})&\leq 2\sum_{n=0}^{\infty} \Big(4I^{-n}(p_0^s)\cdot \frac{\|C_0(\tilde{x}_0)^{-1}\|}{p_0^s}\Big)^2\text{ (by Proposition \ref{tilde1})}\\
        &\leq 32\Big(\frac{\|C_0(\tilde{x}_0)^{-1}\|}{p_0^s}\Big)^2\sum_{n=0}^{\infty}(I^{-n}(1))^2 \ (I^-(x)\text{ is  an increasing function}).
    \end{aligned}
\end{equation}
Since $\tilde{x}_0$ is 0-summ, then $s^2(\tilde{x}_0)<\infty$ and $u^2(\tilde{x}_0)<\infty$, and so  $\|C_0(\tilde{x}_0)^{-1}\|^2<\infty$ by Proposition \ref{C}(1). And $(\sum_{n=0}^{\infty}I^{-n}(1))^2=\sum_{n=0}^{\infty}(I^{-n}(1))^{\frac{1}{\gamma}+(2-\frac{1}{\gamma})}<\infty$ by Proposition \ref{I}(4). Thus, $s^2(\tilde{z})<\infty$ by inequality (\ref{S}). Similarly, $u^2(\tilde{z})<\infty$.

Let $\tilde{V}^s_n:=\tilde{V}^s((v_{i+n})_{i\geq0})$,$\tilde{V}^u_n:=\tilde{V}^u((v_{i+n})_{i\leq0}$), and $V^s_n:=V^s((v_{i+n})_{i\geq0})$ with the representing function $G_n$, $V^u_n:=V^u((v_{i+n})_{i\leq0})$ with the representing function $F_n$ at $\psi_{\tilde{x}_n}^{p_n^s,p_n^u}$, for all $n\in\mathbb{Z}$. For any $\tilde{x}\in \tilde{V}_n^s$, define map $\pi_{\tilde{x}_n}^s: T_{\tilde{x}}\tilde{V}_n^s\rightarrow E_{\tilde{x}_n}^s$ by $$\pi_{\tilde{x}_n}^s(\xi):=d_0\psi_{\tilde{x}_n}\begin{pmatrix}
     w\\
     0
 \end{pmatrix}\ \forall \xi\in T_{\tilde{x}}\tilde{V}^s_n,$$
 where $\xi=d_z\psi_{\tilde{x}_n}\begin{pmatrix}
     w\\
     d_{v}G_n(w)
 \end{pmatrix}$ and $z=(v,G_n(v))=\psi_{\tilde{x}_n}^{-1}(p(\tilde{x}))$. Similarly, define map $\pi_{\tilde{x}_n}^u: T_{\tilde{x}}\tilde{V}_n^u\rightarrow E_{\tilde{x}_n}^u$ by $$\pi_{\tilde{x}_n}^u(\zeta):=d_0\psi_{\tilde{x}_n}\begin{pmatrix}
     0\\
     w
 \end{pmatrix}\ \forall \zeta\in T_{\tilde{x}}\tilde{V}^u_n,$$
 where $\zeta=d_z\psi_{\tilde{x}_n}\begin{pmatrix}
     d_{v}F_n(w)\\
     w
 \end{pmatrix}$ and $z=(F_n(v),v)=\psi_{\tilde{x}_n}^{-1}(p(\tilde{x}))$.
\begin{lem}\label{pi}
    Assume that $\xi$ satisfies the preceding conditions, then  $\frac{|\pi_{\tilde{x}_n}^s(\xi)|}{|\xi|}=e^{\pm Q(\tilde{x}_n)^{\frac{\beta}{2}-\frac{1}{2\gamma}}},\ \forall \xi\in T_{\tilde{x}}\tilde{V}^s_n$.
\end{lem}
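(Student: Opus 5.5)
The plan is to write both $\xi$ and $\pi^s_{\tilde{x}_n}(\xi)$ through the chart $\psi_{\tilde{x}_n}=\exp_{p(\tilde{x}_n)}\circ C_0(\tilde{x}_n)$ and compare their lengths. Here $\xi=d_z\psi_{\tilde{x}_n}(w,d_vG_n(w))$ with $z=(v,G_n(v))$, and $\pi^s_{\tilde{x}_n}(\xi)=d_0\psi_{\tilde{x}_n}(w,0)=C_0(\tilde{x}_n)(w,0)$, because $d_0\exp_x=Id$. I would write
\[
\xi-\pi^s_{\tilde{x}_n}(\xi)=\big(d_{C_0z}\exp_{p(\tilde{x}_n)}-Id\big)C_0(\tilde{x}_n)(w,d_vG_n(w))+C_0(\tilde{x}_n)(0,d_vG_n(w)),
\]
and bound each term by a small multiple of $|\pi^s_{\tilde{x}_n}(\xi)|$. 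For this the denominator needs a lower bound. Since $C_0(\tilde{x}_n)$ is an isometry onto $(E,\langle\cdot,\cdot\rangle')$ and $|\cdot|\geq |\cdot|'/\|C_0^{-1}\|$, we get $|C_0(\tilde{x}_n)(w,0)|\geq |w|/\|C_0(\tilde{x}_n)^{-1}\|$. The upper bound is $|C_0(\tilde{x}_n)(w,0)|\le |w|$, since $C_0$ is contractive by Lemma \ref{C}(1).

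For the first term, inequality (\ref{exp1}) gives $\|d_{u}\exp_{x}-Id\|\le L_3|u|$ in the $\Theta_D$ trivialization, with $|u|\le |z|\le 2p_n^s\le 2Q(\tilde{x}_n)$. Together with (\ref{G1}) ($\|d_vG_n\|\le\epsilon$), the first term is at most $2L_3Q(\tilde{x}_n)(1+\epsilon)|w|$. The second term has norm at most $\|d_vG_n\|\,|w|$. By (\ref{G1}), $\|d_vG_n\|\le (p_n^s)^{\beta/2}\le Q(\tilde{x}_n)^{\beta/2}$, using \ref{AM2}–\ref{AM3} and $|v|\le p^s_n$. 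Thus
\[
|\xi-\pi^s(\xi)|\le 3\,Q(\tilde{x}_n)^{\beta/2}|w|\le 3\,Q(\tilde{x}_n)^{\beta/2}\|C_0(\tilde{x}_n)^{-1}\|\,|\pi^s(\xi)|.
\]

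The key step, and the one I expect to be the main obstacle, is absorbing the factor $\|C_0(\tilde{x}_n)^{-1}\|$ into a power of $Q$. This comes from the definition of $Q_\epsilon$. Since $Q(\tilde{x}_n)\le \epsilon^{20/\beta}\rho^{8a/\beta}\|C_0(\tilde{x}_n)^{-1}\|^{-2\gamma}$ and $\rho\le1$, we have $\|C_0(\tilde{x}_n)^{-1}\|\le Q(\tilde{x}_n)^{-1/(2\gamma)}\epsilon^{10/(\beta\gamma)}$. The relative error is therefore at most $3\epsilon^{10/(\beta\gamma)}Q(\tilde{x}_n)^{\beta/2-1/(2\gamma)}$, which is at most $\tfrac12 Q(\tilde{x}_n)^{\beta/2-1/(2\gamma)}$ for small $\epsilon$. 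The exponent is positive because $\gamma>20/\beta$.

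Finally, $|\xi|/|\pi^s(\xi)|\in[1-\delta,1+\delta]$ with $\delta=\tfrac12 Q^{\beta/2-1/(2\gamma)}\ll1$. Using $1+\delta\le e^{\delta}$ and $1-\delta\ge e^{-2\delta}$ (valid for $\delta\le 1/2$), and adjusting constants so that they fit inside $e^{\pm Q^{\beta/2-1/(2\gamma)}}$, yields the claim. The same argument applies verbatim to $\pi^u$.
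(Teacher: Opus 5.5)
Your proof is correct and follows the paper's argument. Both use the same two error sources: the $d_vG_n(w)$ component, bounded via (\ref{G1}), and the distortion $d_z\psi_{\tilde{x}_n}-d_0\psi_{\tilde{x}_n}$, bounded via (\ref{exp1}) in the $\Theta_D$ trivialization. Both then absorb $\|C_0(\tilde{x}_n)^{-1}\|$ into $Q(\tilde{x}_n)^{-\frac{1}{2\gamma}}$ using the definition of $\tilde{Q}_\epsilon$.

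The remaining differences are cosmetic. You normalize by $|\pi^s_{\tilde{x}_n}(\xi)|$ via $|C_0(\tilde{x}_n)(w,0)|\geq |w|/\|C_0(\tilde{x}_n)^{-1}\|$, whereas the paper implicitly bounds $|w|$ by a multiple of $\|C_0(\tilde{x}_n)^{-1}\|\,|\xi|$. You also keep the factor $|w|$ in the exponential-map term, which the paper's display drops, and you spell out the final conversion to $e^{\pm Q(\tilde{x}_n)^{\frac{\beta}{2}-\frac{1}{2\gamma}}}$, which the paper leaves implicit.
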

 \begin{pf}
     For small enough $\epsilon$, there exists a set $D\in\mathcal{D}$ such that $p(\tilde{x}_n), p(\tilde{x})$  belong to $D$, and let $\Theta:=\Theta_{D}$. Then we can estimate the distortion of $\pi_{\tilde{x}_n}^s$:
assume $p(\tilde{x})=\psi_{\tilde{x}_n}(v,G_n(v))$, then \begin{equation}\label{dp}
    \big|\exp^{-1}_{p(\tilde{x}_n)}(p(\tilde{x}))\big|=d(p(\tilde{x}),p(\tilde{x}_n))\leq |\psi_{\tilde{x}_n}(v,G_n(v))-\psi_{\tilde{x}_n}(0,0)|\leq 4p_n^s.
\end{equation}
Hence,
\begin{equation*}
    \begin{aligned}
        |\Theta(\xi)-\Theta(\pi_{\tilde{x}_n}^s(\xi))|
        &=\Big|\Theta d_z\psi_{\tilde{x}_n}\begin{pmatrix}
     w\\
     d_{v}G_n(w)
 \end{pmatrix}-\Theta d_0\psi_{\tilde{x}_n}\begin{pmatrix}
     w\\
     0
 \end{pmatrix}\Big|\\
 &\leq \Big| \Theta d_z\psi_{\tilde{x}_n}\begin{pmatrix}
     w\\
     d_{v}G_n(w)
 \end{pmatrix}-\Theta d_z\psi_{\tilde{x}_n}\begin{pmatrix}
     w\\
     0
 \end{pmatrix}\Big|+\Big|\Theta d_z\psi_{\tilde{x}_n}\begin{pmatrix}
     w\\
     0
 \end{pmatrix}-\Theta d_0\psi_{\tilde{x}_n}\begin{pmatrix}
     w\\
     0
 \end{pmatrix}\Big|\\
 &\leq 2\|C_0(\tilde{x}_n)\|\cdot\Big|\begin{pmatrix}
     w\\
     d_{v}G_n(w)
 \end{pmatrix}-\begin{pmatrix}
     w\\
     0
 \end{pmatrix}\Big|+L_3|C_0(\tilde{x}_n)(z)| \\
 &\leq 2\|d_{\cdot}G_n\|\cdot|w| +L_3\exp^{-1}_{p(\tilde{x}_n)}(p(\tilde{x}))\text{ (since }z=\psi_{\tilde{x}_n}^{-1}(p(\tilde{x})))\\
 &\leq 4(p_n^s)^{\frac{\beta}{2}}\cdot \|C_0(\tilde{x}_n)^{-1}\|+4 L_3p_n^s \text{ (by inequalities (\ref{G1}) and (\ref{dp})})\\
 &\leq Q(\tilde{x}_n)^{\frac{\beta}{2}-\frac{1}{2\gamma}}.
    \end{aligned}
\end{equation*}
 \end{pf}
  Similarly, $\frac{|\pi_{\tilde{x}_n}^u(\zeta)|}{|\zeta|}=e^{\pm Q(\tilde{x}_n)^{\frac{\beta}{2}-\frac{1}{2\gamma}}},\ \forall \zeta\in T_{\tilde{x}}\tilde{V}^u_n$.
 \begin{lem}\label{S1}
    Let $\tilde{x}\in \hat{f}(\tilde{\mathcal{F}}_{v_n,v_{n+1}}(\tilde{V}^s_{n+1}))=\hat{f}(\tilde{V}^s_{n})\subset \tilde{V}^s_{n+1}$, $\xi\in T_{\hat{f}^{-1}(\tilde{x})}\tilde{V}^s_{n}$. For all $\rho_0\geq e^{\Gamma Q^{\frac{\beta}{8}-\frac{1}{4\gamma}}(\tilde{x}_n)}$, if $\frac{S(\tilde{x},d_{\hat{f}^{-1}}(\tilde{x})\hat{f}(\xi))}{S(\tilde{x}_{n+1},\pi^s_{\tilde{x}_{n+1}}(d_{\hat{f}^{-1}(\tilde{x})}\hat{f}(\xi)))}\in[\rho_0^{-1},\rho_0]$, then $\frac{S(\hat{f}^{-1}(\tilde{x}),\xi)}{S(\tilde{x}_n,\pi_{\tilde{x}_n}^s(\xi))}
    \in\Big[e^{\frac{1}{2}\Gamma Q^{\frac{\beta}{8}+\frac{3}{4\gamma}}(\tilde{x}_n)}\rho_0^{-1}, e^{-\frac{1}{2} \Gamma Q^{\frac{\beta}{8}+\frac{3}{4\gamma}}(\tilde{x}_n)}\rho_0\Big]$.

    A similar statement holds for unstable sets.
 \end{lem}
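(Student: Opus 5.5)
We will follow the scheme of the analogous lemma in \cite{Ovadia2} (the ``improvement lemma'' for $S$), working with the one-step recursion satisfied by $S$ along an orbit. Write $\eta:=d_{\hat f^{-1}(\tilde x)}\hat f(\xi)$. By the definition (\ref{scaling}),
$$S^2(\hat f^{-1}(\tilde x),\xi)=2|\xi|^2+S^2(\tilde x,\eta),$$
and, since $d_{\tilde x_n}\hat f$ preserves $E^s$ and $\pi^s$ is defined through the charts, the analogous identity holds at $\tilde x_n$:
$$S^2(\tilde x_n,\pi^s_{\tilde x_n}(\xi))=2|\pi^s_{\tilde x_n}(\xi)|^2+S^2(\hat f(\tilde x_n),d\hat f\,\pi^s_{\tilde x_n}(\xi)).$$
The first step is to compare $d\hat f\,\pi^s_{\tilde x_n}(\xi)$ with $\pi^s_{\tilde x_{n+1}}(\eta)$. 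In chart coordinates, $\pi^s$ discards the $d G$ component, so the comparison reduces to $F_{\tilde x_n,\tilde x_{n+1}}=D_0(\tilde x_n)+\bar H^+$ from Theorem~\ref{F1}. Using $\|d\bar H^+\|\le\sqrt\epsilon\,Q^{\beta/2}$, the admissibility bound (\ref{G1}), the overlap estimate Proposition~\ref{overlap1}(5) between $\psi_{\hat f(\tilde x_n)}$ and $\psi_{\tilde x_{n+1}}$, and Lemma~\ref{pi}, we will show
$$S\bigl(\hat f(\tilde x_n),d\hat f\,\pi^s_{\tilde x_n}\xi\bigr)=e^{\pm Q(\tilde x_n)^{\beta/2-1/\gamma}}\,S\bigl(\tilde x_{n+1},\pi^s_{\tilde x_{n+1}}\eta\bigr).$$
The $S$-norm is the Lyapunov norm $|C_0^{-1}\cdot|$, so the multiplicative error carries a factor $\|C_0^{-1}\|$. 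That factor is absorbed because $Q\le \epsilon^{20/\beta}\|C_0^{-1}\|^{-2\gamma}\rho^{8a/\beta}$, which also absorbs the singular factors $d(x_0,\Gamma_\infty)^{-a}$ coming from Lemma~\ref{C}(2) and Theorem~\ref{F_{x}}(4).

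The second step combines this with the hypothesis $S(\tilde x,\eta)/S(\tilde x_{n+1},\pi^s\eta)\in[\rho_0^{-1},\rho_0]$ and Lemma~\ref{pi}, which gives $|\xi|/|\pi^s_{\tilde x_n}\xi|=e^{\pm Q^{\beta/2-1/(2\gamma)}}$. This yields
$$\frac{S^2(\hat f^{-1}\tilde x,\xi)}{S^2(\tilde x_n,\pi^s\xi)}\le\frac{2|\xi|^2+\rho_0^2e^{2Q^{\beta/2-1/\gamma}}A}{2|\pi^s\xi|^2e^{-2Q^{\beta/2-1/(2\gamma)}}+A},$$
where $A:=S^2(\hat f(\tilde x_n),d\hat f\,\pi^s\xi)$, together with the symmetric lower bound. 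This is a weighted average of a ratio near $1$ and a ratio near $\rho_0^2$. Since $\rho_0\ge e^{\Gamma Q^{\beta/8-1/(4\gamma)}}$ is much larger than the error factors, the average lies strictly inside $[\rho_0^{-2},\rho_0^2]$. The gain is proportional to the weight of the first term, namely $2|\pi^s\xi|^2/S^2(\tilde x_n,\pi^s\xi)\ge \|C_0(\tilde x_n)^{-1}\|^{-2}$ by Lemma~\ref{C}(1), multiplied by $(\rho_0^2-1)\gtrsim \Gamma Q^{\beta/8-1/(4\gamma)}$.

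The main obstacle is this final bookkeeping: we must show that the gain $\|C_0(\tilde x_n)^{-1}\|^{-2}\cdot\Gamma Q^{\beta/8-1/(4\gamma)}$ beats the errors $Q^{\beta/2-1/\gamma}$ and still leaves at least $\Gamma Q^{\beta/8+3/(4\gamma)}$. Here we will use $\|C_0(\tilde x_n)^{-1}\|^{-2}\ge (Q/\epsilon^{20/\beta})^{1/\gamma}\ge Q^{1/\gamma}$ from the definition of $Q$, so that $\|C_0(\tilde x_n)^{-1}\|^{-2}\,Q^{\beta/8-1/(4\gamma)}\ge Q^{\beta/8+3/(4\gamma)}$. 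It then remains to check that $\beta/2-1/\gamma>\beta/8+3/(4\gamma)$, which holds because $\gamma>20/\beta$. We also need a monotonicity argument in $\rho_0$: the bound is strongest at the endpoint $\rho_0=e^{\Gamma Q^{\beta/8-1/(4\gamma)}}$, and for larger $\rho_0$ the gain only grows. The unstable case follows by the same argument with $\hat f^{-1}$ and $U$.
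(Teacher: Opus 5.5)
Your argument is correct, but it is organized differently from the paper's.

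\textbf{The paper's route.} The paper introduces the auxiliary vector $\zeta=d_{\tilde{x}_{n+1}}\hat{f}^{-1}\bigl(\pi^s_{\tilde{x}_{n+1}}(d\hat{f}\xi)\bigr)$ at $\hat{f}^{-1}(\tilde{x}_{n+1})$ and factors the ratio through $S(\hat{f}^{-1}(\tilde{x}_{n+1}),\zeta)$. Since $d\hat{f}\zeta=\pi^s_{\tilde{x}_{n+1}}(d\hat{f}\xi)$ exactly, the hypothesis enters the factor $S(\hat{f}^{-1}(\tilde{x}),\xi)/S(\hat{f}^{-1}(\tilde{x}_{n+1}),\zeta)$ with no error; the only discrepancy there is $|\zeta|$ versus $|\xi|$. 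The chart-transfer factor (\ref{right}) at time $n$ is estimated separately, by comparing $C_0(\hat{f}^{-1}(\tilde{x}_{n+1}))$ with $C_0(\tilde{x}_n)$ via the overlap and by bounding $|\Theta\zeta-\Theta\xi|$ through the H\"older property \ref{A3} of the inverse branch.

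\textbf{Your route.} You run the one-step recursion at the chart centre $\tilde{x}_n$ and compare the tails at time $n+1$. In coordinates this is exact. With $\xi=d_z\psi_{\tilde{x}_n}(w,d_vG_nw)$ one has $S(\tilde{x}_n,\pi^s_{\tilde{x}_n}\xi)=|w|$ and $S(\hat{f}(\tilde{x}_n),d\hat{f}\,\pi^s_{\tilde{x}_n}\xi)=|D_sw|$. Also $S(\tilde{x}_{n+1},\pi^s_{\tilde{x}_{n+1}}(d\hat{f}\xi))=|w'|$, where $w'$ is the $s$-component of $d_zF_{\tilde{x}_n,\tilde{x}_{n+1}}(w,d_vG_nw)$. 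So everything reduces to the bound on $d\bar{H}^+$ in Theorem \ref{F1}.

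\textbf{What each buys.} Your route avoids inverse branches and the $C_0$-comparison. It also sidesteps the issue raised in the remark after the lemma, since no comparison of $Q(\tilde{x}_{n+1})$ with $Q(\tilde{x}_n)$ is needed. Your weight $2|\pi^s\xi|^2/S^2(\tilde{x}_n,\pi^s\xi)\ge 2\|C_0(\tilde{x}_n)^{-1}\|^{-2}$ is exact. It gives a gain of about $4\Gamma Q^{\frac{\beta}{8}+\frac{3}{4\gamma}}$ in the squared ratio, which is enough for the constant $\frac{1}{2}$ in the statement. By contrast, the paper's chain only reaches $\rho_0^2e^{-\frac{1}{4}\Gamma Q^{\frac{\beta}{8}+\frac{3}{4\gamma}}}$ for the squared ratio. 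The paper's arrangement, in turn, produces (\ref{right}) as a stand-alone transfer estimate, which it reuses when pulling back in Lemma \ref{s1}.

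\textbf{Bookkeeping correction.} The error $e^{\pm Q^{\frac{\beta}{2}-\frac{1}{\gamma}}}$ you claim in the first step does not follow from the ingredients you list. Turning $|w'-D_sw|\le 2\sqrt{\epsilon}\,Q(\tilde{x}_n)^{\frac{\beta}{2}}|w|$ into a ratio costs $\|D_s^{-1}\|\le\sqrt{2}\,d(p(\tilde{x}_n),\Gamma_\infty)^{-a}$. The definition of $Q$ only gives $d(p(\tilde{x}_n),\Gamma_\infty)^{-a}\le\epsilon^{\frac{5}{2}}Q(\tilde{x}_n)^{-\frac{\beta}{8}}$, so the honest error is of order $\epsilon^{3}Q^{\frac{3\beta}{8}}$. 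Your final check then reads $\frac{3\beta}{8}>\frac{\beta}{8}+\frac{3}{4\gamma}$, which still holds because $\gamma>\frac{20}{\beta}$. Alternatively, keep this error additive relative to $|w|^2=S^2(\tilde{x}_n,\pi^s\xi)$; then the factor $d(p(\tilde{x}_n),\Gamma_\infty)^{-a}$ never appears.
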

 \begin{pf}
Without loss of generality,  we assume $|\xi|=1$, then $|\pi_{\tilde{x}_{n}}^s(\xi)|\geq e^{-Q(\tilde{x}_n)^{\frac{\beta}{2}-\frac{1}{2\gamma}}}\geq \frac{1}{2}$ for small enough $\epsilon$, by Lemma \ref{pi}. Decompose
      \begin{equation}\label{decompose}
         \frac{S(\hat{f}^{-1}(\tilde{x}),\xi)}{S(\tilde{x}_n,\pi_{\tilde{x}_n}^s(\xi))}
         =\frac{S(\hat{f}^{-1}(\tilde{x}_{n+1}),\zeta)}{S(\tilde{x}_n,\pi_{\tilde{x}_n}^s(\xi))}
          \cdot\frac{S(\hat{f}^{-1}(\tilde{x}),\xi)}{S(\hat{f}^{-1}(\tilde{x}_{n+1}),\zeta)},
      \end{equation}
      where $\zeta=d_{\tilde{x}_{n+1}}\hat{f}^{-1}(\pi_{\tilde{x}_{n+1}}^s(d_{\hat{f}^{-1}(\tilde{x})}\hat{f}(\xi)))\in E^s(\hat{f}^{-1}(\tilde{x}_{n+1}))$.

      Firstly, we estimate the left factor of equality (\ref{decompose}):
\begin{equation*}
    \begin{aligned}
        \bigg|\frac{S(\hat{f}^{-1}(\tilde{x}_{n+1}),\zeta)}{S(\tilde{x}_n,\pi_{\tilde{x}_n}^s(\xi))}-1\bigg|&=\bigg|\frac{|C_0^{-1}(\hat{f}^{-1}(\tilde{x}_{n+1}))\zeta|}{|C_0^{-1}(\tilde{x}_n)\pi_{\tilde{x}_n}^s(\xi)|}-1\bigg|\\
        &\leq 2\big[|C_{n+1}^{-1}\circ \Theta\zeta-C_n^{-1}\Theta\zeta|+|C_n^{-1}\Theta\zeta-C_n^{-1}\circ\Theta\pi_{\tilde{x}_n}^s(\xi)|\big]\\
        &\leq 2\big[\|C_{n+1}^{-1}-C_n^{-1}\|\cdot|\zeta|+\|C_n^{-1}\|\cdot|\Theta\zeta-\Theta\pi_{\tilde{x}_n}^s(\xi)|\big],
    \end{aligned}
\end{equation*}
where $C_n:=\Theta\circ C_0(\tilde{x}_n)$ and $C_{n+1}:=\Theta\circ C_0(\hat{f}^{-1}(\tilde{x}_{n+1})),\Theta:=\Theta_D$ ($D\in\mathcal{D}$ contains $p(\tilde{x}_n),p(\hat{f}^{-1}(\tilde{x}_{n+1}))$ and $p(\hat{f}^{-1}(\tilde{x}))$. By Proposition \ref{overlap1}(2), $\|C_{n+1}^{-1}-C_n^{-1}\|\leq \eta_n^3\eta_{n+1}^3$ where $\eta_n:=p_n^s\land p_n^u$. Notice that $p(\tilde{x}_{n+1})\in\mathfrak{E}_{p(\tilde{x}_n)}$ and $p(\hat{f}^{-1}(\tilde{x}))\in D_{p(\tilde{x}_n)}$ for small enough $\epsilon$. Thus, by assumption  \ref{A2} and Lemma \ref{pi},
$$|\zeta|=|d_{\tilde{x}_{n+1}}\hat{f}^{-1}(\pi_{\tilde{x}_{n+1}}^s(d_{\hat{f}^{-1}(\tilde{x})}\hat{f}(\xi)))|\leq d(p(\tilde{x}_n),\Gamma_\infty)^{-2a}e^{Q(\tilde{x}_{n+1})^{\frac{\beta}{2}-\frac{1}{2\gamma}}}\leq 2d(p(\tilde{x}_n),\Gamma_\infty)^{-2a}.$$
Hence,
\begin{equation*}
\begin{aligned}
   \bigg|\frac{S(\hat{f}^{-1}(\tilde{x}_{n+1}),\zeta)}{S(\tilde{x}_n,\pi_{\tilde{x}_n}^s(\xi))}-1\bigg|
    &\leq 4d(p(\tilde{x}_n),\Gamma_\infty)^{-2a}\eta_n^3\eta_{n+1}^3+2\|C_0(\tilde{x}_n)^{-1}\|\cdot\big(|\Theta\zeta-\Theta\xi|+|\Theta\xi-\Theta\pi_{\tilde{x}_n}^s(\xi)|\big).
\end{aligned}
\end{equation*}
 Next, we estimate $|\Theta\zeta-\Theta\xi|$. Assume $D'\in\mathcal{D}$ contains points $p(\tilde{x}_{n+1}),p(\tilde{x})$, and $\Theta':=\Theta_{D'}$. Since $\tilde{x}\in\hat{f}(\tilde{V}^s_{n})$, then $\hat{f}^{-1}(\tilde{x})\in \tilde{V}^s_{n}$. Assume $p(\hat{f}^{-1}(\tilde{x}))=\psi_{\tilde{x}_{n}}(v,G_n(v)),|v|\leq p_n^s$ and $p(\tilde{x})=\psi_{\tilde{x}_{n+1}}(w,G_{n+1}(w))$, then $|w|\leq (e^{-\frac{1}{s^2(\tilde{x}_n)}}+\epsilon (p_n^s)^3+\sqrt{\epsilon}(p^s_n)^{\frac{\beta}{2}})\cdot |v|\leq p_n^s$, so $d(p(\tilde{x}_{n+1}), p(\tilde{x}))\leq |\psi_{\tilde{x}_{n+1}}(w,G_{n+1}(w))-\psi_{\tilde{x}_{n+1}}(0,0)|\leq 4p_n^s$,
 \begin{equation*}
     \begin{aligned}
&|\Theta'\pi_{\tilde{x}_{n+1}}^s(d_{\hat{f}^{-1}(\tilde{x})}\hat{f}(\xi))-\Theta'd_{\hat{f}^{-1}(\tilde{x})}\hat{f}(\xi)|\leq 4(p_n^s)^{\frac{\beta}{2}}\cdot \|C_0(\tilde{x}_{n+1})^{-1}\|+4 L_3p_n^s\\
&\leq Q(\tilde{x}_n)^{\frac{\beta}{2}-\frac{1}{2\gamma}}\cdot [(I(p_{n+1}^s))^{\frac{1}{2\gamma}}\cdot \|C_0(\tilde{x}_{n+1})^{-1}\|]+4 L_3Q(\tilde{x}_n)\leq Q(\tilde{x}_n)^{\frac{\beta}{2}-\frac{1}{2\gamma}},
     \end{aligned}
 \end{equation*}
 and
 \begin{equation}
     |\pi_{\tilde{x}_{n+1}}^s(d_{\hat{f}^{-1}(\tilde{x})}\hat{f}(\xi))|\leq e^{Q(\tilde{x}_n)^{\frac{\beta}{2}-\frac{1}{2\gamma}}}\cdot|d_{\hat{f}^{-1}(\tilde{x})}\hat{f}(\xi)|\leq d(p(\tilde{x}_n),\Gamma_\infty)^{-a}e^{Q(\tilde{x}_n)^{\frac{\beta}{2}-\frac{1}{2\gamma}}}
 \end{equation}by the proof of the Lemma \ref{pi}.  Thus, by assumptions \ref{A2}, \ref{A3},
 \begin{equation}\label{zeta1}
     \begin{aligned}
         |\Theta\zeta-\Theta\xi|&=|\Theta d_{\tilde{x}_{n+1}}\hat{f}^{-1}(\pi_{\tilde{x}_{n+1}}^s(d_{\hat{f}^{-1}(\tilde{x})}\hat{f}(\xi)))-\Theta d_{\tilde{x}}\hat{f}^{-1}(d_{\hat{f}^{-1}(\tilde{x})}\hat{f}(\xi))|\\
         &\leq |\Theta d_{\tilde{x}_{n+1}}\hat{f}^{-1}(\pi_{\tilde{x}_{n+1}}^s(d_{\hat{f}^{-1}(\tilde{x})}\hat{f}(\xi)))-\Theta d_{\tilde{x}}\hat{f}^{-1}\nu_{\tilde{x}}\Theta'(\pi_{\tilde{x}_{n+1}}^s(d_{\hat{f}^{-1}(\tilde{x})}\hat{f}(\xi)))|\\
         &+|\Theta d_{\tilde{x}}\hat{f}^{-1}\nu_{\tilde{x}}\Theta'(\pi_{\tilde{x}_{n+1}}^s(d_{\hat{f}^{-1}(\tilde{x})}\hat{f}(\xi)))-\Theta d_{\tilde{x}}\hat{f}^{-1}(d_{\hat{f}^{-1}(\tilde{x})}\hat{f}(\xi))|\\
         &\leq \|\Theta d_{\tilde{x}_{n+1}}\hat{f}^{-1}\nu_{\tilde{x}_{n+1}}-\Theta d_{\tilde{x}}\hat{f}^{-1}\nu_{\tilde{x}}\|\cdot|\pi_{\tilde{x}_{n+1}}^s(d_{\hat{f}^{-1}(\tilde{x})}\hat{f}(\xi))|\\         &+\|d_{\tilde{x}}\hat{f}^{-1}\|\cdot|\Theta'\pi_{\tilde{x}_{n+1}}^s(d_{\hat{f}^{-1}(\tilde{x})}\hat{f}(\xi))-\Theta'd_{\hat{f}^{-1}(\tilde{x})}\hat{f}(\xi)|\\
          &\leq Kd(p(\tilde{x}_{n+1}), p(\tilde{x}))^\beta d(p(\tilde{x}_{n}),\Gamma_\infty)^{-a}e^{Q(\tilde{x}_n)^{\frac{\beta}{2}-\frac{1}{2\gamma}}}+d(p(\tilde{x}_n),\Gamma_\infty)^{-a}Q(\tilde{x}_n)^{\frac{\beta}{2}-\frac{1}{2\gamma}}\\
         &\leq 2^{1+2\beta}Kd(p(\tilde{x}_n),\Gamma_\infty)^{-a}[(p_n^s)^{\beta}+Q(\tilde{x}_n)^{\frac{\beta}{2}-\frac{1}{2\gamma}}]\\
         &\leq Q(\tilde{x}_n)^{\frac{\beta}{4}-\frac{1}{2\gamma}}.
     \end{aligned}
 \end{equation}
Therefore,
 \begin{equation*}
\begin{aligned}
    \bigg|\frac{S(\hat{f}^{-1}(\tilde{x}_{n+1}),\zeta)}{S(\tilde{x}_{n},\pi_{\tilde{x}_{n}}^s(\xi))}-1\bigg|
    &\leq 4d(p(\tilde{x}_{n}),\Gamma_\infty)^{-2a}Q(\tilde{x}_{n})^3Q(\hat{f}^{-1}(\tilde{x}_{n+1}))^3+2  \|C_0(\tilde{x}_{n})^{-1}\|\cdot\big(Q(\tilde{x}_n)^{\frac{\beta}{4}-\frac{1}{2\gamma}}+Q(\tilde{x}_{n})^{\frac{\beta}{4}}\big)\\
    &\leq Q(\tilde{x}_n)^{\frac{\beta}{4}-\frac{1}{\gamma}}.
\end{aligned}
\end{equation*}
This implies that
\begin{equation}\label{right}
    \frac{S(\hat{f}^{-1}(\tilde{x}_{n+1}),\zeta)}{S(\tilde{x}_n,\pi_{\tilde{x}_n}^s(\xi))}=e^{\pm Q(\tilde{x}_n)^{\frac{\beta}{4}-\frac{1}{\gamma}}}.
\end{equation}

For the right factor of equality (\ref{decompose}) $\frac{S(\hat{f}^{-1}(\tilde{x}),\xi)}{S(\hat{f}^{-1}(\tilde{x}_{n+1}),\zeta)}$: by definition, we have
$$S^2(\hat{f}^{-1}(\tilde{x}),\xi)=2\sum_{m=0}^{\infty}|d_{\hat{f}^{-1}(\tilde{x})}\hat{f}^m(\xi)|^2= 2|\xi|^2+S^2(\tilde{x},d_{\hat{f}^{-1}(\tilde{x})}\hat{f}(\xi)),$$
and
$$S^2(\hat{f}^{-1}(\tilde{x}_{n+1}),\zeta)=2\sum_{m=0}^{\infty}|d_{\hat{f}^{-1}(\tilde{x}_{n+1})}\hat{f}^m(\zeta)|^2= 2|\zeta|^2+S^2(\tilde{x}_{n+1},d_{\hat{f}^{-1}(\tilde{x}_{n+1})}\hat{f}(\zeta)).$$
By inequality (\ref{zeta1}), \begin{equation}\label{zeta}
     e^{-Q(\tilde{x}_{n})^{\frac{\beta}{4}-\frac{1}{2\gamma}}}\leq |\zeta|\leq e^{Q(\tilde{x}_{n})^{\frac{\beta}{4}-\frac{1}{2\gamma}}}.
 \end{equation}
Then
\begin{equation*}
    \begin{aligned}
        \frac{S^2(\hat{f}^{-1}(\tilde{x}),\xi)}{S^2(\hat{f}^{-1}(\tilde{x}_{n+1}),\zeta)}&=\frac{2|\xi|^2+S^2(\tilde{x},d_{\hat{f}^{-1}(\tilde{x})}\hat{f}(\xi))}{2|\zeta|^2+S^2(\tilde{x}_{n+1},d_{\hat{f}^{-1}(\tilde{x}_{n+1})}\hat{f}(\zeta))}
        \overset{\text{by assumption}}{\leq}\frac{2|\xi|^2+\rho_0^2S^2(\tilde{x}_{n+1},\pi^s_{\tilde{x}_{n+1}}(d_{\hat{f}^{-1}}(\tilde{x})\hat{f}(\xi)))}{2|\zeta|^2+S^2(\tilde{x}_{n+1},d_{\hat{f}^{-1}(\tilde{x}_{n+1})}\hat{f}(\zeta))}\\
        &= \frac{2|\xi|^2+\rho_0^2S^2(\tilde{x}_{n+1},d_{\hat{f}^{-1}(\tilde{x}_{n+1})}\hat{f}(\zeta))}{2|\zeta|^2+S^2(\tilde{x}_{n+1},d_{\hat{f}^{-1}(\tilde{x}_{n+1})}\hat{f}(\zeta))}
        =\rho_0^2-\frac{2(\rho^2_0-1)|\zeta|^2+2(|\zeta|^2-1)}{S^2(\hat{f}^{-1}(\tilde{x}_{n+1}),\zeta)}\\
        &\leq \rho_0^2-2(\rho^2_0-1)\frac{|\zeta|^2+\frac{|\zeta|^2-1}{\rho^2_0-1}}{S^2(\tilde{x}_n,\pi_{\tilde{x}_n}^s(\xi))}\cdot e^{-Q(\tilde{x}_n)^{\frac{\beta}{4}-\frac{1}{\gamma}}} \text{ (by inequality (\ref{right}))}\\
         &\leq \rho_0^2-2(\rho^2_0-1)\frac{e^{-2Q(\tilde{x}_n)^{\frac{\beta}{4}-\frac{1}{2\gamma}}}+\frac{e^{-2Q(\tilde{x}_n)^{\frac{\beta}{4}-\frac{1}{2\gamma}}}-1}{e^{2\Gamma Q(\tilde{x}_n)^{\frac{\beta}{8}-\frac{1}{4\gamma}}}-1}}{S^2(\tilde{x}_n,\pi_{\tilde{x}_n}^s(\xi))}\cdot e^{-Q(\tilde{x}_n)^{\frac{\beta}{4}-\frac{1}{\gamma}}}\\
        &\leq \rho_0^2-2(\rho^2_0-1)\frac{e^{-2Q(\tilde{x}_n)^{\frac{\beta}{4}-\frac{1}{2\gamma}}}-\frac{1}{\Gamma}Q(\tilde{x}_n)^{\frac{\beta}{8}-\frac{1}{4\gamma}}}{\|C_0(\tilde{x}_n)^{-1}\|^2\cdot |\pi_{\tilde{x}_n}^s(\xi)|^2}\cdot e^{-Q(\tilde{x}_n)^{\frac{\beta}{4}-\frac{1}{\gamma}}}\\
        &\leq \rho_0^2-2(\rho^2_0-1)\frac{1-2Q(\tilde{x}_n)^{\frac{\beta}{4}-\frac{1}{2\gamma}}-\frac{1}{\Gamma}Q(\tilde{x}_n)^{\frac{\beta}{8}-\frac{1}{4\gamma}}}{\|C_0(\tilde{x}_n)^{-1}\|^2}\cdot e^{-Q(\tilde{x}_n)^{\frac{\beta}{4}-\frac{1}{\gamma}}-2Q(\tilde{x}_n)^{\frac{\beta}{2}-\frac{1}{2 \gamma}}}\\
        &\leq \rho_0^2\Big(1-\frac{2(1-\frac{1}{\rho_0^2})e^{-\frac{3}{\Gamma}Q(\tilde{x}_n)^{\frac{\beta}{8}-\frac{1}{4\gamma}}}}{\|C_0(\tilde{x}_n)^{-1}\|^2}\cdot e^{-3Q(\tilde{x}_n)^{\frac{\beta}{4}-\frac{1}{\gamma}}}\Big)\\
        &\leq \rho_0^2\Big(1-\frac{\Gamma Q(\tilde{x}_n)^{\frac{\beta}{8}-\frac{1}{4\gamma}}}{\|C_0(\tilde{x}_n)^{-1}\|^2}\cdot e^{-\frac{4}{\Gamma}Q(\tilde{x}_n)^{\frac{\beta}{8}-\frac{1}{4\gamma}}}\Big)\\
        &\leq \rho_0^2\Big(1-\frac{1}{2}\Gamma Q(\tilde{x}_n)^{\frac{\beta}{8}-\frac{1}{4\gamma}+\frac{1}{\gamma}}\Big) \text{ (since }Q(\tilde{x}_n)^{\frac{1}{\gamma}}\leq \|C_0(\tilde{x}_n)^{-1}\|^{-2}).
    \end{aligned}
\end{equation*}
Hence, $\frac{S^2(\hat{f}^{-1}(\tilde{x}),\xi)}{S^2(\hat{f}^{-1}(\tilde{x}_{n+1}),\zeta)}\leq \rho_0^2e^{-\frac{1}{2}\Gamma Q(\tilde{x}_n)^{\frac{\beta}{8}+\frac{3}{4\gamma}}}$. Similarly, $\frac{S^2(\hat{f}^{-1}(\tilde{x}),\xi)}{S^2(\hat{f}^{-1}(\tilde{x}_{n+1}),\zeta)}\geq \rho_0^{-2}e^{\frac{1}{2}\Gamma Q(\tilde{x}_n)^{\frac{\beta}{8}+\frac{3}{4\gamma}}}$.
 Combine inequality (\ref{decompose}) with inequality (\ref{right}),
\begin{equation*}
    \begin{aligned}
        &\quad \frac{S^2(\hat{f}^{-1}(\tilde{x}),\xi)}{S^2(\tilde{x}_n,\pi_{\tilde{x}_n}^s(\xi))}\leq \rho_0^2e^{Q(\tilde{x}_n)^{\frac{\beta}{4}-\frac{1}{\gamma}}-\frac{1}{2}\Gamma Q(\tilde{x}_n)^{\frac{\beta}{8}+\frac{3}{4\gamma}}}\\
        &\leq \rho_0^2e^{-\frac{1}{4}\Gamma Q(\tilde{x}_n)^{\frac{\beta}{8}+\frac{3}{4\gamma}}+(Q(\tilde{x}_n)^{\frac{\beta}{4}-\frac{1}{\gamma}}-\frac{1}{4}\Gamma Q(\tilde{x}_n)^{\frac{\beta}{8}+\frac{3}{4\gamma}})}\leq \rho_0^2e^{-\frac{1}{4}\Gamma Q(\tilde{x}_n)^{\frac{\beta}{8}+\frac{3}{4\gamma}}}.
    \end{aligned}
\end{equation*}
Similarly, $ \frac{S^2(\hat{f}^{-1}(\tilde{x}),\xi)}{S^2(\tilde{x}_n,\pi_{\tilde{x}_n}^s(\xi))}\geq \rho_0^{-2}e^{\frac{1}{4}\Gamma Q(\tilde{x}_n)^{\frac{\beta}{8}+\frac{3}{4\gamma}}}$.

Notice that $f^{-1}_{\tilde{x}_{n}}, f^{-1}_{\hat{f}^{-1}(\tilde{x}_{n+1})}$ coincide if $\psi_{\tilde{x}_{n}}^{p_0^s,p^u_0}\rightarrow\psi_{\tilde{x}_{n+1}}^{p^s_1,p^u_1}$. Analogously, the statement holds for unstable sets.
 \end{pf}
 \begin{rmk}
     In our context, the bound of the ratio $\frac{Q(\hat{f}(\tilde{x}))}{Q(\tilde{x})}$ is not sure since $\rho(\hat{f}(\tilde{x}))$ and $\rho(\tilde{x})$ are hard to compare. Therefore, we can not get the estimate for $|\Theta\zeta-\Theta\xi|$ by Lemma \ref{pi} directly.
 \end{rmk}

\begin{lem}\label{s1}
For any $I$-chains $(v_i)_{i\in\mathbb{Z}}=(\psi_{\tilde{x}_i}^{p_i^s,p_i^u})_{i\in\mathbb{Z}}$, $(w_i)_{i\in\mathbb{Z}}=(\psi_{\tilde{y}_i}^{q_i^s,q_i^u})_{i\in\mathbb{Z}}\in\Sigma^{\#}$ such that $\pi((v_i)_{i\in\mathbb{Z}})=\pi((w_i)_{i\in\mathbb{Z}})=\tilde{z}$, then $\forall i\in\mathbb{Z}$, and $\xi\in T_{\hat{f}^i(\tilde{z})}\tilde{V}^s_i$,
$$\frac{S(\tilde{x}_i,\pi_{\tilde{x}_i}^s(\xi))}{S(\tilde{y}_i,\pi_{\tilde{y}_i}^s(\xi))}=e^{\pm2\Gamma Q(\tilde{x}_i)^{\frac{\beta}{8}-\frac{1}{4\gamma}}}.$$
\end{lem}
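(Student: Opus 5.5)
We follow the scheme of Ovadia's comparison of stable scaling functions \cite{Ovadia2}: a backward induction driven by Lemma \ref{S1}, started from times where the ratio is controlled a priori. Fix $i$; by shift-invariance (Theorem \ref{main1}(1)) we may take $i=0$. For every $n$ and every $\xi\in T_{\hat f^n(\tilde z)}\tilde V^s_n$, we set
\[
R_n(\xi):=\frac{S(\tilde{x}_n,\pi_{\tilde{x}_n}^s(\xi))}{S(\tilde{y}_n,\pi_{\tilde{y}_n}^s(\xi))}.
\]
The point $\hat f^n(\tilde z)$ lies in both $\tilde V^s((v_{i+n})_{i\ge0})$ and $\tilde V^s((w_{i+n})_{i\ge0})$. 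By Proposition \ref{shadow}(4), these stable manifolds coincide near $p(\hat f^n(\tilde z))$, so the tangent space is the same for both chains.

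\textbf{Step 1: a crude a priori bound at large times.} We first bound $R_N$ for $N$ large in the recurrent set. Since both chains lie in $\Sigma^{\#}$, there are infinitely many $N>0$ with $v_N=v$ and $w_N=w$ for fixed double charts $v,w$. At such $N$, Lemma \ref{pi} gives $|\pi^s(\xi)|=e^{\pm1}|\xi|$. Lemma \ref{C}(1) and the fact that $S(\tilde x,\cdot)$ is a Lyapunov norm give
\[
|\eta|\le S(\tilde x_N,\eta)\le \|C_0(\tilde x_N)^{-1}\|\,|\eta|,
\]
and similarly at $\tilde y_N$. Hence $R_N\in[\rho_N^{-1},\rho_N]$ with $\rho_N:=e^2\max\{\|C_0(\tilde x_N)^{-1}\|,\|C_0(\tilde y_N)^{-1}\|\}$. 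Because $v_N,w_N$ are fixed, $\rho_N$ is a constant $\rho^\ast$ along this subsequence.

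\textbf{Step 2: backward induction.} We want to apply Lemma \ref{S1} simultaneously to the $x$-chain and the $y$-chain, going from time $n+1$ to time $n$. Write $\xi=d\hat f^{-1}\xi'$ with $\xi'$ at time $n+1$. We compare, for each chain, the ratio of $S$ at the true point to $S$ at the chart centre. The two true-point values $S(\hat f^n(\tilde z),\xi)$ coincide, so the quotient of the two chart-centre values is exactly $R_n$.

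Lemma \ref{S1} says that if the true-to-chart ratio lies in $[\rho_0^{-1},\rho_0]$ with $\rho_0\ge e^{\Gamma Q^{\beta/8-1/(4\gamma)}}$, then one step back it lies in the window shrunk by the factor $e^{-\frac12\Gamma Q(\tilde{x}_n)^{\beta/8+3/(4\gamma)}}$. We run this for the true-to-chart ratios of each chain separately, starting from the Step 1 bound at time $N$. Iterating from $N$ down to $0$, either the window has shrunk to the threshold $e^{\Gamma Q(\tilde{x}_n)^{\beta/8-1/(4\gamma)}}$ and is then held there, or it is still contracting. The accumulated contraction is
\[
\sum_{n=0}^{N}\tfrac12\Gamma Q(\tilde x_n)^{\beta/8+3/(4\gamma)}.
\]
Since $Q(\tilde x_n)\ge p^s_n\wedge p^u_n$ and $p_{n+1}=I^{\pm1}(p_n)$, this sum diverges as $N\to\infty$ by Proposition \ref{I}(3), provided the exponent $\frac{\beta}{8}+\frac{3}{4\gamma}$ is compared suitably with $\frac1\gamma$ via $\gamma>20/\beta$. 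Recurrence to fixed charts in $\Sigma^{\#}$ makes the terms bounded below infinitely often, which also yields divergence. So for $N$ large along the Step 1 subsequence, the window at time $0$ is $e^{\pm\Gamma Q(\tilde x_0)^{\beta/8-1/(4\gamma)}}$ for each chain. Dividing the two chains' bounds gives the stated factor $e^{\pm2\Gamma Q(\tilde x_0)^{\beta/8-1/(4\gamma)}}$, using Proposition \ref{overlap1}(3) to replace $Q(\tilde y_0)$ by $Q(\tilde x_0)$ up to a harmless constant.

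\textbf{Main obstacle.} The hard step is the divergence and threshold bookkeeping in Step 2. The threshold in Lemma \ref{S1} moves with $n$, because $Q(\tilde x_n)$ varies and is not uniformly comparable to $Q(\tilde x_{n+1})$ in the presence of singularities (see the Remark after Lemma \ref{S1}). One must therefore check that once the window is at the threshold at time $n+1$, it stays below the threshold at time $n$. I would first try to control the ratio of consecutive thresholds using $p_n=I^{\pm1}(p_{n+1})$ and $Q\ge p$, absorbing the change into the contraction factor $e^{-\frac12\Gamma Q^{\beta/8+3/(4\gamma)}}$. If that fails, I would weaken the target to the maximum of the thresholds over a short window, exploiting the fact that along recurrent times the chart parameters are fixed.
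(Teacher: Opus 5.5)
Your overall plan matches the paper's. You split the quotient through $S(\hat f^i(\tilde z),\xi)$ into two true-to-chart ratios, bound each a priori at return times, shrink the window backwards with Lemma \ref{S1}, and use returns to a fixed chart to force enough contraction. But Step 1 does not start that induction. You bound the chart-to-chart quotient $R_N$ using only $\sqrt2|\eta|\le S(\tilde x_N,\eta)\le\|C_0(\tilde x_N)^{-1}\|\,|\eta|$ at the chart centres. Lemma \ref{S1}, however, propagates $S(\hat f^N(\tilde z),\xi)/S(\tilde x_N,\pi^s_{\tilde x_N}(\xi))$, and a bound on $R_N$ says nothing about that ratio.

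What is missing is an a priori upper bound on the scaling function at the true point $\hat f^N(\tilde z)$, which is not a chart centre; without it you do not even know $S(\hat f^N(\tilde z),\xi)<\infty$. This bound is inequality (\ref{S}), which comes from Proposition \ref{tilde1}. Tangent vectors of $V^s$ contract like $I^{-n}(p^s_0)$ and $\sum_n I^{-n}(1)^2<\infty$. Hence, whenever $v_{n_k}=v$, the quantity $s(\hat f^{n_k}(\tilde z))$, and with it the true-to-chart ratio at the return times, is bounded by a constant $C_v$ depending only on $v$.

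There is a second problem in Step 1. $\Sigma^{\#}$ gives recurrence of each chain separately, not of the pair $(v_N,w_N)$. Joint recurrence would need the local finiteness that comes from Theorem \ref{inverse}, and that theorem is proved using this lemma. The repair is the paper's argument. Treat each chain with its own return times and its own constant $C_v$ (resp. $C_w$). Choose $k_v$ returns with $e^{k_v\cdot\frac12\Gamma Q(\tilde x_{j_0})^{\beta/8+3/(4\gamma)}}>C_v$, and conclude by contradiction that the threshold of Lemma \ref{S1} is reached at some $j\le n_{k_v}$.

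Three smaller points.

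\textbf{Divergence.} Your first justification is wrong. Since $\gamma>20/\beta$, the exponent $\frac{\beta}{8}+\frac{3}{4\gamma}$ exceeds $\frac{1}{\gamma}$. So the lower bound $\sum_n (p^s_n\land p^u_n)^{\beta/8+3/(4\gamma)}$ can converge: take $p_n=I^{-n}(p_0)$ and apply Proposition \ref{I}(4). Only the recurrence argument gives divergence, and it is exactly the counting above.

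\textbf{Replacing $Q(\tilde y_0)$ by $Q(\tilde x_0)$.} Proposition \ref{overlap1}(3) does not allow this. The charts $\psi_{\tilde x_0}$ and $\psi_{\tilde y_0}$ are not known to $I$-overlap. Moreover, the comparison of $Q(\tilde x_i)$ with $Q(\tilde y_i)$ is derived afterwards from this very lemma via Proposition \ref{C3}, so the step is circular. What the argument actually gives is $e^{\pm\Gamma(Q(\tilde x_0)^{\kappa}+Q(\tilde y_0)^{\kappa})}$ with $\kappa=\frac{\beta}{8}-\frac{1}{4\gamma}$. The paper's own proof also writes $Q(\tilde x_0)$ in the $\tilde y$-estimate without comment.

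\textbf{Moving threshold.} Your worry here is legitimate. The paper passes from the good time $j$ back to $0$ only by appeal to the proof of (\ref{right}). The relation $p_n=I^{\pm1}(p_{n+1})$ by itself does not control $Q(\tilde x_n)/Q(\tilde x_{n+1})$, which is what the threshold in Lemma \ref{S1} depends on. That step still has to be made precise.
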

\begin{pf}
    Decompose $\frac{S(\tilde{x}_i,\pi_{\tilde{x}_i}^s(\xi))}{S(\tilde{y}_i,\pi_{\tilde{y}_i}^s(\xi))}=\frac{S(\tilde{x}_i,\pi_{\tilde{x}_i}^s(\xi))}{S(\hat{f}^i(\tilde{z}),\xi)}\cdot \frac{S(\hat{f}^i(\tilde{z}),\xi)}{S(\tilde{y}_i,\pi_{\tilde{y}_i}^s(\xi))}$. Next, we estimate $\frac{S(\hat{f}^i(\tilde{z}),\xi)}{S(\tilde{x}_i,\pi_{\tilde{x}_i}^s(\xi))}$: w.l.o.g., assume $v_{n_k}=v$ for infinitely many $n_k>0$, where $v=\psi_{\tilde{x}_{j_0}}^{p_{j_0}^s,p_{j_0}^u}$. By inequality (\ref{S}), there is a constant $C_v>1$ which depends only on $v$ s.t. $s(\hat{f}^{n_k}(\tilde{z}))<C_v$. Let $k_v:=\Big[\frac{log C_v}{\frac{1}{2}\Gamma Q(\tilde{x}_{j_0})^{\frac{\beta}{8}+\frac{3}{4\gamma}}}\Big]+1$, and $\xi\in T_{\hat{f}^{n_{k_v}}(\tilde{z})}\tilde{V}^s_{n_{k_v}}(1)$. By Lemma \ref{S1}, there exists $j\in\{0,\cdots,n_{k_v}\}$ s.t.
    $$\frac{S(\hat{f}^j(\tilde{z}),d_{\hat{f}^{n_{k_v}}(\tilde{z})}\hat{f}^{-(n_{k_v}-j)}(\xi))}{S^(\tilde{x}_j,\pi_{\tilde{x}_j}^s(d_{\hat{f}^{n_{k_v}}(\tilde{z})}\hat{f}^{-(n_{k_v}-j)}(\xi))}=e^{\pm\Gamma Q(\tilde{x}_j)^{\frac{\beta}{8}-\frac{1}{4\gamma}}}.$$
    Otherwise, the ratio is improved by $e^{k_v\cdot \frac{1}{2}\Gamma Q(\tilde{x}_{j_0})^{\frac{\beta}{8}+\frac{3}{4\gamma}}}\geq C_v$, and so the ratio is greater than $C_v$; this is with the initial ratio smaller than $C_v$. Pulling back $j$ to 0, the ratio is improved or remained by $\epsilon e^{\pm Q(\tilde{x}_j)^{\frac{\beta}{4}-\frac{1}{\gamma}}}$ by the proof of the inequality (\ref{right}). For all $\xi'\in T_{\tilde{z}}\tilde{V}^s_0(1)$, $\exists \xi\in T_{\hat{f}^{n_{k_v}}(\tilde{z})}\tilde{V}^s_{n_{k_v}}$ such that $\xi=d_{\tilde{z}}\hat{f}^{n_{k_v}}(\xi')$, then
    $$\frac{S(\tilde{z},\xi')}{S(\tilde{x}_0,\pi_{\tilde{x}_0}^s\xi')}
    =e^{\pm\max\big\{\epsilon Q(\tilde{x}_0)^{\frac{\beta}{4}-\frac{1}{\gamma}},\Gamma Q(\tilde{x}_0)^{\frac{\beta}{8}-\frac{1}{4\gamma}}\big\}}
    =e^{\pm\Gamma Q(\tilde{x}_0)^{\frac{\beta}{8}-\frac{1}{4\gamma}}}.$$
    Similarly, $\frac{S(\tilde{z},\xi')}{S(\tilde{y}_0,\pi_{\tilde{y}_0}^s\xi')}=e^{\pm\Gamma Q(\tilde{x}_0)^{\frac{\beta}{8}-\frac{1}{4\gamma}}}$. Hence, the conclusion holds for $i=0$. Applying the process for $I$-chains $(v_{i+k})_{i\in\mathbb{Z}}$ and $(w_{i+k})_{i\in\mathbb{Z}}$, we obtain the same statements for $k\in\mathbb{Z}\backslash\{0\}$.
\end{pf}

Define $P_i:=P_i^s+P_i^u:=\pi_{\tilde{y}_i}^s\circ (\pi_{\tilde{x}_i}^s)^{-1}+\pi_{\tilde{y}_i}^u\circ (\pi_{\tilde{x}_i}^u)^{-1}: T_{\tilde{x}_i}M^f\rightarrow T_{\tilde{y}_i}M^f$, and it satisfies the following lemma,   for the proof see \cite{Ovadia2}.

\begin{lem}\label{P}
    $\|P_i\|, \|P_i^{-1}\| \leq e^{Q(\tilde{x}_i)^{\frac{\beta}{4}-\frac{1}{\gamma}}}$, for all $i\in\mathbb{Z}$.
\end{lem}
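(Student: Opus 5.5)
We decompose $P_i$ along the splittings and bound each piece. Write $\xi\in E_{\tilde{x}_i}=E^s_{\tilde{x}_i}\oplus E^u_{\tilde{x}_i}$ as $\xi=\xi^s+\xi^u$. Then
\[
P_i\xi=\pi^s_{\tilde{y}_i}(\pi^s_{\tilde{x}_i})^{-1}\xi^s+\pi^u_{\tilde{y}_i}(\pi^u_{\tilde{x}_i})^{-1}\xi^u .
\]
Here $\eta^s:=(\pi^s_{\tilde{x}_i})^{-1}\xi^s\in T_{\hat f^i(\tilde z)}\tilde V^s_i$ and $\eta^u:=(\pi^u_{\tilde{x}_i})^{-1}\xi^u\in T_{\hat f^i(\tilde z)}\tilde V^u_i$, since $\hat f^i(\tilde z)$ lies on both the stable and unstable sets of the two chains. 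By Lemma \ref{pi}, applied once in the chart of $\tilde{x}_i$ and once in the chart of $\tilde{y}_i$,
\[
\frac{|P_i^s\xi^s|}{|\xi^s|}=\frac{|\pi^s_{\tilde y_i}\eta^s|}{|\eta^s|}\cdot\frac{|\eta^s|}{|\pi^s_{\tilde x_i}\eta^s|}=e^{\pm(Q(\tilde x_i)^{\frac{\beta}{2}-\frac{1}{2\gamma}}+Q(\tilde y_i)^{\frac{\beta}{2}-\frac{1}{2\gamma}})},
\]
and the same holds for $P_i^u$. Before using this, we compare $Q(\tilde y_i)$ with $Q(\tilde x_i)$. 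From step \ref{1.} we have $\rho(\tilde x_i)/\rho(\tilde y_i)=e^{\pm2\epsilon}$. Combining this with Lemma \ref{s1}, the corresponding statement for $U$, and Lemma \ref{C}(1), the ratio $\|C_0(\tilde x_i)^{-1}\|/\|C_0(\tilde y_i)^{-1}\|$ is bounded. Hence $Q(\tilde y_i)\le c\,Q(\tilde x_i)$ for a fixed constant $c$, and the individual distortions are $e^{\pm O(Q(\tilde x_i)^{\frac{\beta}{2}-\frac{1}{2\gamma}})}$.

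The main obstacle is the angle between the two summands. The bound $|P_i\xi|\le |P^s_i\xi^s|+|P^u_i\xi^u|$ only gives a factor relative to $|\xi^s|+|\xi^u|$, which may be much larger than $|\xi|$. We handle this by working with Lyapunov norms, where the splitting is orthogonal, and then converting back. On $E_{\tilde x_i}$ we have $|\xi^s|'^2+|\xi^u|'^2=|\xi|'^2$ and $|\xi|\le|\xi|'\le\|C_0(\tilde x_i)^{-1}\|\,|\xi|$, with $|\cdot|'$ built from $S$ and $U$. Lemma \ref{s1} gives $S(\tilde x_i,\pi^s_{\tilde x_i}\eta^s)/S(\tilde y_i,\pi^s_{\tilde y_i}\eta^s)=e^{\pm2\Gamma Q(\tilde x_i)^{\frac{\beta}{8}-\frac{1}{4\gamma}}}$, and likewise for $U$. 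So, measured in the Lyapunov norms of the two charts, $P_i$ is an almost isometry: $|P_i\xi|'_{\tilde y_i}=e^{\pm2\Gamma Q^{\frac{\beta}{8}-\frac{1}{4\gamma}}}|\xi|'_{\tilde x_i}$. This alone would give the weaker exponent $\frac{\beta}{8}-\frac{1}{4\gamma}$ together with a factor $\|C_0^{-1}\|$. That is too crude.

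Instead, we follow Ovadia and write $P_i-\mathrm{Id}$ in a common chart $\Theta_D$ and estimate it as a perturbation of the identity. By the proof of Lemma \ref{pi}, $|\Theta\pi^{s}_{\tilde x_i}\eta-\Theta\eta|$ and $|\Theta\pi^s_{\tilde y_i}\eta-\Theta\eta|$ are both at most $Q(\tilde x_i)^{\frac{\beta}{2}-\frac{1}{2\gamma}}|\eta|$. Hence $\|\Theta(P_i^s-\mathrm{Id})\Theta^{-1}\|$ on $E^s_{\tilde x_i}$ is bounded by $2Q^{\frac{\beta}{2}-\frac{1}{2\gamma}}$ times the norm of the projection onto $E^s_{\tilde x_i}$ along $E^u_{\tilde x_i}$. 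That projection has norm at most $\|C_0(\tilde x_i)^{-1}\|$, because $C_0$ is a contraction, and similarly for the $u$-part. Therefore
\[
\|P_i-\mathrm{Id}\|\le 4\|C_0(\tilde x_i)^{-1}\|\,Q(\tilde x_i)^{\frac{\beta}{2}-\frac{1}{2\gamma}}.
\]
The definition of $Q$ gives $\|C_0(\tilde x_i)^{-1}\|\le Q(\tilde x_i)^{-\frac{1}{2\gamma}}$, so $\|P_i-\mathrm{Id}\|\le 4Q(\tilde x_i)^{\frac{\beta}{2}-\frac{1}{\gamma}}$. For small $\epsilon$ this is at most $\tfrac12 Q(\tilde x_i)^{\frac{\beta}{4}-\frac{1}{\gamma}}$. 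Then $\|P_i\|\le 1+\|P_i-\mathrm{Id}\|\le e^{Q(\tilde x_i)^{\frac{\beta}{4}-\frac{1}{\gamma}}}$. Finally, $\|P_i^{-1}\|\le (1-\|P_i-\mathrm{Id}\|)^{-1}$ by the Neumann series, and this gives the same bound.
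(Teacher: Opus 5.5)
\textbf{Gap: circular comparison of $Q(\tilde y_i)$ with $Q(\tilde x_i)$.} The problem is your preliminary claim that $Q(\tilde y_i)\le c\,Q(\tilde x_i)$. You justify the boundedness of $\|C_0(\tilde x_i)^{-1}\|/\|C_0(\tilde y_i)^{-1}\|$ by combining the $\rho$-comparison of step 1, Lemma \ref{s1} (with its $U$-analogue) and Lemma \ref{C}(1). These do not suffice.

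\begin{itemize}
\item Lemma \ref{C}(1) writes $\|C_0^{-1}\|^2$ as a supremum of $\bigl(S^2(\cdot,\xi^s)+U^2(\cdot,\xi^u)\bigr)/|\xi^s+\xi^u|^2$.
\item Lemma \ref{s1} only matches the \emph{numerators} at $\tilde x_i$ and $\tilde y_i$ under $\xi\mapsto P_i\xi$.
\item Matching the \emph{denominators} requires $|P_i\xi|$ to be comparable to $|\xi|$, which is the statement you are proving.
\end{itemize}

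This is exactly how the paper derives Proposition \ref{C3} from Lemma \ref{P}, so the step is circular.

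The step is also load-bearing. In your final estimate, the proof of Lemma \ref{pi} bounds the distortion of $\pi^s_{\tilde y_i}$ only by roughly $Q(\tilde y_i)^{\frac{\beta}{2}-\frac{1}{2\gamma}}$. In your decomposition $P_i-\mathrm{Id}=(P_i^s-\mathrm{Id})\mathrm{pr}^s_{\tilde x_i}+(P_i^u-\mathrm{Id})\mathrm{pr}^u_{\tilde x_i}$, that distortion is multiplied by $\|C_0(\tilde x_i)^{-1}\|\le Q(\tilde x_i)^{-\frac{1}{2\gamma}}$. Without an a priori relation between $Q(\tilde x_i)$ and $Q(\tilde y_i)$, this cross term need not be small.

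\textbf{How to repair it.} Factor $P_i$ through $T_{p(\hat f^i(\tilde z))}M$ with the splitting $T\tilde V^s_i\oplus T\tilde V^u_i$.

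\begin{itemize}
\item Set $A_i:=(\pi^s_{\tilde x_i})^{-1}\mathrm{pr}^s_{\tilde x_i}+(\pi^u_{\tilde x_i})^{-1}\mathrm{pr}^u_{\tilde x_i}$, and define $B_i$ in the same way with $\tilde y_i$. Then $P_i=B_i^{-1}A_i$ and $P_i^{-1}=A_i^{-1}B_i$.
\item Each chart's distortion is now paired only with its own projection norm. Your projection bound and Neumann-series argument give, in $\Theta_D$-coordinates, $\|A_i-\mathrm{Id}\|\le C\,Q(\tilde x_i)^{\frac{\beta}{2}-\frac{1}{\gamma}}$ and $\|B_i-\mathrm{Id}\|\le C\,Q(\tilde y_i)^{\frac{\beta}{2}-\frac{1}{\gamma}}$.
\item This yields the crude bound $\|P_i\|,\|P_i^{-1}\|\le 2$ with no circularity.
\item Inserting the crude bound into Lemma \ref{C}(1), together with Lemma \ref{s1}, gives $\|C_0(\tilde x_i)^{-1}\|/\|C_0(\tilde y_i)^{-1}\|\in[\frac14,4]$. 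With the $\rho$-comparison of step 1, this gives $Q(\tilde y_i)\le c\,Q(\tilde x_i)$.
\item Now the factorized bound becomes $e^{C'Q(\tilde x_i)^{\frac{\beta}{2}-\frac{1}{\gamma}}}\le e^{Q(\tilde x_i)^{\frac{\beta}{4}-\frac{1}{\gamma}}}$ for small $\epsilon$.
\end{itemize}

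The rest of your computation is fine: the bound $\|\mathrm{pr}^{s/u}_{\tilde x_i}\|\le\|C_0(\tilde x_i)^{-1}\|$, the bound $\|C_0^{-1}\|\le Q^{-\frac{1}{2\gamma}}$, absorbing the factor $Q^{\frac{\beta}{4}}\le\epsilon^5$, and the Neumann series for $P_i^{-1}$.
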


\begin{prop}\label{C3}
For any $I$-chains $(v_i)_{i\in\mathbb{Z}}=(\psi_{\tilde{x}_i}^{p_i^s,p_i^u})_{i\in\mathbb{Z}}$, $(w_i)_{i\in\mathbb{Z}}=(\psi_{\tilde{y}_i}^{q_i^s,q_i^u})_{i\in\mathbb{Z}}\in\Sigma^{\#}$ such that $\pi((v_i)_{i\in\mathbb{Z}})=\pi((w_i)_{i\in\mathbb{Z}})=\tilde{z}$, then $\forall i\in\mathbb{Z}$,
$$\frac{\|C_0^{-1}(\tilde{x}_i)\|}{\|C_0^{-1}(\tilde{y}_i)\|}=e^{\pm(2\Gamma+1) Q(\tilde{x}_i)^{\min\{\frac{\beta}{2}-\frac{1}{\gamma},\frac{\beta}{4}-\frac{1}{4\gamma}\}}}.$$
\end{prop}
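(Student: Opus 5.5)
The plan follows the scheme of Ovadia \cite{Ovadia2}. We compare the quadratic forms defining $\|C_0(\tilde{x}_i)^{-1}\|$ and $\|C_0(\tilde{y}_i)^{-1}\|$ through the point $\hat{f}^i(\tilde{z})$ and the maps $P_i$. By shift invariance (apply the $i=0$ argument to $(v_{i+k})_k,(w_{i+k})_k$), it suffices to treat $i=0$. By Lemma \ref{C}(1),
$$\|C_0(\tilde{x}_0)^{-1}\|^2=\sup_{\xi^s+\xi^u\neq0}\frac{S^2(\tilde{x}_0,\xi^s)+U^2(\tilde{x}_0,\xi^u)}{|\xi^s+\xi^u|^2},$$
with $\xi^{s/u}\in E^{s/u}(\tilde{x}_0)$, and similarly for $\tilde{y}_0$.

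Fix $\xi^s\in E^s(\tilde{x}_0)$ and $\xi^u\in E^u(\tilde{x}_0)$. Set $\eta^s:=P_0^s\xi^s=\pi^s_{\tilde{y}_0}((\pi^s_{\tilde{x}_0})^{-1}\xi^s)\in E^s(\tilde{y}_0)$ and $\eta^u:=P_0^u\xi^u$. The maps $\pi^{s/u}_{\tilde{x}_0}$ are invertible by Lemma \ref{pi}, so $\eta^{s/u}$ are well defined. Write $\zeta^s:=(\pi^s_{\tilde{x}_0})^{-1}\xi^s\in T_{\tilde{z}}\tilde{V}^s_0$. Lemma \ref{s1} (with $i=0$) gives $S(\tilde{x}_0,\xi^s)=e^{\pm2\Gamma Q(\tilde{x}_0)^{\frac{\beta}{8}-\frac{1}{4\gamma}}}S(\tilde{y}_0,\eta^s)$. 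The symmetric unstable version, proved the same way as Lemma \ref{s1} using the unstable half of Lemma \ref{S1} and the recurrence of the negative tail in $\Sigma^\#$, gives the analogous bound for $U(\tilde{x}_0,\xi^u)$ versus $U(\tilde{y}_0,\eta^u)$. For the denominator we use Lemma \ref{P}: $\eta^s+\eta^u=P_0(\xi^s+\xi^u)$, so $|\eta^s+\eta^u|=e^{\pm Q(\tilde{x}_0)^{\frac{\beta}{4}-\frac{1}{\gamma}}}|\xi^s+\xi^u|$. Substituting into the quotient shows that each term in the supremum for $\tilde{x}_0$ is bounded by $e^{2(2\Gamma Q^{\beta/8-1/(4\gamma)}+Q^{\beta/4-1/\gamma})}$ times a term in the supremum for $\tilde{y}_0$. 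Since $P_0$ is a bijection preserving the splitting, taking suprema and square roots gives one inequality. The reverse inequality follows by exchanging the roles of $\tilde{x}$ and $\tilde{y}$ using $P_0^{-1}$. For that direction we first check that $Q(\tilde{y}_0)$ and $Q(\tilde{x}_0)$ are comparable, which follows from step \ref{1.} ($\rho$ comparable) together with an a priori crude comparison of $\|C_0^{-1}\|$. Alternatively, one can note that Lemmas \ref{s1} and \ref{P} are already stated with $Q(\tilde{x}_i)$ on both sides, so this step may not be needed.

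It remains to collect the exponents into the stated form. We have $e^{\pm(2\Gamma Q^{\frac{\beta}{8}-\frac{1}{4\gamma}}+Q^{\frac{\beta}{4}-\frac1\gamma})}$, and we must bound this by $e^{\pm(2\Gamma+1)Q^{\min\{\cdot\}}}$. Since $Q\le1$, a power $Q^a$ with larger $a$ is smaller, so each term is at most $Q^{\text{smallest exponent}}$. This is where the bookkeeping must be matched to the statement's exponents. If the stable comparison from Lemma \ref{s1} only yields the exponent $\frac{\beta}{8}-\frac{1}{4\gamma}$, we will sharpen it. The sharpening uses the halfway estimate: inequality (\ref{right}) gives the factor $e^{\pm Q^{\beta/4-1/\gamma}}$, and Lemma \ref{pi} gives $e^{\pm Q^{\beta/2-1/(2\gamma)}}$, so that the final exponent becomes $\min\{\frac{\beta}{2}-\frac1\gamma,\frac{\beta}{4}-\frac1{4\gamma}\}$, possibly after replacing $\Gamma$-dependent constants using $\epsilon$ small.

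I expect the main obstacle to be the unstable analogue of Lemma \ref{s1}. In the non-invertible setting the unstable sets are defined via the inverse branches $f^{-1}_{\tilde{x}_n}$, so one must verify that the branches along the two chains agree near $\hat{f}^{n}(\tilde{z})$. This would follow from the lemma that $f^{-1}_{\tilde{x}}$ and $f^{-1}_{\hat f^{-1}(\tilde{y})}$ coincide on the chart, together with the claim in Proposition \ref{tilde}(5). A second difficulty is that there is no uniform bound on $Q(\hat{f}(\tilde{x}))/Q(\tilde{x})$, so the exponents must be tracked at index $0$ alone, without shifting.
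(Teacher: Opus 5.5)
Up to your ``sharpening'' paragraph, this is exactly the paper's proof: Lemma \ref{C}(1) for the supremum, Lemma \ref{s1} and its unstable twin for the numerators with $\eta^{s/u}=P_0^{s/u}\xi^{s/u}$, and Lemma \ref{P} for the denominators. Your ``alternatively'' remark is the right one: both lemmas are two-sided in terms of $Q(\tilde{x}_0)$, so no a priori comparison of $Q(\tilde{x}_0)$ with $Q(\tilde{y}_0)$ is needed.

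What this argument yields is $e^{\pm(2\Gamma Q^{\frac{\beta}{8}-\frac{1}{4\gamma}}+Q^{\frac{\beta}{4}-\frac{1}{\gamma}})}$. Since $Q\le 1$ and $\beta\gamma>20$, this lies within $e^{\pm(2\Gamma+1)Q(\tilde{x}_0)^{\frac{\beta}{8}-\frac{1}{4\gamma}}}$. That is precisely where the paper's own proof ends. It is also the exponent recorded in Theorem \ref{inverse}(2), and it is all that is used later to get $Q(\tilde{y}_i)=I^{\pm1}(Q(\tilde{x}_i))$. Under $\gamma>20/\beta$ the displayed minimum equals $\frac{\beta}{4}-\frac{1}{4\gamma}$, which is strictly larger than $\frac{\beta}{8}-\frac{1}{4\gamma}$. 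So the displayed bound is stronger than what the paper's argument actually delivers.

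The genuine gap is your proposed sharpening to the displayed exponent; it cannot work with these tools, for two reasons.

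First, even if the numerators were compared exactly, Lemma \ref{P} controls the denominators only up to $e^{\pm Q^{\frac{\beta}{4}-\frac{1}{\gamma}}}$. Because $\frac{\beta}{4}-\frac{1}{\gamma}<\frac{\beta}{4}-\frac{1}{4\gamma}$, this term alone is not $O\bigl(Q^{\frac{\beta}{4}-\frac{1}{4\gamma}}\bigr)$ as $Q\to 0$. The inputs you cite, exponent $\frac{\beta}{4}-\frac{1}{\gamma}$ from (\ref{right}) and $\frac{\beta}{2}-\frac{1}{2\gamma}$ from Lemma \ref{pi}, combine at best to $\frac{\beta}{4}-\frac{1}{\gamma}$, not to the claimed minimum.

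Second, and more fundamentally, (\ref{right}) and Lemma \ref{pi} are one-step estimates comparing Lyapunov norms at the neighbouring chart centres $\hat{f}^{-1}(\tilde{x}_{n+1})$ and $\tilde{x}_n$. The comparison of $S(\tilde{z},\xi)$ with $S(\tilde{x}_0,\pi^s_{\tilde{x}_0}\xi)$ involves the whole forward orbit. It is reached only through the recurrence/contraction argument of Lemma \ref{s1}. The precision of that argument is fixed by the threshold $\rho_0\ge e^{\Gamma Q^{\frac{\beta}{8}-\frac{1}{4\gamma}}}$ in Lemma \ref{S1}. That threshold is what absorbs the term $(|\zeta|^2-1)/(\rho_0^2-1)$, where $\bigl||\zeta|^2-1\bigr|$ is of order $Q^{\frac{\beta}{4}-\frac{1}{2\gamma}}$ by (\ref{zeta}); below it the lemma gives no improvement. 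Quoting these one-step estimates therefore cannot upgrade Lemma \ref{s1}.

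Drop the sharpening and state the conclusion with exponent $\frac{\beta}{8}-\frac{1}{4\gamma}$, as the paper's proof does.
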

\begin{pf}
According to Lemma \ref{C}(1), $\frac{|C_0^{-1}(\tilde{x}_i)(\xi)|}{|C_0^{-1}(\tilde{y}_i)(P_i(\xi))|}=\sqrt{\frac{S^2(\tilde{x}_i,\xi_s)+U^2(\tilde{x}_i,\xi_u)}{S^2(\tilde{y}_i,P_i(\xi_s))+U^2(\tilde{y}_i,P_i(\xi_u))}}$ where $\xi:=\xi^s+\xi^u\in E_{\tilde{x}_i}^s\oplus E_{\tilde{x}_i}^u$. And $S^2(\tilde{x}_i,\xi_s)=e^{\pm4\Gamma Q(\tilde{x}_i)^{\frac{\beta}{8}-\frac{1}{4\gamma}}}S^2(\tilde{y}_i,P_i(\xi_s))$, $U^2(\tilde{x}_i,\xi_u)=\ e^{\pm4\Gamma Q(\tilde{x}_i)^{\frac{\beta}{8}-\frac{1}{4\gamma}}}$
$U^2(\tilde{y}_i,P_i(\xi_u))$ by Lemma \ref{s1}. Then $\frac{|C_0^{-1}(\tilde{x}_i)(\xi)|}{|C_0^{-1}(\tilde{y}_i)(P_i(\xi))|}=e^{\pm2\Gamma Q(\tilde{x}_i)^{\frac{\beta}{8}-\frac{1}{4\gamma}}}$. By Lemma \ref{P}, we obtain
$$\frac{\|C_0^{-1}(\tilde{x}_i)\|}{\|C_0^{-1}(\tilde{y}_i)\|}=e^{\pm(2\Gamma+1) Q(\tilde{x}_i)^{\frac{\beta}{8}-\frac{1}{4\gamma}}}.$$
\end{pf}
\begin{enumerate}[resume=myenum]
    \item Compare $Q(\tilde{x}_i)$ with $Q(\tilde{y}_i)$:
\end{enumerate}
Recall that $Q_{\epsilon}(\tilde{x}) := \max\{q \in \mathcal{I} \,|\, q \leq \tilde{Q}_{\epsilon}(\tilde{x})\}$, where
$\tilde{Q}_{\epsilon}(\tilde{x}) := \epsilon^{\frac{20}{\beta}}\rho(\tilde{x})^{\frac{8a}{\beta}}\cdot\|C_0(\tilde{x})^{-1} \|^{-2\gamma}$.  By Proposition \ref{C3}, $\frac{\|C_0^{-1}(\tilde{x}_i)\|^{-2\gamma}}{\|C_0^{-1}(\tilde{y}_i)\|^{-2\gamma}}=e^{\pm 2\gamma(2\Gamma+1) Q(\tilde{x}_i)^{\frac{\beta}{8}-\frac{1}{4\gamma}}}$. And  $\frac{\rho(\tilde{x}_i)^{\frac{8a}{\beta}}}{\rho(\tilde{y}_i)^{\frac{8a}{\beta}}}==e^{\pm\frac{16a\epsilon}{\beta}}$ by the first part \ref{1.} of this subsection.
Then $\forall i\in\mathbb{Z}$,
   $$\frac{\tilde{Q}(\tilde{x}_i)}{\tilde{Q }(\tilde{y}_i)}=e^{\pm\big( 2\gamma(2\Gamma+1) Q(\tilde{x}_i)^{\frac{\beta}{8}-\frac{1}{4\gamma}}+\frac{16a\epsilon}{\beta}\big)},$$
and so $\frac{Q(\tilde{x}_i)}{Q(\tilde{y}_i)}\in\Big[I^{-\frac{1}{4}}(1)e^{-\big( 2\gamma(2\Gamma+1) Q(\tilde{x}_i)^{\frac{\beta}{8}-\frac{1}{4\gamma}}+\frac{16a\epsilon}{\beta}\big)},I^{\frac{1}{4}}(1)e^{ \big( 2\gamma(2\Gamma+1) Q(\tilde{x}_i)^{\frac{\beta}{8}-\frac{1}{4\gamma}}+\frac{16a\epsilon}{\beta}\big)}\Big]$ since $I^{-\frac{1}{4}}(1)\tilde{Q}(\tilde{x})\leq Q(\tilde{x})\leq Q(\tilde{x})$. Furthermore, we have $Q(\tilde{y}_i)=I^{\pm1}(Q(\tilde{x}_i))$ for small enough $\epsilon$ (since $\frac{\beta}{8}-\frac{1}{4\gamma}-\frac{1}{\gamma}>0)$.

\begin{claim}\label{=}
    $\pi(\Sigma^{\#})=RST$.
\end{claim}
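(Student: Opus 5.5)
The inclusion $RST\subset\pi(\Sigma^{\#})$ is Theorem \ref{main1}(3), so only $\pi(\Sigma^{\#})\subset RST$ needs proof. Fix $\uline{v}=(\psi_{\tilde{x}_i}^{p_i^s,p_i^u})_{i\in\mathbb{Z}}\in\Sigma^{\#}$ and put $\tilde{z}:=\pi(\uline{v})$. We must show three things: $\tilde{z}$ avoids $\bigcup_n\hat{f}^n(p^{-1}(\Gamma_\infty))$, $\tilde{z}$ is summable, and $\tilde{z}$ carries a function $q$ with the strong temperability and recurrence properties. For the first, note that $p(\hat{f}^i(\tilde{z}))\in\psi_{\tilde{x}_i}[B_{10Q(\tilde{x}_i)}(0)]$ for every $i$ by Proposition \ref{shadow}(4), together with the computation in part \ref{1.} of this subsection. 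This gives $d(p(\hat{f}^i(\tilde{z})),\Gamma_\infty)=e^{\pm\epsilon}d(p(\tilde{x}_i),\Gamma_\infty)>0$, and likewise $\rho(\hat{f}^i(\tilde{z}))=e^{\pm\epsilon}\rho(\tilde{x}_i)$.

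For summability, inequality (\ref{S}) already gives $s(\hat{f}^i(\tilde{z}))<\infty$ and $u(\hat{f}^i(\tilde{z}))<\infty$, with $E^s$ and $E^u$ taken as the tangent spaces of $V^s((v_{i+k})_{k\ge0})$ and $V^u((v_{i+k})_{k\le0})$. Transversality follows from Proposition \ref{ad}, since the two leaves are graphs with $\mathrm{Lip}<\epsilon$ over complementary coordinates and $E^s\oplus E^u$ spans. For uniqueness of the splitting, any vector outside $E^s$ has a nonzero $E^u$-component. By the inverse of the contraction in Proposition \ref{tilde1}, that component grows along the forward orbit in the sense that its forward sum diverges, so it cannot lie in a summable subspace. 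The same argument applies with time reversed. Hence $\tilde{z}\in$ 0-summ.

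For strong temperability, the natural move is to compare $\tilde{z}$ with a chain whose base points are $\hat{f}^i(\tilde{z})$ itself. Concretely, I would rerun the arguments of Lemma \ref{pi}, Lemma \ref{S1}, Lemma \ref{s1} and Proposition \ref{C3} with $\tilde{y}_i$ replaced by $\hat{f}^i(\tilde{z})$. Those arguments only use that $\hat{f}^i(\tilde{z})$ lies on the stable and unstable sets of the chain, which is true here, and that the chain is in $\Sigma^{\#}$, which supplies the constant $C_v$. The outcome is
\[
\|C_0(\hat{f}^i(\tilde{z}))^{-1}\|=e^{\pm(2\Gamma+1)Q(\tilde{x}_i)^{\frac{\beta}{8}-\frac{1}{4\gamma}}}\|C_0(\tilde{x}_i)^{-1}\|,
\]
which combined with the $\rho$ comparison gives $Q(\hat{f}^i(\tilde{z}))=I^{\pm1}(Q(\tilde{x}_i))$, exactly as in the computation preceding the claim.

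Define $q(\hat{f}^i(\tilde{z})):=I^{-1}(p_i^s\wedge p_i^u)\in\mathcal{I}$; the exponent $1$ is the shift, and if the index bookkeeping demands it I would use $I^{-2}$ instead. The edge condition gives $p_{i+1}^s\wedge p_{i+1}^u=I^{\pm1}(p_i^s\wedge p_i^u)$, so property (1) holds. Since $p_i^s\wedge p_i^u\le Q(\tilde{x}_i)$, we get $q(\hat{f}^i(\tilde{z}))\le I^{-1}(Q(\tilde{x}_i))\le Q(\hat{f}^i(\tilde{z}))$, which is property (2). For recurrence, $\uline{v}\in\Sigma^{\#}$ means some double chart $v$ recurs infinitely often in both directions, so $p_i^s\wedge p_i^u$ equals a fixed positive value infinitely often as $i\to\pm\infty$. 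Hence $\limsup_{n\to\pm\infty}q>0$, and $\tilde{z}\in RST$.

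The main obstacle is the step comparing $Q(\hat{f}^i(\tilde{z}))$ with $Q(\tilde{x}_i)$. The earlier lemmas are phrased for two chains coding the same point, so I would need to check that every estimate used only geometric proximity and the $\Sigma^{\#}$ recurrence, not overlap of the point's own Pesin charts, which are not yet known to exist. I would also need to check that the loss $e^{\pm\epsilon}$ in $\rho$ caused by the singularities is absorbed by a single $I^{\pm1}$ step, which requires $\epsilon$ small relative to $I(t)/t-1$ at $t=Q(\tilde{x}_i)$. If that absorption fails, I would enlarge the shift in the definition of $q$ to $I^{-2}$.
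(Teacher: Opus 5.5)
Your argument follows the paper's route. The inclusion $RST\subset\pi(\Sigma^{\#})$ comes from Theorem \ref{main1}(3). The comparison $Q(\hat{f}^i(\tilde{z}))=I^{\pm1}(Q(\tilde{x}_i))$ comes from the proofs of Lemma \ref{s1} and Proposition \ref{C3}; that proof already compares $S(\tilde{z},\cdot)$ directly with $S(\tilde{x}_0,\pi^s_{\tilde{x}_0}(\cdot))$, so no overlap of Pesin charts at $\tilde{z}$ itself is needed. Finally, $q(\hat{f}^i(\tilde{z})):=I^{-1}(p_i^s\wedge p_i^u)$ is checked against the same three properties. Your extra verification that $\tilde{z}$ avoids the singular orbits and has a unique summable splitting is a useful addition.

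The one step that does not close as written is the one you flag at the end, and your proposed remedy would not work. With only $\rho(\hat{f}^i(\tilde{z}))=e^{\pm\epsilon}\rho(\tilde{x}_i)$, property (2) cannot be certified at indices where $Q(\tilde{x}_i)$ is small. This is also all the paper records, through its factor $e^{\pm 16a\epsilon/\beta}$. The reason is that $I(t)/t=e^{\Gamma t^{1/\gamma}}\to 1$ as $t\to 0$, so no fixed $\epsilon$ is small relative to $I(t)/t-1$ uniformly along the chain. Replacing $I^{-1}$ by $I^{-2}$ does not help either, because $I^{-k}(t)/t\to 1$ for every fixed $k$.

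What saves the step is that the error in $\rho$ is a power of $Q$, not of size $\epsilon$.
\begin{itemize}
\item By Proposition \ref{ad}(1) for the shifted chain, $d(p(\hat{f}^i(\tilde{z})),p(\tilde{x}_i))\le Q(\tilde{x}_i)^2$.
\item By \ref{A2}, together with the coincidence of inverse branches for the $-1$ coordinate, $d(p_{\pm1}(\hat{f}^i(\tilde{z})),p_{\pm1}(\tilde{x}_i))\le\rho(\tilde{x}_i)^{-a}Q(\tilde{x}_i)^2$.
\item Since $\epsilon<1$ and $\|C_0(\tilde{x}_i)^{-1}\|\ge 1$, the definition of $\tilde{Q}_\epsilon$ gives $\rho(\tilde{x}_i)\ge Q(\tilde{x}_i)^{\beta/(8a)}$.
\end{itemize}
Hence each of the three distances to $\Gamma_\infty$ changes by a relative amount at most $\rho(\tilde{x}_i)^{-a-1}Q(\tilde{x}_i)^2\le Q(\tilde{x}_i)^{2-\beta/4}$. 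Therefore $\rho(\hat{f}^i(\tilde{z}))=e^{\pm 2Q(\tilde{x}_i)^{2-\beta/4}}\rho(\tilde{x}_i)$.

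Combined with your $\|C_0^{-1}\|$ comparison, this gives $\tilde{Q}(\hat{f}^i(\tilde{z}))\ge e^{-\delta_i}\tilde{Q}(\tilde{x}_i)$, where $\delta_i\le 2\gamma(2\Gamma+1)Q(\tilde{x}_i)^{\frac{\beta}{8}-\frac{1}{4\gamma}}+\frac{16a}{\beta}Q(\tilde{x}_i)^{2-\frac{\beta}{4}}$. Both exponents exceed $\frac{1}{\gamma}$ (using $\gamma>\frac{20}{\beta}$), and $Q\le\epsilon^{20/\beta}$. So for small $\epsilon$,
$\delta_i\le\Gamma\big(I^{-1}(Q(\tilde{x}_i))\big)^{1/\gamma}=\log\frac{Q(\tilde{x}_i)}{I^{-1}(Q(\tilde{x}_i))}$.

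It follows that $\tilde{Q}(\hat{f}^i(\tilde{z}))\ge e^{-\delta_i}Q(\tilde{x}_i)\ge I^{-1}(Q(\tilde{x}_i))$, and $I^{-1}(Q(\tilde{x}_i))\in\mathcal{I}$. Maximality in the definition of $Q$ then gives $Q(\hat{f}^i(\tilde{z}))\ge I^{-1}(Q(\tilde{x}_i))$, which is exactly the inequality property (2) needs. With this, your definition $q=I^{-1}(p_i^s\wedge p_i^u)$ works unchanged.
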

\begin{pf}
    By Theorem \ref{main1}(3), $\pi(\Sigma^{\#})\supset RST$. Next, we will prove $\pi(\Sigma^{\#})\subset RST$: For any $(u_i)_{i\in\mathbb{Z}}\in\Sigma^{\#}$, where $u_i=\psi_{\tilde{x}_i}^{p^s_i,p^u_i}$, then $\limsup_{i\rightarrow \pm\infty}(p^s_i\land p^u_i)>0$. Let $\tilde{z}:=\pi((u_i)_{i\in\mathbb{Z}})$,
    then $Q(\hat{f}^i(\tilde{z}))=I^{\pm1}(Q(\tilde{x}_i))$ by the proof of Lemma \ref{s1} and Proposition \ref{C3}. And so $I^{-1}(p_i^s\land p^u_i)\leq I^{-1}(Q(\tilde{x}_i))\leq Q(\hat{f}^i(\tilde{z}))$. Set the map $q(\cdot)$ of point $\tilde{z}$ by $q(\hat{f}^i(\tilde{z}))=I^{-1}(p_i^s\land p^u_i)$, then
\begin{enumerate}[label=(\roman*)]
    \item  $q\circ\hat{f}=I^{\pm1}(q)$: $\forall i\in \mathbb{Z}$, $q\circ\hat{f}(\hat{f}^i(\tilde{z}))=q(\hat{f}^{i+1}(\tilde{z}))=I^{-1}(p^s_{i+1}\land p^u_{i+1})=I^{-1}(I^{\pm1}(p_i^s\land p^u_i))=I^{\pm1}(I^{-1}(p_i^s\land p^u_i))=I^{\pm1}(q(\hat{f}^i(\tilde{z})))$;
    \item $q(\hat{f}^i(\tilde{z}))=I^{-1}(p_i^s\land p^u_i)\leq I^{-1}(Q(\tilde{x}_i))\leq Q(\hat{f}^i(\tilde{z}))$;
    \item $\limsup_{i\rightarrow \pm\infty} I^{-1}(p^s_i\land p^u_i)\geq \limsup_{i\rightarrow \pm\infty}(p^s_{i}\land p^u_{i})e^{-\Gamma(p^s_{i}\land p^u_{i})^{\frac{1}{\gamma}}}>0$.
\end{enumerate}
Hence, the point $\tilde{z}\in RST$.
\end{pf}
\begin{enumerate}[resume=myenum]
    \item Compare $p^{s/u}(\tilde{x}_i)$ with $q^{s/u}(\tilde{y}_i)$:\label{4.}
\end{enumerate}
 Since $(v_i)_{i\in\mathbb{Z}}\in \Sigma^{\#}$, $\limsup_{i\rightarrow \pm\infty}(p_i^s\land p^u_i)>0$. Then there exist $l_1>0$ and $l_2<0$ such that $p^s_{l_1}=Q(\tilde{x}_{l_1})$ and $p^u_{l_2}=Q(\tilde{x}_{l_2})$ by Lemma \ref{limsup}. Since $Q(\tilde{x}_{l_1})\geq I^{-1}(Q(\tilde{y}_{l_1}))\geq I^{-1}(q_{l_1}^s)$ and $Q(\tilde{x}_{l_2})\geq I^{-1}(Q(\tilde{y}_{l_2}))\geq I^{-1}(q_{l_2}^u)$, then $p^s_{l_1}\geq I^{-1}(q_{l_1}^s)$ and $p^u_{l_2}\geq I^{-1}(q_{l_2}^u)$. By definition
\begin{equation*}
     \begin{aligned}
         p_{l_1-1}^s&=\min\big\{I(p_{l_1}^s),Q(\tilde{x}_{l_1-1})\big\}\geq \min\big\{I(I^{-1}(q_{l_1}^s)),I^{-1}(Q(\tilde{y}_{l_1-1}))\big\}\\
         &=I^{-1}\min\big\{I(q_{l_1}^s),Q(\tilde{y}_{l_1-1})\big\}=I^{-1}(q^s_{l_1-1}),
     \end{aligned}
 \end{equation*}
 and
 \begin{equation*}
     \begin{aligned}
         p_{l_2+1}^u&=\min\big\{I(p_{l_2}^u),Q(\tilde{x}_{l_2+1})\big\}\geq \min\big\{I(I^{-1}(q_{l_2}^u)),I^{-1}(Q(\tilde{y}_{l_2+1}))\big\}\\
         &=I^{-1}\min\big\{I(q_{l_2}^u),Q(\tilde{y}_{l_2+1})\big\}=I^{-1}(q^u_{l_2+1}).
     \end{aligned}
 \end{equation*}
By induction, we have $p_0^s\geq I^{-1}(q_0^s)$ and $p_0^u\geq I^{-1}(q_0^u)$. Symmetrically, there is the same statement for $(w_i)_{i\in\mathbb{Z}}$: $q_0^s\geq I^{-1}(p_0^s)$ and $q_0^u\geq I^{-1}(p_0^u)$. Hence, $p_0^s=I^{\pm}(q_0^s)$ and $p_0^u=I^{\pm}(q_0^u)$. Applying this to $(\psi_{\tilde{x}_{i+k}}^{p_{i+k}^s,p_{i+k}^u})_{k\in\mathbb{Z}}, (\psi_{\tilde{y}_{i+k}}^{q_{i+k}^s,p_{i+k}^u})_{k\in\mathbb{Z}}$, we have $p_i^s=I^{\pm1}(q_i^s)$, $p_i^u=I^{\pm1}(q_i^u)\ \forall i\in\mathbb{Z}$.

The following theorem summarizes the conclusions of this subsection. The proof of the last item is similar to that in \cite{Ovadia1}.
\begin{thm}\label{inverse}
For any $I$-chains $(v_i)_{i\in\mathbb{Z}}=(\psi_{\tilde{x}_i}^{p_i^s,p_i^u})_{i\in\mathbb{Z}}$, $(w_i)_{i\in\mathbb{Z}}=(\psi_{\tilde{y}_i}^{q_i^s,q_i^u})_{i\in\mathbb{Z}}\in\Sigma^{\#}$ such that $\pi((v_i)_{i\in\mathbb{Z}})=\pi((w_i)_{i\in\mathbb{Z}})=\tilde{z}$, then $\forall i\in\mathbb{Z}$,
\begin{enumerate}[label=(\arabic*)]
    \item $d(p(\tilde{x}_i),p(\tilde{y}_i))\leq 25^{-1}\max\big\{(p_i^s\land p_i^u)^2,(q_i^s\land q_i^u)^2\big\}$.
    \item $\frac{\|C_0^{-1}(\tilde{x}_i)\|}{\|C_0^{-1}(\tilde{y}_i)\|}=e^{\pm(2\Gamma+1) Q(\tilde{x}_i)^{\frac{\beta}{8}-\frac{1}{4\gamma}}}$.
    \item $Q(\tilde{y}_i)=I^{\pm1}(Q(\tilde{x}_i))$.
    \item $p_i^s=I^{\pm1}(q_i^s)$, $p_i^u=I^{\pm1}(q_i^u)$.
    \item  $(\psi_{\tilde{y}_n}^{-1}\circ \psi_{\tilde{x}_n})(u)= O_n(u)+a_n+\Delta_n(u)\ \forall u\in B_{\epsilon}(0)$, where $a_n$ is a constant vector in $\mathbb{R}^d$ satisfies $|a_n|\leq 10^{-1}(q_n^s\land q^u_n)$, $O_n$ is an orthogonal matrix preserving $\mathbb{R}^{d_s(\tilde{x}_0)}$ and $\mathbb{R}^{d-d_s(\tilde{x}_0)}$ , and $\Delta_n: B_{\epsilon}(0)\rightarrow \mathbb{R}^d$ satisfies $\Delta_n(0)=0$, $\|d_u\Delta_n\|\leq \epsilon^{\frac{1}{3}}$.
\end{enumerate}
\end{thm}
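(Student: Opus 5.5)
Items (1)--(4) collect what was established earlier in this subsection, so the plan for them is bookkeeping; the new work is item (5). For (1), I will apply inequality (\ref{x}) and its analogue for $\tilde{y}_0$ to the shifted chains $(v_{i+k})_k$, $(w_{i+k})_k$. This is legitimate because $\pi\circ\sigma^i=\hat f^i\circ\pi$ by Theorem \ref{main1}(1), so both shifted chains code $\hat f^i(\tilde z)$; the triangle inequality then gives the bound. Item (2) is Proposition \ref{C3}, using the exponent $\frac{\beta}{8}-\frac{1}{4\gamma}$ as at the end of its proof. For (3), I combine (2) with the comparison $\rho(\tilde x_i)/\rho(\tilde y_i)=e^{\pm2\epsilon}$ from part \ref{1.}. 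This gives $\tilde Q(\tilde x_i)/\tilde Q(\tilde y_i)$ within a factor that is $\le I^{\frac14}(1)$-type for small $\epsilon$, and discreteness of $\mathcal I$ then gives $Q(\tilde y_i)=I^{\pm1}(Q(\tilde x_i))$. Item (4) is the induction in part \ref{4.}, started at the indices $l_1,l_2$ supplied by Lemma \ref{limsup}, applied to shifted chains.

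For (5), following \cite{Ovadia1}, I will write $\psi_{\tilde y_n}^{-1}\circ\psi_{\tilde x_n}=C_0(\tilde y_n)^{-1}\circ\exp_{p(\tilde y_n)}^{-1}\circ\exp_{p(\tilde x_n)}\circ C_0(\tilde x_n)$ and set $a_n:=\psi_{\tilde y_n}^{-1}(p(\tilde x_n))$. Since $|\psi_{\tilde y_n}^{-1}(p(\tilde z_n))|\le 10^{-2}(q_n^s\wedge q_n^u)^2$ and likewise for $\tilde x_n$ (Proposition \ref{ad}(1)), I get $|a_n|\le 2\|C_0(\tilde y_n)^{-1}\|\,d(p(\tilde x_n),p(\tilde y_n))$. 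By (1) and (4) this is $\lesssim \|C_0(\tilde y_n)^{-1}\|(q_n^s\wedge q_n^u)^2$. Because $q\le Q(\tilde y_n)\le\|C_0(\tilde y_n)^{-1}\|^{-2\gamma}$, this is far below $10^{-1}(q_n^s\wedge q_n^u)$.

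The linear part is $L_n:=d_0(\psi_{\tilde y_n}^{-1}\circ\psi_{\tilde x_n})$, which up to an error of order $L_4 d(p(\tilde x_n),p(\tilde y_n))\|C_0(\tilde y_n)^{-1}\|$ equals $C_0(\tilde y_n)^{-1}\Theta^{-1}\Theta C_0(\tilde x_n)$. The main step is to show $L_n$ is close to an orthogonal map preserving the $s/u$ splitting. I will use the maps $P_n$ of Lemma \ref{P} and the ratio estimates of Lemma \ref{s1} (and its unstable counterpart). These say that $C_0(\tilde y_n)^{-1}P_nC_0(\tilde x_n)$ is block diagonal, since $P_n$ maps $E^{s/u}(\tilde x_n)$ to $E^{s/u}(\tilde y_n)$, and that it nearly preserves Euclidean norms on each block, with distortion $e^{\pm2\Gamma Q^{\frac{\beta}{8}-\frac1{4\gamma}}}$. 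A linear map that is block diagonal and $(1+\delta)$-quasi-isometric on each block has a polar decomposition $O_n(\mathrm{Id}+E_n)$ with $\|E_n\|=O(\delta)$ and $O_n$ block-orthogonal.

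It then remains to compare $L_n$ with $C_0(\tilde y_n)^{-1}P_nC_0(\tilde x_n)$. Here $P_n$ differs from the identity (after the identification through $\Theta$) by $O(Q^{\frac{\beta}{2}-\frac1{2\gamma}})$, using Lemma \ref{pi} and the fact that the tangent spaces of $V^{s/u}$ at $\tilde z_n$ are $\epsilon$-close to the coordinate planes in both charts. The danger is the amplification by $\|C_0(\tilde y_n)^{-1}\|$, which I expect to be the main obstacle. I will control it through $Q(\tilde x_n)^{\kappa}\|C_0^{-1}\|\le Q^{\kappa-\frac1{2\gamma}}$, which is still a positive power because $\gamma>20/\beta$.

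Finally, I set $\Delta_n(u):=\psi_{\tilde y_n}^{-1}\psi_{\tilde x_n}(u)-a_n-O_nu$. Then $\Delta_n(0)=0$, and $\|d_u\Delta_n\|\le\|O_nE_n\|+\|d_u(\psi_{\tilde y_n}^{-1}\psi_{\tilde x_n})-L_n\|$. The second term is bounded via inequality (\ref{exp1}) and (\ref{exp2}) by $\lesssim \|C_0(\tilde y_n)^{-1}\||u|\lesssim\epsilon\|C_0(\tilde y_n)^{-1}\|$. To keep this below $\epsilon^{1/3}$ I expect to need the domain restriction to be understood relative to chart size, or else use the Hölder bound of Proposition \ref{overlap1}(5)-type estimates. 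If that fails, I will take the domain $B_\epsilon(0)$ only after rescaling, as in \cite{Ovadia1}, and absorb the factor using $\|C_0^{-1}\|\le Q^{-1/(2\gamma)}$ together with choosing $\epsilon$ small.
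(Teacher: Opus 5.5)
Your items (1), (2) and the induction in (4) follow the paper, whose proof just collects parts 1--4 of this subsection and defers (5) to Ovadia. Two steps of your plan, however, do not go through as written.

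\emph{Item (3).} From $\rho(\tilde x_i)/\rho(\tilde y_i)=e^{\pm2\epsilon}$ you control $\tilde Q(\tilde x_i)/\tilde Q(\tilde y_i)$ only up to the fixed factor $e^{\pm16a\epsilon/\beta}$. No discreteness argument turns a fixed factor into $I^{\pm1}$:
\begin{itemize}
\item $Q(\tilde y_i)=I^{\pm1}(Q(\tilde x_i))$ forces $Q(\tilde y_i)/Q(\tilde x_i)\in[e^{-\Gamma Q(\tilde x_i)^{1/\gamma}},e^{\Gamma Q(\tilde x_i)^{1/\gamma}}]$, an interval that shrinks to $\{1\}$ as $Q(\tilde x_i)\to0$;
\item the steps of $\mathcal I$ near $t$ have ratio $I^{\frac14}(t)/t\to1$, not $I^{\frac14}(1)$.
\end{itemize}
The $\|C_0^{-1}\|$-factor is harmless only because its exponent $Q^{\frac\beta8-\frac1{4\gamma}}$ is $o(Q^{1/\gamma})$. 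The paper's text has the same weak point, and it also rounds with the constant $I^{-\frac14}(1)$ rather than $I^{-\frac14}(\tilde Q)\le Q\le\tilde Q$.

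The repair is to sharpen part 1. By (\ref{x}), $d(p(\tilde x_i),p(\hat f^i(\tilde z)))\le 50^{-1}Q(\tilde x_i)^2$. Also $\rho(\tilde x_i)\ge Q(\tilde x_i)^{\beta/(8a)}$, since $Q\le\tilde Q\le\rho^{8a/\beta}$. Assumption (A2), together with the coincidence of inverse branches along edges for the $(-1)$-coordinate, then gives $\rho(\tilde x_i)/\rho(\hat f^i(\tilde z))=e^{\pm Q(\tilde x_i)^{3/2}}$, and likewise for $\tilde y_i$. Your crude bound already gives $Q(\tilde y_i)\le C\,Q(\tilde x_i)$. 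Hence $\tilde Q(\tilde x_i)/\tilde Q(\tilde y_i)=e^{\pm O(Q(\tilde x_i)^{\kappa})}$ with $\kappa>1/\gamma$, and correct rounding yields $Q(\tilde y_i)=I^{\pm1}(Q(\tilde x_i))$. This matters downstream, because your (4) and your bound on $a_n$ both depend on (3).

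\emph{Item (5).} Your linear part is sound: the block-diagonal near-isometry from Lemma \ref{s1}, its polar decomposition, and the $O(Q^{\frac\beta2-\frac1\gamma})$ cost of comparing $P_n$ with the $\Theta$-identification. The nonlinear term is where the plan breaks. Write $x=p(\tilde x_n)$ and $y=p(\tilde y_n)$. Bounding the variations of $d\exp_x$ and $d\exp_y^{-1}$ separately via (\ref{exp1}) and (\ref{exp2}) only gives $\|C_0(\tilde y_n)^{-1}\|\,|u|$. None of your fallbacks rescues this: the domain $B_\epsilon(0)$ is fixed by the statement, and $\epsilon\,Q^{-1/(2\gamma)}$ is unbounded as $Q\to0$ for fixed $\epsilon$.

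The missing idea is the cancellation $d_z(\exp_x^{-1})\circ d_w(\exp_x)=\mathrm{Id}$. Put $w=C_0(\tilde x_n)u$ and $z=\exp_x(w)$, and compare $\exp_y^{-1}$ with $\exp_x^{-1}$ at the \emph{same} point $z$, i.e.\ apply (\ref{exp2}) with $z_1=z_2=z$. Set $C_x=\Theta\circ C_0(\tilde x_n)$ and $C_y=\Theta\circ C_0(\tilde y_n)$. For every $u\in B_\epsilon(0)$ this gives $\|d_u(\psi_{\tilde y_n}^{-1}\circ\psi_{\tilde x_n})-C_y^{-1}C_x\|\le 2L_4\|C_0(\tilde y_n)^{-1}\|\,d(x,y)$. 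By (1) and (3), $d(x,y)\le C\,Q(\tilde y_n)^2$, while $\|C_0(\tilde y_n)^{-1}\|\le Q(\tilde y_n)^{-1/(2\gamma)}$. So the nonlinear error is $O(Q(\tilde y_n)^{2-1/(2\gamma)})$ uniformly on $B_\epsilon(0)$. Now take $O_n$ to be the orthogonal factor of $C_0(\tilde y_n)^{-1}P_nC_0(\tilde x_n)$ and combine with your linear step; this gives $\|d_u\Delta_n\|\le\epsilon^{1/3}$.
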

\subsection{\texorpdfstring{Locally finite Markov Partition}{Locally finite Markov Partition}}
Let
$$Z(v)=\{\pi((v_i)_{i\in\mathbb{Z}}):(v_i)_{i\in\mathbb{Z}}\in\Sigma^{\#}\text{ and }v_0=v\},$$
then $\mathcal{Z}:=\{Z(v):v\in\mathcal{V}\}$ is a cover of the set $RST$ since $\pi(\Sigma^{\#})=RST$. $\mathcal{Z}$ is called \textit{Markov cover}. For any $\tilde{x}\in Z:=Z(v)$, there exists $(v_i)_{i\in\mathbb{Z}}\in\Sigma^{\#}$ s.t. $v_0=v$ and $\pi((v_i)_{i\in\mathbb{Z}})=\tilde{x}$. Denote $\tilde{V}^s(\tilde{x},Z):=\tilde{V}^s((v_i)_{i\geq 0})$, $\tilde{V}^u(\tilde{x},Z):=\tilde{V}^u((v_i)_{i\leq 0})$; and $\tilde{W}^s(\tilde{x},Z):=\tilde{V}^s(\tilde{x},Z)\bigcap Z$, $\tilde{W}^u(\tilde{x},Z):=\tilde{V}^u(\tilde{x},Z)\bigcap Z$. By the properties of stable/unstable sets, the following proposition shows the properties of $\tilde{W}^s(\tilde{x},Z)$, $\tilde{W}^u(\tilde{x},Z)$.   This prepares for the construction of the locally finite Markov partition.
\begin{prop}\label{Z}
   For every $Z\in\mathcal{Z}$, assume $\tilde{x}=\pi((v_i)_{i\in\mathbb{Z}})\in Z$ where $(v_i)_{i\in\mathbb{Z}}\in\Sigma^{\#}$.
   The following   statements hold for small enough $\epsilon>0$:
\begin{enumerate}[label=(\arabic*)]
    \item (Locally finite) $|\{Z'\in\mathcal{Z}:Z'\bigcap Z\neq \emptyset\}|<\infty$.
    \item For every $\tilde{y}\in Z$, $\tilde{W}^s(\tilde{x},Z), \tilde{W}^s(\tilde{y},Z)$ are either equal or disjoint. Similarly for $\tilde{W}^u(\tilde{x},Z), \tilde{W}^u(\tilde{y},Z)$.
    \item For every $\tilde{y}\in Z$, there exists a unique element $\tilde{z}$ such that $\tilde{W}^s(\tilde{x},Z)\bigcap \tilde{W}^u(\tilde{y},Z)=\{\tilde{z}\}$.
    \item (Markov property) $\hat{f}(\tilde{W}^s(\tilde{x},Z(v_0)))\subset \tilde{W}^s(\hat{f}(\tilde{x}),Z(v_1))\text{ and } \hat{f}^{-1}(\tilde{W}^u(\hat{f}(\tilde{x}),Z(v_1)))\subset \tilde{W}^u(\tilde{x},Z(v_0)).$
    \item There exists a map $[\cdot,\cdot]_{Z}: \tilde{W}^s(\cdot,Z)\times \tilde{W}^u(\cdot,Z)$. If $\tilde{y}\in Z(v_0),\hat{f}(\tilde{y})\in Z(v_1)$, then $\hat{f}([\tilde{x},\tilde{y}]_{Z(v_0)})=[\hat{f}(\tilde{x}),\hat{f}(\tilde{y})]_{Z(v_1)}$.
    \item (Uniform contraction/expansion) If $\tilde{y}\in \tilde{W}^s(\tilde{x},Z), \tilde{z}\in \tilde{W}^u(\tilde{x},Z)$, then $$d(\hat{f}^{n}(\tilde{x}),\hat{f}^{n}(\tilde{y}))\leq 8I^{-n}(1),d(\hat{f}^{-n}(\tilde{x}),\hat{f}^{-n}(\tilde{z}))\leq 8I^{-n}(1)\forall n\in\mathbb{N}.$$
    \item If $Z'\in\mathcal{Z}$ with $Z\bigcap Z'\neq\emptyset$, where $Z=Z(\psi_{\tilde{x}_0}^{p_0^s,p_0^u}), Z'=Z'(\psi_{\tilde{y}_0}^{q_0^s,q_0^u})$. Then
\begin{enumerate}
    \item $p(Z)\subset \psi_{\tilde{y}_0}[B_{q_0^s\land q_0^u}(0)]$;
    \item If $\tilde{x}\in Z\bigcap Z'$, then $\tilde{W}^s(\tilde{x},Z)\subset \tilde{V}^s(\tilde{x},Z')$ and $\tilde{W}^u(\tilde{x},Z)\subset \tilde{V}^u(\tilde{x},Z')$;
    \item If $\tilde{y}\in Z'$ and $\tilde{x}\in Z\bigcap Z'$, then there exists a unique element $\tilde{o}$ such that $\tilde{V}^s(\tilde{x},Z)\bigcap \tilde{V}^u(\tilde{y},Z' )=\{\tilde{o}\}$.
\end{enumerate}
\end{enumerate}
\end{prop}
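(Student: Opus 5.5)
The plan is to follow Sarig/Ovadia's proof of the analogous proposition, using the inverse theorem (Theorem \ref{inverse}) and the properties of stable/unstable sets (Proposition \ref{tilde}). I will take the items in order.

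For (1), local finiteness: if $Z(\psi_{\tilde{y}_0}^{q_0^s,q_0^u})$ meets $Z(\psi_{\tilde{x}_0}^{p_0^s,p_0^u})$, then some $\tilde{z}$ is coded by two chains in $\Sigma^{\#}$ with zeroth symbols $v$ and $w$. Theorem \ref{inverse}(4) gives $q_0^s\land q_0^u\geq I^{-1}(p_0^s\land p_0^u)$, which is a fixed positive number. The discreteness statement Proposition \ref{graph A}(1), together with the fact that only finitely many $(q^s,q^u)\in\mathcal{I}^2$ exceed a given bound, then leaves only finitely many possible $w$.

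For (2) and (3), I will use Proposition \ref{tilde}(5) applied to the same chart, since $v_0=v$ for both points, so the stable sets at $v$ are either nested or disjoint. Since the $p^s$ parameter is that of $v$ in both cases, they are in fact equal or disjoint, and intersecting with $Z$ keeps this. For (3), Proposition \ref{tilde}(1) gives the unique intersection point $\tilde{z}$ of $\tilde{V}^s((v_i)_{i\ge0})$ and $\tilde{V}^u((w_i)_{i\le0})$. It remains to check that $\tilde{z}\in Z$. For this I form the spliced chain $(\dots,w_{-1},v_0,v_1,\dots)$. It lies in $\Sigma^{\#}$ because both halves recur, and by definition of $\pi$ it codes $\tilde{z}$.

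For (4) and (5), the same splicing together with Proposition \ref{tilde}(3) gives the Markov property: $\hat{f}$ maps $\tilde{V}^s((v_i)_{i\ge0})$ into $\tilde{V}^s((v_{i+1})_{i\ge0})$, and images of points of $Z$ coded with $v_0=v$ are coded by the shifted chain, so they lie in $Z(v_1)$. Define $[\tilde{x},\tilde{y}]_Z$ as the point from (3). Equivariance then follows from uniqueness in (3) combined with (4). Item (6) is Proposition \ref{tilde}(4), using $p_0^{s/u}\le 1$ and the monotonicity of $I^{-n}$.

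Item (7) is the main obstacle, because it requires comparing charts at different base points $\tilde{x}_0\neq\tilde{y}_0$. For (7a) I will use Theorem \ref{inverse}(1),(4): for any point of $p(Z)$, its distance to $p(\tilde{x}_0)$ is at most $50^{-1}(p_0^s\land p_0^u)^2$ by (\ref{x}). The distance $d(p(\tilde{x}_0),p(\tilde{y}_0))$ is bounded by $25^{-1}\max\{\cdot\}^2$, and $p_0^s\land p_0^u\le I(q_0^s\land q_0^u)$. Pulling back by $\psi_{\tilde{y}_0}^{-1}$, whose Lipschitz constant is at most $2\|C_0(\tilde{y}_0)^{-1}\|$ and which is controlled via $Q(\tilde{y}_0)\le\epsilon^{20/\beta}\|C_0(\tilde{y}_0)^{-1}\|^{-2\gamma}$, gives a vector of norm $\ll q_0^s\land q_0^u$. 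Alternatively, Theorem \ref{inverse}(5) handles this directly.

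For (7b) I will show that $\tilde{W}^s(\tilde{x},Z)$ is contained in a stable set at $\psi_{\tilde{y}_0}$ via the shadowing characterization after Proposition \ref{shadow}(4). Points of $\tilde{W}^s(\tilde{x},Z)$ have forward iterates in $\psi_{\tilde{x}_n}[B_{10Q(\tilde{x}_n)}]$. Applying (7a) at each time $n$ (by shift invariance, to the pairs $v_n,w_n$), and using $Q(\tilde{x}_n)=I^{\pm1}Q(\tilde{y}_n)$ together with Theorem \ref{inverse}(5), these iterates lie in $\psi_{\tilde{y}_n}[B_{10Q(\tilde{y}_n)}]$. At time $0$ the point lies in $\psi_{\tilde{y}_0}[B_{q_0^s}]$ by (7a). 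The delicate part is keeping the constant $10$ from growing, and I expect to rely on the $I^{\pm1}$ comparisons plus $|a_n|\le 10^{-1}(q^s_n\land q^u_n)$. The unstable case is symmetric, using that the inverse branches $f^{-1}_{\tilde{x}_n}$ and $f^{-1}_{\tilde{y}_n}$ coincide on the relevant balls, as in the claim inside Proposition \ref{tilde}(5). Finally, (7c) follows from (7b) and Proposition \ref{tilde}(1) applied at chart $\psi_{\tilde{y}_0}$: replace $\tilde{V}^s(\tilde{x},Z)$ by the stable set at $w_0$ containing it.
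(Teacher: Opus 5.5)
Items (1)--(4), (6) and (7a) are fine and essentially follow the paper. For (7a), your direct Lipschitz estimate using $\|C_0(\tilde y_0)^{-1}\|\le Q(\tilde y_0)^{-1/(2\gamma)}$ is a valid substitute for Theorem \ref{inverse}(5).

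The genuine gap is in (7c). You reduce it to Proposition \ref{tilde}(1) at $w_0$ by ``replacing $\tilde V^s(\tilde x,Z)$ by the stable set at $w_0$ containing it.'' But (7b) only gives $\tilde W^s(\tilde x,Z)=\tilde V^s(\tilde x,Z)\cap Z\subset\tilde V^s(\tilde x,Z')$. It says nothing about points of $\tilde V^s(\tilde x,Z)$ outside $Z$. An intersection point with $\tilde V^u(\tilde y,Z')$, where $\tilde y$ is only assumed to lie in $Z'$, need not lie in $Z$.

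Nor is $\tilde V^s(\tilde x,Z)$ contained in a stable set at $w_0$ in general. It is a graph over $B_{p_0^s}$ in $\psi_{\tilde x_0}$, whereas stable sets at $w_0$ are graphs over $B_{q_0^s}$ in $\psi_{\tilde y_0}$, and $p_0^s$ may equal $I(q_0^s)$.

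Existence can be rescued with your tools. The point $\tilde o$ of $\tilde V^s(\tilde x,Z')\cap\tilde V^u(\tilde y,Z')$ lies in $Z'$ by splicing, hence in $\tilde W^s(\tilde x,Z')$. That set is contained in $\tilde V^s(\tilde x,Z)$ by (7b) with $Z,Z'$ interchanged.

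Uniqueness, however, needs the chart comparison you skipped. By Theorem \ref{inverse}(5), $\psi_{\tilde y_0}^{-1}\circ\psi_{\tilde x_0}=O_0+a_0+\Delta_0$, where $O_0$ preserves the $s/u$ splitting and $\|d\Delta_0\|\le\epsilon^{1/3}$. So $\psi_{\tilde y_0}^{-1}(V^s(\tilde x,Z))$ is a graph over the $s$-coordinates with Lipschitz constant $O(\epsilon^{1/3})$. Meanwhile $\psi_{\tilde y_0}^{-1}(V^u(\tilde y,Z'))$ is a graph over the $u$-coordinates with Lipschitz constant at most $\epsilon$. Two such sets meet in at most one point; the paper obtains existence and uniqueness together by a fixed-point argument with a Kirszbraun extension. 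The negative coordinates of $\tilde o$ are then forced by the inverse branches that define $\tilde V^u(\tilde y,Z')$.

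A smaller gap is in (5). Uniqueness in (3) plus (4) does not give $\hat f([\tilde x,\tilde y]_{Z(v_0)})=[\hat f(\tilde x),\hat f(\tilde y)]_{Z(v_1)}$.
\begin{itemize}
\item Property (4) moves $\tilde W^s$ forward and $\tilde W^u$ backward. So it yields membership in $\tilde W^s(\hat f(\tilde x),Z(v_1))$, but not in $\tilde W^u(\hat f(\tilde y),Z(v_1))$.
\item (4) cannot even be applied to $\tilde y$. The facts $\tilde y\in Z(v_0)$ and $\hat f(\tilde y)\in Z(v_1)$ are witnessed by two different chains: $(w_i)$ with $w_0=v_0$, and $(u_i)$ with $u_0=v_1$.
\end{itemize}
You must show that the spliced chain $(\dots,w_{-1},v_0,v_1,u_1,u_2,\dots)$ still codes $\tilde y$, i.e.\ that $p(\tilde y)\in\mathcal F^s_{v_0,v_1}(V^s((u_i)_{i\ge0}))$. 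This is exactly where the paper uses Proposition \ref{tilde}(6), which rests on Theorem \ref{TF}(2). Then $[\tilde x,\tilde y]_{Z(v_0)}=\pi(\dots,w_{-1},v_0,v_1,v_2,\dots)$, and equivariance follows by shifting.

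Finally, the ``delicate'' constant in (7b) is an artifact of your route. Pushing $\psi_{\tilde x_n}[B_{10Q(\tilde x_n)}(0)]$ through the chart change cannot land in $\psi_{\tilde y_n}[B_{10Q(\tilde y_n)}(0)]$ when $Q(\tilde x_n)=I(Q(\tilde y_n))$. Instead, use (4) to get $\hat f^n(\tilde t)\in Z(v_n)$. Then apply (7a) to the pair $Z(v_n),Z(w_n)$, which both contain $\hat f^n(\tilde x)$. This places $p(\hat f^n(\tilde t))$ in $\psi_{\tilde y_n}[B_{q_n^s\land q_n^u}(0)]$, well inside the shadowing window.
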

\begin{pf}
    (1) Assume $Z=Z(\psi_{\tilde{x}_0}^{p_0^s,p_0^u}), Z'=Z'(\psi_{\tilde{y}_0}^{q_0^s,q_0^u})$, and $\tilde{x}=\pi((v_i)_{i\in\mathbb{Z}})=\pi((w_i)_{i\in\mathbb{Z}})\in Z'\bigcap Z$ where $v_0=\psi_{\tilde{x}_0}^{p_0^s,p_0^u}, w_0=\psi_{\tilde{y}_0}^{q_0^s,q_0^u}$. By Theorem \ref{inverse}(4), $q_0^s\geq I^{-1}(p_0^s)$ and $q_0^u\geq I^{-1}(p_0^u)$. And so there are only finitely many double charts $\psi_{\tilde{y}_0}^{q_0^s,q_0^u}$ in $\mathcal{V}$ such that $\tilde{x}=\pi((v_i)_{i\in\mathbb{Z}})=\pi((w_i)_{i\in\mathbb{Z}})\in Z'\bigcap Z$ by discreteness.

    (2) By Proposition \ref{tilde}(5), the stable/unstable set $\tilde{V}^{s/u}(\tilde{x},Z),\tilde{V}^{s/u}(\tilde{y},Z)$ are either equal or disjoint. Then it holds for $\tilde{W}^{s/u}(\tilde{x},Z),\tilde{W}^{s/u}(\tilde{y},Z)$ by definition.

    (3) By Proposition \ref{tilde}(1), the stable set $\tilde{V}^s(\tilde{x},Z)\in\tilde{M}^s(v_0)$ and the unstable set $\tilde{V}^u(\tilde{x},Z)\in\tilde{M}^u(v_0)$ intersect at a unique point $\tilde{z}$. (The construction of the point $\tilde{z}\in Z$ can be found in \cite{Sarig}. Hence, $\tilde{W}^s(\tilde{x},Z)\bigcap \tilde{W}^u(\tilde{y},Z)=\{\tilde{z}\}$.

    (4) Using $\pi\circ \sigma=\hat{f}\circ \pi$ and Proposition \ref{tilde}(3), part (4)can be proved as the proof of proposition 10.9 in \cite{Sarig}.

    (5) The map $[\cdot,\cdot]_{Z}: \tilde{W}^s(\cdot,Z)\times \tilde{W}^u(\cdot,Z)$ is well-defined by part (3). And it is called the \textit{Smale bracket}. By Proposition \ref{tilde}(6), part (4) can be proved as the proof of lemma 10.7 in \cite{Sarig}.

    (6) Using Proposition \ref{ad}(4), part (6) holds.

    (7) (a) By Proposition \ref{ad}(1), $p(Z)\subset \psi_{\tilde{x}_0}[B_{10^{-2}(p^s_0\land p^u_0)^2}(0)]$. Then
    $$p(Z)\subset \psi_{\tilde{y}_0}\circ(\psi_{\tilde{y}_0}^{-1}\circ\psi_{\tilde{x}_0})[B_{10^{-2}(p^s_0\land p^u_0)^2}(0)]\subset \psi_{\tilde{y}_0}[B_{q_0^s\land q_0^u}(0)],$$ by Theorem \ref{inverse}(5).

    (b) The key of the proof is to prove $\tilde{V}^{s/u}(\tilde{x},Z)=\tilde{V}^{s/u}(\tilde{x},Z')$ by the equivalent definition of stable/unstable set.

    (c) Using Theorem \ref{inverse}(5) and graph transform, $V^s=p(\tilde{V}^s(\tilde{x},Z))$ can be expressed as the graph in $\psi_{\tilde{y}_0}$. Assume $V^s=\psi_{\tilde{y}_0}\{(\tilde{w},\tilde{G}(\tilde{w})):\tilde{w} \text{ in some area}\}$, $\tilde{G}$ is  contractive by Theorem \ref{inverse}(5). Then we obtain $\tilde{V}^s(\tilde{x},Z)$ and $\tilde{V}^s(\tilde{y},Z')$ intersect in a single point by the fixed point theorem and the Kirszbraun theorem.
    \end{pf}

Using the method first developed by Sina$\check{\text{\i}}$ and Bowen \cite{Bowen1, Bowen2, Bowen}, there is a locally finite Markov partition $\mathcal{R}$ defined by the following equivalence relation on RST:
$$\tilde{x}=\tilde{y}\iff \forall Z,Z'\in \mathcal{Z}:
\Bigg\{
  \begin{matrix}
    \tilde{x}\in Z \iff\tilde{y}\in Z \\
    \tilde{W}^s(\tilde{x},Z)\bigcap Z'\neq \emptyset\iff \tilde{W}^s(\tilde{y},Z)\bigcap Z'\neq \emptyset\\
    \tilde{W}^u(\tilde{x},Z)\bigcap Z'\neq \emptyset\iff \tilde{W}^u(\tilde{y},Z)\bigcap Z'\neq \emptyset.
  \end{matrix}
$$
By Proposition \ref{Z}, the partition $\mathcal{R}$ satisfies:

(1) Product structure.

(2) Markov property.

(3) $Z\in\mathcal{Z}$ contains finitely many $R\in\mathcal{R}$, $\forall Z\in\mathcal{Z}$.

Let $\tilde{\mathcal{G}}$ be a countable directed graph with vertices $\tilde{\mathcal{V}}=\mathcal{R}$ and edges $\mathcal{E}:=\{(R,S)\in\mathcal{R}\times \mathcal{R}: R\bigcap \hat{f}^{-1}(S)\neq \emptyset\}.$ Denote $\tilde{\Sigma}:=\tilde{\Sigma}(\tilde{\mathcal{G}})$ as the topological Markov shift associated with $\tilde{\mathcal{G}}$. We claim that every vertex of $\tilde{\mathcal{G}}$ has finite degree, i.e. $|\{S\in\mathcal{R}:R\rightarrow S\}|<\infty$: assume $R\rightarrow S$ and $R\subset Z(u)$, where $u\in\mathcal{V}$. And $R\bigcap \hat{f}^{-1}(S)\neq \emptyset \iff \hat{f}(R)\bigcap S\neq \empty$ since $\hat{f}$ is homeomorphism. If $\tilde{x}\in \hat{f}(R)\bigcap S$, there exists $v\in\mathcal{V}$ s.t. $u\rightarrow v$ and $S\subset Z(v)$. Notice that $Z\in\mathcal{Z}$ contains finitely many $R\in\mathcal{R}$, $\forall Z\in\mathcal{Z}$, and $\mathcal{V}$ has finite degree. Then the claim holds. Furthermore, $\tilde{\Sigma}$ is locally compact.

The following lemma can be found in \cite{Lima}.
\begin{lem}\label{Z1}
\begin{enumerate}[label=(\arabic*)]
    \item If $\uline{R}=(R_n)_{n\in\mathbb{Z}}\in\tilde{\Sigma}$ and $R_0\subset Z(v)$, there exists $(v_n)_{n\in\mathbb{Z}}\in \Sigma$ with $v_0=v$ s.t. $\bigcap_{i=m}^n\hat{f}^{-i}(R_i)\subset \bigcap_{i=m}^n\hat{f}^{-i}(Z(v_i))\ \forall m\leq n$. And if $\uline{R}\in\tilde{\Sigma}^{\#}$, then $(v_n)_{n\in\mathbb{Z}}\in \Sigma^{\#}$.
    \item (Bowen property) If $\uline{R}=(R_n)_{n\in\mathbb{Z}},\uline{S}=(S_n)_{n\in\mathbb{Z}}\in\tilde{\Sigma}$, then $\tilde{\pi}(\uline{R})=\tilde{\pi}(\uline{S})$ iff $R_n\sim S_n\ \forall n\in\mathbb{Z}$, where $R_n\sim S_n\iff \exists Z_n,Z_n'\in\mathcal{Z}$ s.t. $R_n\subset Z_n, S_n\subset Z_n'$ and $Z_n\bigcap Z_n'\neq \emptyset$.
\end{enumerate}
\end{lem}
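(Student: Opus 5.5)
Lemma \ref{Z1} is the analogue of Sarig's Lemma 12.2 and Proposition 12.5 (as adapted in \cite{Lima}), and I would follow that argument, using Proposition \ref{Z} and Theorem \ref{inverse} in place of their counterparts. Note first that $\tilde{\pi}$ is defined through $\bigcap_{n\ge0}\overline{\bigcap_{i=-n}^{n}\hat{f}^{-i}(R_i)}$, which is nonempty because of the product structure and the Markov property.

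\textbf{Part (1).} Each $R\in\mathcal{R}$ lies in some $Z(v)$. For an edge $R_n\to R_{n+1}$, pick $\tilde{x}\in R_n\cap\hat{f}^{-1}(R_{n+1})$ and set $v_n:=v$ when $n=0$.

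\emph{Forward direction.} Given $v_n$ with $R_n\subset Z(v_n)$, the definition of the relation says the set of $Z'$ containing a point is constant on $R_n$. Hence $\tilde{x}\in Z(v_n)$, so $\tilde{x}=\pi(\underline{u})$ with $u_0=v_n$ and $\underline u\in\Sigma^{\#}$. Then $\hat{f}(\tilde{x})=\pi(\sigma\underline u)\in Z(u_1)$, again by the definition of $Z(\cdot)$. Set $v_{n+1}:=u_1$. Since $\hat f(\tilde x)\in R_{n+1}$ and membership in $Z$'s is constant on $R_{n+1}$, we get $R_{n+1}\subset Z(v_{n+1})$.

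\emph{Backward direction.} The same argument applied to $\sigma^{-1}$ handles $n<0$. This produces $(v_n)\in\Sigma$ with $R_n\subset Z(v_n)$ for all $n$, so $\bigcap_{i=m}^n\hat f^{-i}(R_i)\subset\bigcap_{i=m}^n\hat f^{-i}(Z(v_i))$.

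\emph{The $\Sigma^{\#}$ case.} If $R_n=R$ infinitely often, then since $R$ lies in only finitely many $Z$'s (local finiteness, Proposition \ref{Z}(1)) and $\mathcal V$ has finite degree, the chosen $v_n$ range over a finite set along that subsequence. Passing to a subsequence gives a repeated symbol. Making the choices consistently needs a diagonal/compactness argument: for each $N$, the set of admissible choices on $[-N,N]$ is finite and nonempty by local finiteness, so König's lemma gives a consistent global choice.

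\textbf{Part (2), ``$\Rightarrow$''.} If $\tilde\pi(\underline R)=\tilde\pi(\underline S)=\tilde z$, then $\tilde z\in\overline{R_n}$ and $\tilde z\in\overline{S_n}$ after shifting. I would upgrade this to $\tilde z\in Z_n\cap Z_n'$ as follows. Part (1) gives chains $(v_n),(w_n)\in\Sigma^{\#}$ when the codes are in $\tilde\Sigma^{\#}$, and by uniform continuity of $\pi$ we get $\pi((v_n))=\tilde z=\pi((w_n))$. Hence $\hat f^n(\tilde z)\in Z(v_n)\cap Z(w_n)$, which gives $R_n\sim S_n$.

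\textbf{Part (2), ``$\Leftarrow$''.} If $R_n\sim S_n$ for all $n$, take $Z_n\ni$ a point of $Z_n'$. By Proposition \ref{Z}(7) the charts of $Z_n$ and $Z_n'$ are comparable (Theorem \ref{inverse}(5)), so the shadowed orbits stay in $\psi_{\tilde y_n}[B_{10Q(\tilde y_n)}(0)]$. The shadowing characterization of stable/unstable sets (Proposition \ref{shadow}(4)) then forces $\tilde\pi(\underline R)$ and $\tilde\pi(\underline S)$ to lie in the same $\tilde V^s\cap\tilde V^u$. Uniqueness in Proposition \ref{tilde}(1) gives equality.

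\textbf{Main obstacle.} The step I expect to be hardest is the ``$\Leftarrow$'' direction in the natural-extension setting. Negative coordinates use the branches $f^{-1}_{\tilde x_n}$, and one must check, via the lemma that $f^{-1}_{\tilde x}$ and $f^{-1}_{\hat f^{-1}(\tilde y)}$ coincide on the relevant charts, that the branches chosen by the two chains agree on the orbit. I would handle this exactly as in the claim inside the proof of Proposition \ref{tilde}(5).
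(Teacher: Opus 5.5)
The genuine gap is in part (2), ``$\Rightarrow$''. The equality $\pi((v_n))=\tilde z=\pi((w_n))$ does not follow from uniform continuity of $\pi$.

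The point $\tilde z$ is a limit of points $\tilde y_N\in\bigcap_{|i|\le N}\hat f^{-i}(R_i)\subset\bigcap_{|i|\le N}\hat f^{-i}(Z(v_i))$. But $\hat f^{i}(\tilde y_N)\in Z(v_i)$ only says that each iterate \emph{separately} is the $\pi$-image of some $\#$-chain with zeroth symbol $v_i$, and that chain may differ for each $i$. Nothing in your argument produces a single coding of $\tilde y_N$ that agrees with $(v_n)$ on $[-N,N]$, so continuity of $\pi$ has nothing to act on.

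The missing ingredient is the shadowing characterization, which you already use for ``$\Leftarrow$''.
\begin{itemize}
\item By Proposition \ref{ad}(1), $p(Z(v_i))\subset\psi_{\tilde x_i}[B_{10^{-2}(p_i^s\wedge p_i^u)^2}(0)]$.
\item Passing to the limit, $p(\hat f^i(\tilde z))\in\psi_{\tilde x_i}[B_{10Q(\tilde x_i)}(0)]$ for every $i$, and $p(\tilde z)\in\psi_{\tilde x_0}[B_{p_0^s\wedge p_0^u}(0)]$.
\item You then need to check that the negative coordinates of $\tilde z$ follow the branches $f^{-1}_{\tilde x_i}$ (the point you flag at the end).
\item Proposition \ref{shadow}(4) then gives $\tilde z\in\tilde V^s((v_i)_{i\ge0})\cap\tilde V^u((v_i)_{i\le0})$, hence $\tilde z=\pi((v_n))$ by Proposition \ref{tilde}(1).
\end{itemize}
This identification of $\tilde\pi(\underline{R})$ with $\pi((v_n))$ is also exactly what the proof of Theorem \ref{main}(1) needs when it invokes Lemma \ref{Z1}(1).

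Smaller points.
\begin{itemize}
\item Your ``$\Rightarrow$'' only treats codes in $\tilde\Sigma^{\#}$, whereas the lemma is stated on $\tilde\Sigma$. For other codes, $(v_n)\notin\Sigma^{\#}$ and $\pi((v_n))$ need not lie in $Z(v_0)$, so nothing places $\hat f^n(\tilde z)$ in $Z(v_n)\cap Z(w_n)$. Either say you prove the $\#$-version, which is all Theorem \ref{main}(2) uses, or give an argument for general codes.
\item In ``$\Leftarrow$'', the vertices underlying the witnesses $Z_n'$ need not form an $I$-chain, so Proposition \ref{shadow}(4) cannot be applied to them. Use instead the chain $(w_n)$ from part (1) with $S_n\subset Z(w_n)$. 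Then compose the chart comparisons of Theorem \ref{inverse}(5) along $Z_n\cap Z_n'\neq\emptyset$ and $Z_n'\cap Z(w_n)\supset S_n$, which puts $\hat f^n(\tilde\pi(\underline{R}))\in\overline{R_n}$ inside the charts of $w_n$.
\item Part (1) is correct, but the K\"onig/diagonal step is superfluous. Your one-step forward and backward extensions already give a bi-infinite path with $v_0=v$ and $R_n\subset Z(v_n)$ for all $n$. For any such path, if $R_{n_k}=R$ infinitely often, then the $v_{n_k}$ lie in the finite set $\{w\in\mathcal V: R\subset Z(w)\}$ (finite by the vertex-level count in the proof of Proposition \ref{Z}(1)). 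Pigeonhole then already gives $(v_n)\in\Sigma^{\#}$.
\end{itemize}
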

Consider a path $R_m\rightarrow R_{m+1}\rightarrow \cdots\rightarrow R_{n}$ on $\tilde{\mathcal{G}}(m<n)$, then $\bigcap_{i=m}^n\hat{f}^{-1}(R_i)\neq \emptyset$ by the Markov property of $\mathcal{R}$, and the diameter of $\bigcap_{i=-n}^n\hat{f}^{-i}(R_i)\leq 16I^{-n}(1)$, then the diameter converging to zero. Define $\tilde{\pi}:\tilde{\Sigma}\rightarrow M^f$ by $\{\tilde{\pi}(\uline{R})\}=\bigcap_{n\geq0}\overline{\bigcap_{i=-n}^n\hat{f}^{-i}(R_i)}$.
 For $R\in\mathcal{R}$, define
$$N(R):=|\{(R',v')\in \mathcal{R}\times \mathcal{A}:R'\sim R\text{ and }R'\subset Z(v')\}|.$$

 Now we can give the Main Theorem.
\begin{thm}\label{main}
    For small enough $\epsilon>0$, the map $\tilde{\pi}$ satisfies:

    (1) $\tilde{\pi}(\tilde{\Sigma}^{\#})=RST$.

    (2) If $\tilde{x}=\tilde{\pi}(\uline{R})$, where $R_n=R$ for infinitely many $n<0$ and $R_n=S$ for infinitely many $n>0$, then $|\tilde{\pi}^{-1}(\tilde{x})\bigcap \tilde{\Sigma}^{\#}|\leq N(R)\cdot N(S)$ ($\tilde{\pi}$ is a finite-to-one map).

    (3) $\tilde{\pi}\circ \sigma=\hat{f}\circ \tilde{\pi}$.

    (4) $\tilde{\pi}$ is uniformly continuous on $\tilde{\Sigma}$.
\end{thm}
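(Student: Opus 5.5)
The proof follows the Sinaĭ--Bowen--Sarig scheme, working throughout on $M^f$ with the homeomorphism $\hat f$; the relevant inputs have already been set up.

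\emph{Definition of $\tilde\pi$ and part (3).} First, $\tilde\pi$ is well defined. For a path $R_{-n}\to\cdots\to R_n$, the Markov property of $\mathcal{R}$ (inherited from Proposition~\ref{Z}(4)) together with the product structure gives $\bigcap_{i=-n}^n\hat f^{-i}(R_i)\neq\emptyset$. Lemma~\ref{Z1}(1) places this set inside $\bigcap_{i=-n}^n\hat f^{-i}(Z(v_i))$ for some chain $(v_i)\in\Sigma$. Proposition~\ref{Z}(6), i.e.\ the contraction of Proposition~\ref{tilde}(4), then bounds its diameter by $16I^{-n}(1)\to0$ (Proposition~\ref{I}(6)). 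The closures therefore form a nested sequence of compact sets with diameters tending to $0$ in the compact space $M^f$, so they meet in a single point. Part (3) is then immediate: $\hat f$ is a homeomorphism, and
\[
\hat f\Big(\bigcap_{i=-n}^n\hat f^{-i}(R_i)\Big)=\bigcap_{i=-n-1}^{n-1}\hat f^{-i}(R_{i+1}).
\]

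\emph{Part (4).} If $\uline R,\uline S\in\tilde\Sigma$ agree on $|i|\le N$, then both $\tilde\pi(\uline R)$ and $\tilde\pi(\uline S)$ lie in $\overline{\bigcap_{|i|\le N}\hat f^{-i}(R_i)}$. Hence $d(\tilde\pi(\uline R),\tilde\pi(\uline S))\le16I^{-N}(1)$, which tends to $0$ uniformly in $\uline R,\uline S$.

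\emph{Part (1).} For $\subseteq$: take $\uline R\in\tilde\Sigma^\#$. Lemma~\ref{Z1}(1) yields $(v_i)\in\Sigma^\#$ with $\tilde\pi(\uline R)\in\bigcap_i\overline{\hat f^{-i}Z(v_i)}$. The $\epsilon$-shadowing characterization (Proposition~\ref{shadow}(4) and the equivalent description of stable/unstable sets) should show $\tilde\pi(\uline R)=\pi((v_i))$, and Claim~\ref{=} then gives $\pi((v_i))\in RST$. For $\supseteq$: $\mathcal R$ is a partition of $RST$, so for $\tilde x\in RST$ we set $R_i$ to be the element containing $\hat f^i(\tilde x)$. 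Then $\uline R\in\tilde\Sigma$ and $\tilde\pi(\uline R)=\tilde x$. To get $\uline R\in\tilde\Sigma^\#$, choose $(v_i)\in\Sigma^\#$ with $\pi((v_i))=\tilde x$ (Theorem~\ref{main1}(3)). Some $v$ recurs infinitely often, and $Z(v)$ contains only finitely many elements of $\mathcal R$, so by pigeonhole some $R$ recurs infinitely often in each direction.

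\emph{Part (2): main obstacle.} I expect the finiteness bound in part (2) to be the hardest step. The plan is to take $\uline S'\in\tilde\pi^{-1}(\tilde x)\cap\tilde\Sigma^\#$. By the Bowen property (Lemma~\ref{Z1}(2)), $S'_n\sim R_n$ for all $n$. At the infinitely many times with $R_n=R$ (negative $n$) or $R_n=S$ (positive $n$), $S'_n$ ranges over at most $N(R)$, resp.\ $N(S)$, choices. Fix $n_-<0<n_+$ with $R_{n_-}=R$ and $R_{n_+}=S$. If $N(R)N(S)+1$ distinct preimages existed, pigeonhole would give two of them, $\uline S^1\neq\uline S^2$, with equal symbols at $n_-$ and $n_+$, and with $\uline S^1,\uline S^2$ differing somewhere in $[n_-,n_+]$. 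The product structure and Markov property of $\mathcal R$ (Proposition~\ref{Z}(3)--(5)) then force the two cylinder paths between times $n_-$ and $n_+$, both containing orbit points of $\tilde x$, to coincide. This uses the Smale bracket $[\cdot,\cdot]$ and the fact that distinct elements of $\mathcal R$ are disjoint, so $\hat f^i(\tilde x)$ determines each $R_i$. Making this rigorous, as in Sarig's Theorem~12.8, requires care with $\tilde\pi(\uline R)$ possibly lying only in closures of rectangles; this is where I would handle boundary effects, using the recurrence built into $\tilde\Sigma^\#$ and Theorem~\ref{inverse} to control overlapping charts.
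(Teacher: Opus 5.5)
Your treatment of the well-definedness of $\tilde{\pi}$ and of parts (1), (3), (4) is fine and tracks the paper. For (3) you argue directly from $\hat{f}\big(\bigcap_{i=-n}^{n}\hat{f}^{-i}(R_i)\big)=\bigcap_{i=-n-1}^{n-1}\hat{f}^{-i}(R_{i+1})$ rather than through $\pi\circ\sigma=\hat{f}\circ\pi$, which is equally good.

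The gap is in part (2), at exactly the step you defer. Your mechanism is that both cylinder segments contain orbit points of $\tilde{x}$, so disjointness of the elements of $\mathcal{R}$ forces $S^1_i=S^2_i$. This is not available. Each $\hat{f}^i(\tilde{x})$ lies in exactly one element of $\mathcal{R}$, so at most one preimage $(S_i)_i$ can have $\hat{f}^i(\tilde{x})\in S_i$ for all $i$. For every other preimage, $\tilde{x}$ lies only in $\bigcap_n\overline{\bigcap_{|i|\le n}\hat{f}^{-i}(S_i)}$. If your mechanism worked you would get injectivity of $\tilde{\pi}$ on $\tilde{\Sigma}^{\#}$ outright, and $N(R),N(S)$ would play no role.

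Nor do product structure and the Markov property alone force two admissible paths with the same endpoint symbols to agree; they do not in a full shift. The hypothesis $\tilde{\pi}((S^1_i)_i)=\tilde{\pi}((S^2_i)_i)$ has to enter through the geometry, and your sketch does not say how. There is also an ordering slip: you fix $n_\pm$ before choosing the two preimages, so those two could differ only outside $[n_-,n_+]$. First fix a window on which all $N(R)N(S)+1$ preimages are pairwise distinct, then take $n_\pm$ outside it.

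The missing idea, which is the content of the Sarig / Araujo--Lima--Poletti argument the paper cites (Theorem 7.6 there), is to replace $\tilde{x}$ by an auxiliary point built with the Smale bracket.

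First, attach to each preimage $(S^{(j)}_i)_i$ a chain $(v^{(j)}_i)_i\in\Sigma^{\#}$ with $S^{(j)}_i\subset Z(v^{(j)}_i)$ and $\pi((v^{(j)}_i)_i)=\tilde{x}$, using Lemma \ref{Z1}(1) as in your part (1). Pigeonhole on the pairs $(S^{(j)}_a,v^{(j)}_a)$ and $(S^{(j)}_b,v^{(j)}_b)$ at times $a<b$ with $R_a=R$, $R_b=S$. This is why $N(\cdot)$ counts pairs $(R',v')$ and not just symbols.

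For the two surviving codings $j\neq k$, pick $\tilde{y}\in\bigcap_{i=a}^{b}\hat{f}^{-i}(S^{(j)}_i)$ and $\tilde{z}\in\bigcap_{i=a}^{b}\hat{f}^{-i}(S^{(k)}_i)$. Let $\tilde{p}$ be the bracket of $\hat{f}^a(\tilde{y})$ (stable side) and $\hat{f}^a(\tilde{z})$ (unstable side), taken in the common rectangle inside $Z(v^{(j)}_a)=Z(v^{(k)}_a)$.

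Then:
\begin{enumerate}
\item The Markov property forward along stable sets gives $\hat{f}^{i-a}(\tilde{p})\in S^{(j)}_i$ for $a\le i\le b$.
\item Since $\hat{f}^i(\tilde{x})\in Z(v^{(j)}_i)\cap Z(v^{(k)}_i)$, Proposition \ref{Z}(7) and the shadowing characterization of Proposition \ref{shadow}(4) show that $\hat{f}^{b-a}(\tilde{p})$ lies on the unstable set of $\hat{f}^b(\tilde{z})$ in $Z(v^{(k)}_b)$.
\item The Markov property backward along unstable sets then gives $\hat{f}^{i-a}(\tilde{p})\in S^{(k)}_i$ as well.
\end{enumerate}
Hence $S^{(j)}_i\cap S^{(k)}_i\neq\emptyset$, so $S^{(j)}_i=S^{(k)}_i$ on $[a,b]$. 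This contradicts the choice of the window.
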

\begin{pf}
    (1) $RST\subset \tilde{\pi}(\tilde{\Sigma}^{\#})$: if $\tilde{x}\in RST$, $\exists (v_i)_{i\in\mathbb{Z}}\in \Sigma^{\#}$ s.t. $\pi((v_i)_{i\in\mathbb{Z}})=\tilde{x}$ by Theorem \ref{main1}(3). By the construction of $\mathcal{R}$, there exists $R_i\in\mathcal{R}$, s.t. $R_i\subset Z(v_i)\ \forall i\in\mathbb{Z}$ and $\tilde{\pi}((R_i)_{i\in\mathbb{Z}})=\tilde{x}$, and $(R_i)_{i\in\mathbb{Z}}\in \tilde{\Sigma}^{\#}$ since $Z\in\mathcal{Z}$ contains finitely many $R\in\mathcal{R}$, $\forall Z\in\mathcal{Z}$. And
    $\tilde{\pi}(\tilde{\Sigma}^{\#})\subset RST$: by claim \ref{=}, $RST=\pi(\Sigma^{\#})$. And $\tilde{\pi}(\tilde{\Sigma}^{\#})\subset \pi(\Sigma^{\#})$ by Lemma \ref{Z1}(1). Hence, $\tilde{\pi}(\tilde{\Sigma}^{\#})=\pi(\Sigma^{\#})=RST$.

    (2) The proof is   the same as the proof of Theorem 7.6 in \cite{Lima}.

    (3) By Lemma \ref{Z1}(1) and Theorem \ref{main1}(1), part (3) holds.

    (4) Assume $\uline{R}=(R_i)_{i\in\mathbb{Z}},\uline{S}=(S_i)_{i\in\mathbb{Z}}\in\tilde{\Sigma}^{\#}$ with $d(\uline{R},\uline{S})=e^{-n}$, then $R_i=S_i,\forall |i|\leq n$. Hence, $d(\tilde{\pi}(\uline{R}),\tilde{\pi}(\uline{S}))\leq diam(\bigcap_{i=n}^{-n}\hat{f}^{-i}(R_i))\leq 16I^{-n}(1)=16I^{\log d(\uline{R},\uline{S})}(1)$.
\end{pf}

\begin{prop}
    Assume $\tilde{x}\in \tilde{\pi}(\tilde{\Sigma}^{\#})$, then there exists a splitting $T_{\tilde{x}}M^f=E^s({\tilde{x}})\oplus E^u(\tilde{x})$, where
    $$\sup_{\substack{\xi^s \in E^s_{\tilde{x}},\ |\xi^s|=1}} \sum_{m \geq 0} |d\hat{f}^m \xi_s|^2 < \infty, \quad
\sup_{\substack{\xi^u \in E^u_{\tilde{x}},\ |\xi^u|=1}} \sum_{m \geq 0} |d\hat{f}^{-m} \xi_u|^2<\infty.$$
And the maps $\uline{R}\in\tilde{\Sigma}^{\#}\rightarrow E^{s/u}(\tilde{\pi}(\uline{R}))$ have summble variations.
\end{prop}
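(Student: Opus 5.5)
The splitting will be built from the coding itself. Given $\uline{R}\in\tilde{\Sigma}^{\#}$ with $\tilde{x}=\tilde{\pi}(\uline{R})$, Lemma \ref{Z1}(1) supplies a chain $(v_i)_{i\in\mathbb{Z}}\in\Sigma^{\#}$, $v_i=\psi_{\tilde{x}_i}^{p_i^s,p_i^u}$, with $\pi((v_i))=\tilde{x}$. I will define
$$E^s(\tilde{x}):=T_{p(\tilde{x})}V^s((v_i)_{i\geq0}),\qquad E^u(\tilde{x}):=T_{p(\tilde{x})}V^u((v_i)_{i\leq0}).$$
By Proposition \ref{shadow}(2) these are tangent spaces of $s$- and $u$-admissible graphs in the chart $\psi_{\tilde{x}_0}$. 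Their representing functions have $C^1$ norm at most $\epsilon$ by (\ref{G1}), so the two spaces are transverse, have complementary dimensions $d_s(\tilde{x}_0)$ and $d_u(\tilde{x}_0)$, and $T_{\tilde{x}}M^f=E^s\oplus E^u$.

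Summability comes from Proposition \ref{tilde1}. For unit $\xi\in E^s(\tilde{x})$ it gives $|d\hat{f}^n\xi|\leq 4I^{-n}(p_0^s)\|C_0(\tilde{x}_0)^{-1}\|/p_0^s$. Hence
$$\sum_{n\geq0}|d\hat{f}^n\xi|^2\leq 16\frac{\|C_0(\tilde{x}_0)^{-1}\|^2}{(p_0^s)^2}\sum_{n\geq0}(I^{-n}(1))^2,$$
and the series converges by Proposition \ref{I}(4), as already used in (\ref{S}). The unstable estimate is symmetric.

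\emph{Independence of the chosen chain.} By Theorem \ref{main}(1) and Claim \ref{=}, $\tilde{x}\in RST\subset$ 0-summ, so $\tilde{x}$ carries a \emph{unique} splitting with the summability property. The spaces just constructed satisfy that property, so they coincide with that splitting. In particular the splitting does not depend on which chain $(v_i)$ in $\Sigma^{\#}$ codes $\tilde{x}$, nor on which chain Lemma \ref{Z1}(1) produced from $\uline{R}$.

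\emph{Summable variations; this is the main obstacle.} Take $\uline{R},\uline{S}\in\tilde{\Sigma}^{\#}$ with $R_i=S_i$ for $|i|\leq N$. The hard part is arranging that the associated chains $(v_i)$ and $(w_i)$ also agree on $|i|\leq N$, or nearly so. I would first try to read this off from the construction in Lemma \ref{Z1}(1): the chain is chosen symbol by symbol, so for equal rectangles the same charts can be chosen. Failing that, I would use Theorem \ref{inverse} to compare the two chains coordinatewise. Once the chains agree on $|i|\leq N$, Proposition \ref{shadow}(5) gives
$$d_{C^1}(\hat{V}^{u},\hat{W}^{u})\leq C\bigl(I^{-N}(p_0^s\wedge p_0^u)\bigr)^{\frac1\gamma+\alpha},$$
and likewise for the stable leaf cores. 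The leaf cores contain $p(\tilde{x})$ and $p(\tilde{y})$ because $|v|\leq10^{-2}(p_0^s\wedge p_0^u)^2$ by Proposition \ref{ad}(1). The distance between $E^u(\tilde{\pi}(\uline{R}))$ and $E^u(\tilde{\pi}(\uline{S}))$ is then bounded by three contributions. The first is the $C^1$ distance of the representing functions, given by the display above. The second is the Hölder modulus of $dG$ over $|v-v'|$; here $|v-v'|$ is controlled by Theorem \ref{main}(4) together with Proposition \ref{ad}(2), and the contribution is $\frac12|v-v'|^{\beta/2}$. The third is the variation of $d\psi_{\tilde{x}_0}$, which is Lipschitz by (\ref{exp1}).

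All three contributions are $O\bigl((I^{-N}(1))^{\frac1\gamma+\alpha'}\bigr)$ for some $\alpha'>0$, up to the factor $\|C_0(\tilde{x}_0)^{-1}\|$, which is fixed by the symbol $R_0$. Proposition \ref{I}(4) gives $\sum_N (I^{-N}(1))^{\frac{1+\tau}{\gamma}}<\infty$, so the variations are summable.
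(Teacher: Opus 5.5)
This is essentially the paper's own argument. The splitting at $\tilde{x}\in RST\subset$ 0-summ is identified, by uniqueness, with the tangent spaces of $V^s((v_i)_{i\geq 0})$ and $V^u((v_i)_{i\leq 0})$ for a coding chain, and summable variations come from Proposition \ref{shadow}(5); you make explicit what the paper's two-line proof leaves implicit (summability of these tangent spaces via Proposition \ref{tilde1}, agreement of the chains on $|i|\leq N$, and the passage from chart coordinates to tangent spaces). Two loose ends remain but close easily: your first option for the chains is the right one, because the conclusion of Lemma \ref{Z1}(1) amounts to $R_i\subset Z(v_i)$ for all $i$, and such a chain can be built outward from $v_0$ one symbol at a time (if $R\to S$ and $R\subset Z(u)$, a point of $R\cap\hat{f}^{-1}(S)$ is coded by a chain with zeroth symbol $u$ whose next symbol $u'$ satisfies $u\to u'$ and, since rectangles lie inside or outside each $Z$, $S\subset Z(u')$), so with a fixed choice rule $v_{-N},\dots,v_N$ depend only on $R_{-N},\dots,R_N$; and since summable variations requires a bound uniform over the countable alphabet, you should not freeze $\|C_0(\tilde{x}_0)^{-1}\|$ as an $R_0$-dependent constant, but instead keep $I^{-N}(p_0^s\wedge p_0^u)$ throughout (bounding $|v-v'|$ by the leaf-core $C^0$ estimate rather than by Theorem \ref{main}(4)) and absorb that factor via $\|C_0(\tilde{x}_0)^{-1}\|\leq (p_0^s\wedge p_0^u)^{-\frac{1}{2\gamma}}\leq \bigl(I^{-N}(p_0^s\wedge p_0^u)\bigr)^{-\frac{1}{2\gamma}}$, which leaves the exponent $\frac{1}{2\gamma}+\alpha>\frac{1}{\gamma}$, summable by Proposition \ref{I}(4).
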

\begin{pf}
    By Theorem \ref{main}(1), $\tilde{x}\in\tilde{\pi}(\tilde{\Sigma}^{\#})=RST$. Then the decomposition of $T_{\tilde{x}}M^f$ exists. Since $\pi(\Sigma^{\#})=\tilde{\pi}(\tilde{\Sigma}^{\#})$, $\exists (u_i)_{i\in\mathbb{Z}}\in\Sigma^{\#}$ s.t. $\pi((u_i)_{i\in\mathbb{Z}})=\tilde{x}$. Then $E^s(\tilde{x})= T_{\tilde{x}}\tilde{V}^s((u_i)_{i\geq0})$ and $E^u(\tilde{x})= T_{\tilde{x}}\tilde{V}^u((u_i)_{i\leq0})$ by the uniqueness. By Proposition \ref{shadow}(5), the rest part holds.
\end{pf}

\section{Applications and Example}\label{example}
\subsection{\texorpdfstring{The growth rate of $P_n(f)$ and the number of M.M.E.}{The growth rate of Pn(f) and the number of M.M.E.}}
For any $f$-invariant probability measure $\mu$, we require that system $(M,f,\mu)$ satisfies the following integrability condition
$$\log d(x,\Gamma_{\infty})\in L^1(\mu).$$
If the $f$-invariant probability measure $\mu$ satisfies the integrability condition, $\mu$ is said to be $f$-adapted. By the integrability condition, $\mu(\Gamma_{\infty})=0$. By the definition, it is easy to see that $\mu$ is $f$-adapted iff $\log\rho\in L^1(\tilde{\mu})$ where $\tilde{\mu}$ is the $\hat{f}$-invariant Borel probability measure on $M^f$ such that $p\tilde{\mu}=\mu$.

Denote $P_n(f)$ as the number of $n$-period points, and $h_{top}(f,0\text{-summ})$ is the topological entropy of $f|_{0\text{-summ}}$.
\begin{thm}
    Suppose $f$ is a $C^{1+\beta}$ endomorphism with singularities and satisfies the assumptions \ref{A1}-\ref{A3}.
\begin{enumerate}[label=(\arabic*)]
\item  If $f$ has a measure of maximal entropy on $0-$summ, then there exists $p\in\mathbb{N}$ such that
$$\liminf_{n\rightarrow \infty,p|n}e^{-nh_{top}(f,0\text{-summ})}P_n(f)>0,\forall n\in \mathbb{N}.$$
\item $f|_{0\text{-summ}}$ possesses at most countably many ergodic M.M.E. where M.M.E. is the maximal entropy measure.
\end{enumerate}
\end{thm}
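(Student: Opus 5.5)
The plan is to transfer the problem to the symbolic side through Theorem \ref{main} and then use the known results for countable-state Markov shifts, following the route of Sarig and of Araujo--Lima--Poletti.

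First, I would use the bijection $p_*$ between $\hat f$-invariant and $f$-invariant probability measures. It preserves ergodicity and entropy, so an ergodic measure of maximal entropy $\mu$ for $f|_{0\text{-summ}}$ lifts to an ergodic $\tilde\mu$ of maximal entropy carried by 0-summ in $M^f$.

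Second, I would show that $\tilde\mu(RST)=1$ once $\mu$ is $f$-adapted and hyperbolic enough to give positive entropy. For $\tilde\mu$-a.e.\ $\tilde x$, the function $\log\tilde Q_\epsilon$ is integrable: $\log\rho\in L^1$ by adaptedness, and $\log\|C_0^{-1}\|$ is controlled via Lemma \ref{C}(3) together with a tempering argument. Ovadia's $I$-ladder construction then produces the function $q$ with $q\circ\hat f=I^{\pm1}(q)$, $q\le Q_\epsilon$, and $\limsup_{n\to\pm\infty}q>0$ by Poincar\'e recurrence. This is the step I expect to be the main obstacle. Lemma \ref{C}(3) allows $Q_\epsilon$ to jump by powers of $d(x_0,\Gamma_\infty)$, so I would need $\frac1n\log\rho(\hat f^n\tilde x)\to0$, which follows from the integrability via Birkhoff, to show these jumps are absorbed by the slowly varying ladder.

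Third, I would lift $\tilde\mu$ to $\tilde\Sigma$. By Theorem \ref{main}(1), $\tilde\pi(\tilde\Sigma^{\#})=RST$, and $\tilde\pi$ is finite-to-one on $\tilde\Sigma^{\#}$ by part (2). A standard measurable-selection argument gives an ergodic shift-invariant $\nu$ on $\tilde\Sigma^{\#}$ with $\tilde\pi_*\nu=\tilde\mu$ and $h_\nu(\sigma)=h_{\tilde\mu}(\hat f)$, since finite-to-one factors preserve entropy. Conversely, by (3), projections of shift-invariant measures are $\hat f$-invariant with no larger entropy. So $\nu$ is a measure of maximal entropy on some irreducible component of $\tilde\Sigma$, and the top entropies agree.

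Fourth, I would apply Gurevich's theory. A locally compact irreducible countable Markov shift carries at most one measure of maximal entropy, and there are only countably many irreducible components. Distinct ergodic M.M.E.\ of $f$ lift to measures on distinct components, so (2) follows. For (1), the component carrying $\nu$ is positive recurrent with some period $p$. By Gurevich/Vere-Jones, the number of loops of length $n$, $p\mid n$, at a fixed vertex $R$ is at least $c\,e^{nh}$. Each such loop gives a periodic point of $\sigma$ in $\tilde\Sigma^{\#}$, and $\tilde\pi$ maps it to an $\hat f$-periodic point, hence projects to an $f$-periodic point of period dividing $n$. Multiplicities are bounded by $N(R)^2$ via Theorem \ref{main}(2), and $p$ restricted to periodic orbits is injective. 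This yields $P_n(f)\ge c' e^{n h_{top}(f,0\text{-summ})}$ along $p\mid n$, giving the $\liminf$.
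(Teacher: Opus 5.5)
Your route is the one the paper intends; its proof is only a pointer to Sarig's Theorems 1.1--1.2. You pass to $M^f$, lift through $\tilde\pi$ to an ergodic measure on an irreducible component of $\tilde\Sigma$, and apply Gurevich's uniqueness and the Vere-Jones loop asymptotics, using the bound $N(R)^2$ and the injectivity of $p$ on periodic orbits. Steps 1, 3 and 4 are standard and fine. The gap is step 2: showing $\tilde\mu(RST)=1$ for every ergodic M.M.E. The paper's one-line reduction leaves exactly this step implicit, and your argument for it does not work.

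\textbf{Why step 2 fails: the rate.} Since $q$ moves by $I^{\pm1}$ at each step, strong temperability of $\tilde x$ forces $Q_\epsilon(\hat f^n\tilde x)\ge I^{-|n|}(q(\tilde x))$ for all $n\in\mathbb{Z}$. Conversely, the natural candidate $\inf_{n}I^{|n|}(Q_\epsilon(\hat f^n\tilde x))$ works essentially iff it is positive.

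The ladder, however, descends only polynomially. From $\log I^{-n}(t)-\log I^{-n-1}(t)=\Gamma (I^{-n-1}(t))^{1/\gamma}$ one gets $(I^{-n}(t))^{-1/\gamma}\le t^{-1/\gamma}+\frac{\Gamma}{\gamma}n$. So $I^{-n}(t)$ is of order $(\gamma/\Gamma n)^{\gamma}$; this is what lies behind Proposition \ref{I}(3),(4). Hence you need $Q_\epsilon(\hat f^n\tilde x)\gtrsim |n|^{-\gamma}$ along the orbit, i.e.\ $\rho^{-8a/(\beta\gamma)}\|C_0^{-1}\|^{2}\lesssim |n|$.

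Birkhoff for $\log\rho$ and the tempering lemma applied to Lemma \ref{C}(3) give only $\frac1n\log Q_\epsilon(\hat f^n\tilde x)\to 0$. That does not exclude $Q_\epsilon(\hat f^n\tilde x)\approx e^{-\sqrt{|n|}}$ infinitely often, and then no admissible $q$ exists. So the slowly varying ladder cannot absorb sub-exponential fluctuations, precisely because it varies so slowly. In Sarig and in Araujo--Lima--Poletti the step is $q\circ f=e^{\pm\epsilon}q$, and there sub-exponential control is exactly enough.

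\textbf{What would be needed.} With the $I$-ladder you need a new input, for example a polynomial moment such as $\tilde Q_\epsilon^{-1/\gamma}\in L^1(\tilde\mu)$. Via Borel--Cantelli this would give $\tilde Q_\epsilon(\hat f^n\tilde x)^{-1/\gamma}\le\frac{\Gamma}{2\gamma}|n|$ for all large $|n|$. That in turn requires polynomial control of $\mu$ near $\Gamma_\infty$ and of $\|C_0^{-1}\|$. Neither $f$-adaptedness nor maximality of entropy supplies this. Hyperbolicity is not the issue: Lemma \ref{C}(2) already gives exponents $\le-\int s^{-2}\,d\tilde\mu<0$ on $E^s$ for any ergodic $\tilde\mu$ carried by 0-summ. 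Without step 2, neither (1) nor (2) follows, since both need every ergodic M.M.E.\ on 0-summ to be carried by $RST=\tilde\pi(\tilde\Sigma^{\#})$.
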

\begin{pf}
According to the Theorem \ref{main}, the proof is similar to   that of Theorem 1.1 and 1.2 of \cite{Sarig}.
\end{pf}
\subsection{\texorpdfstring{An example}{An Example}}
In this section, we will construct an example that satisfies the conditions of this context. Consider the linear map $A=\begin{pmatrix}
    3 & 1\\
    1 & 1
\end{pmatrix}:\mathbb{R}^2\rightarrow\mathbb{R}^2$, and denote $E^u,E^s$ as the subspace of the eigenvalues $\lambda_u=2+\sqrt{2},\lambda_s=2-\sqrt{2}$, respectively. Let $T:\mathbb{T}^2\rightarrow\mathbb{T}^2$ be an endomorphism on $\mathbb{T}^2$ that satisfies $p\circ A=T\circ p$, where $p$ is the canonical projection from $\mathbb{R}^2$ to $\mathbb{T}^2$. Obviously, $p_1:=(0,0)$ is a fixed point. Next, we will modify the endomorphism $T$ by the following steps:
\begin{enumerate}[label=Step \arabic*,  labelwidth=3em, leftmargin=!]
\item: According to the technique of "slow-down"\cite{Katok}, there exists a $C^r$ map $T_1$ satisfies ($r\geq 2)$: $\exists$ two functions $K^s,K^u:\mathbb{T}^2\rightarrow\mathbb{R}$ s.t.
\begin{enumerate}[label=(\arabic*)]
\item $K^s(x)=2-\sqrt{2}$ is a constant function, and $|dT_1|_{E^s}|\leq K^s(x)\ \forall x\in\mathbb{T}^2$;
\item $K^u(\cdot)$ is continuous on $\mathbb{T}^2$, and $|dT_1|_{E^u}|\geq K^u(x)\ \forall x\in\mathbb{T}^2$;
\item $K^u(x)>1\ \forall x\in \mathbb{T}^2\backslash\{p_1\}$ and $K^u(p_1)=1$.
\end{enumerate}\label{step1}
\item: Let $p_2\in \mathbb{T}^2\backslash\{p_1\}$ be a period point of $T$. For arbitrary small $r_1>0$, there is a $C^r$ map $T_2$ such that $T_2(D_{r_1}(p_2))=\{p_2\}$ and $T_2(x)=T_1(x)$ in $\mathbb{T}^2\backslash D_{r_1}(p_2)$ where $D_{r_1}(p_2)$ is a ball at $p_2$ with the radius $r_1$, then $\det(d_{p_2}T_2)=0$, i.e. $p_2$ is a singular point of $T_2$.
\end{enumerate}

Set $\Gamma_{\infty}:=\{p_2\}$, assume the diameter of $\mathbb{T}^2$ is smaller than one by multiply the metric
with a small constant. By the above constructions, we can check that the map $T_2$ satisfies the conditions \ref{A1}-\ref{A3}: for $x\in\mathbb{T}^2$ with $x,T_2(x)\notin \Gamma_\infty$, given constants $a>1,K<\frac{1}{2}$ s.t. $\tau(x):= K\min\{d(x,\Gamma_\infty),d(x,\Gamma_{\infty})\}\geq \min\{d(x,\Gamma_\infty)^a,d(x,\Gamma_{\infty})^a\}$. For the neighborhoods $D_{x}:=B(x,2\tau(x))$ and $\mathfrak{E}(x):=B(T_2(x),2\tau(x))$,
\begin{itemize}
    \item (A1): $T_2|_{D(x)}$ and $T_{2,x}^{-1}|_{\mathfrak{E}(x)}$ are diffeomorphisms onto the image, where $T_{2,x}^{-1}$ is the inverse branch of $T_2$ that sends $T_2(x)$ to $x$.
    \item (A2): For any $x\in \mathbb{T}^2\backslash\{p_2\}$, $d(x,\Gamma_\infty)^a\leq 1$ and $d(x,\Gamma_\infty)^{-a}\geq1$. Since $\mathbb{T}^2$ is compact and $T_2$ is a $C^r$ map, then $\|dT_2\|$ is bounded on $\mathbb{T}^2$. And $|\det d_xT_2| = \|d_xT_2\| \cdot \|(d_xT_2)^{-1}\|^{-1}\ \forall x\notin\Gamma_\infty$. Hence, there is a constant $K'<1$ s.t. $K'd(x,\Gamma_\infty)^{a}\leq \|d_yT_2\|\leq \frac{1}{K'}d(x,\Gamma_{\infty})^{-a}\ \forall y\in D(x)$ and $K'd(x,\Gamma_\infty)^{a}\leq \|d_yT_{2,x}^{-1}\|\leq \frac{1}{K'}d(x,\Gamma_{\infty})^{-a}\ \forall y\in\mathfrak{E}(x)$.
    \item (A3): The maps $T_2|_{D(x)}$ and $T_{2,x}^{-1}|_{\mathfrak{E}(x)}$ are $C^r$ maps, then they satisfy the locally $H\ddot{o}lder$ property.
\end{itemize}
Denote $({\mathbb{T}^2}^{T},\hat{T})$ as the natural extension of $(\mathbb{T}^2,T_2)$. Ovadia proved in \cite{Ovadia2} that for almost Anosov diffeomorphisms $(M,f)$, the set $M\backslash \{p\}$ is the $0$-summ set where $p$ is the unique non-hyperbolic point. Hence, $\tilde{x}=\{x_n\}_{n\in\mathbb{N}}\in {\mathbb{T}^2}^{T}\backslash\bigcup_{n\in\mathbb{Z}}\hat{T}^n\big(p^{-1}(\{p_1,p_2\})\big)$ is the 0-summ set by the properties of the natural extension.

\section*{Acknowledgments}

\end{document}